\theoremstyle{plain}
\newtheorem{theorem}{Theorem}[section]
\newtheorem{proposition}[theorem]{Proposition}
\newtheorem{lemma}[theorem]{Lemma}
\newtheorem{corollary}[theorem]{Corollary}
\theoremstyle{definition}
\theoremstyle{remark}
\newcommand{\JA}{\opJ_{\opA}}
\newcommand{\cut}[1]{{}}
\newcommand{\va}{{\mathbf{a}}}
\newcommand{\vb}{{\mathbf{b}}}
\newcommand{\ve}{{\mathbf{e}}}
\newcommand{\vh}{{\mathbf{h}}}
\newcommand{\vv}{{\mathbf{v}}}
\newcommand{\vA}{{\mathbf{A}}}
\newcommand{\vB}{{\mathbf{B}}}
\newcommand{\vG}{{\mathbf{G}}}
\newcommand{\vL}{{\mathbf{L}}}
\newcommand{\vS}{{\mathbf{S}}}
\newcommand{\cA}{{\mathcal{A}}}
\newcommand{\cC}{{\mathcal{C}}}
\newcommand{\cF}{{\mathcal{F}}}
\newcommand{\cI}{{\mathcal{I}}}
\newcommand{\cN}{{\mathcal{N}}}
\newcommand{\cO}{{\mathcal{O}}}
\newcommand{\cQ}{{\mathcal{Q}}}
\newcommand{\cS}{{\mathcal{S}}}
\newcommand{\cT}{{\mathcal{T}}}
\DeclareFontFamily{U}{ntxmia}{}
\DeclareFontShape{U}{ntxmia}{m}{it}{<-> ntxmia }{}
\DeclareFontShape{U}{ntxmia}{b}{it}{<-> ntxbmia }{}
\DeclareSymbolFont{lettersA}{U}{ntxmia}{m}{it}
\NewDocumentCommand{\varmathbb}{m}
 {
  \tl_map_inline:nn { #1 }
   {
    \use:c { varbb##1 }
   }
 }
\DeclareMathSymbol{varbb#1}{\mathord}{lettersA}{\int_eval:n { `#1+67 }}
\DeclareMathSymbol{varbbk}{\mathord}{lettersA}{169}
\DeclareMathOperator*{\argmin}{argmin}
\newcommand*{\fix}{\mathrm{Fix}\,}
\newcommand*{\zer}{\mathrm{Zer}\,}
\newcommand*{\gra}{\mathrm{Gra}\,}
\newcommand{\dom}{\mathrm{dom}\,}
\newcommand{\reals}{\mathbb{R}}
\newcommand{\opA}{{\varmathbb{A}}}
\newcommand{\opI}{{\varmathbb{I}}}
\newcommand{\opJ}{{\varmathbb{J}}}
\newcommand{\opT}{{\varmathbb{T}}}
\newcommand{\inprod}[2]{\left\langle #1,#2 \right\rangle}
\newcommand{\sqnorm}[1]{\left\| #1 \right\|^2}
\newcommand{\tilA}{\Tilde{\opA}}
\newcommand{\topa}{\tilde{\opA}}
\newcommand{\norm}[1]{\left\|#1\right\|}
\newcommand{\inner}[2]{\left\langle #1 , #2 \right\rangle}
\newcommand{\Z}{X}
\newcommand{\pr}[1]{\left( #1 \right)}
\newcommand{\pmat}[1]{\begin{pmatrix} #1 \end{pmatrix}}
\newcommand{\set}[1]{\left\{ #1 \right\}}
\newcommand{\yap}{\delta}
\newcommand{\OurSpace}{\reals^d}
\newcommand{\sop}[1]{\nabla_{\pm} \mathbf{L} (#1)}
\newcommand{\at}{\text{A}}
\newcommand{\half}{1/2}
\newcommand{\hf}{\half}
\icmltitlerunning{Optimal Acceleration for Minimax and Fixed-Point Problems is Not Unique}
\begin{document}

\twocolumn[
\icmltitle{
Optimal Acceleration for Minimax and Fixed-Point Problems is Not Unique
}

\icmlsetsymbol{equal}{*}

\begin{icmlauthorlist}
\icmlauthor{TaeHo Yoon}{snumath}
\icmlauthor{Jaeyeon Kim}{snumath}
\icmlauthor{Jaewook J.\ Suh}{snumath}
\icmlauthor{Ernest K.\ Ryu}{snumath}
\end{icmlauthorlist}

\icmlaffiliation{snumath}{Department of Mathematical Sciences, Seoul National University}

\icmlcorrespondingauthor{Ernest Ryu}{ernestryu@snu.ac.kr}

\icmlkeywords{Optimization, Minimax optimization, Fixed-point problems, Acceleration}

\vskip 0.3in
]

\printAffiliationsAndNotice{} 

\begin{abstract}
Recently, accelerated algorithms using the anchoring mechanism for minimax optimization and fixed-point problems have been proposed, and matching complexity lower bounds establish their optimality. In this work, we present the surprising observation that the optimal acceleration mechanism in minimax optimization and fixed-point problems is not unique. Our new algorithms achieve exactly the same worst-case convergence rates as existing anchor-based methods while using materially different acceleration mechanisms. Specifically, these new algorithms are dual to the prior anchor-based accelerated methods in the sense of H-duality. This finding opens a new avenue of research on accelerated algorithms since we now have a family of methods that empirically exhibit varied characteristics while having the same optimal worst-case guarantee.
\end{abstract}

\section{Introduction}
Accelerated algorithms using the so-called anchoring mechanism have been recently proposed for solving minimax optimization and fixed-point problems. Furthermore, these algorithms are optimal: for minimax problems, the gap between the upper and lower bounds is a constant factor of $16$, and for fixed-point problems, there is no gap, not even a constant factor. Therefore, anchoring was thought to be ``the'' correct acceleration mechanism for these setups.

In this work, however, we present the surprising observation that the optimal acceleration mechanism in minimax optimization and fixed-point problems is not unique. For minimax optimization, we introduce a new algorithm with the same worst-case rate as the best-known algorithm. For fixed-point problems, we introduce a continuous family of exact optimal algorithms, all achieving the same worst-case rate that exactly matches the known lower bound. 
The representative cases of our new accelerated algorithms are dual algorithms of the prior anchor-based accelerated algorithms in the sense of H-duality. The resulting new acceleration mechanisms are materially different from the existing anchoring mechanism.

These findings show that anchor-based acceleration is not unique and sufficient as the mechanism of achieving the exact optimal complexity and enable us to correctly reframe the study of optimal acceleration as a study of a \emph{family} of acceleration mechanisms rather than a singular one. This shift in perspective will likely be critical in the future research toward a more complete and fundamental understanding of accelerations in fixed-point and minimax problems.

\vspace{-.2cm}

\subsection{Preliminaries}

We use standard notation for set-valued operators \cite{bauschke2011convex,RyuYin2022_largescale}.
An operator $\opA\colon \reals^d \rightrightarrows \reals^d$ is a set-valued function (so $\opA (x) \subseteq \reals^d$ for $x\in\reals^d$).
For simplicity, we write $\opA x = \opA (x)$.
The graph of $\opA$ is defined and denoted as $\gra \opA = \{(x,y) \,|\, x \in \reals^d, y \in \opA x\}$.
The inverse of $\opA$ is defined by $\opA^{-1} y = \{x\in\reals^d \,|\, y \in \opA x \}$.
Scalar multiples and sums of operators are defined in the Minkowski sense. 
If $\opT x$ is a singleton for all $x\in\reals^d$, we write $\opT\colon \reals^d \to \reals^d$ and treat it as a function.
An operator $\opT\colon \reals^d \to \reals^d$ is $L$-Lipschitz ($L > 0$) if $\norm{\opT x- \opT y} \le L \norm{ x -  y}$ for all $x,y \in \reals^d$.
We say $\opT$ is nonexpansive if it is $1$-Lipschitz.

A function $\vL\colon \reals^n \times \reals^m \to \reals$ is convex-concave if $\vL(u, v)$ is convex in $u$ for all fixed $v \in \reals^m$ and concave in $v$ for all fixed $u \in \reals^n$.
If $\vL(u_\star, v) \le \vL(u_\star, v_\star) \le \vL(u, v_\star)$ for all $(u,v) \in \reals^n \times \reals^m$, then $(u_\star, v_\star)$ is a saddle point of $\vL$.
For $L > 0$, if $\vL$ is differentiable and $\nabla \vL$ is $L$-Lipschitz on $\reals^n \times \reals^m$, we say $\vL$ is $L$-smooth.
In this case, we define the saddle operator of $\vL$ by $\sop{u,v} = \left( \nabla_u \vL(u, v), - \nabla_v \vL(u, v) \right)$.
In most of the cases, we use the joint variable notation $x = (u, v)$ and concisely write $\sop{x}$ in place of $\sop{u,v}$.

\vspace{-.2cm}

\subsection{Related work}
Here, we quickly review the most closely related prior work while deferring the more comprehensive literature survey to \cref{section:appendix-related-work}.

\vspace{-.2cm}

\paragraph{Fixed-point algorithms.}
A fixed-point problem solves
\begin{align}
\label{def:fixed-point-problem}
\begin{array}{cc}
    \underset{y\in \reals^d}{\text{find}} & y = \opT y 
\end{array}
\end{align}
for $\opT \colon \mathbb{R}^d \to \mathbb{R}^d$. 
The magnitude of the fixed-point residual $y_k - \opT y_k$ is one natural performance measure. \citet{SabachShtern2017_first} first achieved the rate $\sqnorm{y_k - \opT y_k} = \mathcal{O}(1/k^2)$ through the Sequential Averaging Method, and
\citet{Lieder2021_convergence} showed that Halpern iteration with specific parameters, which we call \ref{alg:halpern} in \cref{subsection:summary-fixed-point}, improves upon the rate of \citet{SabachShtern2017_first} by a factor of $16$.
Furthermore, \citet{ParkRyu2022_exact} provided a matching complexity lower bound showing that the rate of \citet{Lieder2021_convergence} is exactly optimal.

\vspace{-.1in}

\paragraph{Minimax algorithms.}
Minimax optimization solves
\begin{align}
\label{def:minimax-problem}
\begin{array}{cc}
    \underset{u \in \reals^n}{\text{minimize}} \,\, \underset{v \in \reals^m}{\text{maximize}} &  \vL (u, v)
\end{array}
\end{align}
for $\vL\colon\reals^n \times \reals^m \to \reals$.
Under the assumption of convex-concavity, $\sqnorm{\nabla \vL(u_k, v_k)}$ is one natural performance measure. 
\citet{YoonRyu2021_accelerated} first provided the (order-optimal) accelerated $\sqnorm{\nabla \vL(u_k, v_k)} = \cO(1/k^2)$ rate via the Extra Anchored Gradient (EAG) algorithm together with $\Omega(1/k^2)$ complexity lower bound. The Fast Extragradient (FEG) algorithm of \citep{LeeKim2021_fast} then improved this rate by a constant factor, achieving the currently best-known constant.

\vspace{-.1in}

\paragraph{Duality of algorithms.}
H-duality \cite{KimOzdaglarParkRyu2023_timereversed, KimParkOzdaglarDiakonikolasRyu2023_mirror} is a duality correspondence between algorithms. 
The H-duality theory of \citet{KimOzdaglarParkRyu2023_timereversed} shows that in smooth convex minimization, an algorithm's rate with respect to function value translates to the rate of its H-dual algorithm with respect to gradient norm and vice versa. 
Our paper establishes a different H-duality theory for fixed-point algorithms.

\vspace{-.1cm}

\subsection{Contribution and organization}
\paragraph{Contributions.}
This work is presenting a new class of accelerations in fixed-point and minimax problems. Our findings provide the perspective that the study of optimal acceleration must be viewed as a study of a \emph{family} of acceleration mechanisms rather than a singular one.

\vspace{-.1in}

\paragraph{Organization.}
\cref{section:summary} provides an overview of the novel algorithms \ref{alg:dual_halpern} and \ref{alg:dual-feg} and their continuous-time model.
\cref{section:Dual-Halpern} presents the analysis of \ref{alg:dual_halpern}.
\cref{section:family-of-algorithms} presents an infinite family of fixed-point algorithms achieving the same exact optimal rates. 
\cref{section:H-duality} presents the H-duality for fixed-point problems, which explains the symmetry and connection underlying \ref{alg:halpern} and \ref{alg:dual_halpern}.
\cref{section:minimax} provides the analysis of~\ref{alg:dual-feg} and its H-dual relationship with~\ref{alg:feg}. 
\cref{section:continuous-time} explores a continuous-time perspective of the new algorithms.
\cref{section:experiments} provides numerical simulations.

\vspace{-.1cm}

\section{Summary of new acceleration results}
\label{section:summary}

In this section, we provide an overview of novel accelerated algorithms for several setups.
For each setup, we first review the existing (primal) algorithm using the anchor acceleration mechanism and then show its dual counterpart with identical rates but using a materially different acceleration mechanism. (The meaning ``dual'' is clarified later.)
Throughout the paper, we write  $N\ge 1$ to denote the pre-specified iteration count of the algorithm.

\subsection{Fixed-point problems}
\label{subsection:summary-fixed-point}
Consider the fixed-point problem \eqref{def:fixed-point-problem}, where $\opT \colon \reals^d \to \reals^d$ is nonexpansive.
We denote $\fix\opT = \{y \in \reals^d \,|\, y = \opT y\}$ and assume $\fix\opT\ne \emptyset$.

The (primal) Optimal Halpern Method (\ref{alg:halpern})\footnote{Some prior work referred to this method as the ``Optimized'' Halpern Method, but we now know the method is (exactly) optimal as \citep{ParkRyu2022_exact} provided a matching lower bound.} \cite{Halpern1967_fixed, Lieder2021_convergence} is
\begin{align} \label{alg:halpern} \tag{OHM}
    y_{k+1} = \frac{k+1}{k+2} \opT y_k + \frac{1}{k+2} y_0 
\end{align}
for $k=0,1,\dots$. Equivalently, we can write
\begin{align*}
    y_{k+1}
    &= y_k - \frac{1}{k+2} (y_k - \opT y_k) + \frac{k}{k+2} \left( \opT y_k - \opT y_{k-1} \right)
\end{align*}
where we define $\opT y_{-1} = y_0$.
\ref{alg:halpern} exhibits the rate
\begin{align*}
    \|y_{k-1} - \opT y_{k-1}\|^2 \le \frac{4\sqnorm{y_0 - y_\star}}{k^2}
\end{align*}
for $k=1,2,\dots$ and $y_\star \in \fix\opT$ \cite{Lieder2021_convergence}.

We present the new method,
Dual Optimal Halpern Method (\ref{alg:dual_halpern}):
\begin{align} \label{alg:dual_halpern} \tag{Dual-OHM}
    y_{k+1} = y_k + \frac{N-k-1}{N-k} \left( \opT y_k - \opT y_{k-1} \right) 
\end{align}
for $k=0,1,\dots,N-2$, where we define $\opT y_{-1} = y_0$.
Equivalently,
\begin{align}
\begin{aligned}
    z_{k+1} & = \frac{N-k-1}{N-k} z_k - \frac{1}{N-k} \left( y_k - \opT y_k \right) \\
    y_{k+1} & = \opT y_k - z_{k+1} 
\end{aligned}
\label{alg:dual_halpern_with_z}
\end{align}
for $k=0,1,\dots,N-2$, where $z_0 = 0$.
\ref{alg:dual_halpern} exhibits the rate 
\begin{align*}
    \|y_{N-1} - \opT y_{N-1}\|^2 \le \frac{4\sqnorm{y_0 - y_\star}}{N^2} 
\end{align*}
for $y_\star \in \fix\opT$.
This rate exactly coincides with the rate of
\ref{alg:halpern} for $k=N$.
We discuss the detailed analysis in~\cref{section:Dual-Halpern}.

As shown in \citep{ParkRyu2022_exact}, there exists a nonexpansive operator $\opT\colon \reals^d \to \reals^d$ with $d\ge 2N-2$ such that
\begin{align*}
    \sqnorm{y_{N-1} - \opT y_{N-1}} \ge \frac{4\sqnorm{y_0 - y_\star}}{N^2} 
\end{align*}
for any deterministic algorithm using $N-1$ evaluations of $ \opT$.
Therefore, \ref{alg:halpern} is exactly optimal; it cannot be improved, not even by a constant factor, in terms of worst-case performance.
The discovery of~\ref{alg:dual_halpern} is surprising as it shows that the exact optimal algorithm is not unique.

\subsection{Smooth convex-concave minimax optimization}
\label{subsection:summary-smooth-minimax}
Consider the minimax optimization problem \eqref{def:minimax-problem}, 
where $\vL\colon  \reals^n \times \reals^m \to \reals$ is convex-concave and $L$-smooth.
Convex-concave minimax problems are closely related to fixed-point problems, and the anchoring mechanism of \ref{alg:halpern} for accelerating fixed-point algorithms has been used to accelerate algorithms for minimax problems \cite{YoonRyu2021_accelerated, LeeKim2021_fast}.
We show that \ref{alg:dual_halpern} also has its minimax counterpart.
In the following, write $\opA = \nabla_\pm \vL$ for notational conciseness.

The (primal) Fast Extragradient \footnote{\ref{alg:feg} was designed primarily for weakly nonconvex-nonconcave problems, but we consider its application to the special case of convex-concave problems.} (\ref{alg:feg}) \citep{LeeKim2021_fast} is
\begin{align}
\begin{split}
    x_{k+\hf} & = x_k + \frac{1}{k+1} (x_0 - x_k) - \frac{k}{k+1} \alpha \opA x_k \\
    x_{k+1} & = x_k + \frac{1}{k+1} (x_0 - x_k) - \alpha \opA x_{k+\hf}
\end{split}
\label{alg:feg} \tag{FEG}
\end{align}
for $k=0,1,\dots$. 
If $0 < \alpha \le \frac{1}{L}$, \ref{alg:feg} exhibits the rate
\begin{align*}
    \sqnorm{\nabla\vL(x_k)} = \sqnorm{\opA x_k} \le \frac{4\sqnorm{x_0 - x_\star}}{\alpha^2 k^2}
\end{align*}
for $k=1,2,\dots$ and a saddle point (solution) $x_\star = (u_\star, v_\star)$. 
To the best of our knowledge, this result with $\alpha=\frac{1}{L}$ is the fastest known rate.

We present the new method, Dual Fast Extragradient (\ref{alg:dual-feg}):
\begin{align}
    & x_{k+\half}  = x_k - \alpha z_k - \alpha \opA x_k \nonumber\\
    & x_{k+1}  = x_{k+1/2} - \frac{N-k-1}{N-k}\alpha\left(\opA x_{k+1/2} - \opA x_k \right) \nonumber\\
    & z_{k+1}  = \frac{N-k-1}{N-k} z_k - \frac{1}{N-k} \opA x_{k+\hf}
\label{alg:dual-feg}\tag{Dual-FEG}
\end{align}
for $k=0,1,\dots,N-1$, where $z_0 = 0$.
For $0 < \alpha \le \frac{1}{L}$, \ref{alg:dual-feg} exhibits the rate
\begin{align*}
    \sqnorm{\nabla\vL(x_N)} = \sqnorm{\opA x_N} \le \frac{4\sqnorm{x_0 - x_\star}}{\alpha^2 N^2}. 
\end{align*}
This rate exactly coincides with the rate of \ref{alg:feg} for $k=N$.
We discuss the detailed analysis in Section~\ref{section:minimax}. 

As shown in~\citep{YoonRyu2021_accelerated}, there exists $L$-smooth convex-concave $\vL\colon \reals^n \times \reals^n \to \reals$ with $m=n\ge 3N+2$ such that
\begin{align*}
    \sqnorm{\nabla\vL(x_N)} \ge \frac{L^2 \sqnorm{x_0 - x_\star}}{\left(2\lfloor N/2 \rfloor + 1\right)^2}
\end{align*}
for any deterministic $N$-step first-order algorithm.
Therefore, \ref{alg:feg} and \ref{alg:dual-feg} are optimal up to a constant factor.

\subsection{Continuous-time analysis}
\label{subsection:summary-continuous}

This section introduces continuous-time analyses corresponding to the algorithms of Sections~\ref{subsection:summary-fixed-point} and \ref{subsection:summary-smooth-minimax}.
In continuous-time limits, the algorithms for fixed-point problems and minimax optimization reduce to the same continuous-time ODE.
Let $\opA = 2(\opI + \opT)^{-1} - \opI$ for fixed-point problem~\eqref{def:fixed-point-problem} and $\opA = \nabla_\pm \vL$ for minimax problem~\eqref{def:minimax-problem}.

The (primal) Anchor ODE \citep{RyuYuanYin2019_ode, SuhParkRyu2023_continuoustime} is
\begin{align}
\label{ode:anchor}
    \dot{X}(t) = -\opA (X(t)) + \frac{1}{t} (X_0 - X(t)) 
\end{align}
which has an equivalent 2nd-order form
\[
    \ddot{X}(t) + \frac{2}{t} \dot{X} + \frac{1}{t} \opA(X(t)) + \frac{d}{dt} \opA(X(t)) = 0
\]
where $X(0) = X_0$ (and $\dot{X}(0) = -\frac{1}{2}\opA(X_0)$ for 2nd-order form) is the initial condition.
Anchor ODE exhibits the rate
\begin{align*}
    \sqnorm{\opA (X(t))} \le \frac{4 \sqnorm{X_0 - X_\star}}{t^2}
\end{align*}
for $t>0$, where $X_\star$ is a solution (zero of $\opA$).

We present the new Dual-Anchor ODE:
\begin{align}
\begin{aligned}
    \dot{X}(t) & = - Z(t) - \opA(X(t)) \\
    \dot{Z}(t) 
    & = - \frac{1}{T-t} Z(t) - \frac{1}{T-t} \opA(X(t)) ,
\end{aligned}
\label{ode:dual-anchor}
\end{align}
which has an equivalent 2nd-order form
\begin{align*} 
    \ddot{X}(t) + \frac{1}{T-t} \dot{X}(t) + \frac{d}{dt} \opA (X(t)) = 0
\end{align*}
for $t \in (0, T)$, where $T > 0$ is a pre-specified terminal time, and $X(0)=X_0$ and $Z(0)=0$ (or $\dot{X}(0) = -\opA(X_0)$ for the 2nd-order form) are initial conditions.
Dual-Anchor ODE exhibits the rate
\begin{align*}
    \sqnorm{\opA (X(T))} \le \frac{4 \sqnorm{X_0 - X_\star}}{T^2}.
\end{align*}
This rate exactly coincides with the rate of Anchor ODE for $t=T$.
We discuss the detailed analysis in Section~\ref{section:continuous-time}. 

\paragraph{``Dual'' in the sense of H-duality.} Our new algorithms are ``dual'' to the known primal algorithms in the sense of H-duality, a recently developed notion of duality between convex minimization algorithms \cite{KimOzdaglarParkRyu2023_timereversed}. We provide the detailed discussion of H-duality for fixed-point algorithms in \cref{section:H-duality}.

\section{Analysis of \ref{alg:dual_halpern}}
\label{section:Dual-Halpern}

In this section, we present the convergence analysis of \ref{alg:dual_halpern}, showing that it is another exact optimal algorithm for solving nonexpansive fixed-point problems.

\setcounter{subsection}{-1}
\subsection{Preliminaries: Monotone operators}

In the following, we express our analysis using the language of monotone operators. We quickly set up the notation and review the connections between fixed-point problems and monotone operators. 

\paragraph{Monotone operators.}
A set-valued (non-linear) operator $\opA\colon \reals^d \rightrightarrows \reals^d$ is monotone if $\inprod{g - g'}{x - x'} \ge 0$ for all $x,x' \in \reals^d$, $g \in \opA x$, and $g' \in \opA x'$.
If $\opA$ is monotone and there is no monotone operator $\opA'$ for which $\gra\opA \subset \gra\opA'$ properly, then $\opA$ is maximally monotone.
If $\opA$ is maximally monotone, then its resolvent $\JA := (\opI + \opA)^{-1}\colon \reals^d \to \reals^d$ is a well-defined single-valued operator. 

\paragraph{Fixed-point problems are monotone inclusion problems.}
There exists a natural correspondence between the classes of nonexpansive operators and maximally monotone operators in the following sense.
\begin{proposition} 
\label{proposition:monotone-nonexpansive-correspondence}
\citep[Theorem~2]{EcksteinBertsekas1992_Douglas} 
If $\opT\colon \reals^d \to \reals^d$ is a nonexpansive operator, then $\opA = 2(\opI + \opT)^{-1} - \opI$ is maximally monotone.
Conversely, if $\opA\colon \reals^d \rightrightarrows \reals^d$ is maximally monotone, then $\opT = 2\JA - \opI$ is nonexpansive.
\end{proposition}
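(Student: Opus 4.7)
The proof plan rests on the classical equivalence (essentially Minty's surjectivity theorem): an operator $\opA\colon \reals^d \rightrightarrows \reals^d$ is maximally monotone if and only if its resolvent $\JA = (\opI+\opA)^{-1}$ is single-valued, firmly nonexpansive, and has full domain $\reals^d$. Given this characterization, both directions of the proposition reduce to algebra linking nonexpansiveness of $\opT$ with firm nonexpansiveness of the averaged operator $(\opI+\opT)/2$. The only conceptually nontrivial ingredient is Minty's theorem itself; since this proposition is a classical textbook result cited to \citep{EcksteinBertsekas1992_Douglas}, I would expect the paper to simply invoke the standard monotone-operator machinery rather than rederive it.

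For the forward direction, I would set $J := (\opI+\opT)/2$, which is single-valued with $\dom J = \reals^d$ because $\opT$ is. Expanding $\inner{x-y}{Jx-Jy} - \norm{Jx-Jy}^2$ and cancelling yields $\tfrac14(\norm{x-y}^2 - \norm{\opT x - \opT y}^2) \ge 0$ by nonexpansiveness of $\opT$, so $J$ is firmly nonexpansive. Setting $\opA := J^{-1} - \opI$ (which equals $2(\opI+\opT)^{-1} - \opI$), monotonicity of $\opA$ follows by substituting $(x,y) \leftarrow (Ju, Jv)$ into the same identity. Maximality then comes from Minty: $\opI + \opA = J^{-1}$ has range $\dom J = \reals^d$, and combined with monotonicity this upgrades $\opA$ to a maximally monotone operator.

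The reverse direction is a single computation. Given $\opA$ maximally monotone, $\JA$ is firmly nonexpansive with full domain, so for $\opT := 2\JA - \opI$, expanding $\norm{\opT x - \opT y}^2 = \norm{2(\JA x - \JA y) - (x-y)}^2$ yields $\norm{x-y}^2 - 4\bigl(\inner{x-y}{\JA x - \JA y} - \norm{\JA x - \JA y}^2\bigr) \le \norm{x-y}^2$, which is nonexpansiveness of $\opT$. The main obstacle, as noted, is really just the surjectivity half of Minty's theorem underpinning the resolvent characterization of maximal monotonicity; everything visible in the proof is straightforward algebraic expansion.
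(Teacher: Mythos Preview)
Your argument is correct and is the standard textbook proof of this equivalence. Note, however, that the paper does not actually prove this proposition: it is stated with a citation to \citep[Theorem~2]{EcksteinBertsekas1992_Douglas} and used as a black box, so there is no ``paper's own proof'' to compare against. Your write-up is exactly what one would expect from the cited reference or from any monotone-operator text (e.g., Bauschke--Combettes), resting on Minty's surjectivity theorem and the firm-nonexpansiveness identity for $(\opI+\opT)/2$.
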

When $\opT = 2\JA - \opI$, we have $x = \opT x \iff 0 \in \opA x$,
i.e., $\fix\opT = \zer\opA := \{x \in \reals^d \,|\, 0 \in \opA x\}$.
Therefore, \cref{proposition:monotone-nonexpansive-correspondence} induces a one-to-one correspondence between nonexpansive fixed-point problems and monotone inclusion problems.

\paragraph{Fixed-point residual norm and operator norm.}
Given $y \in \reals^d$, its accuracy as an approximate fixed-point solution is often measured by $\|y - \opT y\|$, the norm of fixed-point residual.
Let $\opA$ be the maximal monotone operator satisfying $\opT = 2\JA - \opI$, and let $x = \JA y$.
Then we see that
\begin{align*}
    y \in (\opI + \opA) (x) = x + \opA x \iff y - x \in \opA x .
\end{align*}
Denote $\tilA x = y - x \in \opA x$.
Then
\begin{align*}
    y - \opT y = y - (2\JA - \opI) (y) = 2 (y - \JA y) = 2 \tilA x.
\end{align*}
Therefore, $\|y - \opT y\|=2\|\tilA x\|$.

\paragraph{Minimax optimization and monotone operators.}
The minimax problem \eqref{def:minimax-problem} can also be recast as a monotone inclusion problem.
Precisely, for $L$-smooth convex-concave $\vL$, its saddle operator $\nabla_\pm \vL$ is monotone and $L$-Lipschitz.
In this case, $x_\star = (u_\star, v_\star)$ is a minimax solution for $\vL$ if and only if $\sop{x_\star} = 0$.

Finally, we quickly state a handy lemma used in the convergence analyses throughout the paper.
\begin{lemma}
\label{lemma:convergence-proof-last-step}
Let $\opA\colon \reals^d \rightrightarrows \reals^d$ be monotone and let $x,y \in \reals^d$, $\tilA x \in \opA x$.
Suppose, for some $\rho > 0$,
\begin{align*}
    \rho \sqnorm{\tilA x} + \inprod{\tilA x}{x - y} \le 0 
\end{align*}
holds. 
Then, for $x_\star \in \zer\opA$, $\sqnorm{\tilA x} \le \frac{\sqnorm{y - x_\star}}{\rho^2}$.
\end{lemma}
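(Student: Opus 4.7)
The plan is a three-line argument combining the hypothesis with monotonicity and Cauchy--Schwarz. First I would rewrite the hypothesis as
\[
    \rho \sqnorm{\tilA x} \le \inprod{\tilA x}{y - x} = \inprod{\tilA x}{y - x_\star} + \inprod{\tilA x}{x_\star - x}.
\]
Next I would invoke monotonicity of $\opA$ at the pair $(x, x_\star)$ using the fact that $0 \in \opA x_\star$ (by $x_\star \in \zer \opA$) and $\tilA x \in \opA x$, which yields $\inprod{\tilA x - 0}{x - x_\star} \ge 0$, i.e., $\inprod{\tilA x}{x_\star - x} \le 0$. Substituting gives
\[
    \rho \sqnorm{\tilA x} \le \inprod{\tilA x}{y - x_\star}.
\]

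Finally I would apply Cauchy--Schwarz to the right-hand side to get $\rho \sqnorm{\tilA x} \le \norm{\tilA x}\,\norm{y - x_\star}$. If $\tilA x = 0$ the conclusion is trivial; otherwise divide both sides by $\norm{\tilA x}$ to obtain $\rho \norm{\tilA x} \le \norm{y - x_\star}$, and squaring yields the claimed bound $\sqnorm{\tilA x} \le \sqnorm{y - x_\star}/\rho^2$.

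There is no real obstacle here; the lemma is essentially a packaging device that isolates the final step shared by the convergence proofs. The only subtlety worth flagging is that the argument uses monotonicity in the form $\inprod{\tilA x}{x - x_\star} \ge 0$, which relies on $x_\star$ being a zero of $\opA$ (not merely a point in $\dom \opA$), and on $\tilA x$ being a specific element of $\opA x$ rather than an arbitrary vector. Both hypotheses are built into the statement, so the proof is immediate.
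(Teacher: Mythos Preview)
Your proof is correct and follows essentially the same route as the paper: use monotonicity at $(x,x_\star)$ to replace $x$ by $x_\star$ in the inner product, then bound the resulting term. The only cosmetic difference is that the paper finishes with Young's inequality $\inprod{\tilA x}{x_\star - y} \ge -\tfrac{\rho}{2}\norm{\tilA x}^2 - \tfrac{1}{2\rho}\norm{x_\star - y}^2$ instead of your Cauchy--Schwarz plus division step, which avoids the trivial case split on $\tilA x = 0$ but is otherwise the same argument.
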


\begin{proof}
By monotonicity of $\opA$ and Young's inequality,
\begin{align*}
    0 &\geq \rho \sqnorm{\tilA x} + \inprod{\tilA x}{x - y} \\
    &\geq \rho\sqnorm{\tilA x} + \inprod{\tilA x}{x_\star - y} \\ 
    &\geq \frac{\rho}{2} \norm{\tilA x}^2 - \frac{1}{2\rho}\norm{x_\star-y}^2. 
    \vspace{-\baselineskip} \qedhere
\end{align*}
\end{proof}

\subsection{Convergence analyses of \ref{alg:halpern} and \ref{alg:dual_halpern}}
\label{subsection:family-Halpern}

We formally state the convergence result of \ref{alg:dual_halpern} and outline its proof.

\begin{theorem}
Let $\opT\colon \reals^d \to \reals^d$ be nonexpansive and $y_\star \in \fix\opT$. 
For $N \ge 1$, \ref{alg:dual_halpern} exhibits the rate
\begin{align*}
    \sqnorm{y_{N-1} - \opT y_{N-1}} \le \frac{4\|y_0 - y_\star\|^2}{N^2} .
\end{align*}
\end{theorem}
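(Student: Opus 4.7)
The plan is to mirror the Lyapunov-type analysis one uses for \ref{alg:halpern}, but with a time-reversed potential, and conclude via \cref{lemma:convergence-proof-last-step}.

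First I would pass to the monotone operator framework. Using \cref{proposition:monotone-nonexpansive-correspondence}, write $\opT = 2\JA - \opI$ for a maximally monotone $\opA$, and define $x_k = \JA y_k$ and $\tilA x_k = y_k - x_k \in \opA x_k$. Since $\|y - \opT y\| = 2\|\tilA x\|$ and $\fix \opT = \zer \opA$, the goal reduces to showing $\sqnorm{\tilA x_{N-1}} \le \sqnorm{y_0 - y_\star}/N^2$. I would also rewrite the \ref{alg:dual_halpern_with_z} $z$-form in terms of the $\tilA x_k$'s, using the identity $y_k - \opT y_k = 2\tilA x_k$ to get a clean recursion $(N-k) z_{k+1} = (N-k-1) z_k - 2 \tilA x_k$.

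Next I would construct a Lyapunov function. The known analysis of \ref{alg:halpern} uses a potential roughly of the form $V_k = k(k+1)\sqnorm{\tilA x_k} + 2(k+1)\inprod{\tilA x_k}{x_k - y_0}$ which is non-increasing in $k$ because of the monotonicity inequality $\inprod{\tilA x_{k+1} - \tilA x_k}{x_{k+1} - x_k} \ge 0$. H-duality suggests substituting $k \mapsto N-1-k$ and replacing the explicit anchor $y_0$ by the auxiliary variable $z_k$ (which, by the $z$-recursion above, plays the role of a weighted past-residual record, starting from $z_0 = 0$). I therefore would propose a candidate of the form
\[
    U_k \;=\; (N-k)^2 \sqnorm{\tilA x_k} \,+\, 2(N-k)\inprod{\tilA x_k}{x_k + \mu_k z_k - y_0}
\]
with a correction term and coefficient $\mu_k$ to be pinned down so that $U_{k+1} \le U_k$.

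Third, I would verify monotonicity of $U_k$. Expanding $U_{k+1} - U_k$ and substituting the Dual-OHM update $y_{k+1} = \opT y_k - z_{k+1}$ (equivalently $x_{k+1} + \tilA x_{k+1} = x_k - \tilA x_k - z_{k+1}$), the terms split into three groups: a multiple of $\sqnorm{\tilA x_{k+1} - \tilA x_k}$, a multiple of $\inprod{\tilA x_{k+1} - \tilA x_k}{x_{k+1} - x_k}$, and algebraic residual terms. Choosing the coefficients so that the residual vanishes and the remaining cross-term has a nonnegative multiplier lets me use monotonicity to conclude $U_{k+1} \le U_k$. Iterating and noting $U_0 = N^2 \sqnorm{\tilA x_0} + 2N \inprod{\tilA x_0}{x_0 - y_0}$ (since $z_0 = 0$), and using $y_0 - x_0 = \tilA x_0$, the quantity $U_0$ simplifies to $0$ (the two terms cancel). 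Hence $U_{N-1} \le 0$, which amounts to an inequality of the form $\rho \sqnorm{\tilA x_{N-1}} + \inprod{\tilA x_{N-1}}{x_{N-1} - y_0} \le 0$ after dividing by $2(N-(N-1)) = 2$, with $\rho = N/2$. Applying \cref{lemma:convergence-proof-last-step} with this $\rho$ and $y = y_0$ then yields $\sqnorm{\tilA x_{N-1}} \le \sqnorm{y_0 - y_\star}/(N/2)^2$, and the factor of $4$ from $\|y-\opT y\|^2 = 4\|\tilA x\|^2$ combines to give exactly $4\sqnorm{y_0 - y_\star}/N^2$.

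The main obstacle is pinning down the correct Lyapunov function. Unlike \ref{alg:halpern}, where the anchor $y_0$ is an explicit driving term at every iteration, in \ref{alg:dual_halpern} the anchor only enters through $z_0 = 0$ and the history accumulated in $z_k$; consequently, the dual Lyapunov must bookkeep $z_k$ in just the right way so that the telescoping works exactly. Getting the coefficients right so that the cross-terms from the $z$-recursion and from the residual update exactly cancel (and the remaining quadratic terms combine into a nonnegative monotonicity expression) is the delicate algebraic step that closes the proof.
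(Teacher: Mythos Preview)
Your overall strategy (pass to the monotone operator framework and use a Lyapunov argument ending in \cref{lemma:convergence-proof-last-step}) is sound, but the specific Lyapunov you propose does not work, and the arithmetic at the end does not close.

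First, the claimed boundary value $U_0=0$ is false. With your convention $x_0=\JA y_0$ and $\tilA x_0=y_0-x_0$, one gets $U_0=N^2\sqnorm{\tilA x_0}-2N\sqnorm{\tilA x_0}=N(N-2)\sqnorm{\tilA x_0}$, which is nonzero for $N\ge 3$. Second, even granting $U_{N-1}\le 0$, dividing by $2$ yields $\rho=\tfrac12$, not $\rho=N/2$; and with $\rho=N/2$ the lemma gives $\sqnorm{\tilA x_{N-1}}\le 4\sqnorm{y_0-y_\star}/N^2$, which after multiplying by $4$ would produce $16/N^2$, not $4/N^2$. So the numbers do not add up at either endpoint.

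The deeper issue is structural. Time-reversing the \ref{alg:halpern} potential and inserting $z_k$ does not give the right object: the correct Lyapunov for \ref{alg:dual_halpern} is built around the \emph{terminal} residual $\tilA x_N$ (a fixed quantity throughout the telescoping), not the running $\tilA x_k$. Concretely, the paper uses
\[
V_k=-\tfrac{N-k-1}{N-k}\sqnorm{z_k+2\tilA x_N}+\tfrac{2}{N-k}\inprod{z_k+2\tilA x_N}{y_k-y_{N-1}},
\]
and the descent $V_k\ge V_{k+1}$ comes from the monotonicity inequalities $\inprod{x_N-x_{k+1}}{\tilA x_N-\tilA x_{k+1}}\ge 0$ (each iterate against the terminal one), not consecutive pairs $\inprod{x_{k+1}-x_k}{\tilA x_{k+1}-\tilA x_k}\ge 0$. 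The boundary values then fall out cleanly: $V_{N-1}=0$ at the terminal index, while $V_0$ collapses (using $z_0=0$) to $-4\sqnorm{\tilA x_N}+\tfrac{4}{N}\inprod{\tilA x_N}{y_0-x_N}$, which after dividing by $4/N$ gives exactly $\rho=N$ in \cref{lemma:convergence-proof-last-step} and hence the stated rate. This ``dual Lyapunov structure'' is precisely what the H-duality theory predicts, and it is the missing idea in your proposal.
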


\begin{proof}[Proof outline]
Let $\opA$ be the unique maximally monotone operator such that $\opT = 2\JA - \opI$ (defined as in~\cref{proposition:monotone-nonexpansive-correspondence}).
Let $x_{k+1} = \JA (y_k)$ for $k=0,1,\dots$, so that $\tilA x_{k+1} = y_k - x_{k+1} \in \opA x_{k+1}$.
Recall the alternative form~\eqref{alg:dual_halpern_with_z} of \ref{alg:dual_halpern}.
Define
\begin{align*}
    V_k & = -\frac{N-k-1}{N-k} \sqnorm{z_k + 2 \tilA x_N} \\
    & \quad \quad + \frac{2}{N-k} \inprod{z_k + 2 \tilA x_N}{y_k - y_{N-1}}
\end{align*}
for $k=0,1,\dots,N-1$.
We show in \cref{section:appendix-Dual-Halpern-Lyapunov} that
\begin{align*}
    & V_k - V_{k+1} \\
    & = \frac{4}{(N-k)(N-k-1)} \inprod{x_N - x_{k+1}}{\tilA x_N - \tilA x_{k+1}} ,
\end{align*}
i.e., $V_k \ge V_{k+1}$ for $k=0,1,\dots,N-2$.
Observe that $V_{N-1}=0$ and because $z_0 = 0$,
\begin{align*}
    V_0 & = -\frac{4 (N-1)}{N} \sqnorm{\tilA x_N} + \frac{4}{N} \inprod{\tilA x_N}{y_0 - y_{N-1}} \\
    & = -4 \sqnorm{\tilA x_N} + \frac{4}{N} \inprod{\tilA x_N}{y_0 - x_N} 
\end{align*}
where the second line uses $x_N = y_{N-1} - \tilA x_N$.
Finally, divide both sides of $V_0 \ge \cdots \ge V_{N-1} = 0$ by $\frac{4}{N}$, apply \cref{lemma:convergence-proof-last-step} and the identity $y_{N-1} - \opT y_{N-1} = 2 \tilA x_N$:
\begin{align*}
    \sqnorm{y_{N-1} - \opT y_{N-1}} = 4 \sqnorm{\tilA x_N} \le \frac{4\|y_0 - y_\star\|^2}{N^2} .
\end{align*}
\vspace{-\baselineskip} \qedhere
\end{proof}

\vspace{.05cm}

We point out that the convergence analysis for \ref{alg:halpern} can be done in a similar style \cite{Lieder2021_convergence}.
Define $\opA$, $x_{k+1}$ and $\tilA x_{k+1}$ as above.
Define $U_0 = 0$ and
\begin{align*}
    U_k = k^2 \sqnorm{\tilA x_k} + k \inprod{\tilA x_k}{x_k - y_0}
\end{align*}
for $k=1,2,\dots$.
It can be shown that \cite{RyuYin2022_largescale} 
\begin{align*}
    U_j - U_{j+1} = j(j+1) \inprod{x_{j+1} - x_j}{\tilA{x_{j+1}} - \tilA{x_j}} \ge 0 
\end{align*}
for $j=0,1,\dots$.
Then $0 = U_0 \ge \cdots \ge U_k$, and dividing both sides by $k$ and applying~\cref{lemma:convergence-proof-last-step} gives the rate $\|\tilA x_k\|^2 \le \frac{\sqnorm{y_0 - x_\star}}{k^2}$.

\subsection{Proximal forms of \ref{alg:halpern} and \ref{alg:dual_halpern} }
\label{subsection:proximal-forms}

It is known that \ref{alg:halpern} can be equivalently written as
\begin{align*}
    x_{k+1} & = \JA (y_k) \\
    y_{k+1} & = x_{k+1} + \frac{k}{k+2} (x_{k+1} - x_k) - \frac{k}{k+2} (x_k - y_{k-1})
\end{align*}
for $k=0,1,\dots$, where $x_0 = y_0$. This proximal form is called Accelerated Proximal Point Method (APPM) \cite{Kim2021_accelerated}.
Likewise, \ref{alg:dual_halpern} can be equivalently written as
\begin{align*}
    x_{k+1} & = \JA (y_k) \\
    y_{k+1} & = x_{k+1} + \frac{N-k-1}{N-k} (x_{k+1} - x_k) \\
    & \quad - \frac{N-k-1}{N-k} (x_k - y_{k-1}) - \frac{1}{N-k} (x_{k+1} - y_k) 
\end{align*}
for $k=0,1,\dots,N-2$, where $x_{-1} = y_0 = x_0$.
We prove the equivalence in \cref{section:appendix-algorithm-equivalence}.

\section{Continuous family of exact optimal fixed-point algorithms}
\label{section:family-of-algorithms}

Upon seeing the two algorithms \ref{alg:halpern} and \ref{alg:dual_halpern} exhibiting the same exact optimal rate, it is natural to ask whether there are other exact optimal algorithms. In this section, we show that there is, in fact, an $(N-2)$-dimensional continuous family of exact optimal algorithms.

\subsection{H-matrix representation}
Fixed-point algorithm of $N-1$ iterations with fixed (non-adaptive) step-sizes can be written in the form 
\begin{align} \label{eqn::FPI_H_matrix}
\begin{split}
    y_{k+1} & = y_k  - \sum_{j=0}^{k} h_{k+1,j+1} \underbrace{(y_j - \opT y_j)}_{2\tilA x_{j+1}}
\end{split}
\end{align}
for $k=0,1,\dots,N-2$, where the $\Tilde{\opA}$ notation uses the convention of Section~\ref{section:Dual-Halpern}.
With this representation, the lower-triangular matrix $H\in \mathbb{R}^{(N-1)\times (N-1)}$, defined by $(H)_{k,j} = h_{k,j}$ if $j\leq k$ and $(H)_{k,j}=0$ otherwise, fully specifies the algorithm.

\subsection{Optimal algorithm family via H-matrices}
\label{subsection:family-characterization-via-H-matrices}

We now state our result characterizing a family of exact optimal algorithms.

\begin{theorem}[Optimal family]
\label{theorem:optimal-method-family}
There exist a nonempty open convex set $C \subset \mathbb{R}^{N-2}$ and a continuous injective mapping $\Phi\colon C \to \mathbb{R}^{(N-1)\times (N-1)}$ such that for all $w\in C$ $H=\Phi(w)$ is lower-triangular, and algorithm~\eqref{eqn::FPI_H_matrix} defined via the H-matrix $H=\Phi(w)$ exhibits the exact optimal rate (matching the lower bound)
\begin{align*}
    \sqnorm{y_{N-1} - \opT y_{N-1}} = 4 \sqnorm{\tilA x_N} \leq \frac{4 \norm{y_0-y_\star}^2}{N^2}  
\end{align*}
for nonexpansive  $\opT\colon \reals^d \to \reals^d$ and $y_\star \in \fix\opT$.
\end{theorem}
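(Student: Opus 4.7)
The plan is to parameterize a family of Lyapunov-style certificates in the same spirit as those used for \ref{alg:halpern} and \ref{alg:dual_halpern}, and then extract the H-matrix from each certificate by equating coefficients. The two known optimal algorithms will serve as two explicit points in the family; the remaining degrees of freedom come from the choice of how to weight the monotonicity inequalities.

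First, I would unify the two known Lyapunov proofs under a common template. In both, there is a sequence $V_0, V_1, \ldots, V_{N-1}$ of quadratic forms in $\{x_k\}$, $\{\tilA x_k\}$, and the anchor $y_0$ such that (i) consecutive differences $V_k - V_{k+1}$ are non-negative combinations of monotonicity residuals $\inprod{x_i - x_j}{\tilA x_i - \tilA x_j}$, and (ii) the telescoped sum rearranges into the single inequality
\[
    N \sqnorm{\tilA x_N} + \inprod{\tilA x_N}{x_N - y_0} \le 0,
\]
to which \cref{lemma:convergence-proof-last-step} applies with $\rho = N$ to give the exact optimal rate. I would define the \emph{admissible certificate space} as the set of all choices of quadratic-form coefficients for $\{V_k\}$ together with non-negative monotonicity weights, so that (i) and (ii) hold as polynomial identities in $\{x_k, \tilA x_k, y_0\}$.

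Second, I would show that each admissible certificate uniquely determines an algorithm. Reading (i) as a polynomial identity in the free variables $\{x_k, \tilA x_k\}$, and using the definitional relation $x_{k+1} = \JA(y_k)$ together with $y_{k+1} = y_k - \sum_{j=0}^k h_{k+1,j+1} \cdot 2\tilA x_{j+1}$, the coefficient-matching constraints pin down the entries $h_{k+1,j+1}$ of the H-matrix linearly in the certificate data. The certificates corresponding to \ref{alg:halpern} and \ref{alg:dual_halpern} are two explicit points in this space, and composing certificate-to-H-matrix yields a linear, hence continuous, map $\Phi$; injectivity follows from the fact that distinct certificates produce distinct telescoping structures and hence distinct H-matrix rows.

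Third, I would isolate $C$. After modding out by a global positive rescaling and by the normalization in (ii), the admissible space is an affine subspace; the subset where all monotonicity weights are strictly positive is an open convex set, which I would take as $C$. The explicit Lyapunov functions for \ref{alg:halpern} and \ref{alg:dual_halpern} certify nonemptiness, and the dimension count follows from enumerating the free quadratic-form coefficients subject to the identities in (i) and the boundary normalization (ii). The \textbf{main obstacle} is this dimension count: showing that the admissible space has exactly $N-2$ free parameters (rather than more or fewer) after all identities are imposed. I would attack this by locating the \ref{alg:halpern} and \ref{alg:dual_halpern} certificates as two linearly independent interior points of an $(N-2)$-dimensional affine slice, and verifying by direct perturbation that neither further degeneracies nor additional hidden constraints appear. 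Once the count is in hand, the optimal rate for every $H = \Phi(w)$ is automatic because each $w \in C$ carries its own certificate of the bound.
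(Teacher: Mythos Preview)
Your high-level template (establish an identity combining $N\|\tilA x_N\|^2 + \langle\tilA x_N,x_N-y_0\rangle$ with nonnegative multiples of monotonicity residuals, then invoke \cref{lemma:convergence-proof-last-step}) matches the paper's, but several steps in your plan would not go through as stated.

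\textbf{First, the two known algorithms are not interior points of the certificate region.} For \ref{alg:halpern} the proof uses only the inequalities $\langle x_{k+1}-x_k,\tilA x_{k+1}-\tilA x_k\rangle\ge 0$, while for \ref{alg:dual_halpern} it uses only $\langle x_N-x_k,\tilA x_N-\tilA x_k\rangle\ge 0$. If you take the union index set $I=\{(k+1,k)\}\cup\{(N,k)\}$ and require all weights $\lambda_{i,j}>0$ strictly (which is what makes $C$ open), then each of the two known algorithms sits on the \emph{boundary} of $C$, since one block of $\lambda$'s vanishes. So your nonemptiness argument (``the two Lyapunov functions certify nonemptiness'') and your dimension strategy (``locate them as two linearly independent interior points'') both fail. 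The paper fixes this by showing that convex combinations of the two boundary configurations land in the interior, but this requires identifying the right coordinates first.

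\textbf{Second, the map from certificate to H-matrix is not linear, and the system is overdetermined.} Once you expand the identity in terms of $\tilA x_1,\dots,\tilA x_N$ via the H-matrix, you obtain on the order of $N^2/2$ scalar equations $s_{\ell,k}(H,\lambda)=0$ in roughly $(N-1)(N-2)/2$ off-diagonal H-entries and $2(N-1)$ weights $\lambda$. This is many more equations than unknowns, so ``coefficient-matching pins down $h_{k+1,j+1}$ linearly'' is not automatic---you must prove the system is consistent. The paper resolves this by a specific elimination order: it parameterizes by the \emph{diagonal} entries $h_{k,k}$ (equivalently the products $p_k=\prod_{\ell=k}^{N-1}h_{\ell,\ell}$, subject to the single equality $p_1=1/N$, which is where the dimension $N-2$ comes from), then writes $\lambda$ explicitly as rational functions of the $p_k$'s, and finally shows that for each column $k$ the remaining equations form an invertible lower-triangular linear system in the off-diagonals $h_{k+1,k},\dots,h_{N-1,k}$, with the leftover equations $s_{N-1,k}=s_{N,k}=0$ verified to hold automatically. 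That last consistency check is the technical core and is where most of the work lies; your proposal does not have a mechanism for it.

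\textbf{Third, injectivity and the dimension $N-2$.} In the paper's parametrization, injectivity is immediate because $(p_2,\dots,p_{N-1})$ can be read off the diagonal of $\Phi(w)$. Your proposed injectivity argument (``distinct certificates produce distinct telescoping structures'') is circular: a priori, different weight vectors $\lambda$ could certify the same $H$. Similarly, your dimension count via ``modding out by rescaling and normalization'' is too coarse; the actual count comes from the $N-1$ diagonal degrees of freedom minus the one constraint $p_1=1/N$.
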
  
We briefly outline the high-level idea of the proof while deferring the details to Appendix~\ref{section:appendix-optimal-family-theorem}.
From \cref{subsection:family-Halpern}, we observe that the convergence proofs for \ref{alg:halpern} and \ref{alg:dual_halpern} both work by establishing the identity
\begin{align}
\begin{aligned}
    0 & = \inprod{\topa x_N}{x_N - y_0} + N \|\topa x_N\|^2 \\
    & \quad \quad + \sum_{(i,j)\in I} \lambda_{i,j} \inprod{\topa x_i - \topa x_j}{x_i - x_j} ,
\end{aligned}
\label{eqn:optimal-algorithm-class-proof-template}
\end{align}
where $I$ is some set of tuple of indices $(i,j)$ with $i > j$ 
(for both algorithms, the consecutive differences of Lyapunov functions are of the form $\lambda_{i,j} \inprod{\topa x_i - \topa x_j}{x_i - x_j}$; sum them up to obtain~\eqref{eqn:optimal-algorithm-class-proof-template}) and $\lambda_{i,j} \ge 0$.
For \ref{alg:halpern}, $I$ consists of $(k+1,k)$ for $k=1,\dots,N-1$ while for \ref{alg:dual_halpern}, $(N,k)$ for $k=1,\dots,N-1$ are used.
In the proof of~\cref{theorem:optimal-method-family}, we identify algorithms (in terms of H-matrices) whose convergence can be proved via~\eqref{eqn:optimal-algorithm-class-proof-template} where $I$ is the union of the two sets of tuples, i.e.,
\begin{align*}
    I = \{(k+1,k) \,|\, k=1,\dots,N-1\} & \\
    \cup \, \{(N,k) \,|\, k=1,\dots,N-1\} & .
\end{align*}
Once~\eqref{eqn:optimal-algorithm-class-proof-template} is established, we obtain the desired convergence rate from monotonicity of $\opA$ and \cref{lemma:convergence-proof-last-step}.

To clarify, the algorithm family as defined in~\cref{theorem:optimal-method-family} does not include \ref{alg:halpern} and \ref{alg:dual_halpern} as $C$ is an open set and \ref{alg:halpern} and \ref{alg:dual_halpern} correspond to points on the boundary $\partial C=\overline{C}\backslash C$.
We choose not to incorporate $\partial C$ in \cref{theorem:optimal-method-family} because doing so leads to cumbersome divisions-by-zero. However, with a specialized analysis, 
\cref{proposition:optimal-family-continuous-extension} shows that \ref{alg:halpern} and \ref{alg:dual_halpern} 
are indeed a part of the parametrization $\Phi$ 
and therefore that the optimal family ``connects'' \ref{alg:halpern} and \ref{alg:dual_halpern}.
The proof is deferred to~\cref{subsection:appendix-optimal-family-continuous-extension}.

\begin{proposition}
\label{proposition:optimal-family-continuous-extension}
Consider $C$ and $\Phi$ defined in Theorem~\ref{theorem:optimal-method-family}. 
One can continuously extend $\Phi$ to some $u,v \in\partial C$ such that $\Phi(u)=H_{\text{OHM}}$ and $\Phi(v)=H_{\text{Dual-OHM}}$, where $H_{\text{OHM}}$ and $H_{\text{Dual-OHM}}$ are respectively the H-matrix representations of \ref{alg:halpern} and \ref{alg:dual_halpern}.
\end{proposition}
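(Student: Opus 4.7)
The plan is to exhibit explicit boundary points $u, v \in \partial C$ together with a continuous extension of $\Phi$ to $\{u,v\}$, and then verify directly that the limiting H-matrices coincide with $H_{\text{OHM}}$ and $H_{\text{Dual-OHM}}$. Conceptually, $\Phi$ assigns to each $w \in C$ the H-matrix produced by the synthesis procedure behind \cref{theorem:optimal-method-family}, where $w$ controls the splitting of ``certificate mass'' between the two families of tuples $\{(k+1,k)\}$ and $\{(N,k)\}$ in~\eqref{eqn:optimal-algorithm-class-proof-template}. The boundary of $C$ then corresponds to degenerate splittings: I expect $u$ to be the point where the weights $\lambda_{N,k}$ for $k< N-1$ all vanish (so only consecutive-difference terms $(k+1,k)$ survive), and $v$ to be the point where the weights $\lambda_{k+1,k}$ for $k<N-1$ all vanish (so only anchor-to-last terms $(N,k)$ survive). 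These are precisely the certificate structures used in the convergence proofs for \ref{alg:halpern} and \ref{alg:dual_halpern} summarized in \cref{subsection:family-Halpern}.

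First I would recall the explicit derivation of $\Phi$ from the proof of~\cref{theorem:optimal-method-family}: requiring~\eqref{eqn:optimal-algorithm-class-proof-template} to hold as a formal identity in the iterates, after expanding $x_i - x_j$ and $\topa x_i - \topa x_j$ through the algorithm recursion~\eqref{eqn::FPI_H_matrix}, yields a triangular linear system in the entries $h_{k,j}$ whose solution is a rational function of the free parameters $w$. The denominators of this rational function are polynomial in the coordinates of $w$ and vanish only on $\partial C$. Next, I would let $w$ approach $u$ (resp.\ $v$) along an admissible path in $C$ and compute $\lim \Phi(w)$ entrywise. Indeterminate $0/0$ expressions would be resolved by cancelling the common vanishing factors explicitly; the triangular algebraic structure of the synthesis system guarantees that these limits exist and are path-independent, producing the continuous extension.

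Finally, it remains to identify the limits with $H_{\text{OHM}}$ and $H_{\text{Dual-OHM}}$. The cleanest route is to verify that at $u$ the identity~\eqref{eqn:optimal-algorithm-class-proof-template} collapses exactly to the Lyapunov telescoping with $U_k = k^2\|\topa x_k\|^2 + k\langle \topa x_k, x_k - y_0\rangle$ used for \ref{alg:halpern}, and that at $v$ it collapses to the Lyapunov telescoping with $V_k$ used for \ref{alg:dual_halpern}; by uniqueness of the H-matrix determined by the synthesis procedure for each fixed certificate, these telescopings force $\Phi(u)=H_{\text{OHM}}$ and $\Phi(v)=H_{\text{Dual-OHM}}$ respectively. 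Alternatively, one can read off $h_{k+1,j+1}$ from the limiting rational expression and match them against the coefficients extracted from the update rules~\eqref{alg:halpern} and~\eqref{alg:dual_halpern_with_z} directly.

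The main obstacle is the bookkeeping around the limits at $u$ and $v$: several numerator and denominator factors in the synthesis formula vanish simultaneously, so verifying that the limit exists and equals the target matrix requires either a careful choice of approach path or, more robustly, a reparametrization that clears the offending denominators before passing to the limit. Once this is handled, injectivity of $\Phi$ on $C$ together with the uniqueness of the certificate structure at the boundary ensures that the extension to $\{u,v\}$ is unambiguous and continuous, which is all that the proposition asks for.
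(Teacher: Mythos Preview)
Your high-level picture is right: $u$ and $v$ are the boundary points where $\lambda_{N,k}=0$ (OHM) and $\lambda_{k+1,k}=0$ (Dual-OHM) respectively, and the task is to show the synthesized $h_{\ell,k}^*$ have well-defined limits there that coincide with the known H-matrices. But the step where you write ``the triangular algebraic structure of the synthesis system guarantees that these limits exist'' is where the argument needs a concrete idea that you have not supplied. In the construction of \cref{theorem:optimal-method-family}, the off-diagonal entries $h_{k+1,k},\dots,h_{N-1,k}$ are obtained by solving a triangular system with coefficient matrix $\vA$ whose diagonal entries are $\lambda_{k+2,k+1},\dots,\lambda_{N-1,N-2},1$, after first dividing certain equations by $\lambda_{N,k}$. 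At $v$ every $\lambda_{j+1,j}$ vanishes so $\vA$ becomes singular; at $u$ the preliminary division by $\lambda_{N,k}$ is illegal. So the triangularity of $\vA$ does not by itself give you anything at the boundary, and ``cancel the common factors'' is not obviously well-posed: you would have to exhibit the cancellation explicitly for every entry, which is the entire content of the proof.

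The paper's route avoids $0/0$ altogether. Instead of the system $s_{\ell,k}=0$ for $\ell=k,\dots,N-2$ (matrix $\vA$), it uses the equivalent system $s_{\ell,k}=0$ for $\ell=k+1,\dots,N-1$, whose coefficient matrix
\[
\vB=\begin{bmatrix}
\lambda_{k+2,k+1}+\lambda_{N,k+1} & \lambda_{N,k+1} & \cdots & \lambda_{N,k+1}\\
-\lambda_{k+2,k+1} & \lambda_{k+3,k+2}+\lambda_{N,k+2} & \cdots & \lambda_{N,k+2}\\
\vdots & & \ddots & \vdots\\
0 & \cdots & -\lambda_{N-1,N-2} & \lambda_{N,N-1}
\end{bmatrix}
\]
does not involve any division by $\lambda_{N,k}$. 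As $w\to u$ this $\vB$ becomes lower bidiagonal with diagonal $\lambda_{j+1,j}^{(0)}=\tfrac{j(j+1)}{N}\ne 0$; as $w\to v$ it becomes upper triangular with diagonal $\lambda_{N,j}^{(1)}=\tfrac{N}{(N-j)(N-j+1)}\ne 0$. Hence $\vB$ is invertible in a neighborhood of each boundary point, Cramer's rule gives continuous dependence of the solution on $w$ all the way to the boundary, and since $H_{\text{OHM}}$ and $H_{\text{Dual-OHM}}$ satisfy the same identity~\eqref{eqn:optimal-algorithm-class-proof-template} they must be the unique limiting solutions. Your ``reparametrization that clears the denominators'' is exactly this switch from $\vA$ to $\vB$; the proposal would be complete once you name it.
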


Our proof of~\cref{theorem:optimal-method-family} comes without an explicit characterization of $\Phi$ or the H-matrices (we only show existence\footnote{The H-matrices are, however, ``computable'' in the sense that they can be obtained by solving an explicit system of linear equations, as outlined in \cref{section:appendix-optimal-family-theorem}.}).
Fortunately, for $N=3$, we do have a simple explicit characterization, which directly illustrates the family interpolating between the \ref{alg:halpern} and \ref{alg:dual_halpern}. 
For any $2\times 2$ lower-triangular $\begin{bmatrix} h_{1,1} & 0 \\ h_{2,1} & h_{2,2} \end{bmatrix}$ satisfying $h_{2,1} = 1 - h_{1,1} - h_{2,2}$, $h_{1,1}h_{2,2} = \frac{1}{3}$ and $h_{1,1}, h_{2,2} \in \left[\frac{1}{2}, \frac{2}{3}\right]$, the iterate $y_2$ defined by \eqref{eqn::FPI_H_matrix} satisfies $\sqnorm{y_2 - \opT y_2} \hfill = 4 \sqnorm{\tilA x_3} \hfill \le \hfill \frac{4 \sqnorm{y_0 - y_\star}}{9}$.
In particular, \ref{alg:halpern} and \ref{alg:dual_halpern} each correspond to $(h_{1,1},h_{2,2})=\left(\frac{1}{2},\frac{2}{3}\right)$ and $(h_{1,1},h_{2,2})=\left(\frac{2}{3},\frac{1}{2}\right)$.

As a final comment, we clarify that \cref{theorem:optimal-method-family} is not the exhaustive parametrization of optimal algorithms; there are other exact optimal algorithms that \cref{theorem:optimal-method-family} does not cover.
We provide such examples in Appendix~\ref{section:appendix-optimal-family-remaining-proof_missing}.
The complete characterization of the set of exact optimal algorithms seems to be challenging, and we leave it to future work.

\section{H-duality for fixed-point algorithms}
\label{section:H-duality}

In this section, we present an H-duality theory for fixed-point algorithms. 
At a high level, H-duality states that an algorithm and its convergence proof can be \emph{dualized} in a certain sense.
This result provides a formal connection between \ref{alg:halpern} and \ref{alg:dual_halpern}.

\subsection{H-dual operation}

Before providing the formal definition, we observe the following.
The H-matrix of \ref{alg:halpern} is
\begin{align*}
\begin{split}
    & \left(H_{\text{OHM}}\right)_{k,j}=
    \begin{cases}
        -\frac{j}{k(k+1)} & \text{if } j < k \\
        \frac{k}{k+1}     & \text{if } j = k
    \end{cases}
\end{split} 
\end{align*}
while the H-matrix of \ref{alg:dual_halpern} is
\begin{align*}
\begin{split}
     \left(H_{\text{Dual-OHM}}\right)_{k,j} =
    \begin{cases}
        -\frac{N-k}{(N-j)(N-j+1)} & \text{if } j < k \\
        \frac{N-k}{N-k+1}         & \text{if } j = k.
    \end{cases}
\end{split} 
\end{align*}
We derive the H-matrix representations in \cref{section:appendix-algorithm-equivalence}.
For now, note the relationship $H_{\text{Dual-OHM}} = H_{\text{OHM}}^\at$, where superscript $\at$ denotes anti-diagonal transpose $\left(H^\at\right)_{k,j} = \left(H\right)_{N-j,N-k}$.
The anti-diagonal transpose operation reflects a square matrix along its anti-diagonal direction and preserves lower triangularity.

Given an algorithm represented by $H$, we define its \emph{H-dual} as the algorithm represented by $H^\at$.
In other words, given a general iterative algorithm defined as~\eqref{eqn::FPI_H_matrix}, its H-dual is the algorithm with the update rule
\begin{align} \label{eqn::FPI_H_dual}
\begin{split}
    y_{k+1} &= y_k  - \sum_{j=0}^{k} h_{N-j-1,N-k-1}\left( y_j -\opT y_j\right)  
\end{split}
\end{align}
for $k=0,1,\dots,N-2$.  

\begin{proposition}
\ref{alg:dual_halpern} and \ref{alg:halpern} are H-duals of each other.
\end{proposition}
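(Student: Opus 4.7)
The plan is to verify the claim by a direct entrywise computation: show that the anti-diagonal transpose of $H_{\text{OHM}}$ equals $H_{\text{Dual-OHM}}$, and then invoke the definition of H-dual (equation \eqref{eqn::FPI_H_dual}). Since the paper has already stated closed-form expressions for both H-matrices (with derivations deferred to \cref{section:appendix-algorithm-equivalence}), essentially all the substantive work has been done; what remains is pure index bookkeeping. Concretely, the definition gives $(H^{\at})_{k,j} = H_{N-j,\,N-k}$, so it suffices to prove
\[
    (H_{\text{OHM}})_{N-j,\,N-k} \;=\; (H_{\text{Dual-OHM}})_{k,j}
\]
for every $1 \le j \le k \le N-1$.

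First, I would observe that the anti-diagonal transpose preserves lower triangularity: if $j > k$, then $N-j < N-k$, so $(H_{\text{OHM}})_{N-j,N-k} = 0$, matching the lower-triangular zero pattern of $H_{\text{Dual-OHM}}$. Second, I would split into two cases following the piecewise definitions. On the diagonal ($j = k$), plugging into the formula for $H_{\text{OHM}}$ at the index $(N-k, N-k)$ gives $\tfrac{N-k}{N-k+1}$, which is precisely the diagonal entry $(H_{\text{Dual-OHM}})_{k,k}$. Strictly below the diagonal ($j < k$, so $N-k < N-j$, meaning we are evaluating a sub-diagonal entry of $H_{\text{OHM}}$), the formula $-\tfrac{\text{col}}{\text{row}\cdot(\text{row}+1)}$ with $\text{row} = N-j$ and $\text{col} = N-k$ yields $-\tfrac{N-k}{(N-j)(N-j+1)}$, which is exactly $(H_{\text{Dual-OHM}})_{k,j}$.

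With the entrywise identity $(H_{\text{OHM}})^{\at} = H_{\text{Dual-OHM}}$ established, the update \eqref{eqn::FPI_H_dual} for the H-dual of \ref{alg:halpern} reads
\[
    y_{k+1} = y_k - \sum_{j=0}^{k} (H_{\text{OHM}})_{N-j-1,\,N-k-1}\,(y_j - \opT y_j),
\]
which, by the identity just verified, is the H-matrix form of \ref{alg:dual_halpern}. Symmetry of the anti-diagonal transpose ($(H^{\at})^{\at} = H$) then gives the other direction, so the two methods are H-duals of each other.

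The honest difficulty is not in this proposition itself but in the prerequisite derivation of the two closed-form H-matrices from the recursions of \cref{subsection:summary-fixed-point}; once those are in hand, no obstacle remains. In particular, for \ref{alg:dual_halpern} one must expand $\opT y_k - \opT y_{k-1}$ recursively (or use the $z_k$-form \eqref{alg:dual_halpern_with_z}) to express each increment $y_{k+1} - y_k$ as a linear combination of the residuals $\{y_j - \opT y_j\}_{j \le k}$; this unrolling step is where the backward-indexed coefficients $\tfrac{N-k}{(N-j)(N-j+1)}$ actually arise, and it is the natural place to carry out an induction on $k$. The case-check above is then the short concluding argument.
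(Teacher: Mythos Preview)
Your proposal is correct and takes essentially the same approach as the paper: the paper simply records the closed-form H-matrices (derived in \cref{section:appendix-algorithm-equivalence}), observes that $H_{\text{Dual-OHM}} = H_{\text{OHM}}^{\at}$, and then invokes the definition of H-dual. Your entrywise verification of $(H_{\text{OHM}})_{N-j,N-k} = (H_{\text{Dual-OHM}})_{k,j}$ is exactly the computation that makes this observation explicit, and your remarks about where the real work lies (the appendix derivations) are accurate.
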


\subsection{H-duality theorem}
\ref{alg:halpern} and \ref{alg:dual_halpern} are H-duals of each other, and they share the identical rates on $\sqnorm{y_{N-1} - \opT y_{N-1}}$.
This symmetry is not a coincidence; the following H-duality theorem explains the connection between them.

\begin{theorem}[Informal]  \label{thm::H-duality_FPI_informal} 
Let $H \in \mathbb{R}^{(N-1)\times (N-1)}$ be lower triangular and $\nu >0$. Then the following are equivalent.
\begin{itemize}
    \item $\norm{y_{N-1} - \opT y_{N-1}}^2 \leq \nu\norm{y_0-y_\star}^2$ for~\eqref{eqn::FPI_H_matrix} with $H$ can be proved with primal Lyapunov structure.
    \item $\norm{y_{N-1} - \opT y_{N-1}}^2 \leq \nu\norm{y_0-y_\star}^2$ for \eqref{eqn::FPI_H_matrix} with $H^\at$ (H-dual) can be proved with dual Lyapunov structure.
\end{itemize}
\end{theorem}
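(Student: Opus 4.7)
The plan is to give Theorem~\ref{thm::H-duality_FPI_informal} precise content by formalizing the two Lyapunov structures algebraically and then reducing the equivalence to a symmetry of an identity under anti-diagonal transposition. Mirroring the two proofs in \cref{subsection:family-Halpern}, I would declare that $\|y_{N-1}-\opT y_{N-1}\|^2 \le \nu\|y_0-y_\star\|^2$ admits a \emph{primal} certificate for \eqref{eqn::FPI_H_matrix} with $H$ when there exist $\lambda_1,\dots,\lambda_{N-1}\ge 0$ such that, as an identity in the formal variables $\topa x_1,\dots,\topa x_N$ (after the differences $x_i - x_j$ are rewritten using $x_{k+1} = y_k - \topa x_{k+1}$ and the update \eqref{eqn::FPI_H_matrix}),
\begin{align*}
\nu^{-1}\|\topa x_N\|^2 + \langle \topa x_N, x_N - y_0\rangle \;=\; \sum_{k=1}^{N-1}\lambda_k\,\langle\topa x_{k+1}-\topa x_k,\, x_{k+1}-x_k\rangle
\end{align*}
holds. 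A \emph{dual} certificate is defined analogously but with terminal pairings $\mu_k\,\langle \topa x_N - \topa x_k,\, x_N - x_k\rangle$ on the right, matching the skeleton of the \ref{alg:dual_halpern} proof. In either case, monotonicity of $\opA$ together with \cref{lemma:convergence-proof-last-step} delivers the stated rate.

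Expanding each template produces a quadratic form in $\{\topa x_k\}_{k=1}^N$ whose coefficient matrix depends linearly on the entries of $H$ and on the certificate weights; existence of a primal certificate for $H$ thus becomes a linear feasibility problem $\mathcal{L}_{\mathrm{pr}}(H)\lambda = b$ with $\lambda \ge 0$, and similarly a dual certificate for $H'$ amounts to $\mathcal{L}_{\mathrm{du}}(H')\mu = b$ with $\mu \ge 0$. The key step is to apply the involutive reindexing $k \mapsto N-k$ on the formal variables together with $\lambda_k \leftrightarrow \mu_{N-k}$ on the weights. Under this reindexing, the primal index set $\{(k{+}1,k):k=1,\dots,N-1\}$ is carried to $\{(N-k{+}1,N-k)\}$, which after relabeling is precisely the dual index set $\{(N,k):k=1,\dots,N-1\}$ from \cref{subsection:family-characterization-via-H-matrices}, and the coefficient $h_{k+1,j+1}$ is replaced by $h_{N-j,N-k}$ --- exactly the entries of $H^\at$. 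Consequently $\mathcal{L}_{\mathrm{pr}}(H)=\mathcal{L}_{\mathrm{du}}(H^\at)$, and certificates transfer bijectively; the involution $(H^\at)^\at = H$ gives the reverse implication for free.

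The main obstacle I anticipate is the bookkeeping of the boundary/anchor terms under time reversal. The term $\langle \topa x_N, x_N - y_0\rangle$ is fundamentally asymmetric: the iterate $x_0 = y_0$ is pinned by the initial condition, while $\topa x_N$ is the residual whose norm we bound. Under $k \mapsto N-k$ these two roles swap, so one must carefully verify that the anchor term produced by the dual template on $H^\at$ --- which a priori mentions $\topa x_0$ and $y_{N-1}$ after reversal --- correctly matches $\langle \topa x_N, x_N - y_0\rangle$ for the H-dual algorithm rather than some spurious boundary term. Aligning the signs, the interpretation of $y_0$ versus $y_{N-1}$, and the precise matching of consecutive versus terminal pairings across the reversal is where the real work lies, and it is this non-tautological reconciliation that gives H-duality its content; once the correspondence is pinned down, the rest of the proof is linear-algebraic.
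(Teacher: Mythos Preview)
Your proposal contains a genuine error at the key step. You claim that under the reindexing $k \mapsto N-k$ on the formal variables, the primal index set $\{(k{+}1,k):k=1,\dots,N{-}1\}$ is carried to the dual index set $\{(N,k):k=1,\dots,N{-}1\}$. This is false: any permutation of indices sends consecutive pairs to consecutive pairs. Under $k \mapsto N-k$, the pair $(k{+}1,k)$ becomes $(N{-}k{-}1,N{-}k)$, which after relabeling is still of the form $(j{+}1,j)$, never $(N,j)$. So the linear systems $\mathcal{L}_{\mathrm{pr}}(H)$ and $\mathcal{L}_{\mathrm{du}}(H^\at)$ are \emph{not} related by the reindexing you describe, and the argument collapses.

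The paper's proof handles this with a genuinely different linear change of variables on the formal residuals $g_k = \topa x_k$, namely
\[
\mathcal{F}(g_1,\dots,g_N) \;=\; \bigl(u_{N-1}(g_N{-}g_{N-1}) + g_N,\ \dots,\ u_1(g_2{-}g_1)+g_N,\ g_N\bigr),
\]
together with the weight correspondence $v_j = 1/u_{N-j}$ (reciprocals, not a permutation). The point of $\mathcal{F}$ is that the terminal difference $g_N' - g_k'$ on the dual side becomes the scaled consecutive difference $u_{N-k}(g_{N-k}-g_{N-k+1})$ on the primal side, so $v_k\|g_N'-g_k'\|^2 = u_{N-k}\|g_{N-k+1}-g_{N-k}\|^2$. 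This is the mechanism that exchanges consecutive pairings for terminal pairings, and it is not available through reindexing alone. The paper then expands both quadratic forms $\cS$ (primal, with $H$) and $\cT$ (dual, with $H^\at$) explicitly and verifies $\cS(g) = \cT(\mathcal{F}(g))$ term by term; the $H$-dependent cross terms match under $\mathcal{F}$ precisely because of the anti-transpose.

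A secondary issue: you formalize each certificate as an \emph{identity}, whereas the paper formalizes it as a quadratic-form \emph{inequality} (the slack is a PSD form in the $g_k$'s), and the theorem is $\cS \ge 0 \iff \cT \ge 0$. Your equality version is strictly more restrictive and would not cover the full statement.
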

 
We defer the precise statement and the proof of \cref{thm::H-duality_FPI_informal} to \cref{sec::appendix_H_duality_pf}. The high-level takeaway of \cref{thm::H-duality_FPI_informal} is as follows: If an algorithm achieves a certain convergence rate with respect to $\norm{y_{N-1} - \opT y_{N-1}}^2$ (based on a certain proof structure), then the proof can be \emph{H-dualized} to guarantee the exact same convergence rate for its H-dual. The relationship between \ref{alg:halpern} and \ref{alg:dual_halpern} is an instance of this duality correspondence (with $\nu=1/N^2$).
 
Different from the prior H-duality result for convex minimization \cite{KimOzdaglarParkRyu2023_timereversed}, we show that algorithms in H-dual relationship share the convergence rate with respect to the same performance measure $\sqnorm{y_{N-1} - \opT y_{N-1}}$ and the same initial condition $\sqnorm{y_0 - y_\star}$. One could say that the performance measure $\sqnorm{y_{N-1} - \opT y_{N-1}}$ is ``self-dual''. This contrasts with the prior H-duality \cite{KimOzdaglarParkRyu2023_timereversed}, which showed that function values are ``dual'' to gradient magnitude in convex minimization. While the H-duality theory of this work and that of \cite{KimOzdaglarParkRyu2023_timereversed} share some superficial similarities, the unifying fundamental principle remains unknown, and finding one is an interesting subject of future research.

\section{Analysis of \ref{alg:dual-feg} for minimax problems\!}
\label{section:minimax}

\begin{figure*}[ht!]
\centering
\begin{subfigure}{0.45\textwidth}
\centering
    \includegraphics[width=\linewidth]{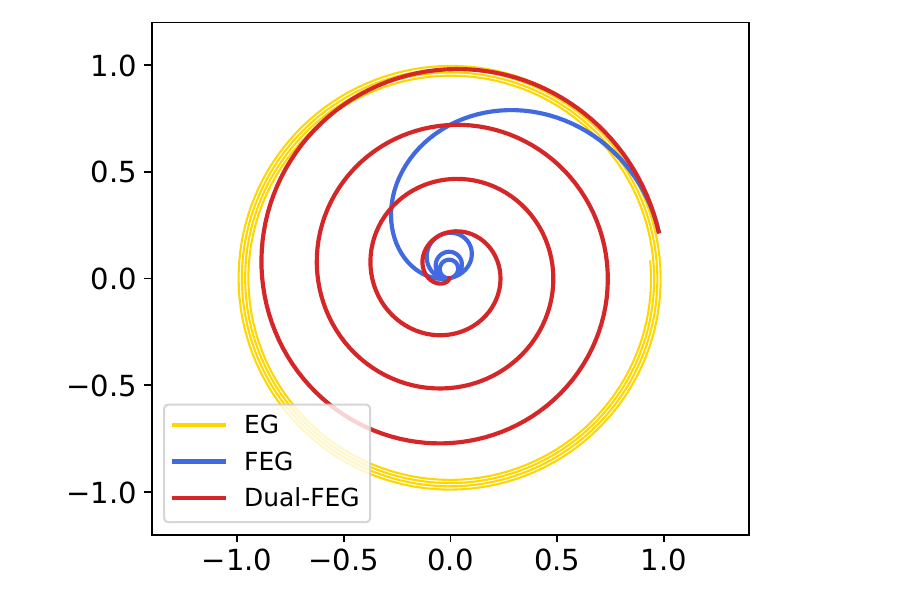}
    \caption{Trajectories from a 2D bilinear example}
    \label{subfig:2D_bilinear_trajectories}
\end{subfigure}
~
\begin{subfigure}{0.45\textwidth}
\centering
    \includegraphics[width=\linewidth]{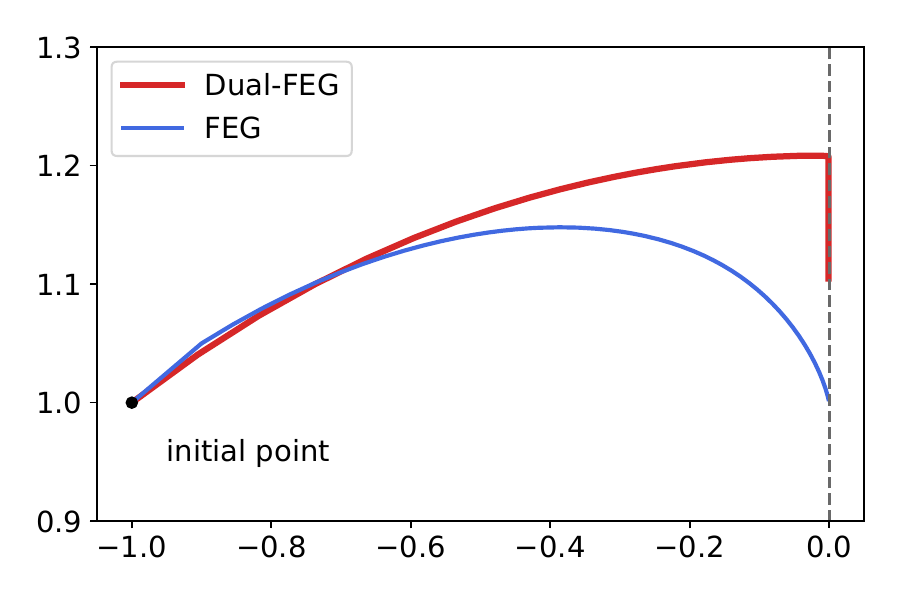}
    \caption{Trajectories from a 2D non-bilinear example} \label{subfig:u_square_v_trajectories}
\end{subfigure}
\caption{Trajectories generated by minimax optimization algorithms on \textbf{(left)} $\vL(u,v) = uv$ with a random initial point of norm 1 and \textbf{(right)} $\vL(u, v) = u^2 v$ with initial point $(-1, 1)$, where the dashed vertical line indicates the set of optima (saddle points).}
\label{fig:trajectory_plots}
\end{figure*}

In this section, we present the convergence analysis of \ref{alg:dual-feg} and additionally provide the observation that \ref{alg:dual-feg} is the H-dual of \ref{alg:feg}.

Algorithms such as \ref{alg:halpern} and \ref{alg:dual_halpern} are sometimes referred to as ``implicit methods'' since they can be expressed using resolvents, as discussed in Section~\ref{subsection:proximal-forms}. The results of this section show that \ref{alg:dual_halpern} has an ``explicit'' counterpart \ref{alg:dual-feg}, which uses direct gradient evaluations instead.

First, we formally state the convergence result.
\begin{theorem}
Let $\vL\colon \reals^n \times \reals^m \to \reals$ be convex-concave and $L$-smooth.
For $N \ge 1$ fixed, if $0 < \alpha \le \frac{1}{L}$, \ref{alg:dual-feg} exhibits the rate
\begin{align*}
    \sqnorm{\nabla\vL (x_N)} = \sqnorm{\sop{x_N}} \le \frac{4\sqnorm{x_0 - x_\star}}{\alpha^2 N^2},
\end{align*}
where $x_\star \in \zer\nabla_\pm \vL$ is a solution.
\end{theorem}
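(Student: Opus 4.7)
The plan is to mirror the Lyapunov-based argument used for \ref{alg:dual_halpern} in \cref{section:Dual-Halpern}, adapted to the explicit gradient updates of \ref{alg:dual-feg}. Writing $\opA = \nabla_\pm \vL$ as in the algorithm statement, I would define a Lyapunov sequence of the form
$$V_k = -\frac{N-k-1}{N-k}\,\alpha^2 \|z_k + \opA x_N\|^2 + \frac{2\alpha}{N-k}\,\inner{z_k + \opA x_N}{x_k - x_N}$$
for $k=0,1,\dots,N-1$. This is the direct analog of the Dual-OHM Lyapunov with $2\tilA x_N$ replaced by $\alpha \opA x_N$ and the resolvent iterates $y_k, x_{k+1}$ replaced by the explicit iterate $x_k$.

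The central technical step is to establish $V_k \ge V_{k+1}$ along the iterations. I would expand the difference by substituting the update rules for $x_{k+1}$ and $z_{k+1}$ and regrouping so that the leading contribution is proportional to $\inner{\opA x_{k+1/2} - \opA x_N}{x_{k+1/2} - x_N}$, which is non-negative by monotonicity of $\opA$. The extragradient correction with coefficient $\frac{N-k-1}{N-k}$ in the $x_{k+1}$ update is designed precisely so that the residual cross terms between $x_k$, $x_{k+1/2}$, $z_k$ and $\opA x_N$ telescope or cancel. Any leftover quadratic term of the form $\alpha^2 \|\opA x_{k+1/2} - \opA x_k\|^2 - \|x_{k+1/2} - x_k\|^2$ is absorbed using the cocoercivity-like bound $\alpha L \le 1$ that follows from the step-size restriction $\alpha \le 1/L$.

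Once the monotone decrease is established, telescoping reduces the argument to boundary terms. Because $z_0 = 0$, the initial value simplifies to
$$V_0 = -\frac{N-1}{N}\alpha^2 \|\opA x_N\|^2 + \frac{2\alpha}{N}\inner{\opA x_N}{x_0 - x_N},$$
and by direct computation $V_{N-1} = 0$. Rearranging $V_0 \ge V_{N-1} = 0$ and dividing by $2\alpha/N$ produces an inequality of the form $\frac{\alpha N}{2}\|\opA x_N\|^2 + \inner{\opA x_N}{x_N - x_0} \le 0$, which is exactly the hypothesis of \cref{lemma:convergence-proof-last-step} with $\rho = \alpha N / 2$ applied to $\opA x_N \in \opA x_N$. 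The lemma then yields $\|\opA x_N\|^2 \le 4\|x_0 - x_\star\|^2/(\alpha^2 N^2)$, completing the proof.

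The main obstacle is the monotone-decrease step, where the clean Dual-OHM cancellation from the resolvent identity $\tilA x_{k+1} = y_k - x_{k+1}$ is no longer available; instead the two operator evaluations $\opA x_k$ and $\opA x_{k+1/2}$ must be combined with a Lipschitz estimate at just the right scale. An appealing alternative that could shortcut much of the algebra is to invoke H-duality directly: since \ref{alg:dual-feg} is the H-dual of \ref{alg:feg}, the known Lyapunov proof of \ref{alg:feg} should dualize into a proof for \ref{alg:dual-feg} with the same rate, provided the H-duality theorem of \cref{section:H-duality} extends from the implicit fixed-point setting to the explicit minimax setting. Either route arrives at the same conclusion, and it is natural to expect the paper to present the direct Lyapunov argument while remarking on the H-dual relationship.
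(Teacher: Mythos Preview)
Your overall strategy---Lyapunov descent combining monotonicity of $\opA$ at the half-step with a Lipschitz correction, then \cref{lemma:convergence-proof-last-step}---matches the paper. But two concrete pieces are off.

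First, the Lyapunov coefficient is wrong. The paper uses
\[
V_k = -\alpha\,\|z_k + \opA x_N\|^2 + \frac{2}{N-k}\,\inprod{z_k + \opA x_N}{x_k - x_N},
\]
with a constant $-\alpha$ in front of the squared norm, not your $-\frac{N-k-1}{N-k}\alpha^2$. You copied the $k$-dependent prefactor from Dual-OHM too literally; in the explicit setting that extra $\frac{1}{N-k}$ does not telescope away, and indeed with your $V_0$ the arithmetic gives $\rho=\frac{(N-1)\alpha}{2}$ rather than $\frac{N\alpha}{2}$, so you would land on $4\|x_0-x_\star\|^2/(\alpha^2(N-1)^2)$ instead of the stated rate.

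Second, your claim that $V_{N-1}=0$ by direct computation is false. In Dual-OHM the terminal Lyapunov vanishes because both the coefficient $\frac{N-k-1}{N-k}$ and the difference $y_{N-1}-y_{N-1}$ are zero. Here the inner-product term at $k=N-1$ is $\inprod{z_{N-1}+\opA x_N}{x_{N-1}-x_N}$, and $x_{N-1}\neq x_N$ in general. What the paper actually proves is $V_{N-1}\ge 0$: using $x_N=x_{N-1/2}=x_{N-1}-\alpha z_{N-1}-\alpha\opA x_{N-1}$ one rewrites
\[
V_{N-1}=\tfrac{1}{\alpha}\bigl(\|x_N-x_{N-1}\|^2-\alpha^2\|\opA x_N-\opA x_{N-1}\|^2\bigr)+2\inprod{x_N-x_{N-1}}{\opA x_N-\opA x_{N-1}}\ge 0,
\]
where nonnegativity uses both $\alpha L\le 1$ and monotonicity. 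So the Lipschitz bound enters not only in the descent step $V_k\ge V_{k+1}$ (as you anticipated) but also at the terminal boundary---this is the structural difference from the implicit case that your proposal missed.
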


\begin{proof}[Proof outline]
Define $g_N = \sop{x_N}$. 
In \cref{subsection:appendix-Dual-FEG-Lyapunov}, we show that
\begin{align*}
    V_k = -\alpha \sqnorm{z_k + g_N} + \frac{2}{N-k} \inprod{z_k + g_N}{x_k - x_N}
\end{align*}
is nonincreasing in $k=0,1,\dots,N-1$, and $V_{N-1} \ge 0$.
This implies $0 \le V_0 = -\alpha \sqnorm{g_N} + \frac{2}{N} \inprod{g_N}{x_0 - x_N}$. 
Finally, divide both sides by $\frac{2}{N}$ and apply~\cref{lemma:convergence-proof-last-step}.
\end{proof}

\paragraph{\ref{alg:dual-feg} and \ref{alg:feg} are H-duals.}
\ref{alg:dual-feg} has intermediate iterates $x_{k+\hf}$, serving a role similar to intermediate iterates of \ref{alg:feg}.
Consider the following H-matrix representation for \ref{alg:dual-feg}, analogous to~\eqref{eqn::FPI_H_matrix}:
\begin{align*}
    x_{(\ell+1)/2} = x_{\ell/2} - \frac{1}{L} \sum_{i=0}^{\ell} h_{(\ell+1)/2, i/2} \sop{x_{i/2}}
\end{align*} 
for $\ell=0,1,\dots,2N-1$.
For \ref{alg:dual-feg}, the H-matrix is the $2N\times 2N$ lower-triangular matrix
\begin{align*}
    (H_{\text{Dual-FEG}})_{\ell,i} =     \begin{cases}
         h_{\ell/2,(i-1)/2} & \text{if } i \leq \ell \\
        0     & \text{if } i > \ell.
    \end{cases}
\end{align*} 
We can analogously define the H-matrix $H_{\text{FEG}}$ for \ref{alg:feg}, which also has extragradient-type intermediate iterates. 
It turns out that $H_{\text{FEG}}$ and $H_{\text{Dual-FEG}}$ are anti-diagonal transposes of each other.
 
\begin{proposition}
\label{proposition:dual-feg-H-dual}
\ref{alg:dual-feg} and \ref{alg:feg} are H-duals of each other.
\end{proposition}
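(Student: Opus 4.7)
The plan is to derive closed-form expressions for the entries of both $H_{\text{FEG}}$ and $H_{\text{Dual-FEG}}$ and then verify the identity $H_{\text{Dual-FEG}} = H_{\text{FEG}}^{\at}$ entry by entry. Because both algorithms alternate between half-integer and integer update sub-steps, each H-matrix has at most four types of nonzero entries, classified by whether the target half-index $p$ and the source half-index $q$ are integer or half-integer. The verification reduces to checking the anti-diagonal transpose relation on each of these four entry types separately.

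For \ref{alg:feg}, I would rewrite the integer update as $(k+1)(x_{k+1}-x_0) = k(x_k-x_0) - (k+1)\alpha \opA x_{k+\hf}$ and telescope to obtain $x_k - x_0 = -\tfrac{\alpha}{k}\sum_{j=0}^{k-1}(j+1)\opA x_{j+\hf}$ for $k \ge 1$. Substituting this into $x_{k+\hf} - x_k$ and $x_{k+1} - x_{k+\hf}$ yields (denoting the coefficient at target $p$, source $q$ by $h_{p,q}$)
\begin{align*}
    h^{\text{FEG}}_{k+\hf, k} &= \tfrac{k}{k+1}, &
    h^{\text{FEG}}_{k+\hf, j+\hf} &= -\tfrac{j+1}{k(k+1)}, \\
    h^{\text{FEG}}_{k+1, k+\hf} &= 1, &
    h^{\text{FEG}}_{k+1, k} &= -\tfrac{k}{k+1},
\end{align*}
with all other entries zero. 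For \ref{alg:dual-feg}, a short induction from $z_0=0$ yields the closed form $z_k = -\sum_{j=0}^{k-1}\tfrac{N-k}{(N-j)(N-j-1)}\opA x_{j+\hf}$; substituting this into the two sub-steps of \ref{alg:dual-feg} produces
\begin{align*}
    h^{\text{Dual-FEG}}_{k+\hf, k} &= 1, &
    h^{\text{Dual-FEG}}_{k+\hf, j+\hf} &= -\tfrac{N-k}{(N-j)(N-j-1)}, \\
    h^{\text{Dual-FEG}}_{k+1, k+\hf} &= \tfrac{N-k-1}{N-k}, &
    h^{\text{Dual-FEG}}_{k+1, k} &= -\tfrac{N-k-1}{N-k}.
\end{align*}

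Anti-diagonal transpose of a $2N\times 2N$ matrix sends the (target, source) half-index pair $(p, q)$ to $(N-q, N-p)$. Applying this to the Dual-FEG half-to-half entry at $p = k+\hf$, $q = j+\hf$, the reindexing $K = N-k-1$, $J = N-j-1$ (which sends $j < k$ to $J > K$) rewrites the coefficient as $-\tfrac{K+1}{J(J+1)}$, which matches exactly $h^{\text{FEG}}_{J+\hf, K+\hf}$. The remaining three entry types (half-from-integer, integer-from-half, integer-from-integer) are verified by analogous one-line reindexings, which together establish $H_{\text{Dual-FEG}} = H_{\text{FEG}}^{\at}$. The only nontrivial obstacle is the bookkeeping that converts between the half-integer labelling used by the algorithms and the $2N\times 2N$ integer matrix indexing that defines $\cdot^{\at}$; the algebra itself collapses cleanly once the four entry formulas are tabulated.
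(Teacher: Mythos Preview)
Your proposal is correct and follows essentially the same approach as the paper: both derive the explicit H-matrix entries for \ref{alg:feg} and \ref{alg:dual-feg} (the paper does this in \cref{section:appendix-algorithm-equivalence}, obtaining the same four nonzero entry formulas you list) and then verify the anti-diagonal transpose relation by case-checking on the entry types. The only cosmetic differences are that the paper carries the integer matrix indices $(\ell,i)\in\{1,\dots,2N\}^2$ alongside the half-integer algorithm indices, and it splits the verification into six cases (including the zero-entry cases) rather than your four nonzero types; the algebra and the reindexing $K=N-k-1$, $J=N-j-1$ are identical.
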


We defer the proof to \cref{subsection:proof-of-dual-feg-H-dual}. 
This intriguing H-dual relationship and identical convergence rates of \ref{alg:feg} and \ref{alg:dual-feg} strongly indicate the possible existence of H-duality theory for smooth minimax optimization; we leave its formal treatment to future work.

\section{Continuous-time analysis of dual-anchoring}
\label{section:continuous-time}

\begin{figure*}[ht!]
\centering
\begin{subfigure}{0.45\textwidth}
\centering
    \includegraphics[width=\linewidth]{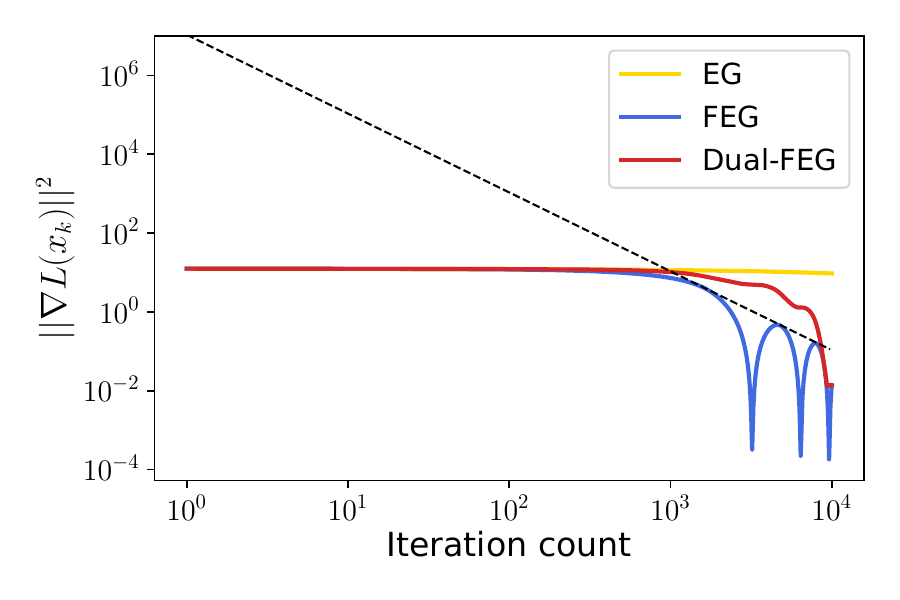}
    \caption{Worst case bilinear problem}
    \label{subfig:ouyang_construction_performance}
\end{subfigure}
\hfill
\begin{subfigure}{0.45\textwidth}
\centering
    \includegraphics[width=\linewidth]{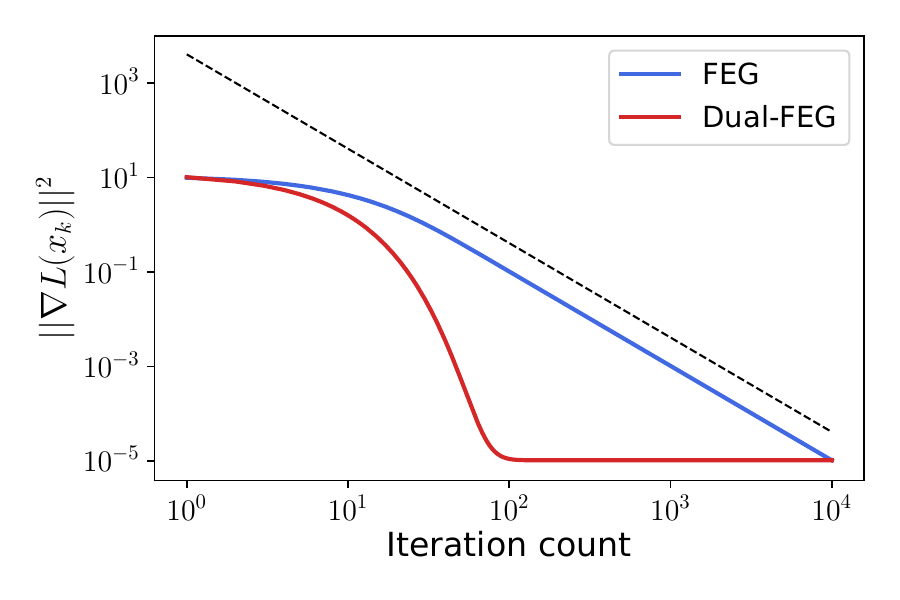}
    \caption{Strongly-convex-strongly-concave bilinear problem}
    \label{subfig:ouyang_strongly_monotone_performance}
\end{subfigure}
\caption{Performance of minimax algorithms in reducing $\sqnorm{\nabla \vL(x_k)}$ for bilinear problem instances. The dashed black line indicates the theoretical upper bounds for \ref{alg:feg}.} 
\label{fig:performance_plots}
\end{figure*}

In this section, we outline the analysis of the Dual-Anchor ODE \eqref{ode:dual-anchor}, which is the common continuous-time model for both \ref{alg:dual_halpern} and \ref{alg:dual-feg}, as derived in \cref{appendix:continuous_time_limit_algorithms}.
We then introduce the notion of H-dual for ODE models and show that Dual-Anchor ODE is the H-dual of Anchor ODE~\eqref{ode:anchor}, the continuous-time model for \ref{alg:halpern} and \ref{alg:feg}.

\begin{theorem} \label{thm:continuous_convergnece_rate}
Let $\opA \colon \reals^d \to \reals^d$ be Lipschitz continuous and monotone.  
For $T>0$, the solution $X\colon [0,T) \to \reals^d$ of the Dual-Anchor ODE \eqref{ode:dual-anchor} with initial conditions $X(0) = X_0, Z(0) = 0$ uniquely exists, and $X(T) = \lim_{t\to T^-} X(t)$ satisfies
\begin{align*}
    \norm{ \opA(X(T)) }^2 \le \frac{4 \norm{ X_0 - X_\star }^2 }{T^2} 
\end{align*}
where $X_\star \in \zer \opA$.
\end{theorem}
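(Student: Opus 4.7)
The plan is to mirror the discrete Lyapunov analyses of~\ref{alg:dual_halpern} and~\ref{alg:dual-feg} in continuous time. Let $X(T) := \lim_{t \to T^-} X(t)$, $g_\star := \opA(X(T))$, and $W(t) := Z(t) + g_\star$. The central object is the Lyapunov function
\[
    \mathcal{V}(t) := -\sqnorm{W(t)} + \frac{2}{T-t}\inner{W(t)}{X(t) - X(T)},
\]
which is the natural continuous counterpart of the discrete Lyapunov $-\alpha\sqnorm{z_k + g_N} + \frac{2}{N-k}\inner{z_k + g_N}{x_k - x_N}$ from the analysis of~\ref{alg:dual-feg}, obtained by rescaling under the correspondence $t = k/L$, $T = N/L$.

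The algebraic core is a direct calculation. From the ODE one has $\dot{Z} = -\frac{1}{T-t}(Z + \opA(X))$ and $\dot{X} = -Z - \opA(X)$, which yields $\dot{W} = -\frac{1}{T-t}(W + \opA(X) - g_\star)$ together with the clean identity $\dot{X} = (T-t)\dot{W}$. Expanding $\dot{\mathcal{V}}$ and collecting terms, the contributions involving $\sqnorm{W}$ cancel exactly (mirroring the discrete telescoping identity $V_k - V_{k+1} = \frac{4}{(N-k)(N-k-1)}\inner{x_N - x_{k+1}}{\tilA x_N - \tilA x_{k+1}}$), leaving
\[
    \dot{\mathcal{V}}(t) = -\frac{2}{(T-t)^2}\inner{\opA(X(t)) - g_\star}{X(t) - X(T)} \le 0
\]
by monotonicity of $\opA$. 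Combined with the boundary value $\lim_{t \to T^-}\mathcal{V}(t) = 0$ (see below), this gives $\mathcal{V}(0) \ge 0$, which, using $Z(0) = 0$, rearranges to $\frac{T}{2}\sqnorm{g_\star} + \inner{g_\star}{X(T) - X_0} \le 0$. Applying~\cref{lemma:convergence-proof-last-step} with $\rho = T/2$, $\tilA x = g_\star \in \opA(X(T))$, $x = X(T)$, and $y = X_0$ then delivers the claimed rate.

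The hard part will be the analytical well-posedness: establishing existence and uniqueness of $(X, Z)$ on all of $[0, T)$, the existence of the limit $X(T)$, and the boundary identity $\lim_{t \to T^-}\mathcal{V}(t) = 0$. On any compact subinterval of $[0, T)$ the ODE's right-hand side is Lipschitz in $(X, Z)$, so Picard--Lindel\"of gives local existence and uniqueness. To extend to all of $[0, T)$ and reach $X(T)$, the key structural identity is $\dot{X} = (T-t)\dot{Z}$; integration by parts yields $X(t) = X_0 + (T-t)Z(t) + \int_0^t Z(s)\,ds$, and combining this with Lipschitzness of $\opA$ and a Gronwall-type argument rules out finite-time blow-up on $[0, T)$ and shows $\int_0^T \|\dot{X}(t)\|\,dt < \infty$, whence $X(T)$ exists. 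The same estimates imply $W(t) \to 0$ and $\|X(t) - X(T)\| = O(T-t)$ as $t \to T^-$, so both terms of $\mathcal{V}(t)$ vanish in the limit, closing the argument.
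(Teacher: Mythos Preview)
Your Lyapunov function $\mathcal{V}(t)$ is exactly the paper's $V(t)$, and your derivative identity and endgame via \cref{lemma:convergence-proof-last-step} match the paper line for line. The only substantive difference is in how the terminal behavior is handled.

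The paper does not use your integration-by-parts identity plus Gronwall. Instead it introduces the auxiliary quantity $\Psi(t)=\frac{1}{(T-t)^2}\|\dot X(t)\|^2$ and, from the second-order form $\ddot X+\frac{1}{T-t}\dot X+\frac{d}{dt}\opA(X)=0$, computes $\dot\Psi(t)=-\frac{2}{(T-t)^2}\bigl\langle\frac{d}{dt}\opA(X(t)),\dot X(t)\bigr\rangle\le 0$ by monotonicity. This gives the sharp bound $\|\dot X(t)\|\le\frac{T-t}{T}\|\opA(X_0)\|$, from which existence of $X(T)$, $\dot X(t)\to 0$, and hence $W(t)\to 0$ all follow at once. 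Your Gronwall sketch is vaguer at precisely this point: knowing $\int_0^T\|\dot X\|<\infty$ yields $X(T)$ but not $W(t)\to 0$; for the latter you need $\dot X(t)\to 0$, equivalently $\|\dot X(t)\|=O(T-t)$ so that $\dot Z=\frac{1}{T-t}\dot X$ remains bounded and $Z$ converges. The identity $X(t)=X_0+(T-t)Z(t)+\int_0^t Z$ does not obviously deliver this. A Gronwall on $\Psi$ itself using Lipschitzness alone ($\dot\Psi\le 2L\Psi$, hence $\Psi$ bounded) would close the gap, but the paper's monotonicity-based $\Psi$ argument is both cleaner and gives the tight constant; it is the right tool here.
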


\begin{proof} [Proof outline.]
    Define $V\colon [0,T) \to \reals$ by
    \begin{align*}
        V(t) &= -\norm{ Z(t) + \opA(X(T)) }^2 \\
        & \quad \quad + \frac{2}{T-t} \inner{ Z(t) + \opA(X(T)) }{  X(t) - X(T) } .
    \end{align*}
    In \cref{appendix:proof_of_continuous_convergnece_rate}, we show that 
    \begin{align*}
        \dot{V}(t)
        &= -\frac{2 \inner{ X(t) - X(T) }{ \opA(X(t)) - \opA(X(T)) }}{(T-t)^2} \le 0.
    \end{align*} 
    \cref{lemma:ode_terminal_infomation} shows $\lim_{t\to T^-} V(t) = 0$. 
    From $Z(0) = 0$,
    \begin{align*}
        V(0) = -\norm{ \opA(X(T)) }^2 - \frac{2}{T} \inner{ \opA(X(T)) }{ X(T) - X_0 }. 
    \end{align*}
    Dividing both sides of $0 = \lim_{t\to T^-} V(t) \le V(0)$ by $\frac{2}{T}$ and applying \cref{lemma:convergence-proof-last-step} we get the desired inequality.
\end{proof} 

\paragraph{Dual-Anchor ODE and Anchor ODE are H-duals.}
The continuous-time analogue of the H-matrix representation \cite{KimYang2023_unifying,KimOzdaglarParkRyu2023_timereversed} is
\begin{align*} 
    \dot{X}(t) = - \int_{0}^{t} H(t,s) \opA(X(s)) ds ,
\end{align*}
where we refer to $H(t,s)$ as the \emph{H-kernel}. The \emph{H-dual ODE} is defined by
\[
    \dot{X}(t) = - \int_{0}^{t} H^\at(t,s) \opA(X(s)) ds
\]
where $H^\at(t,s) = H(T-s,T-t)$ is the analogue of the anti-diagonal transpose. 
We prove the following in \cref{appendix:continuous_time_H-matrix}.
\begin{proposition}
\label{proposition:continuous-time-H-dual}
Dual-Anchor ODE \eqref{ode:dual-anchor} and Anchor ODE \eqref{ode:anchor} are H-duals of each other.
\end{proposition}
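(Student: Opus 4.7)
The plan is to compute the H-kernel $H(t,s)$ of each ODE explicitly in the form $\dot{X}(t) = -\int_0^t H(t,s)\opA(X(s))\,ds$ and then verify the identity $H_{\mathrm{dual}}(t,s) = H_{\mathrm{anchor}}(T-s,T-t)$ by direct substitution. The work reduces to solving the linear driving equations in closed form so that $\dot{X}(t)$ is expressed purely as a functional of the history $\{\opA(X(s))\}_{s\in[0,t]}$.

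For the Anchor ODE, I would multiply $\dot{X}(t) + \frac{1}{t}X(t) = -\opA(X(t)) + \frac{1}{t}X_0$ by the integrating factor $t$, giving $\frac{d}{dt}[tX(t)] = -t\opA(X(t)) + X_0$, and integrate from $0$ to $t$ (using boundedness of $X$ near $0$) to obtain $X(t) = X_0 - \tfrac{1}{t}\int_0^t s\,\opA(X(s))\,ds$. Differentiating yields
\[
\dot{X}(t) = -\opA(X(t)) + \frac{1}{t^2}\int_0^t s\,\opA(X(s))\,ds,
\]
so (treating the diagonal term as a Dirac mass at $s=t$) the H-kernel is $H_{\mathrm{anchor}}(t,s) = \delta(t-s) - \tfrac{s}{t^2}$. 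For the Dual-Anchor ODE, the $Z$-equation $\dot{Z}(t) + \tfrac{1}{T-t}Z(t) = -\tfrac{1}{T-t}\opA(X(t))$ has integrating factor $1/(T-t)$, giving $\frac{d}{dt}\!\left[\tfrac{Z(t)}{T-t}\right] = -\tfrac{\opA(X(t))}{(T-t)^2}$. Using $Z(0)=0$ and integrating produces $Z(t) = -(T-t)\int_0^t \tfrac{1}{(T-s)^2}\opA(X(s))\,ds$, which substituted into $\dot{X}(t) = -Z(t) - \opA(X(t))$ yields
\[
\dot{X}(t) = -\opA(X(t)) + (T-t)\int_0^t \frac{1}{(T-s)^2}\opA(X(s))\,ds,
\]
so $H_{\mathrm{dual}}(t,s) = \delta(t-s) - \tfrac{T-t}{(T-s)^2}$.

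The last step is a one-line algebraic check: applying the anti-diagonal transpose $H^{\at}(t,s) = H(T-s,T-t)$ to $H_{\mathrm{anchor}}$,
\[
H_{\mathrm{anchor}}^{\at}(t,s) = \delta\bigl((T-s)-(T-t)\bigr) - \frac{T-t}{(T-s)^2} = \delta(t-s) - \frac{T-t}{(T-s)^2} = H_{\mathrm{dual}}(t,s),
\]
which is precisely the H-kernel of Dual-Anchor ODE, establishing the H-dual relationship.

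The main obstacle is interpreting the pointwise $-\opA(X(t))$ term consistently with the H-kernel framework: the natural convention is to treat it as a singular component $\delta(t-s)$ of the kernel, and one then checks that the anti-diagonal transpose preserves this diagonal mass and only acts nontrivially on the smooth component. Equivalently (and cleaner to present), one may separate the equation into a diagonal part and a history integral and match only the smooth parts $-s/t^2 \leftrightarrow -(T-t)/(T-s)^2$ under the transformation $(t,s)\mapsto(T-s,T-t)$. Either formulation reduces the proposition to the explicit kernel computations above, and the verification follows immediately.
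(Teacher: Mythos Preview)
Your proposal is correct and follows essentially the same approach as the paper: both compute the H-kernel of the Anchor ODE via the integrating factor $t$ to get $H(t,s)=\delta(s-t)-s/t^2$, obtain the closed form $Z(t)=-(T-t)\int_0^t (T-s)^{-2}\opA(X(s))\,ds$ for the Dual-Anchor side, and verify that the two kernels are anti-diagonal transposes. The only cosmetic difference is that the paper applies the transpose to $H_{\mathrm{anchor}}$ and then checks the resulting integral equation reduces to the Dual-Anchor system, whereas you compute both kernels independently and compare; the underlying calculations are identical.
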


\paragraph{Generalization to differential inclusion.}
\cref{thm:continuous_convergnece_rate} assumes Lipschitz continuity of $\opA$ for simplicity.
However, even if we only assume that $\opA$ is maximally monotone, the existence of a solution and the convergence result can be established for the differential inclusion
\begin{align*}
    \dot{X}(t) & \in -Z(t) - \opA(X(t)) \\
    \dot{Z}(t) & \in -\frac{1}{T-t} Z(t) - \frac{1}{T-t} \opA(X(t)) ,
\end{align*}
which is a generalized continuous-time model for possibly set-valued operators.
We provide the details in \cref{appendix:continuous-time-differential-inclusion}.

\section{Experiments}
\label{section:experiments}

In this section, we present some numerical simulations illustrating the dynamics of dual-anchor algorithm.
In \cref{fig:trajectory_plots}, we compare the trajectories of \ref{alg:feg}, \ref{alg:dual-feg}, and the Extragradient (EG) \cite{Korpelevich1976_extragradient} algorithms.
\cref{subfig:2D_bilinear_trajectories} uses a bilinear example $\vL(u,v) = uv$ with $\alpha=0.005$ and $N=5000$ as an approximation of the algorithms' behavior in the limit $\alpha\to 0$, and \cref{subfig:u_square_v_trajectories} uses a 
non-bilinear example $\vL(u, v) = u^2 v$ (which is convex-concave and smooth on $[-1,1] \times \reals_{\ge 0}$) with $\alpha=0.05$ and $N=10000$.
In Figures~\ref{fig:performance_plots} and \ref{fig:basis_pursuit}, 
we plot $\sqnorm{\nabla \vL(x_k)}$.
\cref{subfig:ouyang_construction_performance} uses a worst-case bilinear example due to \citet{OuyangXu2021_lower}:
\begin{align*} 
    \vL(u,v) = \frac{1}{2} u^\intercal G u - g^\intercal u - \inprod{Au - b}{v}
\end{align*}
where $u,v \in \reals^n$ with $n=200$.
The precise terms of $\vL$ are stated in \cref{subsection:appendix-experiment-Ouyang-construction}. 
We use initial points $u_0 = 0, v_0 = 0$ and use $\alpha=1.0$ and $N=10000$.
\cref{subfig:ouyang_strongly_monotone_performance} uses the same example of \citet{OuyangXu2021_lower} with additional $\mu$-strongly convex term in $u$ and $\mu$-strongly-concave term in $v$ (with $\mu=0.1$):
\begin{align*}
    \vL_\mu (u,v) = \frac{1}{2} u^\intercal (G + \mu I) u - g^\intercal u - \inprod{Au - b}{v} - \frac{\mu}{2} \|v\|^2 .
\end{align*}
\cref{fig:basis_pursuit} considers the linearly constrained convex minimization problem
\begin{align*}
\begin{array}{cc}
    \underset{u \in \reals^n}{\text{minimize}} & h_\delta(u) \\
    \text{subject to} & Au = b
\end{array}
\end{align*}
where $h_\delta (u) = \begin{cases} \frac{1}{2} \|u\|^2 & \text{if} \,\, \|u\| \le \delta \\ \delta \|u\| - \frac{1}{2} \delta^2 & \text{otherwise} \end{cases}$ is the Huber loss and $A \in \reals^{m \times n}$ ($m < n$).
We run \ref{alg:feg} and \ref{alg:dual-feg} on the Lagrangian $\vL (u,v) = h_\delta (u) + \inprod{Au - b}{v}$.
We choose $n = 100$, $m=20$, randomly generate entries of $A$ as i.i.d.\ $\cN(0,1/n^2)$, randomly choose $\frac{n}{10}$ coordinates where $\overline{u} \in \reals^n$ has nonzero values (uniform random in $[0,1]$) and set $b = A\overline{u}$.
We use initial points $u_0, v_0$ with i.i.d.\ standard normal coordinates and choose $\delta = 0.1$, $\alpha=0.5$ and $N=10^5$.

\begin{figure}[t]
\centering 
\includegraphics[width=\linewidth, trim=0 0 0 0.15in, clip]{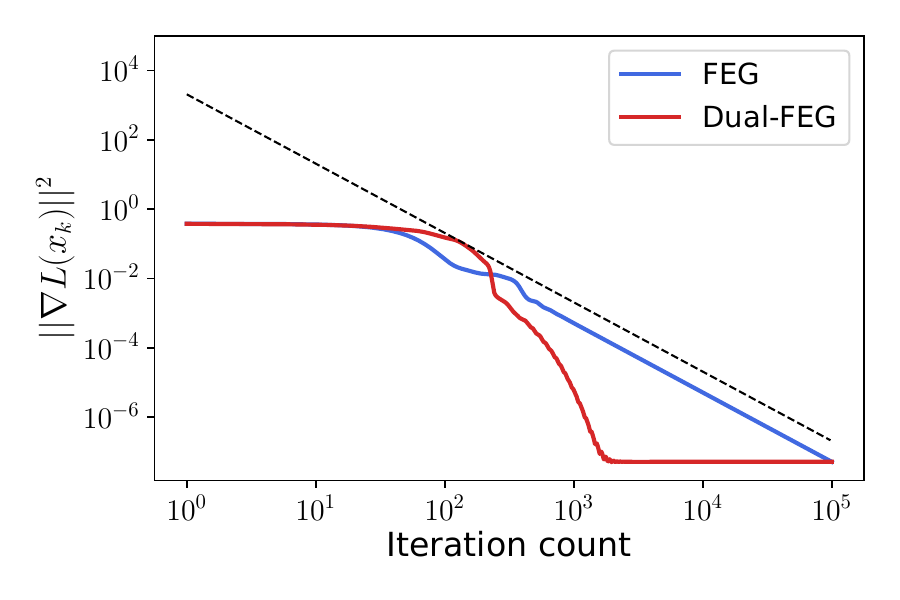}
\vspace{-.3in}
\caption{Performance of algorithms in reducing $\sqnorm{\nabla \vL(x_k)}$ for the Lagrangian of a linearly constrained smooth convex minimization problem. The dashed black line indicates the theoretical upper bounds for \ref{alg:feg}.}
\label{fig:basis_pursuit}
\end{figure}

The trajectories of \ref{alg:dual-feg} are qualitatively different from those of \ref{alg:feg}, indicating that the two acceleration mechanisms are genuinely different.
Interestingly, however, we find that the last iterates of \ref{alg:dual-feg} and \ref{alg:feg} are identical when the operator is linear, as we show in~\cref{subsection:appendix-experiment-identical-terminal-iterates}. Indeed, Figures~\ref{subfig:2D_bilinear_trajectories} and \ref{fig:performance_plots} show the two algorithms arriving at the same point at the terminal iteration. However, this is not true for nonlinear operators; in Figure~\ref{subfig:u_square_v_trajectories}, we observe that the terminal iterate of \ref{alg:dual-feg} does not coincide with the terminal iterate of \ref{alg:feg}.

In Figures~\ref{subfig:ouyang_strongly_monotone_performance} and \ref{fig:basis_pursuit}, we observe that \ref{alg:dual-feg} approaches the terminal accuracy much earlier compared to FEG, which progresses much more steadily.
In \cref{subsection:appendix-experiment-early-stopping}, we prove that in continuous-time, this phenomenon occurs when the objective function is strongly-convex-strongly-concave.
We anticipate that the discrete dual algorithms also exhibit a similar phenomenon, and we leave this investigation to future work.

\paragraph{Comparison of primal and dual algorithms.}
We observe that the dual algorithms require $N$ or $T$ to be specified in advance, while the primal ones do not. In this respect, primal algorithms are advantageous over dual algorithms.
On the other hand, we observe that for problems involving strongly monotone operators, dual algorithms exhibit much faster convergence than primal algorithms in the earlier iterations.
As the primal and dual algorithms share the same worst-case guarantee but display distinct characteristics, determining the best choice of algorithm may require considering other criterion that depend on the particular application scenario.

\section{Conclusion}
This work presents a new class of accelerations in fixed-point and minimax problems and provides the correct perspective that optimal acceleration is a family of algorithms. Our findings open several new avenues of research on accelerated algorithms. Since the worst-case guarantee, as a criterion, does not uniquely identify a single best method, additional criteria should be introduced to break the tie. Moreover, the role of H-duality in discovering the new dual accelerated algorithms hints at a deeper yet unexplored significance of H-duality in the theory of first-order algorithms.


\section*{Impact statement}
This paper presents a theoretical study of optimization. 
Given its abstract nature, we do not expect our work to raise any significant ethical or societal concerns and consider it neutral in terms of immediate real-world impact. 

\section*{Acknowledgements}
We thank Donghwan Kim and Felix Lieder for the discussion on their processes of using the performance estimation problem (PEP) methodology for finding the exact optimal algorithm presented as APPM and OHM. We thank the anonymous referees for inspiring the exploration of fast convergence of dual algorithms with strongly monotone operators.

\newpage

\bibliography{main}
\bibliographystyle{icml2024}

\newpage
\appendix
\onecolumn

\section{Related work}
\label{section:appendix-related-work}

\paragraph{Fixed-point algorithms.}
Iterative algorithms for solving fixed-point problems~\eqref{def:fixed-point-problem} have been extensively studied over a long period, and among them, Picard iteration, Krasnosel’skii–Mann (KM) Iteration and Halpern Iteration stand out as representative classes, each defined by:  
\begin{align}
    y_{k+1} &= \opT y_k, \label{eqn::Picard} \tag{Picard}  \\
    y_{k+1} &= \lambda_{k+1} y_k + (1-\lambda_{k+1}) \opT y_k,  \label{eqn::KM} \tag{KM} \\
     y_{k+1} &= \lambda_{k+1} y_0 + (1-\lambda_{k+1}) \opT y_k  \label{eqn::Halpern} \tag{Halpern}
\end{align}
The formal study of \ref{eqn::Picard} iteration dates up to \citet{Banach1922_operations}.
The class \ref{eqn::KM} generalizes the works of \citet{Krasnoselskii1955_two} and \citet{Mann1953_mean}; early works \cite{Ishikawa1976_fixed,BorweinReichShafrir1992_krasnoselskimann} focused on its asymptotic convergence $\norm{y_k - \opT y_k}^2 \to 0$, while quantitative rates of $\mathcal{O}(1/k)$ to $o(1/k)$ were respectively demonstrated by \citet{CominettiSotoVaisman2014_rate,LiangFadiliPeyre2016_convergence,BravoCominetti2018_sharp} and \citet{BaillonBruck1992_optimal,DavisYin2016_convergence,Matsushita2017_convergence}.
For the \ref{eqn::Halpern} class, devised by \citet{Halpern1967_fixed}, \citet{Wittmann1992_approximation,Xu2002_iterative} established asymptotic convergence. Concerning quantitative convergence, \cite{Leustean2007_rates} provided the rate $\sqnorm{y_k - \opT y_k} = \mathcal{O}(1/(\log k)^2)$, which was later improved to $\mathcal{O}(1/k)$ \cite{Kohlenbach2011_quantitative}, and to $\mathcal{O}(1/k^2)$, first by \citet{SabachShtern2017_first}, and then by \citet{Lieder2021_convergence} with a tighter constant, using the choice $\lambda_k=\frac{1}{k+1}$.
\citet{ParkRyu2022_exact} established the exact optimality of the convergence rate from \citet{Lieder2021_convergence} (and the independently discovered equivalent result of \citet{Kim2021_accelerated}) by constructing a matching complexity lower bound.
Some notable acceleration results for fixed-point problems not covered by the above list include Anderson-type acceleration \cite{Anderson1965_iterativea,WalkerNi2011_andersona,ZhangODonoghueBoyd2020_globally} and inertial-type acceleration \cite{Mainge2008_convergence,DongYuanChoRassias2018_modified,Shehu2018_convergence,ReichThongCholamjiakVanLong2021_inertial}.

\paragraph{Minimax optimization algorithms.}
Smooth convex-concave minimax optimization is a classical problem, whose investigation dates up to \citet{Korpelevich1976_extragradient, Popov1980_modification}, which respectively first developed the Extragradient (EG) and optimistic gradient (OG) whose variants have been studied extensively \cite{SolodovSvaiter1999_hybrid, Nemirovski2004_proxmethod, Nesterov2007_dual, RakhlinSridharan2013_online, DaskalakisIlyasSyrgkanisZeng2018_training} across the optimization and machine learning communities.
Recently there has been rapid development in the theory of reducing $\sqnorm{\nabla \vL(\cdot)}$; while EG and OG converge at the rate of $\sqnorm{\nabla \vL(u_k,v_k)}=\cO(1/k)$ \cite{GorbunovLoizouGidel2022_extragradient, CaiOikonomouZheng2022_finitetime}, using the anchoring mechanism \cite{RyuYuanYin2019_ode} motivated by the optimal Halpern iteration \cite{Halpern1967_fixed, Kim2021_accelerated, Lieder2021_convergence}, \citet{Diakonikolas2020_halpern} achieved the near-optimal rate $\Tilde{\cO}(1/k^2)$, which was finally accelerated to $\cO(1/k^2)$ via Extra Anchored Gradient (EAG) algorithm \cite{YoonRyu2021_accelerated}.
\citet{LeeKim2021_fast} provided a constant factor improvement over EAG while generalizing the acceleration to weakly nonconvex-nonconcave problems, and \citet{Tran-DinhLuo2021_halperntype, CaiZheng2023_accelerated} presented single-call versions of the acceleration.
\citet{BotCsetnekNguyen2023_fast} achieved asymptotic $o(1/k^2)$ convergence based on the continuous-time perspective.
The conceptual connection between smooth minimax optimization and fixed-point problems (proximal algorithms) have been formally studied in \citet{MokhtariOzdaglarPattathil2020_unified, YoonRyu2022_accelerated}.

\paragraph{Continuous-time analyses.} 
Taking the continuous-time limit of an iterative algorithm results in an ordinary differential equation (ODE), and they often more easily reveal the structure of convergence analysis.
The ODE models for OG and EG were respectively studied by \citet{CsetnekMalitskyTam2019_shadow, Lu2022_os}. 
The continuous-time analysis of accelerated algorithms was initiated by \citet{SuBoydCandes2014_differential, KricheneBayenBartlett2015_accelerated}, and the anchor acceleration ODE for fixed-point and minimax problems was first introduced by~\citet{RyuYuanYin2019_ode} and  
then generalized to a broader family of differential inclusion with more rigorous treatment by~\citet{SuhParkRyu2023_continuoustime}.
\citet{BotCsetnekNguyen2023_fast} studied a different family of ODE that achieves acceleration asymptotically. 
An ODE involving the coefficient of the form $\frac{1}{T-t}$ with fixed terminal time $T$ was first presented in~\citet{SuhRohRyu2022_continuoustime},  
as continuous-time model of the OGM-G algorithm \cite{KimFessler2021_optimizing}. 
The formal concept of H-duality in continuous-time has been proposed by \citet{KimOzdaglarParkRyu2023_timereversed}, while \citet{KimYang2023_convergence, KimYang2023_unifying} also presents a similar result.

\paragraph{Performance estimation problem (PEP) technique.}
At a high level, the PEP methodology \cite{DroriTeboulle2014_performance, TaylorHendrickxGlineur2017_smooth, TaylorBach2019_stochastic, RyuTaylorBergelingGiselsson2020_operator, DasGuptaVanParysRyu2023_branchandbound} provides a computer-assisted framework for finding tight convergence proofs and optimizing the step-sizes of optimization of algorithms, and many efficient algorithms and novel proofs have recently been discovered with the aid of this methodology \cite{KimFessler2016_optimized, Lieder2021_convergence, Kim2021_accelerated, KimFessler2021_optimizing, ParkRyu2021_optimal, ParkRyu2022_exact, GorbunovLoizouGidel2022_extragradient, TaylorDrori2022_optimal, JangGuptaRyu2023_computerassisted, ParkRyu2023_accelerated}.
\newpage

Specifically relevant to our work are the prior, concurrent work of \citet{Kim2021_accelerated} presenting APPM and of \citet{Lieder2021_convergence} presenting OHM. 
It was later shown that APPM and OHM are equivalent, and they represent one element within the family of exact optimal algorithms that we present in this work. We find it somewhat surprising that the two authors independently found the same algorithm when there is an infinitude of answers. Therefore, we personally asked Kim and Lieder to understand how their processes led to their discovery and not any other choice.

In a personal communication, Donghwan Kim explained that the search process in \cite{KimFessler2016_optimized, Kim2021_accelerated}, at a high level, involved the following steps:
\begin{enumerate}
    \item Identify the set of inequalities needed to establish a convergence guarantee for a simpler ``reference algorithm'' (such as gradient descent or proximal point method).
    \item Set the algorithm step-sizes as additional variables within the PEP formulation, and perform joint minimization of the convergence rate (worst-case complexity) with respect to the step-sizes. This may result in a nonconvex formulation, requiring heuristics such as alternating minimization and repetition of local minimization followed by perturbation (to escape from local optimum).
    \item Iterate Step~2 while restricting the set of inequalities available within the proof (that PEP formulation uses) based on the pattern detected from Step~1. This often facilitates convergence to the global minimum and encourages numerically cleaner solutions that are conducive to analysis by humans.
    In the search of APPM \cite{Kim2021_accelerated}, for instance, only the inequalities $\inprod{\tilA x_{i+1} - \tilA x_i}{x_{i+1} - x_i} \ge 0$ for $i=1,\dots,N-1$ and $\inprod{\tilA x_N}{x_N - y_\star} \ge 0$ were used.
\end{enumerate}
While the strategy of using a constrained set of inequalities is effective for identifying \emph{an} optimal algorithm, it would prevent the discovery of other optimal algorithms that require distinct proof structures.

In another personal communication, Felix Lieder explained that the search process of OHM in \citet{Lieder2021_convergence} involved the optimization of step-sizes as variables within the PEP formulation (as in the Step 2 in the case of APPM), but unlike \cite{Kim2021_accelerated}, the entire set of available inequalities was used (without restriction).
Local nonlinear programming solvers were employed to tackle this nonconvex optimization, and within multiple trials with random initialization, the step-sizes often converged near the step-size values for OHM (as detailed in \cite{Lieder2018}).
In this case, despite the numerical indication that optimal step-sizes were not unique, the authors' main goal was to identify the simplest, hence most easily interpretable solution, which turned out to be OHM in the end.

In our work, we take a mixture of deductive and numerical approaches.
Our initial motivation to investigate the H-dual of OHM (namely \ref{alg:dual_halpern}) came from the convex H-duality theory \cite{KimOzdaglarParkRyu2023_timereversed}, and PEP was mainly utilized for quick search of proofs (i.e.\ the correct way of combining the inequalities) and for numerical verification of the convergence guarantees.

\paragraph{Non-uniqueness of exact optimal algorithms.}
In the setup of minimizing a non-smooth convex function $f$ whose subgradient magnitude is bounded by $M$ (so the objective function is $M$-Lipschitz continuous), there are at least 4 different algorithms achieving the exact optimal complexity.
Suppose that $\|x_0 - x_\star\| \le R$, where $x_\star$ is a minimizer of $f$.
The subgradient method of \cite{Nesterov2004_introductory}, given by $x_{k+1} = x_k - \frac{R}{\sqrt{N+1}} \frac{g_k}{\norm{g_k}}$ where $g_k \in \partial f(x_k)$, exhibits the rate $f(x_N) - f_\star \le \frac{MR}{\sqrt{N+1}}$ where $f_\star = f(x_\star)$.
The exact same rate is achieved by the algorithm of \cite{DroriTeboulle2016_optimal} (which is a variant of the cutting-plane method), a fixed step-size algorithm of \cite{DroriTaylor2020_efficient} (without gradient normalization), and its line-search variant.
As there is a matching lower bound $f(x_N) - f_\star \ge \frac{MR}{\sqrt{N+1}}$ \cite{DroriTeboulle2016_optimal}, all these methods are exactly optimal in terms of the worst-case complexity.

In the fixed-point setup (or the equivalent monotone inclusion setup), a Halpern-type algorithm by \citet{SuhParkRyu2023_continuoustime} that uses adaptive interpolation (anchoring) coefficients also achieves the exact optimal complexity (same rate as \ref{alg:halpern}).
Hence, in a strict sense, our work is not the first discovery of the non-uniqueness of exact optimal algorithms.
However, to the best of our knowledge, our work is the first to identify distinct \emph{fixed step-size} algorithms sharing the same exact optimal complexity for fixed-point problems.
Additionally, while the adaptive algorithm of \cite{SuhParkRyu2023_continuoustime} relies on a mechanism that is similar to \ref{alg:halpern}, our dual algorithms use an arguably different mechanism from that underlying \ref{alg:halpern}.

\newpage
\section{Equivalence of algorithms}
\label{section:appendix-algorithm-equivalence}

In this section, we show equivalences between distinct forms of algorithms.

\subsection{\ref{alg:halpern}}
\textbf{Form 1} (Anchoring form).
\begin{align} 
\label{eqn:appendix-Halpern-anchor-form}
    y_{k+1} = \frac{k+1}{k+2} \opT y_k + \frac{1}{k+2} y_0 
\end{align}

\textbf{Form 2} (Momentum form).
\begin{align}
\label{eqn:appendix-Halpern-momentum-form}
    y_{k+1} = y_k - \frac{1}{k+2} (y_k - \opT y_k) + \frac{k}{k+2} \left( \opT y_k - \opT y_{k-1} \right)    
\end{align}
where $\opT y_{-1} = x_0$.

\textbf{Form 3} (H-matrix form).
\begin{align*}
    y_{k+1} & = y_k  - \sum_{j=0}^{k} h_{k+1,j+1}(y_j - \opT y_j)  
\end{align*}
where
\begin{align}
\label{eqn:appendix-Halpern-H-matrix}
     h_{k,j} =
    \begin{cases}
        -\frac{j}{k(k+1)} & \text{if } j < k  , \\
        \frac{k}{k+1}         & \text{if } j = k .
    \end{cases}
\end{align}

\textbf{Form 4} (APPM).
\begin{align}
\label{eqn:appendix-appm-definition}
\begin{split}
    x_{k+1} & = \JA (y_k) \\
    y_{k+1} & = x_{k+1} + \frac{k}{k+2} (x_{k+1} - x_k) - \frac{k}{k+2} (x_k - y_{k-1})
\end{split}
\end{align}
where $y_{-1} = x_0 = y_0$ and $\opA = 2(\opT + \opI)^{-1} - \opI \iff \opT = 2\JA - \opI$.

\vspace{.1in}

\paragraph{Form 1 $\implies$ Form 2.} Multiplying $k+2$ to \eqref{eqn:appendix-Halpern-anchor-form} gives
\begin{align*}
    (k+2)y_{k+1} = (k+1)\opT y_k + y_0.
\end{align*}
Thus $y_0=(k+2)y_{k+1} - (k+1)\opT y_k$. 
Substituting this into $y_0=(k+1)y_k - k \opT y_{k-1}$ and rearranging:
\begin{align*}
    0 = (k+2)y_{k+1} - (k+1)\opT y_k - (k+1)y_k + k\opT y_{k-1} \iff \text{[Form 2]}.
\end{align*}

\paragraph{Form 2 $\iff $ Form 4.}
Direct substitution $x_{k+1} = \frac{1}{2} (y_k + \opT y_k)$ (which follows from $\JA = \frac{1}{2} (\opI + \opT)$) shows that~\eqref{eqn:appendix-Halpern-momentum-form} and \eqref{eqn:appendix-appm-definition} are the identical update rules.

\paragraph{Form 4 $\implies$ Form 1.}
Multiplying $k+2$ throughout the second line of~\eqref{eqn:appendix-appm-definition} gives
\begin{align*}
    & (k+2) y_{k+1} = (2k + 2) x_{k+1} - 2k x_k + k y_{k-1} \\
    \iff & (k+2) y_{k+1} - k y_{k-1} = (2k + 2) x_{k+1} - 2k x_k .
\end{align*}
Summing up the last line, we see that the terms telescope:
\begin{align*}
    & \sum_{j=0}^k \left[ (j+2) y_{j+1} - j y_{j-1} \right] = \sum_{j=0}^k \left[ (2j+2) x_{j+1} - 2j x_j \right] \\
    \iff & (k+2) y_{k+1} + (k+1) y_k - y_0 = (2k+2) x_{k+1} \\
    \iff & (k+2) y_{k+1} = y_0 + (k+1) (2x_{k+1} - y_k) = y_0 + (k+1) (2\JA y_k - y_k) \\
    \iff & y_{k+1} = \frac{1}{k+2} y_0 + \frac{k+1}{k+2} (2\JA - \opI)(y_k) = \frac{k+1}{k+2} \opT y_k + \frac{1}{k+2} y_0 .
\end{align*}
The last line is precisely~\eqref{eqn:appendix-Halpern-anchor-form}.

\paragraph{Form 1 $\implies$ Form 3.}
Observe that $y_1 = \frac{1}{2} \opT y_0 + \frac{1}{2} y_0 = y_0 - \frac{1}{2} (y_0 - \opT y_0)$, so we have $h_{1,1} = \frac{1}{2}$, which is agrees with~\eqref{eqn:appendix-Halpern-H-matrix}.
We use induction on $k$: Let $k \ge 1$ and suppose that~\eqref{eqn:appendix-Halpern-anchor-form} satisfies the representation~\eqref{eqn:appendix-Halpern-H-matrix} up to $y_0,\dots,y_k$.
Then
\begin{align}
    y_{k+1} & = \frac{k+1}{k+2} \opT y_k + \frac{1}{k+2} y_0 \nonumber \\
    & = y_k - \frac{k+1}{k+2} (y_k - \opT y_k) + \frac{1}{k+2} (y_0 - y_k) , \label{eqn:appendix-algorithm-equivalence-Halpern-H-matrix-induction}
\end{align}
and
\begin{align}
    y_0 - y_k & = \sum_{i=1}^{k} (y_{i-1} - y_i) \nonumber \\
    & = \sum_{i=1}^k \sum_{j=0}^{i-1} h_{i,j+1} (y_j - \opT y_j) \nonumber \\
    & = \sum_{i=1}^k \left( \sum_{j=0}^{i-2} -\frac{j+1}{i(i+1)} (y_j - \opT y_j) + \frac{i}{i+1} (y_{i-1} - \opT y_{i-1}) \right) \nonumber \\
    & = \sum_{i=0}^{k-1} \left( \sum_{j=0}^{i-1} -\frac{j+1}{(i+1)(i+2)} (y_j - \opT y_j) \right) + \sum_{j=0}^{k-1} \frac{j+1}{j+2} (y_j - \opT y_j) \nonumber \\
    & = \frac{k}{k+1} (y_{k-1} - \opT y_{k-1}) + \sum_{j=0}^{k-2} \left( \frac{j+1}{j+2} - \sum_{i=j+1}^{k-1} \frac{j+1}{(i+1)(i+2)} \right) (y_j - \opT y_j) \label{eqn:appendix-algorithm-equivalence-Halpern-H-matrix-induction-y0-yk}
\end{align}
where we change the order of double summation to obtain the last identity.
Applying the following formula
\begin{align*}
    \sum_{i=j+1}^{k-1} \frac{j+1}{(i+1)(i+2)} = (j+1) \sum_{i=j+1}^{k-1} \left( \frac{1}{i+1} - \frac{1}{i+2} \right) = (j+1) \left( \frac{1}{j+2} - \frac{1}{k+1} \right)
\end{align*}
to~\eqref{eqn:appendix-algorithm-equivalence-Halpern-H-matrix-induction-y0-yk} gives
\begin{align*}
    y_0 - y_k & = \frac{k}{k+1} (y_{k-1} - \opT y_{k-1}) + \sum_{j=0}^{k-2} \frac{j+1}{k+1} (y_j - \opT y_j) \\
    & = \sum_{j=0}^{k-1} \frac{j+1}{k+1} (y_j - \opT y_j) .
\end{align*}
Finally, plugging the last expression into~\eqref{eqn:appendix-algorithm-equivalence-Halpern-H-matrix-induction}, we obtain
\begin{align*}
    y_{k+1} = y_k - \frac{k+1}{k+2} (y_k - \opT y_k) + \sum_{j=0}^{k-1} \frac{j+1}{(k+1)(k+2)} (y_j - \opT y_j) ,
\end{align*}
so $h_{k+1,j+1} = -\frac{j+1}{(k+1)(k+2)}$ for $j=0,\dots,k-1$ and $h_{k+1,k+1} = \frac{k+1}{k+2}$, completing the induction.

\paragraph{Form 3 $\implies$ Form 4.} We have
\begin{align*}
    y_1 & = \frac{1}{2} y_0 + \frac{1}{2} \opT y_0 = \frac{1}{2} y_0 + \frac{1}{2} (2 x_1 - y_0) = x_1 ,
\end{align*}
which agrees with~\eqref{eqn:appendix-appm-definition} with $k=0$. 

Now let $k \ge 1$. 
Let $\tilA x_{j+1} = y_j - x_{j+1}$ for $j=0,1,\dots$. 
Then $y_j - \opT y_j = y_j - (2x_{j+1} - y_j) = 2 \tilA x_{j+1}$, and
\begin{align}
    y_{k+1} & = y_k - \sum_{j=0}^k 2 h_{k+1,j+1} \tilA x_{j+1} \nonumber \\
    & = y_k + \sum_{j=0}^{k-1} \frac{2(j+1)}{(k+1)(k+2)} \tilA x_{j+1} - \frac{2(k+1)}{k+2} \tilA x_{k+1} , \label{eqn:appendix-algorithm-equivalence-Halpern-H-matrix-expanded}
\end{align}
and
\begin{align*}
    x_{k+1} - x_k & = (y_k - \tilA x_{k+1}) - (y_{k-1} - \tilA x_k) \\
    & = y_k - y_{k-1} - \tilA x_{k+1} + \tilA x_k \\
    & = -\sum_{j=0}^{k-1} 2h_{k,j+1} \tilA x_{j+1} - \tilA x_{k+1} + \tilA x_k \\
    & = \sum_{j=0}^{k-2} \frac{2(j+1)}{k(k+1)} \tilA x_{j+1} + \left( 1 - \frac{2k}{k+1} \right) \tilA x_k - \tilA x_{k+1} \\
    & = \sum_{j=0}^{k-2} \frac{2(j+1)}{k(k+1)} \tilA x_{j+1} - \frac{k-1}{k+1} \tilA x_k - \tilA x_{k+1} .
\end{align*}
Thus,
\begin{align*}
    & x_{k+1} + \frac{k}{k+2} (x_{k+1} - x_k) - \frac{k}{k+2} (x_k - y_{k-1}) \\
    & = y_k - \tilA x_{k+1} + \frac{k}{k+2} \left( \sum_{j=0}^{k-2} \frac{2(j+1)}{k(k+1)} \tilA x_{j+1} - \frac{k-1}{k+1} \tilA x_k - \tilA x_{k+1} \right) + \frac{k}{k+2} \tilA x_k \\
    & = y_k + \sum_{j=0}^{k-2} \frac{2(j+1)}{(k+1)(k+2)} \tilA x_{j+1} + \left( \frac{k}{k+2} - \frac{k(k-1)}{(k+1)(k+2)} \right) \tilA x_k - \frac{2k+2}{k+2} \tilA x_{k+1} \\
    & = y_k + \sum_{j=0}^{k-1} \frac{2(j+1)}{(k+1)(k+2)} \tilA x_{j+1} - \frac{2(k+1)}{k+2} \tilA x_{k+1} \\
    & = y_{k+1} 
\end{align*}
and the proof is complete (the last line follows from~\eqref{eqn:appendix-algorithm-equivalence-Halpern-H-matrix-expanded}).

\subsection{\ref{alg:dual_halpern}}
\textbf{Form 1} (Momentum form).
\begin{align*}
    y_{k+1} = y_k + \frac{N-k-1}{N-k} \left( \opT y_k - \opT y_{k-1} \right) 
\end{align*}
for $k=0,1,\dots,N-2$, where $\opT y_{-1} = y_0$.

\textbf{Form 2} ($z$-form).
\begin{align}
\label{eqn:appendix-dual-halpern-z-form}
\begin{split}
    z_{k+1} & = \frac{N-k-1}{N-k} z_k - \frac{1}{N-k} \left( y_k - \opT y_k \right) \\
    y_{k+1} & = \opT y_k - z_{k+1}
\end{split}
\end{align}
for $k=0,1,\dots,N-2$, where $z_0 = 0$.

\textbf{Form 3} (H-matrix form).
\begin{align*}
    y_{k+1} & = y_k  - \sum_{j=0}^{k} h_{k+1,j+1}(y_j - \opT y_j) \\
    & = y_k  - \sum_{j=0}^{k} 2 h_{k+1,j+1} \tilA x_{j+1}   
\end{align*}
for $k=0,1,\dots,N-2$, where
\begin{align}
     h_{k,j} =
    \begin{cases}
        -\frac{N-k}{(N-j)(N-j+1)} & \text{if } j < k  , \\
        \frac{N-k}{N-k+1}         & \text{if } j = k .
    \end{cases}
    \label{eqn:appendix-dual-Halpern-H-matrix}
\end{align}

\textbf{Form 4} (Proximal form of~\cref{subsection:proximal-forms}). 
\begin{align}
\begin{split}
    x_{k+1} & = \JA (y_k) \\
    y_{k+1} & = x_{k+1} + \frac{N-k-1}{N-k} (x_{k+1} - x_k)  - \frac{N-k-1}{N-k} (x_k - y_{k-1}) - \frac{1}{N-k} (x_{k+1} - y_k) 
\end{split}
\label{eqn:appendix-dual-Halpern-APPM-form}
\end{align}
for $k=0,1,\dots,N-2$,
where $x_0 = y_0$ and $\opA = 2(\opT + \opI)^{-1} - \opI \iff \opT = 2\JA - \opI$.

\paragraph{Form 1 $\implies$ Form 2.}
Let $z_0 = 0$ and $z_{k} = \opT y_{k-1} - y_k$ for $k=1,\dots,N-1$, so that the second line of~\eqref{eqn:appendix-dual-halpern-z-form} holds by definition.
Following establishes the recursion for $z_k$, which is the first line of~\eqref{eqn:appendix-dual-halpern-z-form}:
\begin{align*}
    z_{k+1} &= \opT y_{k}  - y_{k+1} \\
    &= \opT y_k - \left(y_k + \frac{N-k-1}{N-k}(\opT y_k- \opT y_{k-1}) \right) \\
    &= \opT y_k - \left(y_k + \frac{N-k-1}{N-k}(\opT y_k- z_k-y_k) \right) \\
    &= \frac{N-k-1}{N-k} z_k - \frac{1}{N-k} \left( y_k - \opT y_k \right).
\end{align*}

\paragraph{Form 2 $\implies$ Form 3.}
The first line of~\eqref{eqn:appendix-dual-halpern-z-form} is equivalent to
\begin{align*}
    z_{j+1} = \frac{N-j-1}{N-j}z_j - \frac{2}{N-j} \tilA x_{j+1}.
\end{align*}
Dividing both sides by $N-j-1$ and rearranging, we obtain
\begin{align*}
    \frac{1}{N-j-1} z_{j+1} - \frac{1}{N-j}z_j =  - \frac{2}{(N-j)(N-j-1)} \tilA x_{j+1}.
\end{align*}
Summing this up from $j=0$ to $k$ and multiplying $N-k-1$ to the both sides we have
\begin{align*}
    z_{k+1} = -(N-k-1) \sum_{j=0}^{k} \frac{2}{(N-j)(N-j-1)} \tilA x_{j+1}
\end{align*}
(note that $z_0 = 0$). 
Next, we substitute the last expression into the second line of~\eqref{eqn:appendix-dual-halpern-z-form}:
\begin{align*}
    y_{k+1} &= y_k  + (\opT y_k -y_k)- z_{k+1} \\
    &= y_k - 2\tilA x_{k+1} + (N-k-1) \sum_{j=0}^{k} \frac{2}{(N-j)(N-j-1)} \tilA x_{j+1} \\
    &=y_k - \frac{2(N-k-1)}{N-k} \tilA x_{k+1} + \sum_{j=1}^{k} \frac{2(N-k-1)}{(N-j+1)(N-j)}\tilA x_j .
\end{align*}
This shows that $h_{k+1,j} = -\frac{(N-k-1)}{(N-j)(N-j+1)}$ for $j=1,\dots,k$ and $h_{k+1,k+1} = \frac{N-k-1}{N-k}$, which agrees with~\eqref{eqn:appendix-dual-Halpern-H-matrix}.

\paragraph{Form 3 $\implies$ Form 4.}
From the definition of Form 3,
\begin{align}
\begin{split} \label{appendix_eq_pf_1}
    y_{k+1} &= y_k - \frac{2(N-k-1)}{N-k} \tilA x_{k+1} + \sum_{j=0}^{k-1} \frac{2(N-k-1)}{(N-j-1)(N-j)}\tilA x_{j+1}.
\end{split}
\end{align}
Putting $k-1$ in place of $k$, we have
\begin{align*}
    y_{k} &= y_{k-1} - \frac{2(N-k)}{N-k+1} \tilA x_k + \sum_{j=0}^{k-2} \frac{2(N-k)}{(N-j-1)(N-j)} \tilA x_{j+1} \\
    &= y_{k-1} - 2\tilA x_{k} + \sum_{j=0}^{k-1} \frac{2(N-k)}{(N-j-1)(N-j)}\tilA x_{j+1}.
\end{align*}
Using the last equation, we replace the summation within~\eqref{appendix_eq_pf_1}:
\begin{align*}
    y_{k+1} 
    &=  y_k - \frac{2(N-k-1)}{N-k}\tilA x_{k+1} + \frac{N-k-1}{N-k}\left(y_k-y_{k-1}+2\tilA x_{k} \right) .
\end{align*}
Finally, substitute $\tilA x_{k+1} = y_k - x_{k+1}$, $\tilA x_k = y_{k-1} - x_k$ and rearrange to obtain Form 4:
\begin{align*}
    y_{k+1} &= y_k - \frac{2(N-k-1)}{N-k}\left(y_k -x_{k+1} \right) + \frac{N-k-1}{N-k}\left(y_k-y_{k-1}+2(y_{k-1}-x_k) \right) \\
    &=  x_{k+1} + \frac{N-k-1}{N-k} (x_{k+1} - x_k)  - \frac{N-k-1}{N-k} (x_k - y_{k-1}) - \frac{1}{N-k} (x_{k+1} - y_k) .
\end{align*}

\paragraph{Form 4 $\implies$ Form 1.}
Simply substitute $x_{k+1}=\frac{1}{2}(y_k + \opT y_k)$ into the second line of~\eqref{eqn:appendix-dual-Halpern-APPM-form} and rearrange. In detail:
\begin{align*}
    y_{k+1} & = x_{k+1} + \frac{N-k-1}{N-k} (x_{k+1} - x_k) - \frac{N-k-1}{N-k} (x_k - y_{k-1}) - \frac{1}{N-k} (x_{k+1} - y_k)  \\
    &= \frac{2(N-k-1)}{N-k}x_{k+1} - \frac{2(N-k-1)}{N-k}x_k +\frac{N-k-1}{N-k}y_{k-1} + \frac{1}{N-k}y_k \\
    &=\frac{N-k-1}{N-k}\left(y_k + \opT y_k \right)-\frac{N-k-1}{N-k}\left(y_{k-1} + \opT y_{k-1} \right) + \frac{N-k-1}{N-k} y_{k-1} + \frac{1}{N-k}y_k \\
    &=  y_k + \frac{N-k-1}{N-k} \left( \opT y_k - \opT y_{k-1} \right).
\end{align*}

\subsection{\ref{alg:feg}}
\textbf{Form 1} (Original form).
\begin{align}
\begin{split}
    x_{k+\hf} & = x_k + \frac{1}{k+1} (x_0 - x_k) - \frac{k}{k+1} \alpha \sop{x_k}\\
    x_{k+1} & = x_k + \frac{1}{k+1} (x_0 - x_k) - \alpha \sop{x_{k+\hf}}
\end{split}
\label{eqn:appendix-FEG-original-form}
\end{align}

\textbf{Form 2} (H-matrix form).
\begin{align*}
    x_{(\ell+1)/2} = x_{\ell/2} - \frac{1}{L} \sum_{i=0}^{\ell} h_{(\ell+1)/2, i/2} \sop{x_{i/2}}
\end{align*} 
where for $k=0,1,\dots,N-1$,
\begin{align}
   \frac{1}{\alpha L} h_{(\ell+1)/2,i/2} = \begin{cases}
      \frac{k}{k+1}  & \text{if } \ell=2k,\, i=2k \\
      -\frac{j+1}{k(k+1)} & \text{if } \ell=2k,\, i=2j+1, \, j=0,\dots,k-1 \\
      0 & \text{if } \ell=2k,\, i=2j,\, j=0,\dots,k-1 \\
      1 & \text{if } \ell=2k+1,\, i=2k+1 \\
      -\frac{k}{k+1} & \text{if } \ell=2k+1,\, i=2k \\
      0 & \text{if } \ell=2k+1,\, i=0,\dots,2k-1.
    \end{cases}
    \label{eqn:appendix-FEG-H-matrix-form}
\end{align}

It suffices to inductively check that the two forms indicate the same update rule of generating $x_{k+1/2}, x_{k+1}$ provided that they are equivalent for all $x_0,\dots,x_k$.
They are clearly equivalent for $k=0$.
Assume that the equivalence holds for $j=0,\dots,k$.
Now we show that the update rule by Form~1 agrees with~\eqref{eqn:appendix-FEG-H-matrix-form}.
Multiplying $k+1$ to the second line of~\eqref{eqn:appendix-FEG-original-form} and switching the index from $k$ to $j$, we have
\begin{align*}
    (j+1) x_{j+1} = j x_j + x_0 - \alpha (j+1) \sop{x_{j+\hf}}.
\end{align*}
Summing this up from $j=0$ to $k$ and dividing by $k+1$ we have
\begin{align}
\label{eqn::appendix_pf_2}
    x_{k+1} - x_0 = - \frac{1}{k+1} \alpha \sum_{j=0}^{k} (j+1) \sop{x_{i+\hf}}.
\end{align}
Also, by subtracting the first line of~\eqref{eqn:appendix-FEG-original-form} from the second line of~\eqref{eqn:appendix-FEG-original-form}, we obtain
\begin{align*} 
    x_{k+1} = x_{k+1/2} - \alpha \sop{x_{k+\hf}} + \frac{k}{k+1}\alpha\sop{x_k}.
\end{align*}
Then, by applying \eqref{eqn::appendix_pf_2} with $k$ and $k \leftarrow k-1$ we can write
\begin{align*}
    x_{k+1/2} - x_k & = x_{k+1} +\alpha\sop{x_{k+\hf}} - \frac{k}{k+1}\alpha\sop{x_k} - x_k \\
    & = (x_{k+1}-x_k) +\alpha\sop{x_{k+\hf}} - \frac{k}{k+1}\alpha\sop{x_k} \\
    & = (x_{k+1}-x_0)-(x_k-x_0) +\alpha\sop{x_{k+\hf}} - \frac{k}{k+1} \alpha\sop{x_k} \\
    & = \left( - \frac{1}{k+1} \alpha \sum_{j=0}^{k} (j+1) \sop{x_{i+\hf}} \right) - \left( - \frac{1}{k} \alpha \sum_{j=0}^{k-1} (j+1) \sop{x_{i+\hf}} \right) \\
    & \quad\quad + \alpha\sop{x_{k+\hf}} - \frac{k}{k+1} \alpha\sop{x_k}   \\ 
    & = -\frac{k}{k+1} \alpha\sop{x_k} + \sum_{j=0}^{k-1} \left( \frac{j+1}{k} - \frac{j+1}{k+1} \right) \alpha \sop{x_{j+1/2}} \\
    & = -\frac{k}{k+1} \alpha\sop{x_k} + \sum_{j=0}^{k-1} \frac{j+1}{k(k+1)} \alpha\sop{x_{j+1/2}}.
\end{align*}
This shows that
\begin{align*} 
    h_{k+1/2,k} = \frac{k}{k+1}\alpha L, \quad h_{k+1/2,j+1/2} = -\frac{j+1}{k(k+1)}\alpha L, \quad h_{k+1/2, j} = 0 \quad (j=0,\dots,k-1) .
\end{align*}
Next, because
\begin{align*} 
    x_{k+1} &= x_{k+1/2} -\alpha \sop{x_{k+\hf}} + \frac{k}{k+1} \alpha\sop{x_k} 
\end{align*}
we obtain
\begin{align*} 
    h_{k+1,k+1/2} = \alpha L, \quad h_{k+1,k} = -\frac{k}{k+1} \alpha L, \quad h_{k+1,i/2} = 0 \quad (i=0,\dots,2k-1)
\end{align*}
which agrees with~\eqref{eqn:appendix-FEG-H-matrix-form}.

\subsection{\ref{alg:dual-feg}}

\textbf{Form 1} (Original form).
\begin{align}
\begin{split}
    x_{k+\half} & = x_k - \alpha z_k - \alpha \sop{x_k} \\ 
    x_{k+1}  & = x_{k+1/2}-\frac{N-k-1}{N-k}\alpha\left( \sop{x_{k+1/2}} - \sop{x_k} \right) \\
    z_{k+1} & = \frac{N-k-1}{N-k} z_k - \frac{1}{N-k} \sop{x_{k+\hf}}
\end{split}
\label{eqn:appendix-dual-FEG-original-form}
\end{align}
for $k=0,1,\dots,N-1$, where $z_0 = 0$.

\textbf{Form 2} (H-matrix form).
\begin{align*}
    x_{(\ell+1)/2} = x_{\ell/2} - \frac{1}{L} \sum_{i=0}^{\ell} h_{(\ell+1)/2, i/2} \sop{x_{i/2}}
\end{align*} 
where
\begin{align}
   \frac{1}{\alpha L} h_{(\ell+1)/2,i/2} = \begin{cases}
       1 & \text{if } \ell=2k,\, i=2k \\
      -\frac{N-k}{(N-j-1)(N-j)} & \text{if } \ell=2k,\, i=2j+1, \, j=0,\dots,k-1 \\
      0  & \text{if } \ell=2k,\, i=2j,\, j=0,\dots,k-1 \\
      \frac{N-k-1}{N-k} & \text{if } \ell=2k+1,\, i=2k+1 \\
      -\frac{N-k-1}{N-k} & \text{if } \ell=2k+1,\, i=2k \\
      0 & \text{if } \ell=2k+1,\, i=0,\dots,2k-1.
    \end{cases}
    \label{eqn:appendix-dual-FEG-H-matrix-form}
\end{align}
for $k=0,1,\dots,N-1$.

As in the case of FEG, we check that update rule~\eqref{eqn:appendix-dual-FEG-original-form} of Form 1 defines the identical update rule for $x_{k+1/2}, x_{k+1}$ provided that they are equivalent for all $x_0,\dots,x_k$.
Subtracting the first line of~\eqref{eqn:appendix-dual-FEG-original-form} from the second line of~\eqref{eqn:appendix-dual-FEG-original-form} we get
\begin{align*}
    x_{k+1} = x_{k+1/2} - \frac{N-k-1}{N-k} \alpha\left(\sop{x_{k+1/2}}-\sop{x_k} \right) .
\end{align*}
Dividing the the third line of~\eqref{eqn:appendix-dual-FEG-original-form} by $N-k$ and switching the index from $k$ to $j$ we obtain
\begin{align*}
    \frac{1}{N-j}z_j & = \frac{1}{N-j+1} z_{j-1} - \frac{1}{(N-j)(N-j+1)}\sop{x_{j-1/2}} .
\end{align*}
Summing this up from $j=0$ to $k-1$ and multiplying $N-k$ to both sides gives
\begin{align*}
    z_k = -\sum_{j=0}^{k-1} \frac{N-k}{(N-j-1)(N-j)}\sop{x_{j+1/2}}
\end{align*}
(note that $z_0 = 0$).
Now substituting the above expression for $z_k$ into the first line of~\eqref{eqn:appendix-dual-FEG-original-form} gives
\begin{align*}
    x_{k+\half} & = x_k - \alpha z_k - \alpha \sop{x_k} \\
    & = x_k- \alpha \sop{x_k} + \sum_{j=0}^{k-1} \frac{N-k}{(N-j-1)(N-j)} \alpha \sop{x_{j+1/2}}
\end{align*}
and thus,
\begin{align*}
    h_{k+1/2,k} = \alpha L, \quad h_{k+1/2,j+1/2} = -\frac{N-k}{(N-j-1)(N-j)} \alpha L , \quad h_{k+1/2,j} = 0 \quad (j=0,\dots,k-1).
\end{align*}
Finally, the second line of~\eqref{eqn:appendix-dual-FEG-original-form} is
\begin{align*}
      x_{k+1} &=  x_{k+1/2} - \frac{N-k-1}{N-k} \alpha\sop{x_{k+1/2}} + \frac{N-k-1}{N-k} \alpha\sop{x_k} 
\end{align*}
which gives
\begin{align*}
    h_{k+1,k+1/2} = \frac{N-k-1}{N-k} \alpha L , \quad h_{k+1,k} = -\frac{N-k-1}{N-k} \alpha L , \quad h_{k+1,i/2} = 0 \quad (i=0,\dots,2k-1) .
\end{align*}
This is precisely~\eqref{eqn:appendix-dual-FEG-H-matrix-form}.

\subsection{Anchor ODE \eqref{ode:anchor}} 

\textbf{Form 1} (Original form).
\begin{align} \label{ode:anchor-appendix}  
    \dot{X}(t) = -\opA(X(t)) - \frac{1}{t} (X(t) - X_0)   
\end{align}
where $X(0) = X_0$.

\textbf{Form 2} (Second-order form).
\begin{align} \label{ode:anchor-2nd-appendix}
    \ddot{X}(t) + \frac{2}{t} \dot{X}(t) + \frac{1}{t}\opA(X(t)) + \frac{d}{dt} \opA(X(t)) = 0
\end{align}
where $X(0) = X_0$ and $\dot{X}(0) = -A(X_0)$.

\paragraph{Form 1 $\implies$ Form 2.}
Let $X\colon [0,\infty) \to \reals^d$ be the solution to~\eqref{ode:anchor-appendix} with initial condition $X(0)=X_0$. 
First observe that taking the limit $t\to 0^+$ in \eqref{ode:anchor-appendix} gives $\dot{X}(0) = -\opA(X_0) - \dot{X}(0)$, which is equivalent to the initial velocity condition $\dot{X}(0) = -\frac{1}{2}\opA(X_0)$ for~\eqref{ode:anchor-2nd-appendix}. 
Differentiating both sides of \eqref{ode:anchor-appendix}, we have
\begin{align*}
    \ddot{X}(t) = -\frac{d}{dt} \opA(X(t)) - \frac{1}{t} \dot{X}(t) + \frac{1}{t^2} (X(t)-X_0). 
\end{align*}
Rearranging the defining equation~\eqref{ode:anchor-appendix} gives $\frac{1}{t^2}(X(t) - X_0) = -\frac{1}{t}\dot{X}(t) - \frac{1}{t}\opA(X(t))$.
Substituting this into the last equation and reorganizing, we obtain~\eqref{ode:anchor-2nd-appendix}:
\begin{align*}
    & \ddot{X}(t) = - \frac{d}{dt} \opA(X(t)) - \frac{1}{t} \dot{X}(t) + \left(-\frac{1}{t} \dot{X}(t) - \frac{1}{t} \opA(X(t)) \right) \\
    & \iff \ddot{X}(t) + \frac{2}{t} \dot{X}(t) + \frac{1}{t}\opA(X(t)) + \frac{d}{dt} \opA(X(t)) = 0 .
\end{align*}

\paragraph{Form 2 $\implies$ Form 1.}
Suppose $X\colon [0,\infty) \to \reals^d$ is the solution to~\eqref{ode:anchor-2nd-appendix} with initial conditions $X(0) = X_0$ and $\dot{X}(0) = -\frac{1}{2}\opA(X_0)$. 
Multiplying $t$ throughout \eqref{ode:anchor-2nd-appendix}, we have
\begin{align*}
    0 
    &= t\ddot{X}(t) + 2 \dot{X}(t) + \opA(X(t)) +  t\frac{d}{dt} \opA(X(t)) 
    = \frac{d}{dt} \pr{ t \dot{X}(t) } + \dot{X}(t) + \frac{d}{dt} \pr{ t \opA(X(t)) }.
\end{align*}
Integrating both sides from $0$ to $t$ gives
\begin{align*}
    0 = t \dot{X}(t) + X(t) - X_0 + t \opA(X(t)).
\end{align*}
Dividing both sides by $t$ and reorganizing, we get \eqref{ode:anchor-appendix}.

The above result holds given a minimal assumption that $\opA$ is Lipschitz continuous (with the equalities holding for almost every $t$ if differentiability is not assumed).
For the rigorous discussion on this point, we refer the readers to~\citep[Appendix~B]{SuhParkRyu2023_continuoustime}.

\subsection{Dual-Anchor ODE \eqref{ode:dual-anchor}}
\label{subsection:appendix-equivalence-dual-anchor-ode}

\textbf{Form 1} (Original form).
\begin{align}
\begin{aligned}
    \dot{X}(t) & = - Z(t) - \opA(X(t)) \\
    \dot{Z}(t) 
    & = - \frac{1}{T-t} Z(t) - \frac{1}{T-t} \opA(X(t)) 
\end{aligned}
 \label{ode:dual-anchor-appendix}
\end{align}
where $X(0)=X_0$ and $Z(0)=0$.

\textbf{Form 2} (Second-order form).
\begin{align}
    \ddot{X}(t) + \frac{1}{T-t} \dot{X}(t) -\frac{d}{dt} \opA(X(t)) = 0
    \label{ode:dual-anchor-2nd-appendix}
\end{align}
where $X(0)=X_0$ and $\dot{X}(0)=-\opA(X_0)$.

\paragraph{Form 1 $\implies$ Form 2.}
Let $\pmat{X \\Z} \colon [0,T) \to \reals^d\times \reals^d$ be the solution to~\eqref{ode:dual-anchor-appendix} with initial conditions $X(0)=X_0$ and $Z(0)=0$. 
Plugging $t=0$ into the first line of \eqref{ode:dual-anchor} gives $\dot{X}(0) = 0 - \opA(X_0)$, which is the initial velocity condition for~\eqref{ode:dual-anchor-2nd-appendix}. 
Now observe that
\begin{align*}
    \dot{Z}(t) = \frac{1}{T-t} \left( - Z(t) - \opA(X(t)) \right) = \frac{1}{T-t} \dot{X}(t)
\end{align*}
where the last equality comes from the first line of~\eqref{ode:dual-anchor-appendix}.
Differentiating the first line of~\eqref{ode:dual-anchor-appendix} and plugging in the above identity we obtain Form 2:
\begin{align*}
    0 
    = \ddot{X}(t) + \dot{Z}(t) + \frac{d}{dt} \opA(X(t)) 
    = \ddot{X}(t) + \frac{1}{T-t} \dot{X}(t) -\frac{d}{dt} \opA(X(t)).
\end{align*}

\paragraph{Form 2 $\implies$ Form 1.}
Suppose $X\colon [0,T) \to \reals^d$ is the solution to~\eqref{ode:dual-anchor-2nd-appendix} with initial conditions $X(0) = X_0$ and $\dot{X}(0) = -\opA(X_0)$. 
Define $Z\colon[0,T) \to \reals^d$ by $Z(t) = -\dot{X}(t) - \opA(X(t))$. 
Then $\dot{X}(t) = -Z(t) - \opA(X(t))$ by definition (this is the first line of~\eqref{ode:dual-anchor-appendix}).
Also note that $Z(0) = -\dot{X}(0) - \opA(X_0) = 0$, which is the $Z$-initial condition for~\eqref{ode:dual-anchor-appendix}.
Now differentiating $Z$ we have
\begin{align*}
    \dot{Z}(t) = - \ddot{X}(t) - \frac{d}{dt} \opA(X(t)) = \frac{1}{T-t} \dot{X}(t) = -\frac{1}{T-t} \pr{ Z(t) + \opA(X(t)) }. 
\end{align*}
The second equality directly follows from the defining equation~\eqref{ode:dual-anchor-2nd-appendix}. 
This shows that $\pmat{X \\Z}$ is the solution of \eqref{ode:dual-anchor-appendix}.

The above result holds given a minimal assumption that $\opA$ is Lipschitz continuous (with the equalities holding for almost every $t$ if differentiability is not assumed).
For the rigorous discussion on existence, uniqueness of the solutions and almost everywhere differntiability of the involved quantities, we refer the readers to~\cref{section:appendix-continuous}.

\newpage
\section{Lyapunov analysis of \ref{alg:dual_halpern}}
\label{section:appendix-Dual-Halpern-Lyapunov}

Recall the following form of \ref{alg:dual_halpern}:
\begin{align}
    z_{k+1} & = \frac{N-k-1}{N-k} z_k - \frac{1}{N-k} \left( y_k - \opT y_k \right) \label{eqn:appendix-dual-Halpern-z-definition} \\
    y_{k+1} & = \opT y_k - z_{k+1} . \label{eqn:appendix-dual-Halpern-y-definition}
\end{align}
Substituting \eqref{eqn:appendix-dual-Halpern-z-definition} into \eqref{eqn:appendix-dual-Halpern-y-definition} we get
\begin{align}
    y_{k+1} & = \opT y_k - \frac{N-k-1}{N-k} z_k + \frac{1}{N-k} \left( y_k - \opT y_k \right) \nonumber \\
    & = \frac{1}{N-k} y_k + \frac{N-k-1}{N-k} \opT y_k - \frac{N-k-1}{N-k} z_k \label{eqn:appendix-dual-Halpern-y-alternative-form}
\end{align}
With the substitution $\opT = 2\JA - \opI$ and $x_{k+1} = \JA y_k$ we can write \eqref{eqn:appendix-dual-Halpern-z-definition} and \eqref{eqn:appendix-dual-Halpern-y-alternative-form} as
\begin{align}
    z_{k+1} & = \frac{N-k-1}{N-k} z_k - \frac{2}{N-k} \tilA x_{k+1} \label{eqn:appendix-dual-Halpern-z-update-monotone-operator} \\
    y_{k+1} & = \frac{1}{N-k} y_k + \frac{N-k-1}{N-k} \left( y_k - 2 \tilA x_{k+1} \right) - \frac{N-k-1}{N-k} z_k \nonumber \\
    & = y_k - \frac{2(N-k-1)}{N-k} \tilA x_{k+1} - \frac{N-k-1}{N-k} z_k \label{eqn:appendix-dual-Halpern-y-update-monotone-operator}
\end{align}
where we have used $\opT y_k = 2 \JA y_k - y_k = 2x_{k+1} - y_k = y_k - 2(y_k - x_{k+1}) = y_k - 2\tilA x_{k+1}$.
For simplicity, write $g_j = \tilA x_j$ for $j=1,\dots,N$.
To complete the proof, it remains to show that for
\begin{align*}
    V_k = \underbrace{-\frac{N-k-1}{N-k} \sqnorm{z_k + 2 g_N}}_{:=V_k^{(1)}} + \underbrace{\frac{2}{N-k} \inprod{z_k + 2 g_N}{y_k - y_{N-1}}}_{:=V_k^{(2)}}
\end{align*}
the following holds:
\begin{align*}
    V_k - V_{k+1} = \frac{4}{(N-k)(N-k-1)} \inprod{x_{k+1} - x_N}{g_{k+1} - g_N} .
\end{align*} 
Rewriting the right hand side, we have 
\begin{align*}
    & \frac{4}{(N-k)(N-k-1)} \inprod{x_{k+1} - x_N}{g_{k+1} - g_N} \\
    &= \frac{4}{(N-k)(N-k-1)} \inprod{y_k - y_{N-1} - (g_{k+1} - g_N)}{g_{k+1} - g_N} \\
    &= \frac{2}{N-k-1} \inprod{y_k - y_{N-1}}{ \frac{2}{N-k} g_{k+1} - \frac{2}{N-k} g_N} - \frac{4}{(N-k)(N-k-1)} \sqnorm{g_{k+1} - g_N} \\  
    &\!\stackrel{\eqref{eqn:appendix-dual-Halpern-z-update-monotone-operator}}{=}  
    \frac{2}{N-k-1} \inprod{y_k - y_{N-1}}{ \frac{N-k-1}{N-k}(z_k+ 2g_N) - (z_{k+1} + 2g_N)} - \frac{4}{(N-k)(N-k-1)} \sqnorm{g_{k+1} - g_N} \\
    &= \frac{2}{N-k} \inprod{y_k - y_{N-1}}{ z_k+ 2g_N} 
        - \frac{2}{N-k-1} \inprod{y_k - y_{N-1}}{ z_{k+1} + 2g_N} 
        - \frac{4}{(N-k)(N-k-1)} \sqnorm{g_{k+1} - g_N} \\
    &\!\stackrel{\eqref{eqn:appendix-dual-Halpern-y-update-monotone-operator}}{=} V_k^{(2)}
        - \frac{2}{N-k-1} \inprod{y_{k+1} - y_{N-1} + \frac{2(N-k-1)}{N-k} g_{k+1} + \frac{N-k-1}{N-k} z_{k} }{ z_{k+1} + 2g_N} 
        - \frac{4}{(N-k)(N-k-1)} \sqnorm{g_{k+1} - g_N} \\
    &= V_k^{(2)} - V_{k+1}^{(2)} \underbrace{ - \frac{2}{N-k} \inprod{ 2 g_{k+1} + z_k }{z_{k+1} + 2g_N} - \frac{1}{(N-k)(N-k-1)} \sqnorm{2g_{k+1} - 2g_N}}_{:=R_k}
\end{align*}
and the proof is done once we show $R_k = V_k^{(1)} - V_{k+1}^{(1)}$.
From \eqref{eqn:appendix-dual-Halpern-z-update-monotone-operator} we have $2 g_{k+1} = (N-k-1)z_k -  (N-k)z_{k+1}$,
and plugging this into $R_k$ we obtain
\begin{align*}
    R_k
    &= - 2\inprod{ z_{k+1} - z_k}{z_{k+1} + 2g_N} - \frac{1}{(N-k)(N-k-1)} \sqnorm{ (N-k-1)z_k -  (N-k)z_{k+1} - 2g_N} \\
    &= 2 \inprod{ (z_{k+1}+2 g_N)-(z_k+2 g_N) }{z_{k+1} + 2g_N}  \\ &\quad 
        - \frac{1}{(N-k)(N-k-1)} \sqnorm{ (N-k-1) (z_k+2 g_N)-(N-k) (z_{k+1}+2 g_N) } \\
    &= 2 \sqnorm{ z_{k+1}+2 g_N } - \frac{N-k-1}{N-k} \sqnorm{z_k+2 g_N} - \frac{N-k}{N-k-1} \sqnorm{ z_{k+1}+2 g_N } \\
    &= -\frac{N-k-1}{N-k} \sqnorm{z_k + 2 g_N} + \frac{N-k-2}{N-k-1} \sqnorm{z_{k+1} + 2 g_N} \\
    &= V_k^{(1)} - V_{k+1}^{(1)},
\end{align*}
which proves that indeed, $R_k = V_k^{(1)} - V_{k+1}^{(1)}$.

\newpage
\section{Fixed-point H-duality theory}
\label{sec::appendix_H_duality_pf}

\subsection{The precise statement of H-duality theorem}
\label{subsection:appendix-H-duality-statement}
To state the theorem, we first need to set up some notations and concepts. 

\paragraph{Primal Lyapunov structure.}
Consider the H-matrix representation of an algorithm
\begin{align}
\label{eqn:FPI-H-matrix-appendix}
    y_{k+1} & = y_k - \sum_{j=0}^{k} h_{k+1,j+1} (y_j - \opT y_j) = y_k - \sum_{j=0}^{k} 2h_{k+1,j+1} \tilA x_{j+1}
\end{align}
for $k=0,1,\dots,N-2$, where $x_{k+1} = \JA(y_k)$ for $k=0,1,\dots,N-1$.
Consider a convergence proof for~\eqref{eqn:FPI-H-matrix-appendix} with respect to the performance measure $\norm{y_{N-1}-\opT y_{N-1}}^2 = 4\norm{\tilde{\opA} x_N}^2$, structured in the following way: 
Take a sequence $\{u_j\}_{j=1}^{N-1}$ of positive numbers, and define the sequence $\{U_j\}_{j=1}^{N}$ by $U_1 = 0$ and 
\begin{align*}
    U_{j+1} = U_j  - u_j\inprod{x_{j+1} - x_j}{\tilde{\opA} x_{j+1} - \tilde{\opA} x_j}
\end{align*}
for $j=1,\dots,N-1$. 
As $\opA$ is monotone, $\{U_j\}_{j=1}^{N}$ is a nonincreasing sequence. 
To clarify, while we restrict the proof structure to use a specific combination of monotonicity inequalities, we let $\{u_j\}_{j=1}^{N-1}$ as free variables which can be appropriately chosen to make the convergence analysis work.

Assume that we can show that for some $\tau_U > 0$,
\begin{align} \label{eqn::primal_cond_FPI} \tag{C1}
    \tau_U \norm{\tilA x_N}^2 + \inprod{\tilde{\opA}x_N}{x_N-y_0} \leq U_N 
\end{align}
holds for arbitrary $\tilA x_1, \dots, \tilA x_N$. 
To put this precisely, $(\text{RHS}) - (\text{LHS})$ in \eqref{eqn::primal_cond_FPI} is a ``vector quadratic form'' of $\{\tilde{\opA}x_j\}_{j=1}^N$, i.e., a function $\cQ \colon \prod_{j=1}^N \reals^d \to \reals$ of the form 
\begin{align*}
    \cQ \left( g_1, \dots, g_N \right)  = \sum_{i=1}^N \sum_{j=1}^N s_{i,j} \inprod{g_i}{g_j}  = \mathrm{Trace} \left( \vG^\intercal \vS_N \vG \right)
\end{align*}
where $\vG = \begin{bmatrix} g_1 & \cdots & g_N \end{bmatrix}, \vS_N = \left( s_{i,j} \right)_{1\le i,j \le N} \in \reals^{N\times N}$.  
Here $s_{i,j} = s_{i,j} \left( H, \{u_j\}_{j=1}^{N-1}, \tau_U \right)$ has a hidden dependency on the entries of H-matrix, $u_j$'s and $\tau_U$.
If $\cQ (g_1,\dots,g_N) \ge 0$ for any $g_1,\dots,g_N \in \reals^d$ (which is equivalent to $\vS_N \succeq 0$) then we informally say $\cQ \ge 0 \iff $ \eqref{eqn::primal_cond_FPI}.
If that is the case, we can establish
\begin{align*}
   \tau_U \norm{\opA x_N}^2 + \inprod{\tilde{\opA}x_N}{x_N-y_0} \leq  U_N \leq \dots \leq U_1 = 0.
\end{align*}
By Lemma~\ref{lemma:convergence-proof-last-step}, this implies $\norm{\tilde{\opA} x_N}^2 \leq \frac{\norm{y_0-x_\star}^2}{\tau_U^2}$. 
\ref{alg:halpern} is an example where this holds; selecting $\tau_U = N$ and $u_j = \frac{j(j+1)}{N}$ for $j=1,\dots,N-1$ ensures \eqref{eqn::primal_cond_FPI}, leading to the final convergence rate $\norm{\tilde{\opA} x_N}^2 \leq \frac{\norm{y_0-x_\star}^2}{N^2}$.
We refer to this proof strategy as \emph{primal Lyapunov proof}.

\paragraph{Dual Lyapunov structure.} 
Consider a convergence proof for~\eqref{eqn:FPI-H-matrix-appendix} with respect to the same performance measure $\norm{y_{N-1}-\opT y_{N-1}}^2 = 4\norm{\tilde{\opA} x_N}^2$, but now using a positive sequence $\{v_j\}_{j=1}^{N-1}$ and $\{V_j\}_{j=0}^{N-1}$, defined by $V_{N-1} = 0$ and the backward recursion
\begin{align*}
    V_{j} = V_{j+1}  + v_{j+1}\inprod{x_N - x_{j+1}}{\tilde{\opA} x_N - \tilde{\opA} x_{j+1}}
\end{align*}
for $j=0,\dots,N-2$ (note that we are using a different set of inequalities than $U_j$).
Then $\{V_j\}_{j=0}^{N-1}$ is nonincreasing because $\opA$ is monotone.
As before, $\{v_j\}_{j=1}^{N-1}$ can be seen as free variables.
Assume that we can show that for some $\tau_V > 0$, 
\begin{align} \label{eqn::dual_cond_FPI} \tag{C2}
  V_0 \leq  -\tau_V \norm{\tilA x_N}^2 - \inprod{\tilde{\opA}x_N}{x_N-y_0}
\end{align}
holds  
(in the sense of vector quadratic form with coefficients on $H$, $v_j$'s and $\tau_V$). 
Then
\begin{align*}
  0 = V_{N-1} \leq \dots \leq V_0 \leq -\tau_V \norm{\tilA x_N}^2 - \inprod{\tilde{\opA}x_N}{x_N-y_0} .
\end{align*}
By Lemma~\ref{lemma:convergence-proof-last-step}, this implies $\norm{\tilde{\opA} x_N}^2 \leq \frac{\norm{y_0-x_\star}^2}{\tau_V^2}$. 
\ref{alg:dual_halpern} is an example satisfying this; selecting $\tau_V = N$ and $v_j = \frac{N}{(N-j)(N-j+1)}$ for $j=1,\dots,N-1$ ensures \eqref{eqn::dual_cond_FPI}, which implies $\norm{\tilde{\opA} x_N}^2 \leq \frac{\norm{y_0-x_\star}^2}{N^2}$.
We refer to this proof strategy as \emph{dual Lyapunov proof}.

Finally, we are ready to state the H-duality theorem,  
which shows that the primal Lyapunov proof for a ``primal'' algorithm can be transformed into a dual Lyapunov proof establishing the same rate for its H-dual algorithm.
\begin{theorem}[H-duality] \label{thm::H-duality_FPI} 
Consider sequences of positive real numbers $\{u_j\}_{j=1}^{N-1}$ and $\{v_j\}_{j=1}^{N-1}$ related through $v_j=\frac{1}{u_{N-j}}$ for $j=1,\dots,N-1$. 
Let $H\in \mathbb{R}^{(N-1)\times(N-1)}$ be a lower triangular matrix and $\tau > 0$. Then
\begin{align*}
\left[\text{\eqref{eqn::primal_cond_FPI} is satisfied with $H, \{u_j\}_{j=1}^{N-1}$ and $\tau_U = \tau$}\right] 
\quad\Leftrightarrow\quad\left[\text{\eqref{eqn::dual_cond_FPI} is satisfied with $H^{\at}, \{v_j\}_{j=1}^{N-1}$ and $\tau_V = \tau$}\right].    
\end{align*}
\end{theorem}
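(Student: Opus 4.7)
The plan is to reformulate (C1) and (C2) as negative-semidefiniteness conditions on explicit $N \times N$ symmetric matrices, and then establish their equivalence via an invertible congruence between those matrices.

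First, for the algorithm~\eqref{eqn:FPI-H-matrix-appendix} with H-matrix $H$, each of the differences $x_{j+1} - x_j$ and $x_N - y_0$ is an explicit linear combination of $\tilA x_1, \ldots, \tilA x_N$ whose coefficients are polynomials in the entries of $H$. Substituting these expressions into~\eqref{eqn::primal_cond_FPI} shows that~\eqref{eqn::primal_cond_FPI} is equivalent to $\mathrm{Trace}(G^\intercal S^P G) \le 0$ for all $G = [\tilA x_1 \mid \cdots \mid \tilA x_N] \in \reals^{d \times N}$, hence to $S^P(H, u, \tau) \preceq 0$ for an explicit symmetric matrix $S^P \in \reals^{N \times N}$ whose entries are polynomials in the entries of $H$, the $u_j$'s, and $\tau$. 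An analogous reformulation applied to the H-dual algorithm (using $H^{\at}$ and the terminal-index monotonicity inequalities $\inprod{x_N - x_j}{\tilA x_N - \tilA x_j}$ weighted by the $v_j$'s) recasts~\eqref{eqn::dual_cond_FPI} as $S^D(H^{\at}, v, \tau) \preceq 0$ for an analogous explicit symmetric matrix $S^D$.

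Next, I would establish the congruence identity
\begin{equation*}
    S^D(H^{\at}, v, \tau) = M^\intercal\, S^P(H, u, \tau)\, M \quad \text{for all } \tau > 0,
\end{equation*}
where $M = M(H, u) \in \reals^{N \times N}$ is an explicit invertible matrix \emph{independent of $\tau$}, under the identification $v_j = 1/u_{N-j}$. Since invertible congruence preserves signature by Sylvester's law of inertia, this identity immediately yields the biconditional $S^P \preceq 0 \iff S^D \preceq 0$. The $\tau$-independence of $M$ is natural: since $\tau$ enters $S^P$ and $S^D$ only through their $(N,N)$-entries with coefficient $1$, matching the $\tau$-coefficients on both sides of the congruence forces the $N$-th row of $M$ to be exactly $e_N^\intercal$. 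The remaining entries of $M$ are pinned down by matching the $\tau$-free coefficients, and they solve a triangular linear system. As a sanity check, for $N = 2$ a direct computation yields $M = \bigl(\begin{smallmatrix} 1/u_1 & (u_1-1)/(u_1(1-2h_{1,1})) \\ 0 & 1 \end{smallmatrix}\bigr)$, which is invertible whenever $h_{1,1} \ne 1/2$ and satisfies $S^D = M^\intercal S^P M$ by direct verification.

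The main technical obstacle is constructing $M$ in closed form and verifying the congruence entrywise for general $N$. Based on the $N=2$ solution, I expect $M$ to be upper-triangular, with diagonal entries built from the $u_j$'s (matching the $\tau$-free diagonal entries of $S^P$ and $S^D$) and off-diagonal entries that account for the interaction between the anti-diagonal flip $H \mapsto H^{\at}$ and the reciprocation $u_{N-j} \mapsto 1/u_{N-j}$. The cancellations that make the identity hold require precisely the weight relation $v_j = 1/u_{N-j}$ stated in the hypothesis: the anti-diagonal flip $H \mapsto H^{\at}$ reverses the roles of iteration indices in the algorithm's update, and the reciprocation in the weights compensates for how the $u_j$'s enter the two Lyapunov structures asymmetrically. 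Once this matrix identity is verified, the biconditional follows immediately from Sylvester's law of inertia, completing the proof.
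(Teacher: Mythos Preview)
Your high-level strategy---reduce (C1) and (C2) to semidefiniteness of explicit $N\times N$ symmetric matrices and relate them by an invertible congruence---is exactly the paper's approach. The paper phrases it as constructing a bijection $\cF\colon(\reals^d)^N\to(\reals^d)^N$ with $\cS(g_1,\ldots,g_N)=\cT(\cF(g_1,\ldots,g_N))$, which in your language is the congruence $S^P=F^\top S^D F$ (so $M=F^{-1}$); Sylvester is then immediate.

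Where your proposal has a genuine gap is the construction of $M$. Your $N=2$ matrix is a valid congruence, but it depends on $h_{1,1}$ and blows up at $h_{1,1}=\tfrac12$, even though the theorem must hold there. More importantly, an $H$-dependent $M$ will make the general-$N$ verification unwieldy, and your triangular ansatz is an artefact of $N=2$: for $N\ge 3$ no triangular $M$ should be expected. The paper's key insight---which you have not yet found---is that the congruence matrix can be taken \emph{independent of $H$}, depending only on $\{u_j\}$:
\[
\cF(g_1,\ldots,g_N)=\bigl(u_{N-1}(g_N-g_{N-1})+g_N,\;\ldots,\;u_1(g_2-g_1)+g_N,\;g_N\bigr).
\]
This map is bijective whenever all $u_j>0$, and with it the identity $\cS(g)=\cT(\cF(g))$ reduces to a clean expansion-and-index-swap argument in which the relation $v_j=1/u_{N-j}$ emerges naturally. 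For $N=2$ this gives $M=\bigl(\begin{smallmatrix}-1/u_1 & (u_1+1)/u_1\\ 0 & 1\end{smallmatrix}\bigr)$, a different (and nonsingular) congruence from yours. So your framework is right, but to complete the proof you need to discover that $M$ need not involve $H$ at all.
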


\subsection{Proof of Theorem~\ref{thm::H-duality_FPI}}
 
Define the two vector quadratic forms $\cS$ and $\cT$ by
\begin{align*}
    \cS(\tilA x_1, \dots, \tilA x_N) & = U_N \pr{H, \{u_j\}_{j=1}^{N-1}} - \tau \sqnorm{\tilA x_N} - \inprod{\tilA x_N}{x_N \pr{H} - y_0} \\
    \cT(\tilA x_1, \dots, \tilA x_N) & = -V_0 \pr{H^\at, \{v_j\}_{j=1}^{N-1}} - \tau \sqnorm{\tilA x_N} - \inprod{\tilA x_N}{x_N \pr{H^\at} - y_0} ,
\end{align*}
where we write $U_N \pr{H, \{u_j\}_{j=1}^{N-1}}$ and $x_N(H)$ to specify that these quantities are defined through the rules of~\cref{subsection:appendix-H-duality-statement} while using the H-matrix $H$ to define $x_1,\dots,x_N$ (according to~\eqref{eqn:FPI-H-matrix-appendix}) and using those $x_1,\dots,x_N$ and the specified sequence $\{u_j\}_{j=1}^{N-1}$ to define $U_N$.
Similarly, $V_0 \pr{H^\at, \{v_j\}_{j=1}^{N-1}}$ and $x_N \pr{H^\at}$ specifies that they are defined using $H^\at$ as H-matrix and using the sequence $\{v_j\}_{j=1}^{N-1}$.

\paragraph{Rewriting $\cS$.} 
We first expand $\cS$ without explicitly expressing its dependency on $H$:
\begin{align}
    & \cS (\tilA x_1, \dots, \tilA x_N) \nonumber \\
    & = U_N - \tau \norm{\tilA x_N}^2 - \inprod{\tilA x_N}{x_N-y_0} \nonumber \\
    & = -\sum_{k=1}^{N-1} u_k\inprod{\tilA x_{k+1}-\tilA x_k}{x_{k+1 }-x_k} - \tau \norm{\tilA x_N}^2 - \inprod{\tilA x_N}{x_N - y_0} \nonumber \\
    & = -\tau \norm{\tilA x_N}^2 - \inprod{\tilA x_N}{y_{N-1} - \tilA x_N - y_0} - \sum_{k=1}^{N-1} u_k\inprod{\tilA x_{k+1}-\tilA x_k}{y_k - \tilA x_{k+1} - y_{k-1} + \tilA x_k} \nonumber \\
    & = -(\tau - 1) \norm{\tilA x_N}^2 + \inprod{\tilA x_N}{y_0-y_{N-1}} + \sum_{k=1}^{N-1} u_k\norm{\tilA x_{k+1}-\tilA x_k}^2 - \sum_{k=1}^{N-1} u_k \inprod{\tilA x_{k+1}-\tilA x_k}{y_k - y_{k-1}}  \label{eqn:appendix-H-duality-S-expansion-first}
\end{align}
Now rewrite $\inprod{\tilA x_N}{y_0-y_{N-1}} = -\sum_{k=1}^{N-1} \inprod{\tilA x_N}{y_k - y_{k-1}}$ and group this summation with the last summation within~\eqref{eqn:appendix-H-duality-S-expansion-first} to obtain
\begin{align}
    & \cS (\tilA x_1, \dots, \tilA x_N) \nonumber \\
    & = -(\tau - 1) \norm{\tilA x_N}^2   + \sum_{k=1}^{N-1} u_k\norm{\tilA x_{k+1}-\tilA x_k}^2 - \sum_{k=1}^{N-1} \inprod{u_k(\tilA x_{k+1}-\tilA x_k)+\tilA x_N}{y_k-y_{k-1}} \nonumber \\
    & = -(\tau - 1) \norm{\tilA x_N}^2   + \sum_{k=1}^{N-1} u_k\norm{\tilA x_{k+1}-\tilA x_k}^2 + \sum_{k=1}^{N-1} \inprod{u_k(\tilA x_{k+1}-\tilA x_k)+\tilA x_N}{\sum_{j=0}^{k-1} 2h_{k,j+1}\tilA x_{j+1}} \label{eqn:appendix-H-duality-S-expansion-second}
\end{align}
where in the last line we substitute $y_k-y_{k-1}$ using the update rule~\eqref{eqn:FPI-H-matrix-appendix} (which finally reveals the $H$-dependency of $\cS$).

\paragraph{Rewriting $\cT$.}
We similarly expand $\cT$:
\begin{align}
    & \cT (\tilA x_1, \dots, \tilA x_N) \nonumber \\
    & = - V_0 - \tau \sqnorm{\tilA x_N} - \inprod{\tilA x_N}{x_N - y_0} \nonumber \\
    & = - \sum_{k=1}^{N-1} v_k \inprod{\tilA x_N - \tilA x_k}{x_N-x_k} - \tau \norm{\tilA x_N}^2 - \inprod{\tilA x_N}{x_N - y_0} \nonumber \\
    & = -\tau \norm{\tilA x_N}^2 - \inprod{\tilA x_N}{y_{N-1}-\tilA x_N - y_0} - \sum_{k=1}^{N-1} v_k\inprod{\tilA x_N-\tilA x_k}{y_{N-1}-\tilA x_N - y_{k-1} + \tilA x_k} \nonumber \\
    & = -(\tau-1)\norm{\tilA x_N}^2 + \inprod{\tilA x_N}{y_0-y_{N-1}} + \sum_{k=1}^{N-1} v_k\norm{\tilA x_N - \tilA x_k}^2 - \sum_{k=1}^{N-1} v_k \inprod{\tilA x_N- \tilA x_k}{y_{N-1}-y_{k-1}} \label{eqn:appendix-H-duality-T-expansion-first}
\end{align}
We can rewrite the last summation in~\eqref{eqn:appendix-H-duality-T-expansion-first} as
\begin{align*}
    -\sum_{k=1}^{N-1} v_k \inprod{\tilA x_N- \tilA x_k}{y_{N-1}-y_{k-1}} & = -\sum_{k=1}^{N-1} \sum_{j=k}^{N-1} \inprod{v_k (\tilA x_N- \tilA x_k)}{y_{j}-y_{j-1}} \\
    & = -\sum_{j=1}^{N-1} \sum_{k=1}^{j} \inprod{v_k (\tilA x_N- \tilA x_k)}{y_{j}-y_{j-1}} \\
    & = -\sum_{k=1}^{N-1} \sum_{j=1}^{k} \inprod{v_j (\tilA x_N- \tilA x_k)}{y_{k}-y_{k-1}} \\
    & = -\sum_{k=1}^{N-1} \inprod{v_1 (\tilA x_N - \tilA x_1) + \cdots + v_k (\tilA x_N - \tilA x_k)}{y_{k}-y_{k-1}}
\end{align*}
where in the second-last equality, we switch the roles of indices $k$ and $j$.
Now plug this back into~\eqref{eqn:appendix-H-duality-T-expansion-first} together with the identity $\inprod{\tilA x_N}{y_0-y_{N-1}} = -\sum_{k=1}^{N-1} \inprod{\tilA x_N}{y_k - y_{k-1}}$ to obtain
\begin{align}
    & \cT (\tilA x_1, \dots, \tilA x_N) \nonumber \\
    & = -(\tau - 1) \norm{\tilA x_N}^2 + \sum_{k=1}^{N-1} v_k\norm{\tilA x_N - \tilA x_k}^2 \nonumber \\
    & \qquad - \sum_{k=1}^{N-1}  \inprod{\tilA x_N + v_1(\tilA x_N - \tilA x_1) + \dots + v_k(\tilA x_N - \tilA x_k) }{y_{k}-y_{k-1}} \nonumber \\
    & = -(\tau - 1) \norm{\tilA x_N}^2 + \sum_{k=1}^{N-1} v_k\norm{\tilA x_N - \tilA x_k}^2 \nonumber \\
    & \qquad + \sum_{k=1}^{N-1}  \inprod{\tilA x_N + v_1(\tilA x_N - \tilA x_1) + \dots + v_k(\tilA x_N - \tilA x_k) }{\sum_{j=0}^{k-1} 2h_{N-j-1,N-k}\tilA x_{j+1}} \label{eqn:appendix-H-duality-T-expansion-second}
\end{align}
where in the last line we substitute $y_k-y_{k-1}$ using the update rule~\eqref{eqn:FPI-H-matrix-appendix} but with entries of $H^\at$, i.e., $h_{N-j-1,N-k-1}$ in place of $h_{k+1,j+1}$, which reveals the $H^\at$-dependency of $\cT$.

\paragraph{Equivalence of $\cS \ge 0$ and $\cT \ge 0$.}
Now we construct a one-to-one correspondence $\mathcal{F}\colon \prod_{j=1}^N \reals^d \to \prod_{j=1}^N \reals^d$ satisfying $\cS(g_1,\dots,g_N) = \cT(\mathcal{F}(g_1,\dots,g_N))$. 
Note that once this is established, it immediately follows that
\begin{align*}
   \left[ \cS (g_1,\dots,g_N) \geq 0 ,\quad \forall \, g_1,\dots,g_N \in \mathbb{R}^d \right] \quad \iff \quad \left[ \cT(g_1,\dots,g_N) \geq 0 ,\quad \forall \, g_1,\dots,g_N \in \mathbb{R}^d \right] .
\end{align*}
Specifically, define
\begin{align*}
    \mathcal{F}\left(g_1,\dots,g_N \right) = \big( u_{N-1}(g_N-g_{N-1}) + g_N , \dots , u_1(g_2-g_1) + g_N , g_N \big) = (g_1', \dots, g_N'). 
\end{align*}
Then $\cF$ is a bijection because it has the explicit inverse mapping
\begin{align*}
    \cF^{-1} (g_1',\dots,g_N') = \left( g_N' + \sum_{k=1}^{N-1} \frac{1}{u_{N-k}} (g_N' - g_k') , \dots , g_N' + \frac{1}{u_{N-1}} (g_N' - g_1') , g_N'\right) .
\end{align*} 
It remains to verify that $\cS(g_1,\dots,g_N) = \cT(\mathcal{F}(g_1,\dots,g_N))$ indeed holds true.
We directly plug in the transformed set of vectors $g_1',\dots,g_N'$ to the expansion~\eqref{eqn:appendix-H-duality-T-expansion-second} of $\cT$, and substitute $v_k = \frac{1}{u_{N-k}}$ for $k=1,\dots,N$:
\begin{align}
    \cT(g_1',\dots,g_N') 
    & = -(\tau - 1) \norm{g_N'}^2 + \sum_{k=1}^{N-1} v_k\norm{g_N' - g_k'}^2 \nonumber \\
    & \qquad + \sum_{k=1}^{N-1}  \inprod{g_N' + v_1(g_N' - g_1') + \dots + v_k(g_N' - g_k') }{\sum_{j=0}^{k-1} 2h_{N-j-1,N-k} g_{j+1}'} \nonumber \\
    \begin{split}
        & = -(\tau - 1) \norm{g_N'}^2 + \sum_{k=1}^{N-1} \frac{1}{u_{N-k}}\norm{g_N' - g_k'}^2 \\
        & \qquad + \sum_{k=1}^{N-1}  \inprod{g_N' +\frac{1}{u_{N-1}}(g_N' - g_1') + \dots + \frac{1}{u_{N-k}}(g_N' - g_k') }{\sum_{j=0}^{k-1} 2h_{N-j-1,N-k} g_{j+1}'}     
    \end{split}
    \label{eqn:appendix-H-duality-T-transformed-expansion}
\end{align}
Note that for $k=1,\dots,N-1$, it follows directly from the definition of $\cF$ that $g_N'-g_k' = u_{N-k}(g_{N-k}-g_{N-k+1})$.
Therefore, the first two terms of~\eqref{eqn:appendix-H-duality-T-transformed-expansion} can be rewritten as
\begin{align*}
    -(\tau - 1) \norm{g_N'}^2 + \sum_{k=1}^{N-1} \frac{1}{u_{N-k}}\norm{g_N' - g_k'}^2 & = -(\tau - 1) \norm{g_N}^2 + \sum_{k=1}^{N-1} u_{N-k}\norm{g_{N-k} - g_{N-k+1}}^2 \\
    & = -(\tau - 1) \norm{g_N}^2 + \sum_{k=1}^{N-1} u_{k}\norm{g_{k+1} - g_{k}}^2
\end{align*}
and the last expression coincides with the first two terms within~\eqref{eqn:appendix-H-duality-S-expansion-second}.
Next, rewrite the second line of~\eqref{eqn:appendix-H-duality-T-transformed-expansion} as following:
\begin{align*}
    & \sum_{k=1}^{N-1}  \inprod{g_N' +\frac{1}{u_{N-1}}(g_N' - g_1') + \dots + \frac{1}{u_{N-k}}(g_N' - g_k') }{\sum_{j=0}^{k-1} 2h_{N-j-1,N-k}g_{j+1}'}  \\
    & = \sum_{k=1}^{N-1} \inprod{g_N + (g_{N-1} - g_N) + \dots + (g_{N-k} - g_{N-k+1}) }{\sum_{j=0}^{k-1} 2h_{N-j-1,N-k} g_{j+1}'} \\
    & = \sum_{k=1}^{N-1} \inprod{g_{N-k} }{\sum_{j=0}^{k-1} 2h_{N-j-1,N-k} g_{j+1}'} \\
    & = \sum_{k=1}^{N-1}\sum_{j=0}^{k-1} 2h_{N-j-1,N-k}\inprod{g_{N-k}}{g_N + u_{N-j-1}(g_{N-j} - g_{N-j-1})} \\
    & = \sum_{k=1}^{N-1}\sum_{i=N-k}^{N-1} 2h_{i,N-k}\inprod{g_{N-k}}{g_N + u_i(g_{i+1} - g_{i})}  
\end{align*}
where to obtain the last equality, we make an index substitution $i=N-j-1$.
From the last line, make another substitution of index $\ell=N-k$:
\begin{align}
    \sum_{k=1}^{N-1}\sum_{i=N-k}^{N-1} 2h_{i,N-k}\inprod{g_{N-k}}{g_N + u_i(g_{i+1} - g_{i})} & = \sum_{\ell=1}^{N-1}\sum_{i=\ell}^{N-1} 2h_{i,\ell} \inprod{g_\ell}{g_N + u_i(g_{i+1} - g_{i})} \nonumber \\
    & = \sum_{i=1}^{N-1}\sum_{\ell=1}^{i} 2h_{i,\ell} \inprod{g_\ell}{g_N + u_i(g_{i+1} - g_{i})} . \label{eqn:appendix-H-duality-final-index-change}
\end{align} 
Finally, changing the name of the indices $(i,\ell)$ to $(k,j+1)$ in~\eqref{eqn:appendix-H-duality-final-index-change} gives
\begin{align*}
    \sum_{i=1}^{N-1}\sum_{\ell=1}^{i} 2h_{i,\ell} \inprod{g_\ell}{g_N + u_i(g_{i+1} - g_{i})} & = \sum_{k=1}^{N-1}\sum_{j=0}^{k-1} 2h_{k,j+1} \inprod{g_{j+1}}{g_N + u_k (g_{k+1} - g_k)} \\
    & = \sum_{k=1}^{N-1} \inprod{g_N + u_k (g_{k+1} - g_k)}{\sum_{j=0}^{k-1} 2h_{k,j+1} g_{j+1}}
\end{align*}
where the last expression coincides with the last summation within~\eqref{eqn:appendix-H-duality-S-expansion-second}.
This shows that $\cT(\cF(g_1,\dots,g_N)) = \cT(g_1',\dots,g_N') = \cS(g_1,\dots,g_N)$, completing the proof of~\cref{thm::H-duality_FPI}.

\newpage
\section{Proof of the optimal family theorem}
\label{section:appendix-optimal-family-theorem}

\subsection{Overview of the proof and description of the parametrization $\Phi\colon C \to \reals^{(N-1)\times(N-1)}$}
\label{subsection:appendix-optimal-family-proof-outline}
In this section, we prove \cref{theorem:optimal-method-family}.
The full proof is long and complicated, so we first outline the structure of the proof.
\begin{enumerate}
    \item We consider the following proof strategy: If there is some index set $I \subseteq \{1,\dots,N\} \times \{1,\dots,N\}$ and nonnegative real numbers $\lambda_{i,j}$ for each $(i,j) \in I$ such that
    \begin{align}
        \label{eqn:proof-template-identity-appendix}
        0 = \inprod{\topa x_N}{x_N - y_0} + N \|\topa x_N\|^2 + \sum_{(i,j)\in I} \lambda_{i,j} \inprod{\topa x_i - \topa x_j}{x_i - x_j} 
    \end{align}
    then the algorithm exhibits the rate
    \begin{align*}
        \sqnorm{y_{N-1} - \opT y_{N-1}} = 4 \sqnorm{\tilA x_N} \leq \frac{4 \norm{y_0-y_\star}^2}{N^2} .
    \end{align*}
    The final convergence rate is a direct consequence of~\eqref{eqn:proof-template-identity-appendix}, monotonicity of $\opA$ and \cref{lemma:convergence-proof-last-step}.
    
    \item We choose $I = \{(k+1,k)\,|\,k=1,\dots,N-1\} \cup \{(N,k)\,|\,k=1,\dots,N-1\}$, and we use the H-matrix representation~\eqref{eqn:FPI-H-matrix-appendix} to eliminate $x_1,\dots,x_N$ within~\eqref{eqn:proof-template-identity-appendix}.
    Then~\eqref{eqn:proof-template-identity-appendix} becomes a vector quadratic form in $\tilA x_1, \dots, \tilA x_N$, i.e.,
    \begin{align*}
        & \inprod{\topa x_N}{x_N - y_0} + N \|\topa x_N\|^2 + \sum_{(i,j)\in I} \lambda_{i,j} \inprod{\topa x_i - \topa x_j}{x_i - x_j}
        = \sum_{k=1}^N \sum_{\ell=k}^N s_{\ell,k} \inprod{\topa x_\ell}{\topa x_k} 
    \end{align*}
    where the coefficients $s_{\ell,k} = s_{\ell,k} \left( H, \lambda \right)$ are functions depending on $H$ and $\lambda = \left(\lambda_{i,j}\right)_{(i,j) \in I}$ 
    (note that we only keep $s_{\ell,k}$ with $\ell \ge k$ to avoid redundancy).
    We characterize the explicit expressions for $s_{\ell,k}$ in terms of $\lambda$ and entries of $H$, reducing the problem of establishing the identity~\eqref{eqn:proof-template-identity-appendix} to the problem of solving the system of equations
    \begin{align}
    \label{eqn:appendix-optimal-family-s-ell-k-system}
        s_{\ell,k} \left(H, \lambda \right) = 0 \quad (k=1,\dots,N, \,\, \ell=k,\dots,N) .
    \end{align}

    \item We provide explicit solutions for $\lambda$, in terms of diagonal entries $h_{k,k}$ ($k=1,\dots,N-1$) of the H-matrix.
    With these explicit values of $\lambda$, the system~\eqref{eqn:appendix-optimal-family-s-ell-k-system} becomes nonlinear in $h_{k,k}$ ($k=1,\dots,N-1$), but it is a linear system in non-diagonal entries $h_{\ell,k}$ ($\ell=k+1,\dots,N-1$) of $H$.

    \item (This is the most technical core step of analysis.) We show that with the expressions of $\lambda$ from the previous step, under certain conditions on $h_{k,k}$ ($k=1,\dots,N-1$), the linear system~\eqref{eqn:appendix-optimal-family-s-ell-k-system} is uniquely solvable in the non-diagonal H-matrix entries $h_{\ell,k}$ ($\ell=k+1,\dots,N-1$), and these unique solutions
    \begin{align}
    \label{eqn:appendix-optimal-family-h-ell-k-as-functions}
        h_{\ell,k}^* = h_{\ell,k}^*(h_{1,1},h_{2,2},\dots,h_{N-1,N-1})
    \end{align}
    can be expressed as continuous functions of the diagonal $h_{k,k}$ ($k=1,\dots,N-1$).
\end{enumerate}

We now describe how $\Phi\colon C \to \reals^{(N-1)\times(N-1)}$ is constructed.
Define
\begin{align}
\label{eqn:appendix-pk-definition}
    p_k = \prod_{\ell=k}^{N-1} h_{\ell,\ell}, \quad k=1,\dots,N-1 
\end{align}
(these quantities play significant role in all analyses of this section).
It turns out that the affine constraints 
\begin{align}
    p_1 & = \frac{1}{N} \label{eqn:appendix-family-p1-equality-constraint} \\
    p_k & \ge \frac{1}{N-k+1} \quad (k=2,\dots,N-1) \label{eqn:appendix-family-pk-affine-constraint-1} \\
    p_k & \ge \frac{N-k}{N-k-1} p_{k+1} - \frac{1}{N-k-1} \quad (k=1,\dots,N-2) \label{eqn:appendix-family-pk-affine-constraint-2} 
\end{align}
are the key conditions making the construction of Steps~1 through 4 work.
Specifically, we have $\lambda_{i,j} > 0$ for explicit expressions of $\lambda$ from Step~3, if the constraints \eqref{eqn:appendix-family-p1-equality-constraint} holds, and \eqref{eqn:appendix-family-pk-affine-constraint-1} and \eqref{eqn:appendix-family-pk-affine-constraint-2} holds with strict inequality.
The set $C$ of all $(p_2, \dots, p_{N-1})$ satisfying the inequality constraints \eqref{eqn:appendix-family-pk-affine-constraint-1} and \eqref{eqn:appendix-family-pk-affine-constraint-2} strictly is an open convex subset of $\reals^{N-2}$.
To check that $C \ne \emptyset$, consider the H-matrices of \ref{alg:halpern} and \ref{alg:dual_halpern}.
From \ref{alg:halpern}, we have
\begin{align*}
    p_k^{(0)} = \prod_{\ell=k}^{N-1} h_{\ell,\ell} = \prod_{\ell=k}^{N-1} \frac{\ell}{\ell+1} = \frac{k}{N}
\end{align*}
and from \ref{alg:dual_halpern},
\begin{align*}
    p_k^{(1)} = \prod_{\ell=k}^{N-1} h_{\ell,\ell} = \prod_{\ell=k}^{N-1} \frac{N-\ell}{N-\ell+1} = \frac{1}{N-k+1} .
\end{align*}
It can be checked by direct calculations that $p_1^{(0)} = \frac{1}{N} = p_1^{(1)}$ and that $p_k^{(0)}$'s satisfy \eqref{eqn:appendix-family-pk-affine-constraint-1} strictly and \eqref{eqn:appendix-family-pk-affine-constraint-2} with equality, while $p_k^{(1)}$'s satisfy \eqref{eqn:appendix-family-pk-affine-constraint-1} with equality and \eqref{eqn:appendix-family-pk-affine-constraint-2} strictly.
Therefore, $\left( p_2^{(0)}, \dots , p_{N-1}^{(0)} \right), \left( p_2^{(0)}, \dots , p_{N-1}^{(0)} \right) \in \partial C$.
Additionally, if we define $p_k^{(\gamma)} := \gamma p_k^{(0)} + (1-\gamma) p_k^{(1)}$ for $\gamma \in (0,1)$ and $k=1,\dots,N-1$, then $p_1^{(\gamma)} = \frac{1}{N}$, and $\left( p_2^{(\gamma)}, \dots , p_{N-1}^{(\gamma)} \right)$ satisfies both \eqref{eqn:appendix-family-pk-affine-constraint-1}, \eqref{eqn:appendix-family-pk-affine-constraint-2} strictly, i.e.,
\begin{align*}
    \left( p_2^{(\gamma)}, \dots , p_{N-1}^{(\gamma)} \right) \in C .
\end{align*}
This shows that $C$ is nonempty.

Note that provided that $p_1 = \frac{1}{N}$, we can recover the diagonal $h_{k,k}$'s from $(p_2,\dots,p_{N-1}) \in C$ as $h_{1,1} = \frac{p_1}{p_2} = \frac{1}{Np_2}$, $h_{k,k} = \frac{p_k}{p_{k+1}}$ for $k=2,\dots,N-2$ and $h_{N-1,N-1} = p_{N-1}$.
Now assuming that Step~4 is done so that we have $h_{\ell,k}^*$ determined as functions of $h_{1,1},\dots,h_{N-1,N-1}$ as in~\eqref{eqn:appendix-optimal-family-h-ell-k-as-functions},
we define $\Phi: C \to \reals^{(N-1)\times(N-1)}$ by
\begin{align*}
    \Phi (p_2, \dots, p_{N-1}) = 
    \begin{bmatrix}
        \frac{1}{N p_2} & 0 & \cdots & 0 \\
        h_{2,1}^* \left( \frac{1}{N p_2} , \frac{p_2}{p_3} , \dots , p_{N-1} \right) & \frac{p_2}{p_3} & \cdots & 0 \\
        \vdots & \vdots & \ddots & \vdots \\
        h_{N-1,1}^* \left( \frac{1}{N p_2} , \frac{p_2}{p_3} , \dots , p_{N-1} \right) & h_{N-1,2}^* \left( \frac{1}{N p_2} , \frac{p_2}{p_3} , \dots , p_{N-1} \right) & \cdots & p_{N-1}
    \end{bmatrix} .
\end{align*}
This map is injective because one can recover the values of $p_2, \dots, p_{N-1}$ from the diagonal entries of $\Phi (p_2, \dots, p_{N-1})$.
Additionally, it is continuous provided that $h_{\ell,k}^*$ are continuous.  
Because $\Phi(w)$ is designed to satisfy~\eqref{eqn:proof-template-identity-appendix} (when it is used as H-matrix) for any $w = (p_2, \dots, p_{N-1}) \in C$, by monotonicity of $\opA$, the algorithm with $\Phi(w)$ as H-matrix exhibits the exact optimal rate of $\sqnorm{y_{N-1} - \opT y_{N-1}} = 4 \sqnorm{\tilA x_N} \leq \frac{4 \norm{y_0-y_\star}^2}{N^2}$ by Step~1.
In the subsequent sections, we go through each steps outlined above, and explain how the quantities $p_k$ defined in~\eqref{eqn:appendix-pk-definition} and the constraints \eqref{eqn:appendix-family-p1-equality-constraint}, \eqref{eqn:appendix-family-pk-affine-constraint-1} and \eqref{eqn:appendix-family-pk-affine-constraint-2} become relevant to analysis.

\subsection{Step 2: Computation of coefficient functions $s_{\ell,k}$ in vector quadratic form}

We rewrite the terms $x_N - y_0$ and $x_i - x_j$ in the right hand side of~\eqref{eqn:proof-template-identity-appendix} as linear combinations of $\tilA x_k$ ($k=1,\dots,N$) according to the definition~\eqref{eqn::FPI_H_matrix} and $x_k = \JA(y_{k-1}) = y_{k-1} - \tilA x_k$, and then expand everything.
Denoting $g_k = \tilA x_k$ for simplicity, the expansion goes:
\begin{align*}
    & \inprod{\topa x_N}{x_N - y_0} + N \|\topa x_N\|^2 + \sum_{(i,j)\in I} \lambda_{i,j} \inprod{\topa x_i - \topa x_j}{x_i - x_j} \\
    & = \inprod{\topa x_N}{x_N - y_0} + N \|\topa x_N\|^2 + \sum_{k=1}^{N-2} \lambda_{k+1,k} \inprod{\topa x_{k+1} - \topa x_k}{x_{k+1} - x_k} + \sum_{k=1}^{N-2} \lambda_{N,k} \inprod{\topa x_N - \topa x_k}{x_N - x_k} \\
    & \quad + \lambda_{N,N-1} \inprod{\topa x_N - \topa x_{N-1}}{x_N - x_{N-1}} \\
    & = \inprod{g_N}{-\sum_{k=1}^{N-1} \sum_{j=1}^k 2 h_{k,j} g_j - g_N} + N \|g_N\|^2 + \sum_{k=1}^{N-2} \lambda_{k+1,k} \inprod{g_{k+1} - g_k}{-\sum_{j=1}^k 2 h_{k,j} g_j - (g_{k+1} - g_k)} \\
    & \quad + \sum_{k=1}^{N-2} \lambda_{N,k} \inprod{g_N - g_k}{-\sum_{i=k}^{N-1}\sum_{j=1}^i 2 h_{i,j} g_j - (g_N - g_k)} + \lambda_{N,N-1} \inprod{g_N - g_{N-1}}{-\sum_{j=1}^{N-1} 2 h_{N-1,j} g_j - (g_N - g_{N-1})}  \\
    & = \inprod{g_N}{-\sum_{j=1}^{N-1} \sum_{k=j}^{N-1} 2 h_{k,j} g_j - g_N} +  N \|g_N\|^2 - \sum_{k=1}^{N-2} \lambda_{k+1,k} \|g_{k+1}-g_k \|^2 - \sum_{k=1}^{N-2} \lambda_{N,k}\| g_N - g_k\|^2 - \lambda_{N,N-1}\|g_N-g_{N-1} \|^2 \\
    & \quad -\sum_{k=1}^{N-2}\lambda_{N,k}\inprod{g_N-g_k}{\sum_{i=k}^{N-1}\sum_{j=1}^i 2 h_{i,j} g_j}
    -\sum_{j=1}^{N-2} \inprod{\sum_{k=j}^{N-2} 2 h_{k,j}\lambda_{k+1,k}(g_{k+1}-g_k)}{g_j}
    -\lambda_{N,N-1}\inprod{g_N-g_{N-1}}{\sum_{j=1}^{N-1} 2 h_{N-1,j} g_j}.
\end{align*} 
We carefully gather the terms within the above expansion and group the coefficients attached to the same inner product terms, into the form
\begin{align}
\label{eqn:appendix-optimal-family-vector-quadratic-form}
    \sum_{k=1}^N \sum_{\ell=k}^N s_{\ell,k} \inprod{g_\ell}{g_k} .
\end{align} 
The result of the computation is as following:
\begin{align}
\begin{split}
    s_{N,N} & = N - 1 - \sum_{k=1}^{N-1} \lambda_{N,k} \\
    s_{N-1,N-1} & = -\lambda_{N-1,N-2} + \lambda_{N,N-1} (2 h_{N-1,N-1} - 1) \\
    s_{N,N-1} & = -2 h_{N-1,N-1} - \sum_{k=1}^{N-2} 2\lambda_{N,k} h_{N-1,N-1} - 2 \lambda_{N,N-1} (h_{N-1,N-1} - 1)
\end{split}
\label{eqn:appendix-optimal-family-s-computation-1}
\end{align}
and for $k=1,\dots,N-2$,
\begin{align}
\begin{split}
    s_{k,k} & = \begin{cases}
        \lambda_{k+1,k} (2 h_{k,k} - 1) - \lambda_{k,k-1} + \lambda_{N,k} \left( \sum_{i=k}^{N-1}2  h_{i,k} - 1 \right) & \text{if } \,\, k > 1 \\
        \lambda_{k+1,k} (2 h_{k,k} - 1) + \lambda_{N,k} \left( \sum_{i=k}^{N-1} 2 h_{i,k} - 1 \right) & \text{if } \,\, k = 1
    \end{cases} \\
    s_{k+1,k} & = 2 \lambda_{k+1,k} (1-h_{k,k}) + 2 \lambda_{k+2,k+1} h_{k+1,k} + 2 \lambda_{N,k} \sum_{i=k+1}^{N-1} h_{i,k+1} + 2 \lambda_{N,k+1} \sum_{i=k+1}^{N-1} h_{i,k} \quad (k < N - 2) \\ 
    s_{\ell,k} & = - 2 \lambda_{\ell,\ell-1} h_{\ell-1,k} + 2 \lambda_{\ell+1,\ell} h_{\ell,k} + 2 \lambda_{N,k} \sum_{i=\ell}^{N-1} h_{i,\ell} + 2 \lambda_{N,\ell} \sum_{i=\ell}^{N-1} h_{i,k} \quad (\ell=k+2,\dots,N-2) \\
    s_{N-1,k} & = 2 \lambda_{N,k} h_{N-1,N-1} - 2 \lambda_{N-1,N-2} h_{N-2,k} + 2 \lambda_{N,N-1} h_{N-1,k} \quad (k < N - 2) \\ 
    s_{N,k} & = 2 \lambda_{N,k} - \sum_{\ell=k}^{N-1} \left( 1 + \sum_{j=1}^\ell \lambda_{N,j} \right) 2 h_{\ell,k} 
\end{split}
\label{eqn:appendix-optimal-family-s-computation-2}
\end{align}
and finally,
\begin{align}
    s_{N-1,N-2} = 2 \lambda_{N-1,N-2} (1-h_{N-2,N-2}) + 2 \lambda_{N,N-1} h_{N-1,N-2} + 2 \lambda_{N,N-2} h_{N-1,N-1} .
    \label{eqn:appendix-optimal-family-s-computation-3}
\end{align}

\subsection{Step 3: Explicit characterization of $\lambda$}

Recall the definition~\eqref{eqn:appendix-pk-definition}: $p_k = \prod_{\ell=k}^{N-1} h_{\ell,\ell}$ for $k=1,\dots,N-1$. 
Assume $p_1 = \frac{1}{N}$.
We define
\begin{align}
\begin{split}
    \lambda_{N,N-1} & = N h_{N-1,N-1} \\
    \lambda_{k+1,k} & = \frac{N}{N-k-1} p_{k+1} \left( (N-k) p_{k+1} - 1 \right) \quad (k=1,\dots,N-2) \\
    \lambda_{N,k} & = \frac{N}{(N-k)(N-k-1)} - \frac{N}{N-k-1} p_{k+1} + \frac{N}{N-k} p_k \quad (k=1,\dots,N-2) 
\end{split}
\label{eqn:appendix-optimal-family-dual-variables}
\end{align}

We prove the following handy identities for $\lambda$.

\begin{proposition}
\label{proposition:lambda_Nj-telescoping}
For $\lambda$ defined as in~\eqref{eqn:appendix-optimal-family-dual-variables}, the following holds, provided that $p_1 = \frac{1}{N}$.
\begin{align*}
    & \sum_{i=k}^{N-1} \lambda_{N,i} = \frac{N(N-k-1)}{N-k} + \frac{N}{N-k} p_k \quad (k=2,\dots,N-1) \\
    & \sum_{i=1}^{N-1} \lambda_{N,i} = N - 1.  
\end{align*}
\end{proposition}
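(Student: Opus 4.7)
The plan is to recognize that both identities follow from a single telescoping computation, and that the second identity is just the first at $k=1$ combined with the assumption $p_1=\tfrac{1}{N}$. The key observation is that the expression for $\lambda_{N,i}$ in \eqref{eqn:appendix-optimal-family-dual-variables} decomposes into three pieces, each of which telescopes in $i$.

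First I would introduce the auxiliary quantity $a_i = \frac{N}{N-i}\, p_i$ for $i=1,\dots,N-1$. With this notation, the second and third terms of $\lambda_{N,i}$ (for $i\le N-2$) become $-a_{i+1}$ and $+a_i$, respectively, so
\begin{align*}
    \lambda_{N,i} = \frac{N}{(N-i)(N-i-1)} + a_i - a_{i+1}, \qquad i=1,\dots,N-2.
\end{align*}
Moreover, the rational coefficient decomposes by partial fractions as $\frac{N}{(N-i)(N-i-1)} = \frac{N}{N-i-1} - \frac{N}{N-i}$, which also telescopes in $i$.

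Next I would sum from $i=k$ to $i=N-2$: the partial-fraction piece collapses to $N - \frac{N}{N-k}$, while the $a_i - a_{i+1}$ piece collapses to $a_k - a_{N-1}$. Since $a_{N-1} = N p_{N-1}$ and $\lambda_{N,N-1} = N h_{N-1,N-1} = N p_{N-1}$ (using $p_{N-1} = h_{N-1,N-1}$ from \eqref{eqn:appendix-pk-definition}), adding $\lambda_{N,N-1}$ to the sum exactly cancels $-a_{N-1}$. Substituting $a_k = \frac{N}{N-k} p_k$ and simplifying gives
\begin{align*}
    \sum_{i=k}^{N-1} \lambda_{N,i} = N - \frac{N}{N-k} + \frac{N}{N-k}\, p_k = \frac{N(N-k-1)}{N-k} + \frac{N}{N-k}\, p_k,
\end{align*}
which is the first claimed identity, valid for $k=2,\dots,N-1$ (and in fact also for $k=1$, since no $k\ge 2$ restriction was used in the telescoping). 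Finally, for the second identity I would apply this formula at $k=1$ and substitute $p_1 = \tfrac{1}{N}$: the right-hand side becomes $\frac{N(N-2)}{N-1} + \frac{N}{N-1}\cdot\frac{1}{N} = \frac{N(N-2)+1}{N-1} = \frac{(N-1)^2}{N-1} = N-1$, as desired.

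The computation is essentially routine once the telescoping structure is recognized; the only subtlety is that $\lambda_{N,N-1}$ is defined by a separate formula than the other $\lambda_{N,i}$, so the telescoping sum in $a_i$ must be run only up to $i = N-2$ and the boundary term $-a_{N-1}$ must be cancelled by hand using the closed form for $\lambda_{N,N-1}$. No genuine obstacle is expected.
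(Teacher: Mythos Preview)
Your proposal is correct and follows essentially the same approach as the paper: both exploit the partial-fraction decomposition of $\frac{N}{(N-i)(N-i-1)}$ together with the telescoping structure of the $p$-terms, then handle the boundary case $\lambda_{N,N-1}=Np_{N-1}$ separately and specialize to $k=1$ using $p_1=\tfrac{1}{N}$. Your auxiliary notation $a_i=\frac{N}{N-i}p_i$ is a cosmetic addition that the paper does not use, but the underlying computation is identical.
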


\begin{proof}
Observe that the first identity, when $k=N-1$, is $\lambda_{N,N-1} = N p_{N-1} = N h_{N-1,N-1}$, which holds true by definition.
Next, for $k=1,\dots,N-1$, we can rewrite
\begin{align*}
    \lambda_{N,k} = \frac{N}{N-k-1} - \frac{N}{N-k} - \frac{N}{N-k-1} p_{k+1} + \frac{N}{N-k} p_k
\end{align*}
so they telescope:
\begin{align*}
    \sum_{i=k}^{N-1} \lambda_{N,i} & = \lambda_{N,N-1} + \sum_{i=k}^{N-2} \lambda_{N,i} \\
    & = N p_{N-1} + \sum_{i=k}^{N-2} \left( \frac{N}{N-i-1} - \frac{N}{N-i} - \frac{N}{N-i-1} p_{i+1} + \frac{N}{N-i} p_i \right) \\
    & = N p_{N-1} + \left( N - \frac{N}{N-k} - N p_{N-1} + \frac{N}{N-k} p_k \right) \\
    & = \frac{N(N-k-1)}{N-k} + \frac{N}{N-k} p_k .
\end{align*} 
In the case $k=1$, we have
\begin{align*}
    \sum_{i=1}^{N-1} \lambda_{N,i} = \frac{N(N-2)}{N-1} + \frac{N}{N-1} p_1 = \frac{N^2 - 2N + 1}{N - 1} = N - 1 .
\end{align*}
\end{proof}

Observe that $\lambda_{k+1,k} \ge 0$ if $p_{k+1} \ge \frac{1}{N-k}$ for $k=1,\dots,N-2$, which is equivalent to~\eqref{eqn:appendix-family-pk-affine-constraint-1}.
Moreover,
\begin{align*}
    & \lambda_{N,k} = \frac{N}{(N-k)(N-k-1)} - \frac{N}{N-k-1} p_{k+1} + \frac{N}{N-k} p_k \ge 0 \\
    & \iff p_k \ge \frac{N-k}{N} \left( \frac{N}{N-k-1} p_{k+1} - \frac{N}{(N-k)(N-k-1)} \right) = \frac{N-k}{N-k-1} p_{k+1} - \frac{1}{N-k-1}
\end{align*}
which is~\eqref{eqn:appendix-family-pk-affine-constraint-2}.
Clearly, if these inequalities are satisfied strictly, then $\lambda_{k+1,k}, \lambda_{N,k} > 0$.

\subsection{Step 4: Solving the linear system $s_{\ell,k} = 0$}

We first observe that by definition of $\lambda$ from the previous section and \cref{proposition:lambda_Nj-telescoping},
\begin{align*}
    s_{N,N} & = N - 1 - \sum_{k=1}^{N-1} \lambda_{N,k} = 0 \\
    s_{N-1,N-1} & = -\lambda_{N-1,N-2} + \lambda_{N,N-1} (2 h_{N-1,N-1} - 1) \\
    & = - N p_{N-1} (2p_{N-1} - 1) + N h_{N-1,N-1} (2h_{N-1,N-1} - 1) = 0
\end{align*}
and
\begin{align*}
    s_{N,N-1} & = -2 h_{N-1,N-1} - \sum_{k=1}^{N-2} 2\lambda_{N,k} h_{N-1,N-1} - 2 \lambda_{N,N-1} (h_{N-1,N-1} - 1) \\
    & = -2h_{N-1,N-1} \left(1 + \sum_{k=1}^{N-1} \lambda_{N,k} \right) + 2\lambda_{N,N-1} \\
    & = -2N h_{N-1,N-1} + 2\lambda_{N,N-1} \\
    & = 0
\end{align*}
directly follows.
However, for $s_{\ell,k}$ with $k\le N-2$ and $\ell=k,\dots,N$, we need to determine $h_{k+1,k},\dots,h_{N-1,k}$ that achieves $s_{\ell,k} = 0$.
We first characterize the condition for $s_{k,k} = 0$.
\begin{proposition}
\label{proposition:appendix-skk-zero-condition}
For $k=1,\dots,N-2$, provided that $p_1 = \frac{1}{N}$,
\begin{align}
\label{eqn:appendix-optimal-family-skk-zero-condition}
    s_{k,k} = 0 \iff \sum_{i=k}^{N-1} 2h_{i,k} = 1 - (N-k) p_{k+1} + (N-k+1) p_k .
\end{align}
\end{proposition}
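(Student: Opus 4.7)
The claim is a direct algebraic consequence of the formula for $s_{k,k}$ in~\eqref{eqn:appendix-optimal-family-s-computation-2} together with the explicit choice of dual variables in~\eqref{eqn:appendix-optimal-family-dual-variables}. The plan is simply to isolate $\sum_{i=k}^{N-1} 2h_{i,k}$ in the equation $s_{k,k}=0$ and verify the resulting identity by direct substitution.

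First, I would rewrite $s_{k,k}=0$ as
\begin{align*}
    \lambda_{N,k} \sum_{i=k}^{N-1} 2h_{i,k} \;=\; \lambda_{N,k} + \lambda_{k,k-1} - \lambda_{k+1,k}(2h_{k,k}-1)
\end{align*}
for $k>1$, with the convention $\lambda_{1,0}=0$ covering the $k=1$ case. Since we may assume the strict versions of~\eqref{eqn:appendix-family-pk-affine-constraint-1} and \eqref{eqn:appendix-family-pk-affine-constraint-2} hold (so $\lambda_{N,k}>0$), dividing through reduces the proposition to proving the purely algebraic identity
\begin{align*}
    \lambda_{k,k-1} - \lambda_{k+1,k}(2h_{k,k}-1) \;=\; \lambda_{N,k}\bigl[(N-k+1)p_k - (N-k)p_{k+1}\bigr].
\end{align*}

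Second, I would substitute the expressions from~\eqref{eqn:appendix-optimal-family-dual-variables}, together with the factorization
\begin{align*}
    \lambda_{N,k} = \frac{N}{(N-k)(N-k-1)}\bigl[1 - (N-k)p_{k+1} + (N-k-1)p_k\bigr]
\end{align*}
and the relation $h_{k,k} = p_k/p_{k+1}$ that follows from the definition \eqref{eqn:appendix-pk-definition} of $p_k$. With the abbreviations $a=p_k$, $b=p_{k+1}$, $m=N-k$, the identity becomes a polynomial identity in $a,b$ with coefficients depending on $m$: both sides reduce to a common multiple of $\tfrac{N}{m(m-1)}$ times the polynomial $(m^2-1)a^2 - 2m^2 ab + m^2 b^2 + (m+1)a - mb$, which can be checked by straightforward expansion.

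Third, the $k=1$ case requires the extra input $p_1=1/N$ since $\lambda_{1,0}$ is absent. Using $Np_1=1$, the right-hand side $(N-k+1)p_k - (N-k)p_{k+1}$ at $k=1$ simplifies to $1-(N-1)p_2$, and the same polynomial verification (now with $a=1/N$) goes through. I do not anticipate any real obstacle: the proof is a single contraction of the definitions, and the only minor care needed is the separate bookkeeping for $k=1$ versus $k>1$ to handle the missing $\lambda_{k,k-1}$ term.
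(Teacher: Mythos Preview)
Your proposal is correct and follows essentially the same approach as the paper: both isolate $\sum_{i=k}^{N-1}2h_{i,k}$ from the formula for $s_{k,k}$, introduce the convention $\lambda_{1,0}=0$ (valid because $p_1=1/N$) to unify the cases, divide by $\lambda_{N,k}$, and reduce to an algebraic identity after substituting the explicit $\lambda$'s and $h_{k,k}=p_k/p_{k+1}$. The only cosmetic difference is in how the final identity is verified: the paper introduces the shorthand $d_k = 1-(N-k)p_{k+1}+(N-k-1)p_k$ (proportional to $\lambda_{N,k}$) and simplifies the quotient to $d_k+2p_k-1$, whereas you expand both sides directly as polynomials in $a=p_k$, $b=p_{k+1}$, $m=N-k$ and match coefficients.
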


\begin{proof}
Recall from~\eqref{eqn:appendix-optimal-family-s-computation-2} that
\begin{align*}
    s_{k,k} = 
    \begin{cases}
        \lambda_{k+1,k} (2 h_{k,k} - 1) - \lambda_{k,k-1} + \lambda_{N,k} \left( \sum_{i=k}^{N-1}2  h_{i,k} - 1 \right) & \text{if } \,\, k > 1 \\
        \lambda_{k+1,k} (2 h_{k,k} - 1) + \lambda_{N,k} \left( \sum_{i=k}^{N-1} 2 h_{i,k} - 1 \right) & \text{if } \,\, k = 1
    \end{cases}
\end{align*}
To eliminate the need to deal with the case $k=1$ separately, define the dummy variable $\lambda_{1,0} = \frac{N}{N-1} p_1 (Np_1 - 1)$, which is consistent with the rule~\eqref{eqn:appendix-optimal-family-dual-variables} for defining $\lambda_{k+1,k}$ for $k=1,\dots,N-2$.
Given that $p_1 = \frac{1}{N}$ we have $\lambda_{1,0} = 0$, so we can write
\begin{align}
    s_{k,k} = 0 \, \iff \, \sum_{i=k}^{N-1} 2h_{i,k} - 1 = \frac{\lambda_{k,k-1}-\lambda_{k+1,k}(2h_{k,k}-1)}{\lambda_{N,k}} \label{eqn:optimal-method-family-induction-S_kk-equivalence}
\end{align}
where
\begin{align*}
    \lambda_{k+1,k} & = \frac{N}{N-k-1} p_{k+1} \left( (N-k)p_{k+1} - 1 \right) \\
    \lambda_{k,k-1} & = \frac{N}{N-k} p_k \left((N-k+1) p_k - 1 \right) \\
    \lambda_{N,k} & = \frac{N}{(N-k)(N-k-1)} - \frac{N}{N-k-1} p_{k+1} + \frac{N}{N-k} p_k \\
    h_{k,k} & = \frac{\prod_{\ell=k}^{N-1} h_{\ell,\ell}}{\prod_{\ell=k+1}^{N-1} h_{\ell,\ell}} = \frac{p_k}{p_{k+1}}
\end{align*} 
Plugging the above identities into the right hand side of~\eqref{eqn:optimal-method-family-induction-S_kk-equivalence} gives
\begin{align*}
    \sum_{i=k}^{N-1} 2 h_{i,k} - 1 & = \frac{\lambda_{k,k-1}-\lambda_{k+1,k}(2h_{k,k}-1)}{\lambda_{N,k}} \\
    & = \frac{\frac{N}{N-k} p_k \left( (N-k+1) p_k -1 \right) - \frac{N}{N-k-1} p_{k+1} \left( (N-k)p_{k+1} -1 \right) \left( \frac{2p_k}{p_{k+1}} - 1 \right)}{\frac{N}{(N-k)(N-k-1)} - \frac{N}{N-k-1}p_{k+1} + \frac{N}{N-k}p_k} .
\end{align*}
Multiply $\frac{(N-k)(N-k-1)}{N}$ throughout both numerator and denominator, and rearrange the expressions using the substitution $d_k = 1 - (N-k) p_{k+1} + (N-k-1) p_k$:
\begin{align*}
    \sum_{i=k}^{N-1} 2h_{i,k} - 1
    & = \frac{(N-k-1) p_k \left((N-k+1)p_k - 1 \right) - (N-k) \left((N-k)p_{k+1}-1 \right) \left(2p_k - p_{k+1}\right)}{1 - (N-k)p_{k+1} + (N-k-1)p_k} \\
    & = \frac{(N-k-1) p_k \left((N-k+1)p_k - 1 \right) - \left( (N-k)p_{k+1}-1 \right) \left( 2(N-k) p_k - (N-k) p_{k+1} \right)}{1 - (N-k)p_{k+1} + (N-k-1)p_k} \\
    & = \frac{(N-k-1) p_k \left((N-k+1)p_k - 1 \right) - \left( (N-k-1)p_k - d_k \right) \left( 2(N-k)p_k + d_k - 1 - (N-k-1)p_k \right)}{d_k} \\
    & = \frac{(N-k-1) p_k \left((N-k+1)p_k - 1 \right) - \left( (N-k-1)p_k - d_k \right) \left( (N-k+1)p_k + d_k - 1 \right)}{d_k} \\
    & = \frac{d_k^2 + (2p_k - 1) d_k}{d_k} \\
    & = d_k + 2p_k - 1.
\end{align*}
This is again equivalent to
\begin{align*}
    \sum_{i=k}^{N-1} 2h_{i,k} = d_k + 2p_k = 1 - (N-k)p_{k+1} + (N-k+1)p_k .
\end{align*}
\end{proof}

We name the quantity
\begin{align}
\label{eqn:appendix-optimal-family-qk-definition}
    q_k = 1 - (N-k)p_{k+1} + (N-k+1)p_k .
\end{align}
from~\eqref{eqn:appendix-optimal-family-skk-zero-condition} as it will be frequently used in the subsequent analysis.

In~\eqref{eqn:appendix-optimal-family-s-computation-2}, we see that for each fixed $k=1,\dots,N-2$, the equations $s_{\ell,k} = 0$ ($\ell \ge k$) involve only $h_{i,j}$'s with $i\ge j \ge k$.
Therefore, rather than trying to solve the whole system $s_{\ell,j} = 0$ ($j=1,\dots,N-2$, $\ell=k,\dots,N$) at once, we iteratively solve the partial systems $s_{\ell,k} = 0$ ($\ell=k,\dots,N$) one-by-one with fixed value of $k$, starting from $k=N-2$ and progressively lowering it to $k=1$. 

\begin{proposition}
For each $k=1,\dots,N-2$ fixed, the system of equations $s_{\ell,k} = 0$ ($\ell=k,\dots,N$), viewing only $h_{k+1,k}, \dots, h_{N-1,k}$ as variables, is a uniquely solvable linear system, if: all $\lambda_{i,j}$'s defined in~\eqref{eqn:appendix-optimal-family-dual-variables} are positive and
\begin{align}
    \sum_{i=j}^{N-1} 2 h_{i,j} = q_j
    \label{eqn:family-existence-proof-induction-assumption}
\end{align}
holds for $j=k+1,\dots,N-2$.
In this case, the solutions $h_{k+1,k}^*, \dots, h_{N-1,k}^*$ satisfying $s_{\ell,k} = 0$ ($\ell=k,\dots,N$) can be written as continuous functions of $h_{1,1}, \dots, h_{N-1,N-1}$, i.e.,
\begin{align*}
    h_{i,k}^* = h_{i,k}^* \left( h_{1,1}, \dots, h_{N-1,N-1} \right) .
\end{align*}
\label{proposition:appendix-optimal-family-solvability-up-to-k=2}
\end{proposition}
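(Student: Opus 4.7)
The plan is to exploit a staircase structure in the equations $s_{\ell,k}=0$ and solve them recursively from $\ell=N-1$ downward. Inspecting~\eqref{eqn:appendix-optimal-family-s-computation-2}, each equation $s_{\ell,k}=0$ for $\ell\in\{k+2,\dots,N-1\}$ has the form
\[
-2\lambda_{\ell,\ell-1}h_{\ell-1,k} + 2(\lambda_{\ell+1,\ell}+\lambda_{N,\ell})h_{\ell,k} + 2\lambda_{N,\ell}\sum_{i=\ell+1}^{N-1} h_{i,k} + c_{\ell,k} = 0,
\]
where $c_{\ell,k}$ depends only on $\sum_{i=\ell}^{N-1} h_{i,\ell}$, which is fixed by the induction hypothesis~\eqref{eqn:family-existence-proof-induction-assumption}. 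Since $\lambda_{\ell,\ell-1}>0$ under the strict inequalities~\eqref{eqn:appendix-family-pk-affine-constraint-1}, this uniquely expresses $h_{\ell-1,k}$ as a rational function of $h_{\ell,k},\dots,h_{N-1,k}$ with continuous dependence on the diagonal $h_{\ell',\ell'}$'s. An analogous step handles the boundary equation $s_{N-1,k}=0$. Propagating from $\ell=N-1$ down to $\ell=k+2$ yields $h_{k+1,k},\dots,h_{N-2,k}$ as continuous functions of a single free parameter $h_{N-1,k}$.

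Next I would use $s_{k+1,k}=0$ to pin down $h_{N-1,k}$. After substituting the parametrization, this becomes a single linear equation in $h_{N-1,k}$; I must verify its leading coefficient is nonzero, which should follow from an explicit computation using~\eqref{eqn:appendix-optimal-family-dual-variables} together with the strict positivity of all $\lambda$'s. It then remains to verify that the two residual equations $s_{k,k}=0$ and $s_{N,k}=0$ hold automatically at the constructed solution. By Proposition~\ref{proposition:appendix-skk-zero-condition}, $s_{k,k}=0$ is equivalent to the sum identity $\sum_{i=k}^{N-1} 2h_{i,k} = q_k$, which, once established, supplies the induction hypothesis required at index $k-1$ in the next round. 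To prove both residual identities simultaneously, I would search for weights $w_\ell$ making the linear combination $\sum_{\ell=k+1}^{N-1} w_\ell\, s_{\ell,k} + w_{k+1}^\star s_{k+1,k}$ collapse algebraically onto a combination of $s_{k,k}$ and $s_{N,k}$, using the telescoping identity of Proposition~\ref{proposition:lambda_Nj-telescoping} to guide the choice of weights.

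The main obstacle is precisely this automatic-satisfaction step. The bidiagonal terms $-2\lambda_{\ell,\ell-1}h_{\ell-1,k}+2\lambda_{\ell+1,\ell}h_{\ell,k}$ in the $s_{\ell,k}$'s telescope cleanly, but the tails $2\lambda_{N,\ell}\sum_{i=\ell}^{N-1}h_{i,k}$ require a delicate double-summation rearrangement that mirrors the telescoping of $\sum_{j}\lambda_{N,j}$ in Proposition~\ref{proposition:lambda_Nj-telescoping}, and the specific quadratic form of $\lambda_{k+1,k}$ and $\lambda_{N,k}$ in $p_{k+1}, p_k$ from~\eqref{eqn:appendix-optimal-family-dual-variables} must be invoked to cancel the leftover cross-terms. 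Once this bookkeeping is completed, continuity of each $h_{\ell,k}^*$ as a function of $(h_{1,1},\dots,h_{N-1,N-1})$ follows automatically, since every step of the construction is a rational operation whose denominator is a strictly positive $\lambda$, hence the composition remains continuous on the open region cut out by the strict inequalities in~\eqref{eqn:appendix-family-pk-affine-constraint-1} and~\eqref{eqn:appendix-family-pk-affine-constraint-2}.
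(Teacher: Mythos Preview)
Your outline is sound and takes a genuinely different route from the paper. You partition the $N-k+1$ equations as $\{s_{k+1,k},\dots,s_{N-1,k}\}$ (determining) and $\{s_{k,k},s_{N,k}\}$ (residual), whereas the paper uses $\{s_{k,k},\dots,s_{N-2,k}\}$ as determining and $\{s_{N-1,k},s_{N,k}\}$ as residual. Your back-substitution from $\ell=N-1$ downward works, and the leading coefficient you worry about is indeed nonzero: if $\alpha_i$ denotes the coefficient of $h_{N-1,k}$ in $h_{i,k}$ after parametrization, then $\alpha_{N-1}=1$ and each recursion step gives $\alpha_{\ell-1}$ as a positive combination of $\alpha_\ell,\dots,\alpha_{N-1}$ divided by $\lambda_{\ell,\ell-1}>0$, so all $\alpha_i>0$ and the final coefficient $2\lambda_{k+2,k+1}\alpha_{k+1}+2\lambda_{N,k+1}\sum_i\alpha_i$ is strictly positive. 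The paper instead subtracts a multiple of $s_{k,k}$ from each $s_{\ell,k}$ to produce a lower-triangular matrix with diagonal $\lambda_{k+2,k+1},\dots,\lambda_{N-1,N-2},1$, making invertibility immediate without any propagation argument.

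The real divergence is at the residual step, which you rightly flag as the obstacle. The paper's verification of its residual $s_{N-1,k}=0$ is not a mere telescoping exercise: it introduces auxiliary sequences $a_\ell,b_\ell$ defined by a backward recursion, derives closed-form expressions for them (a separate proposition), and then uses these formulas to show that a carefully assembled combination $r_{N-1,k}$ vanishes identically as a polynomial in $p_k,p_{k+1}$. Your plan to ``search for weights $w_\ell$'' using Proposition~\ref{proposition:lambda_Nj-telescoping} is the right instinct, but you will need comparable auxiliary machinery, and it is not clear your residuals are any easier than the paper's. One structural cost of your partition is that $s_{k,k}=0$---equivalently $\sum_i 2h_{i,k}=q_k$, the very identity needed as induction hypothesis at level $k-1$---is a residual you must \emph{prove}, whereas in the paper's partition it is one of the determining equations and hence holds by construction. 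This makes the paper's induction self-propagating, while yours must re-establish the hypothesis at every step.
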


\begin{proof}
First consider the case $k=N-2$, which requires a separate treatment because $s_{N-1,N-2}$ is defined separately as~\eqref{eqn:appendix-optimal-family-s-computation-3}.
In this case, we have 3 equations to solve, namely
\begin{align}
\label{eqn:appendix-optimal-family-system-Nm2-case}
    s_{N-2,N-2} = 0 , \quad s_{N-1,N-2} = 0 , \quad s_{N,N-2} = 0 .
\end{align}
By~\cref{proposition:appendix-skk-zero-condition}, we have 
\begin{align}
\label{eqn:appendix-optimal-family-sNm2,Nm2-zero-condition}
\begin{split}
    s_{N-2,N-2} = 0 \, & \iff \, 2(h_{N-2,N-2} + h_{N-1,N-2}) = q_{N-1} = 1 - 2p_{N-1} + 3p_{N-2} \\
    & \iff \, h_{N-1,N-2} = \frac{1 - 2h_{N-1} + 3h_{N-2,N-2}h_{N-1,N-1}}{2} - h_{N-2,N-2} .
\end{split}
\end{align}
Plug the last expression for $h_{N-1,N-2}$ into 
\begin{align*}
    s_{N-1,N-2} & = 2 \lambda_{N-1,N-2} (1-h_{N-2,N-2}) + 2 \lambda_{N,N-1} h_{N-1,N-2} + 2 \lambda_{N,N-2} h_{N-1,N-1} \\
    & = 2N h_{N-1,N-1} (2h_{N-1,N-1} - 1) (1 - h_{N-2,N-2}) + 2N h_{N-1,N-1} h_{N-1,N-2} \\
    & \qquad + 2 \left( \frac{N}{2} - Nh_{N-1,N-1} + \frac{N}{2} h_{N-2,N-2}h_9{N-1,N-1} \right) h_{N-1,N-1}
\end{align*}
gives $s_{N-1,N-2} = 0$.
Next, provided that~\eqref{eqn:appendix-optimal-family-sNm2,Nm2-zero-condition} holds true, we have
\begin{align*}
    s_{N,N-2} & = 2\lambda_{N,N-2} - \left( 1 + \sum_{i=1}^{N-2} \lambda_{N,i} \right) 2h_{N-2,k} - \left( 1 + \sum_{i=1}^{N-1} \lambda_{N,i} \right) 2h_{N-1,k} \\
    & = 2\lambda_{N,N-2} - (N - \lambda_{N,N-1}) 2h_{N-2,N-2} - 2N h_{N-1,N-2} \\
    & = 2 \left( \frac{N}{2} - Nh_{N-1,N-1} + \frac{N}{2} h_{N-2,N-2}h_{N-1,N-1} \right) + 2 \lambda_{N,N-1} h_{N-2,N-2} - N (2h_{N-2,N-2} + 2h_{N-1,N-1}) \\
    & = 2 \left( \frac{N}{2} - Nh_{N-1,N-1} + \frac{N}{2} h_{N-2,N-2}h_{N-1,N-1} \right) + 2 Nh_{N-1,N-1} h_{N-2,N-2} - N (1 - 2h_{N-1} + 3h_{N-2,N-2}h_{N-1,N-1}) \\
    & = 0 .
\end{align*}
This shows the value of $h_{N-1,N-2}$ characterized by~\eqref{eqn:appendix-optimal-family-sNm2,Nm2-zero-condition} is the unique solution solving the equations~\eqref{eqn:appendix-optimal-family-system-Nm2-case}.
The expression determining $h_{N-1,N-2}$ is clearly continuous in $h_{N-2,N-2}, h_{N-1,N-1}$.

Now suppose $k < N-2$.
We first consider the following system of equations (without $s_{N-1,k} = 0$ and $s_{N,k} = 0$):
\begin{align}
\begin{split}
    s_{k,k} & = \lambda_{N,k} \left( \sum_{i=k+1}^{N-1} 2h_{i,k} + 2h_{k,k} - q_k \right) = 0 \\
    s_{k+1,k} & = 2 \lambda_{k+1,k} (1-h_{k,k}) + 2 \lambda_{k+2,k+1} h_{k+1,k} + 2 \lambda_{N,k} \sum_{i=k+1}^{N-1} h_{i,k+1} + 2 \lambda_{N,k+1} \sum_{i=k+1}^{N-1} h_{i,k} = 0 \\
    s_{\ell,k} & = - 2 \lambda_{\ell,\ell-1} h_{\ell-1,k} + 2 \lambda_{\ell+1,\ell} h_{\ell,k} + 2 \lambda_{N,k} \sum_{i=\ell}^{N-1} h_{i,\ell} + 2 \lambda_{N,\ell} \sum_{i=\ell}^{N-1} h_{i,k} = 0 \quad (\ell=k+2,\dots,N-2) 
\end{split}
\label{eqn:appendix-optimal-family-invertible-system-except-for-two-eqns}
\end{align}
where the first identity has been reorganized in a simpler form using~\cref{proposition:appendix-skk-zero-condition}.
We show that the system~\eqref{eqn:appendix-optimal-family-invertible-system-except-for-two-eqns} form an invertible linear system in the variables $h_{k+1,k}, \dots, h_{N-1,k}$, and thus have unique solutions.
Then we show that these solutions are consistent with the remaining two equations to complete the proof.

\textbf{Claim 1. } The system of equations~\eqref{eqn:appendix-optimal-family-invertible-system-except-for-two-eqns} is uniquely solvable.

In~\eqref{eqn:appendix-optimal-family-invertible-system-except-for-two-eqns}, the non-diagonal $h_{i,j}$'s with $j=k+1,\dots,N-2$ are involved only through the summation $\sum_{i=j}^{N-1} h_{i,j}$, which can be replaced with $q_j/2$ by the assumption~\eqref{eqn:family-existence-proof-induction-assumption}. 
Now for each $\ell=k+1,\dots,N-2$ we subtract a multiple of $s_{k,k}$ from each $s_{\ell,k}$ to eliminate the variables $h_{\ell,k},\dots,h_{N-1,k}$ while retaining an equivalent system of linear equations:
\begin{align*}
    s'_{k+1,k} & := s_{k+1,k} - \frac{\lambda_{N,k+1}}{\lambda_{N,k}} s_{k,k} \\
    & = 2 \lambda_{k+2,k+1} h_{k+1,k} + \underbrace{\lambda_{N,k} q_{k+1} + 2 \lambda_{k+1,k} (1 - h_{k,k}) + \lambda_{N,k+1} (q_k - 2h_{k,k})}_{:= t_{k+1,k} = \, \text{Terms not depending on }h_{k+1,k}, \dots, h_{N-1,k}} \\
    s'_{\ell,k} & := s_{\ell,k} - \frac{\lambda_{N,\ell}}{\lambda_{N,k}} s_{k,k} \\
    & = -2\lambda_{\ell,\ell-1} h_{\ell-1,k} + 2\lambda_{\ell+1,\ell} h_{\ell,k} - 2\lambda_{N,\ell} \sum_{i=k+1}^{\ell-1} h_{i,k} + \underbrace{\lambda_{N,\ell} (q_k - 2h_{k,k}) + \lambda_{N,k} q_\ell}_{:= t_{\ell,k} = \, \text{Terms not depending on }h_{k+1,k}, \dots, h_{N-1,k}} \quad (\ell=k+2,\dots,N-2) .
\end{align*}
For the sake of conciseness, we separately define the terms within $s'_{\ell,k}$ not depending on $h_{k+1,k}, \dots, h_{N-1,k}$:
\begin{align}
\label{eqn:appendix-optimal-family-tlk-definition}
    t_{\ell,k} = \begin{cases}
        \lambda_{N,k} q_{k+1} + 2 \lambda_{k+1,k} (1 - h_{k,k}) + \lambda_{N,k+1} (q_k - 2h_{k,k}) & \text{if } \ell = k + 1\\
        \lambda_{N,\ell} (q_k - 2h_{k,k}) + \lambda_{N,k} q_\ell & \text{if } \ell = k+2, \dots, N-2
    \end{cases}
\end{align}
so that we can write
\begin{align*}
    s'_{k+1,k} & = 2 \lambda_{k+2,k+1} h_{k+1,k} + t_{k+1,k} \\
    s'_{\ell,k} & = -2\lambda_{\ell,\ell-1} h_{\ell-1,k} + 2\lambda_{\ell+1,\ell} h_{\ell,k} - 2\lambda_{N,\ell} \sum_{i=k+1}^{\ell-1} h_{i,k} + t_{\ell,k} .
\end{align*}
Next, we express the system of equations $\frac{s'_{\ell,k}}{2} = 0$ ($\ell=k+1,\dots,N-1$) in matrix form.
To do this, we define the vectors within $\reals^{N-k-1}$, holding the coefficients attached to the variables $h_{k+1,k}, \dots, h_{N-1,k}$ within each $\frac{s'_{\ell,k}}{2}$:
\[
\begin{array}{ccccccccc}
   \va_1 & =  \big[ & \lambda_{k+2,k+1} & 0 & 0 & \cdots & 0 & 0 & 0 \big]^\intercal \smallskip \\
   \va_2 & = \big[ & -\lambda_{N,k+2} - \lambda_{k+2,k+1} & \lambda_{k+3,k+2} & 0 & \cdots & 0 & 0 & 0 \big]^\intercal \smallskip \\
   \va_3 & = \big[ & -\lambda_{N,k+3} & -\lambda_{N,k+3} - \lambda_{k+3,k+2} & \lambda_{k+4,k+3} & \cdots & 0 & 0 & 0 \big]^\intercal \\
   \vdots &  &  &  & \vdots &  &  & \\
   \va_{N-k-2} & = \big[ & -\lambda_{N,N-2} & -\lambda_{N,N-2} & -\lambda_{N,N-2} & \cdots & -\lambda_{N,N-2} - \lambda_{N-2,N-3} & \lambda_{N-1,N-2} & 0 \big]^\intercal
\end{array}
\]
so that if we define $\vh = \begin{bmatrix} h_{k+1,k} & \cdots & h_{N-1,k} \end{bmatrix}^\intercal$, then
\begin{align}
\label{eqn:appendix-optimal-family-slk-prime-vector-form}
    \frac{s'_{\ell,k}}{2} = \va_{\ell-k}^\intercal \vh + \frac{t_{\ell,k}}{2} 
\end{align}
for $\ell = k+1,\dots,N-2$.
More precisely,
\begin{align}
\begin{split}
    \va_1 & = \lambda_{k+2,k+1} \ve_1 \\
    \va_{\ell-k} & = - \lambda_{N,\ell} \sum_{i=1}^{\ell-k-1} \ve_i -\lambda_{\ell,\ell-1} \ve_{\ell-k-1} + \lambda_{\ell+1,\ell} \ve_{\ell-k}, \quad \ell = k+2,\dots,N-2
\end{split}
\label{eqn:appendix-optimal-family-va-definition-equation}
\end{align}
where $\ve_1, \dots, \ve_{N-k-2} \in \reals^{N-k-1}$ are elementary basis vectors.
Finally, define
\[
    \va_{N-k-1} = \sum_{i=1}^{N-k-1} \ve_i = \begin{bmatrix} 1 & 1 & \cdots & 1 \end{bmatrix}
\]
so that
\begin{align}
\label{eqn:appendix-optimal-family-skk-vector-form}
    \frac{s_{k,k}}{2\lambda_{N,k}} = \sum_{i=k+1}^{N-1} h_{i,k} + h_{k,k} - \frac{q_k}{2} 
    & = \va_{N-k-1}^\intercal \vh + h_{k,k} - \frac{q_k}{2} .
\end{align}
Now, by defining the matrix
\begin{align*}
    \vA = \begin{bmatrix} \va_1 & \va_2 & \cdots & \va_{N-k-1} \end{bmatrix}^\intercal \in \reals^{(N-k-1) \times (N-k-1)}
\end{align*}
we can write
\begin{align*}
    \begin{bmatrix}
        s_{k,k} \\
        s_{k+1,k} \\
        \vdots \\
        s_{N-2,k}
    \end{bmatrix} = 0
    \,\iff\,
    \frac{1}{2}\begin{bmatrix}
        s_{k+1,k}' \\
        \vdots \\
        s_{N-2,k}' \\
        s_{k,k}/\lambda_{N,k}
    \end{bmatrix}
    = 0 \,\iff\, \vA \vh = \frac{1}{2} \begin{bmatrix}
        -t_{k+1,k} \\
        \vdots \\
        -t_{N-2,k} \\
        q_k - 2h_{k,k}
    \end{bmatrix} .
\end{align*}
Note that $\vA$ is lower-triangular:
\begin{align*}
    \vA = \begin{bmatrix}
        \lambda_{k+2,k+1} & 0 & \cdots & 0 & 0 \\
        -\lambda_{N,k+2} - \lambda_{k+2,k+1} & \lambda_{k+3,k+2} & \cdots & 0 & 0 \\
        \vdots & \vdots & \ddots & \vdots & \vdots \\
        -\lambda_{N,N-2} & -\lambda_{N,N-2} & \cdots & \lambda_{N-1,N-2} & 0 \\
        1 & 1 & \cdots & 1 & 1 
    \end{bmatrix} 
\end{align*} 
Therefore, it is invertible provided that all diagonal entries $\lambda_{k+2,k+1}, \dots, \lambda_{N-1,N-2}$ are positive.
Because all entries of $\vA$ and $t_{k+1,k}, \dots, t_{N-2,k}$ are expressed as continuous functions of $h_{1,1}, \dots, h_{N-1,N-1}$ (because $\lambda$ is defined by~\eqref{eqn:appendix-optimal-family-dual-variables} and $q_\ell$'s are defined by~\eqref{eqn:appendix-optimal-family-qk-definition}), applying Cramer's rule to the above system, we can write $h_{k+1,k}, \dots, h_{N-1,k}$ as continuous functions of $h_{k,k}, \dots, h_{N-1,N-1}$.

\textbf{Claim 2. } Assuming that the solutions of~\eqref{eqn:appendix-optimal-family-invertible-system-except-for-two-eqns} also satisfy $s_{N-1,k} = 0$, we have $s_{N,k} = 0$.

Recall that
\begin{align}
    s_{N-1,k} & = 2 \lambda_{N,k} h_{N-1,N-1} - 2 \lambda_{N-1,N-2} h_{N-2,k} + 2 \lambda_{N,N-1} h_{N-1,k} \nonumber \\
    s_{N,k} & = 2 \lambda_{N,k} - \sum_{\ell=k}^{N-1} \left( 1 + \sum_{j=1}^\ell \lambda_{N,j} \right) 2 h_{\ell,k} . \label{eqn:appendix-optimal-family-sNk-definition}
\end{align}
Assume $s_{N-1,k} = 0$ (as well as $s_{\ell,k} = 0$ for $\ell=k,\dots,N-2$).
Then
\begin{align*}
    0 = s'_{N-1,k} & := s_{N-1,k} - \frac{\lambda_{N,N-1}}{\lambda_{N,k}} s_{k,k} \\
    & = 2 \lambda_{N,k} h_{N-1,N-1} + 2 \lambda_{N,N-1} h_{N-1,k} - 2 \lambda_{N-1,N-2} h_{N-2,k} - \lambda_{N,N-1} \left( \sum_{i=k+1}^{N-1} 2h_{i,k} + 2h_{k,k} - q_k \right) \\
    & = - \lambda_{N,N-1} \sum_{i=k+1}^{N-3} 2h_{i,k} - (\lambda_{N,N-1} + \lambda_{N-1,N-2}) 2h_{N-2,k} + \underbrace{ 2\lambda_{N,k} h_{N-1,N-1} + \lambda_{N,N-1} (q_k - 2h_{k,k})}_{:= t_{N-1,k} = \, \text{Terms not depending on }h_{k+1,k}, \dots, h_{N-1,k}} .
\end{align*}
Now sum up all $s'_{\ell,k}$'s for $\ell=k+1,\dots,N-1$ (up to scalar multiplication by~2, the coefficients of $h_{k+1,k},\dots,h_{N-1,k}$ correspond to $\va_1 + \cdots + \va_{N-k-2}$ (recall~\eqref{eqn:appendix-optimal-family-va-definition-equation}) plus the vector $\begin{bmatrix} -\lambda_{N,N-1} & \cdots & -\lambda_{N,N-1} & -\lambda_{N,N-1} - \lambda_{N-1,N-2} & 0 \end{bmatrix}$) to obtain
\begin{align*}
    0 & = \sum_{\ell=k+1}^{N-1} s'_{\ell,k} \\
    & = \sum_{\ell=k+1}^{N-1} \left( -\sum_{i=\ell+1}^{N-1} \lambda_{N,i} \right) 2h_{\ell,k} + \sum_{\ell=k+1}^{N-1} t_{\ell,k} .
\end{align*}
Using the fact $1 + \sum_{i=1}^{N-1} \lambda_{N,i} = N$ (from Proposition~\ref{proposition:lambda_Nj-telescoping}), if we add
\begin{align*}
    0 = \frac{N}{\lambda_{N,k}} s_{k,k} & = \left( 1 + \sum_{i=1}^{N-1} \lambda_{N,i} \right) \left( \sum_{\ell=k+1}^{N-1} 2h_{\ell,k} + 2h_{k,k} - q_k \right) \\
    & = \sum_{\ell=k+1}^{N-1} \left( 1 + \sum_{i=1}^{N-1} \lambda_{N,i} \right) 2h_{\ell,k} + N (2h_{k,k} - q_k)
\end{align*} 
to both sides, we get
\begin{align}
    0 & = \sum_{\ell=k+1}^{N-1} \left( 1 + \sum_{i=1}^\ell \lambda_{N,i} \right) 2h_{\ell,k} + N (2h_{k,k} - q_k) + \sum_{\ell=k+1}^{N-1} t_{\ell,k} \nonumber \\
    & \!\stackrel{\eqref{eqn:appendix-optimal-family-sNk-definition}}{=} -s_{N,k} + 2\lambda_{N,k} - \left( 1+ \sum_{i=1}^k \lambda_{N,i} \right) 2h_{k,k} + N (2h_{k,k} - q_k) + \sum_{\ell=k+1}^{N-1} t_{\ell,k} \nonumber \\
    & = -s_{N,k} + \underbrace{2\lambda_{N,k} + \left( \sum_{i=k+1}^{N-1} \lambda_{N,i} \right) 2h_{k,k} - N q_k + \sum_{\ell=k+1}^{N-1} t_{\ell,k}}_{:= t_{N,k} = \, \text{Terms not depending on }h_{k+1,k}, \dots, h_{N-1,k}} . \label{eqn:appendix-optimal-family-proof-tNk-definition}
\end{align}
We show that $t_{N,k} = 0$, which then implies $s_{N,k} = 0$.
We expand and rearrange the summation $\sum_{\ell=k+1}^{N-1} t_{\ell,k}$ as following:
\begin{align}
    \sum_{\ell=k+1}^{N-1} t_{\ell,k} & = t_{k+1,k} + \sum_{\ell=k+2}^{N-2} t_{\ell,k} + t_{N-1,k} \nonumber \\
    & = \lambda_{N,k} q_{k+1} + 2\lambda_{k+1,k} (1 - h_{k,k}) + \lambda_{N,k+1} (q_k - 2h_{k,k}) + \sum_{\ell=k+2}^{N-2} \left( \lambda_{N,\ell} (q_k - 2h_{k,k}) + \lambda_{N,k} q_\ell \right) \nonumber \\
    & \quad + 2\lambda_{N,k} h_{N-1,N-1} + \lambda_{N,N-1} (q_k - 2h_{k,k}) \nonumber \\
    & = \lambda_{N,k} \left( \sum_{\ell=k+1}^{N-2} q_\ell + 2h_{N-1,N-1} \right) + 2\lambda_{k+1,k} (1 - h_{k,k}) + \left( \sum_{\ell=k+1}^{N-1} 
    \lambda_{N,\ell} \right) (q_k - 2h_{k,k}) . \label{eqn:appendix-optimal-family-proof-tlk-summation}
\end{align}
Because of the definition~\eqref{eqn:appendix-optimal-family-qk-definition} of $q_\ell$, their sum telescopes:
\begin{align}
    \sum_{\ell=k+1}^{N-2} q_\ell + 2h_{N-1,N-1} & = \left( \sum_{\ell=k+1}^{N-2}  1 - (N-\ell) p_{\ell + 1} + (N-\ell+1) p_\ell \right) + 2h_{N-1,N-1} \nonumber \\
    & = (N-k-2) - 2 p_{N-1} + (N-k) p_{k+1} + 2h_{N-1,N-1} \nonumber \\
    & = (N-k-2) + (N-k) p_{k+1}
    \label{eqn:appendix-optimal-family-proof-ql-sum-telescoping}
\end{align}
where the last line follows from $p_{N-1} = \prod_{\ell=N-1}^{N-1} h_{\ell,\ell} = h_{N-1,N-1}$.
Now plugging in the expression~\eqref{eqn:appendix-optimal-family-proof-tlk-summation} into~\eqref{eqn:appendix-optimal-family-proof-tNk-definition} and applying simplification by~\eqref{eqn:appendix-optimal-family-proof-ql-sum-telescoping} gives
\begin{align*}
    t_{N,k} = \underbrace{\lambda_{N,k} \left( (N-k) + (N-k) p_{k+1} \right)}_{(\mathrm{I})} + \underbrace{2\lambda_{k+1,k} (1 - h_{k,k})}_{(\mathrm{II})} + \underbrace{\left( \sum_{\ell=k+1}^{N-1} \lambda_{N,\ell} - N \right) q_k}_{(\mathrm{III})} .
\end{align*}
We expand each term $(\mathrm{I}), (\mathrm{II}), (\mathrm{III})$: first, plugging in the expression for $\lambda_{N,k}$ from~\eqref{eqn:appendix-optimal-family-dual-variables}, we obtain
\begin{align}
    (\mathrm{I}) & = (N-k) \lambda_{N,k} \left( 1 + p_{k+1} \right) \nonumber \\
    & = \left( \frac{N}{N-k-1} - \frac{N(N-k)}{N-k-1} p_{k+1} + Np_k \right) (1 + p_{k+1}) \nonumber \\ 
    & = \frac{N}{N-k-1} - \frac{N(N-k)}{N-k-1} p_{k+1} + N p_k + \frac{N}{N-k-1} p_{k+1} - \frac{N(N-k)}{N-k-1} p_{k+1}^2 + N p_k p_{k+1} \nonumber \\
    & = \frac{N}{N-k-1} + N p_k - N p_{k+1} + N p_k p_{k+1} - \frac{N(N-k)}{N-k-1} p_{k+1}^2 . \label{eqn:appendix-optimal-family-proof-I-expansion}
\end{align}
Next, again from~\eqref{eqn:appendix-optimal-family-dual-variables}, plug in the expression for $\lambda_{k+1,k}$ into $(\mathrm{II})$ to get
\begin{align}
    (\mathrm{II}) & = \frac{2N}{N-k-1} p_{k+1} \left( (N-k) p_{k+1} - 1 \right) (1 - h_{k,k}) \nonumber \\
    & = \frac{2N(N-k)}{N-k-1} p_{k+1}^2 - \frac{2N(N-k)}{N-k-1} p_{k+1}^2 h_{k,k} - \frac{2N}{N-k-1} p_{k+1} + \frac{2N}{N-k-1} p_{k+1} h_{k,k} \nonumber \\
    & = \frac{2N}{N-k-1} p_k - \frac{2N}{N-k-1} p_{k+1} - \frac{2N(N-k)}{N-k-1} p_k p_{k+1} + \frac{2N(N-k)}{N-k-1} p_{k+1}^2 \label{eqn:appendix-optimal-family-proof-II-expansion}
\end{align}
where in the last line, we use $p_k = \prod_{\ell=k}^{N-1} h_{\ell,\ell} = h_{k,k} \prod_{\ell=k+1}^{N-1} h_{\ell,\ell} = p_{k+1} h_{k,k}$.
Finally, applying~\cref{proposition:lambda_Nj-telescoping}, we rewrite $(\mathrm{III})$ as
\begin{align}
    (\mathrm{III}) & = \left( \frac{N(N-k-2)}{N-k-1} + \frac{N}{N-k-1} p_{k+1} - N \right) q_k \nonumber \\
    & = \left( -\frac{N}{N-k-1} + \frac{N}{N-k-1} p_{k+1} \right) \left( 1 - (N-k) p_{k+1} + (N-k+1) p_k \right) \nonumber \\
    & = -\frac{N}{N-k-1} + \frac{N}{N-k-1} p_{k+1} + \frac{N(N-k)}{N-k-1} p_{k+1} - \frac{N(N-k)}{N-k-1} p_{k+1}^2 - \frac{N(N-k+1)}{N-k-1} p_k + \frac{N(N-k+1)}{N-k-1} p_k p_{k+1} \nonumber \\
    & = -\frac{N}{N-k-1} - \frac{N(N-k+1)}{N-k-1} p_k + \frac{N(N-k+1)}{N-k-1} p_{k+1} + \frac{N(N-k+1)}{N-k-1} p_k p_{k+1} - \frac{N(N-k)}{N-k-1} p_{k+1}^2 . \label{eqn:appendix-optimal-family-proof-III-expansion}
\end{align}
Summing up the expressions~\eqref{eqn:appendix-optimal-family-proof-I-expansion}, \eqref{eqn:appendix-optimal-family-proof-II-expansion}, \eqref{eqn:appendix-optimal-family-proof-III-expansion}, the coefficients of $p_k, p_{k+1}, p_k p_{k+1}, p_{k+1}^2$ terms and the constant term all vanish, and we conclude
\begin{align*}
    t_{N,k} = (\mathrm{I}) + (\mathrm{II}) + (\mathrm{III}) = 0 .
\end{align*}

\textbf{Claim 3. } The solutions of~\eqref{eqn:appendix-optimal-family-invertible-system-except-for-two-eqns} satisfy $s_{N-1,k} = 0$.

To prove this, we first have to match the coefficients of $h_{k+1,k}, \dots, h_{N-1,k}$ within $s_{N-1,k}$ via linear combination of rows of $\vA$ through Gaussian elimination.
The following proposition and its proof outlines this process.

\begin{proposition}
\label{proposition:appendix-optimal-family-proof-gaussian-elimination}
The following identity holds:
\begin{align}
    s_{N-1,k} - 2\lambda_{N,k} h_{N-1,N-1} 
    & = \lambda_{N,N-1} \left( \frac{1}{\lambda_{N,k}} s_{k,k} - 2h_{k,k} + q_k \right) - \sum_{\ell=k+1}^{N-2} \frac{a_{\ell+1}}{\lambda_{\ell+1,\ell}} \left( s'_{\ell,k} - t_{\ell,k} \right)
    \label{eqn:interpolation-S_k,Nm1-gaussian-elimination-coefficients}
\end{align}
where the sequences $\{a_\ell, b_\ell\}_{\ell=k+1}^{N-1}$ are defined by $a_{N-1} = \lambda_{N,N-1}+\lambda_{N-1,N-2}, b_{N-1} = \lambda_{N,N-1}$ and the reverse recursion
\begin{align}
\begin{split}
    a_\ell & = b_{\ell+1} + \frac{\lambda_{N,\ell} + \lambda_{\ell,\ell-1}}{\lambda_{\ell+1,\ell}} a_{\ell+1} \\
    b_\ell & = b_{\ell+1} + \frac{\lambda_{N,\ell}}{\lambda_{\ell+1,\ell}} a_{\ell+1}
\end{split}
\label{eqn:appendix-optimal-family-proof-aj-bj-recursion}
\end{align}
for $\ell=N-1,\dots,k+1$.
\end{proposition}

\begin{proof}
Define $\vv = \begin{bmatrix} 0 & \cdots & 0 & -\lambda_{N-1,N-2} & \lambda_{N,N-1} \end{bmatrix}^\intercal$, so that
\begin{align*}
    s_{N-1,k} - 2 \lambda_{N,k} h_{N-1,N-1} & = - 2\lambda_{N-1,N-2} h_{N-2,k} + 2\lambda_{N,N-1} h_{N-1,k}  
    = 2\vv^\intercal \vh
\end{align*} 
Recall the vector identities~\eqref{eqn:appendix-optimal-family-slk-prime-vector-form}, \eqref{eqn:appendix-optimal-family-skk-vector-form}
\begin{align*}
    s'_{\ell,k} - t_{\ell,k} & = 2 \va_{\ell-k}^\intercal \vh \\
    \frac{1}{\lambda_{N,k}} s_{k,k} - 2h_{k,k} + q_k & = 2 \va_{N-k-1}^\intercal \vh 
\end{align*}
where $\va_1^\intercal, \dots, \va_{N-k-1}^\intercal$ are row vectors of $\vA$.
Because $\vA$ is lower-triangular, we can express $\vv$ as a linear combination of $\va_j$ by matching the entries from backwards.
We start with
\begin{align*}
    \vv_{N-1} := \vv - \lambda_{N,N-1} \va_{N-k-1} = \begin{bmatrix}
        -\lambda_{N,N-1} & -\lambda_{N,N-1} & \cdots & -\lambda_{N,N-1} & -\lambda_{N,N-1} - \lambda_{N-1,N-2} & 0 
    \end{bmatrix}^\intercal .
\end{align*}
We recursively define a sequence of vectors $\vv_\ell$ ($\ell=N-2, \dots, k+1$) by adding a constant multiple of $\va_{\ell-k}$ from $\vv_{\ell+1}$ which eliminates the $(\ell-k)$th entry of $\vv_{\ell+1}$.
We denote the value of the $(\ell-k)$th entry of $\vv_{\ell+1}$ by $-a_{\ell+1}$, and the common value of the remaining entries by $-b_{\ell+1}$, so:
\begin{align*}
    a_{N-1} & = \lambda_{N,N-1} + \lambda_{N-1,N-2} \\
    b_{N-1} & = \lambda_{N,N-1} .
\end{align*}
Note that for $\ell = N-2, \dots, k+1$, the last nonzero entry of $\va_{\ell-k}$ is its $(\ell-k)$th entry $\lambda_{\ell+1,\ell}$.
Hence, to cancel out the $(\ell-k)$th entry $-a_{\ell+1}$ of $\vv_{\ell+1}$, we proceed as:
\begin{align*}
    \vv_\ell & = \vv_{\ell+1} + \frac{a_{\ell+1}}{\lambda_{\ell+1,\ell}} \va_{\ell-k} \\
    & = \begin{bmatrix}
        -b_{\ell+1} & \cdots & -b_{\ell+1} & -b_{\ell+1} & -a_{\ell+1} & 0 & \cdots & 0
    \end{bmatrix}^\intercal \\
    & \quad\quad + \frac{a_{\ell+1}}{\lambda_{\ell+1,\ell}} \begin{bmatrix}
        -\lambda_{N,\ell} & \cdots & -\lambda_{N,\ell} & -\lambda_{N,\ell} - \lambda_{\ell,\ell-1} & \lambda_{\ell+1,\ell} & 0 & \cdots & 0
    \end{bmatrix}^\intercal \\
    & = \begin{bmatrix}
        -b_{\ell+1} - \frac{\lambda_{N,\ell}}{\lambda_{\ell+1,\ell}} a_{\ell+1} & \cdots & -b_{\ell+1} - \frac{\lambda_{N,\ell}}{\lambda_{\ell+1,\ell}} a_{\ell+1} & -b_{\ell+1} - \frac{\lambda_{N,\ell}+\lambda_{\ell,\ell-1}}{\lambda_{\ell+1,\ell}} a_{\ell+1} & 0 & 0 & \cdots & 0
    \end{bmatrix}^\intercal .
\end{align*}
Because $\vv_\ell = \begin{bmatrix} -b_\ell & \cdots & -b_\ell & -a_\ell & 0 & 0 & \cdots & 0 \end{bmatrix}^\intercal$, we obtain the recursion
\begin{align*}
    a_\ell = b_{\ell+1} + \frac{\lambda_{N,\ell} + \lambda_{\ell,\ell-1}}{\lambda_{\ell+1,\ell}} a_{\ell+1} , \quad b_\ell = b_{\ell+1} + \frac{\lambda_{N,\ell}}{\lambda_{\ell+1,\ell}} a_{\ell+1} \quad (\ell = N-1, \dots, k+2)
\end{align*}
as defined in the statement of \cref{proposition:appendix-optimal-family-proof-gaussian-elimination}.
Repeating this process until we reach $\ell=k+1$, we arrive at the identity
\begin{align*}
    \vv_{k+1} = \vv - \lambda_{N,N-1} \va_{N-k-1} + \sum_{\ell=k+1}^{N-2} \frac{a_{\ell+1}}{\lambda_{\ell+1,\ell}} \va_{\ell-k} = 0 .
\end{align*}
This implies
\begin{align*}
    s_{N-1,k} - 2 \lambda_{N,k} h_{N-1,N-1} & = 2\vv^\intercal \vh \\
    & = \lambda_{N,N-1} \cdot 2 \va_{N-k-1}^\intercal \vh - \sum_{\ell=k+1}^{N-2} \frac{a_{\ell+1}}{\lambda_{\ell+1,\ell}} 2\va_{\ell-k}^\intercal \vh \\
    & = \lambda_{N,N-1} \left( \frac{1}{\lambda_{N,k}} s_{k,k} - 2h_{k,k} + q_k \right) - \sum_{\ell=k+1}^{N-2} \frac{a_{\ell+1}}{\lambda_{\ell+1,\ell}} \left( s'_{\ell,k} - t_{\ell,k} \right)
\end{align*}
which is the desired conclusion.
\end{proof}

\begin{proposition}
The sequences $a_\ell, b_\ell$ defined in~\cref{proposition:appendix-optimal-family-proof-gaussian-elimination} have the closed-form expressions
\begin{align}
    a_\ell = \frac{(N-\ell-1)! \cdot N (N-\ell+1) p_\ell \prod_{i=\ell}^{N-1} p_i}{\prod_{j=\ell+1}^{N-1} \left( (N-j+1) p_j - 1 \right)} , \quad
    b_\ell = \frac{(N-\ell-1)! \cdot N \prod_{i=\ell}^{N-1} p_i}{\prod_{j=\ell+1}^{N-1} \left( (N-j+1) p_j - 1 \right)}
    \label{eqn:aj-bj-closed-form-expressions}
\end{align}
for $\ell=N-1,\dots,k+1$.
\label{proposition:aj-bj-has-closed-forms}
\end{proposition}

\begin{proof}
We use induction on $\ell = N-1, \dots, k+1$.
In the case $\ell=N-1$,
\begin{align*}
    b_{N-1} & = \lambda_{N,N-1} = N h_{N-1,N-1} = N p_{N-1} \\
    a_{N-1} & = \lambda_{N,N-1} + \lambda_{N-1,N-2} \\
    & = N h_{N-1,N-1} + N p_{N-1} ( 2p_{N-1} - 1 ) \\
    & = N p_{N-1} + N p_{N-1} ( 2p_{N-1} - 1 ) \\
    & = 2N p_{N-1}^2
\end{align*}
which is consistent with~\eqref{eqn:aj-bj-closed-form-expressions} (with the vacuous product in the denominators being 1).
Now let $k+1 \le \ell \le N-2$ and assume that the formula~\eqref{eqn:aj-bj-closed-form-expressions} holds for $\ell+1$.
Recall that
\begin{align*}
    \lambda_{N,\ell} & = \frac{N}{(N-\ell)(N-\ell-1)} - \frac{N}{N-\ell-1} p_{\ell+1} + \frac{N}{N-\ell} p_\ell \\
    \lambda_{\ell+1,\ell} & = \frac{N}{N-\ell-1} p_{\ell+1} \left( (N-\ell) p_{\ell+1} - 1 \right) \\
    \lambda_{\ell,\ell-1} & = \frac{N}{N-\ell} p_\ell \left( (N-\ell+1) p_\ell - 1 \right) .
\end{align*}
Rewriting
\begin{align*}
    \lambda_{N,\ell} = - \frac{N}{(N-\ell)(N-\ell-1)} \left( (N-\ell) p_{\ell+1} - 1 \right) + \frac{N}{N-\ell} p_\ell ,
\end{align*}
we see that
\begin{align*}
    \frac{\lambda_{N,\ell}}{\lambda_{\ell+1,\ell}} & = \frac{1}{\frac{N}{N-\ell-1} p_{\ell+1} \left( (N-\ell) p_{\ell+1} - 1 \right)} \left( - \frac{N}{(N-\ell)(N-\ell-1)} \left( (N-\ell) p_{\ell+1} - 1 \right) + \frac{N}{N-\ell} p_\ell \right) \\
    & = - \frac{1}{(N-\ell) p_{\ell+1}} + \frac{(N-\ell-1) p_\ell}{(N-\ell) p_{\ell+1} \left( (N-\ell) p_{\ell+1} - 1 \right)} .
\end{align*}
Using the above formula and the induction hypothesis~\eqref{eqn:aj-bj-closed-form-expressions}, we compute the term $b_\ell - b_{\ell+1} = \frac{\lambda_{N,\ell}}{\lambda_{\ell+1,\ell}} a_{\ell+1}$ which comes from the recursion~\eqref{eqn:appendix-optimal-family-proof-aj-bj-recursion}:
\begin{align*}
    \frac{\lambda_{N,\ell}}{\lambda_{\ell+1,\ell}} a_{\ell+1} & = \left( - \frac{1}{(N-\ell) p_{\ell+1}} + \frac{(N-\ell-1) p_\ell}{(N-\ell) p_{\ell+1} \left( (N-\ell) p_{\ell+1} - 1 \right)} \right) \frac{(N-\ell-2)! \cdot N (N-\ell) p_{\ell+1} \prod_{i=\ell+1}^{N-1} p_i}{\prod_{j=\ell+2}^{N-1} \left( (N-j+1) p_j - 1 \right)} \\
    & = - \frac{(N-\ell-2)! \cdot N \prod_{i=\ell+1}^{N-1} p_i}{\prod_{j=\ell+2}^{N-1} \left( (N-j+1) p_j - 1 \right)} + \frac{(N-\ell-1)! \cdot N \prod_{i=\ell}^{N-1} p_i}{\prod_{j=\ell+1}^{N-1} \left( (N-j+1) p_j - 1 \right)} \\
    & = -b_{\ell+1} + \frac{(N-\ell-1)! \cdot N \prod_{i=\ell}^{N-1} p_i}{\prod_{j=\ell+1}^{N-1} \left( (N-j+1) p_j - 1 \right)} .
\end{align*}
This proves the identity~\eqref{eqn:aj-bj-closed-form-expressions} for $b_\ell$.

Similarly, we compute
\begin{align*}
    \lambda_{N,\ell} + \lambda_{\ell,\ell-1} & = - \frac{N}{(N-\ell)(N-\ell-1)} \left( (N-\ell) p_{\ell+1} - 1 \right) + \frac{N}{N-\ell} p_\ell + \frac{N}{N-\ell} p_\ell ((N-\ell+1) p_\ell - 1) \\
    & = - \frac{N}{(N-\ell)(N-\ell-1)} \left( (N-\ell) p_{\ell+1} - 1 \right) + \frac{N(N-\ell+1)}{N-\ell} p_\ell^2 
\end{align*}
so
\begin{align*}
    \frac{\lambda_{N,\ell} + \lambda_{\ell,\ell-1}}{\lambda_{\ell+1,\ell}} & = \frac{1}{\frac{N}{N-\ell-1} p_{\ell+1} \left( (N-\ell) p_{\ell+1} - 1 \right)} \left( - \frac{N}{(N-\ell)(N-\ell-1)} \left( (N-\ell) p_{\ell+1} - 1 \right) + \frac{N(N-\ell+1)}{N-\ell} p_\ell^2   \right) \\
    & = - \frac{1}{(N-\ell) p_{\ell+1}} + \frac{(N-\ell+1)(N-\ell-1) p_\ell^2}{(N-\ell) p_{\ell+1} \left( (N-\ell) p_{\ell+1} - 1 \right)} .
\end{align*}
Plugging this into the right hand side of the recursion $a_\ell - b_{\ell+1} = \frac{\lambda_{N,\ell} + \lambda_{\ell,\ell-1}}{\lambda_{\ell+1,\ell}} a_{\ell+1}$ we see that
\begin{align*}
    & \frac{\lambda_{N,\ell} + \lambda_{\ell,\ell-1}}{\lambda_{\ell+1,\ell}} a_{\ell+1} \\
    & = \left( - \frac{1}{(N-\ell) p_{\ell+1}} + \frac{(N-\ell+1)(N-\ell-1) p_\ell^2}{(N-\ell) p_{\ell+1} \left( (N-\ell) p_{\ell+1} - 1 \right)} \right) \frac{(N-\ell-2)! \cdot N (N-\ell) p_{\ell+1} \prod_{i=\ell+1}^{N-1} p_i}{\prod_{j=\ell+2}^{N-1} \left( (N-j+1) p_j - 1 \right)} \\
    & = -b_{\ell+1} + \frac{(N-\ell-1)! \cdot N (N-\ell+1) p_\ell \prod_{i=\ell}^{N-1} p_i}{\prod_{j=\ell+1}^{N-1} \left( (N-j+1) p_j - 1 \right)}
\end{align*}
which proves the identity~\eqref{eqn:aj-bj-closed-form-expressions} for $a_\ell$.
This completes the induction.
\end{proof}

From~\eqref{eqn:interpolation-S_k,Nm1-gaussian-elimination-coefficients} we have
\begin{align*}
    s_{N-1,k} & = \frac{\lambda_{N,N-1}}{\lambda_{N,k}} s_{k,k} - \sum_{\ell=k+1}^{N-2} \frac{a_{\ell+1}}{\lambda_{\ell+1,\ell}} s'_{\ell,k} \\
    & \quad + \underbrace{2\lambda_{N,k} h_{N-1,N-1} + \lambda_{N,N-1} (q_k - 2h_{k,k}) + \sum_{\ell=k+1}^{N-2} \frac{a_{\ell+1}}{\lambda_{\ell+1,\ell}} t_{\ell,k}}_{:= r_{N-1,k}} . 
\end{align*}
Because $s_{\ell,k} = 0$ for $\ell=k+1,\dots,N-2$, if we prove $r_{N-1,k} = 0$ then we are done.
Plugging the definition~\eqref{eqn:appendix-optimal-family-tlk-definition} of $t_{\ell,k}$'s into $r_{N-1,k}$ we get
\begin{align}
    & r_{N-1,k} \nonumber \\
    & = 2 \lambda_{N,k} h_{N-1,N-1} + \lambda_{N,N-1} (q_k - 2 h_{k,k}) + \frac{a_{k+2}}{\lambda_{k+2,k+1}} \left( \lambda_{N,k} q_{k+1} + 2 \lambda_{k+1,k} (1 - h_{k,k}) + \lambda_{N,k+1} (q_k - 2 h_{k,k}) \right) \nonumber \\
    & \quad\quad + \sum_{\ell=k+2}^{N-2} \frac{a_{\ell+1}}{\lambda_{\ell+1,\ell}} \left( \lambda_{N,\ell} (q_k - 2h_{k,k}) + \lambda_{N,k} q_\ell \right) \nonumber \\
    & \begin{aligned}
        & = 2 \lambda_{N,k} h_{N-1,N-1} + \left( \lambda_{N,N-1} + \sum_{\ell=k+1}^{N-2} \frac{\lambda_{N,\ell}}{\lambda_{\ell+1,\ell}} a_{\ell+1} \right) (q_k - 2h_{k,k}) + \frac{2 \lambda_{k+1,k}}{\lambda_{k+2,k+1}} a_{k+2} (1-h_{k,k}) \\
        & \quad\quad + \lambda_{N,k} \sum_{\ell=k+1}^{N-2} \frac{a_{\ell+1}}{\lambda_{\ell+1,\ell}} q_\ell .
    \end{aligned}
    \label{eqn:interpolation-S_k,Nm1-bias-term-regrouped}  
\end{align}
From the proof of Proposition~\ref{proposition:aj-bj-has-closed-forms}, we have
\begin{align*}
    \frac{\lambda_{N,\ell}}{\lambda_{\ell+1,\ell}} a_{\ell+1} = b_\ell - b_{\ell+1} ,
\end{align*}
for $\ell=k+1,\dots,N-2$, so by telescoping, we can simplify:
\begin{align}
\label{eqn:appendix-optimal-family-proof-k=2-final-telescoping-1}
    \lambda_{N,N-1} + \sum_{\ell=k+1}^{N-2} \frac{\lambda_{N,\ell}}{\lambda_{\ell+1,\ell}} a_{\ell+1} & = \lambda_{N,N-1} + b_{k+1} - b_{N-1} = b_{k+1}  
\end{align}
Next, observe that
\begin{align*}
    q_\ell = - ((N-\ell) p_{\ell+1} - 1) + (N-\ell+1) p_\ell
\end{align*}
so that
\begin{align*}
    \frac{a_{\ell+1}}{\lambda_{\ell+1,\ell}} q_\ell & = \left[ \frac{1}{\frac{N}{N-\ell-1} p_{\ell+1} \left( (N-\ell) p_{\ell+1} - 1 \right)} \left( - ((N-\ell) p_{\ell+1} - 1) + (N-\ell+1) p_\ell \right) \right] a_{\ell+1} \\
    & = \left[ - \frac{N-\ell-1}{N p_{\ell+1}} + \frac{(N-\ell+1) (N-\ell-1) p_\ell}{N p_{\ell+1} ((N-\ell) p_{\ell+1} - 1)} \right] \frac{(N-\ell-2)! \cdot N (N-\ell) p_{\ell+1} \prod_{i=\ell+1}^{N-1} p_i}{\prod_{j=\ell+2}^{N-1} \left( (N-j+1) p_j - 1 \right)} \\
    & = - \frac{(N-\ell)! \prod_{i=\ell+1}^{N-1} p_i}{\prod_{j=\ell+2}^{N-1} \left( (N-j+1) p_j - 1 \right)} + \frac{(N-\ell+1)! \prod_{i=\ell}^{N-1} p_i}{\prod_{j=\ell+1}^{N-1} \left( (N-j+1) p_j - 1 \right)} \\
    & = - \frac{(N-\ell)(N-\ell-1)b_{\ell+1}}{N} + \frac{(N-\ell+1)(N-\ell) b_\ell}{N}
\end{align*}
which allows to simplify the summation via telescoping:
\begin{align}
\label{eqn:appendix-optimal-family-proof-k=2-final-telescoping-2}
    \sum_{\ell=k+1}^{N-2} \frac{a_{\ell+1}}{\lambda_{\ell+1,\ell}} q_\ell 
    = -2h_{N-1,N-1} + \frac{(N-k)(N-k-1)b_{k+1}}{N}
\end{align}
Plugging~\eqref{eqn:appendix-optimal-family-proof-k=2-final-telescoping-1} and \eqref{eqn:appendix-optimal-family-proof-k=2-final-telescoping-2} into~\eqref{eqn:interpolation-S_k,Nm1-bias-term-regrouped}, we obtain
\begin{align}
\begin{split}
    r_{N-1,k} & = 
    2 \lambda_{N,k} h_{N-1,N-1} + b_{k+1} (q_k - 2h_{k,k}) + \frac{2 \lambda_{k+1,k}}{\lambda_{k+2,k+1}} a_{k+2} (1-h_{k,k}) \\
    & \quad + \lambda_{N,k} \left( -2h_{N-1,N-1} + \frac{(N-k)(N-k-1)b_{k+1}}{N} \right)
\end{split}
\label{eqn:appendix-optimal-family-proof-rNm1k-second-last-expansion}
\end{align}
The terms $\pm 2\lambda_{N,k} h_{N-1,N-1}$ in~\eqref{eqn:appendix-optimal-family-proof-rNm1k-second-last-expansion} cancel out each other.
Now we plug in the identity
\begin{align*}
    \frac{\lambda_{k+1,k}}{\lambda_{k+2,k+1}} a_{k+2} & = \frac{\frac{N}{N-k-1} p_{k+1} ((N-k)p_{k+1} - 1)}{\frac{N}{N-k-2} p_{k+2} ((N-k-1) p_{k+2} - 1)}  \frac{(N-k-3)! \cdot N(N-k-1) p_{k+2} \prod_{i=k+2}^{N-1} p_i}{\prod_{j=k+3}^{N-1} ((N-j+1) p_j - 1)} \\
    & = \frac{(N-k-2)! \cdot N \prod_{i=k+1}^{N-1} p_i}{\prod_{j=k+2}^{N-1} ((N-j+1) p_j - 1)} ((N-k) p_{k+1} - 1) \\
    & = b_{k+1} ((N-k) p_{k+1} - 1) 
\end{align*}
into~\eqref{eqn:appendix-optimal-family-proof-rNm1k-second-last-expansion} to obtain
\begin{align*}
    r_{N-1,k} & = b_{k+1} (q_k - 2h_{k,k}) + 2 b_{k+1} ((N-k) p_{k+1} - 1) (1-h_{k,k}) + \lambda_{N,k} \frac{(N-k)(N-k-1)b_{k+1}}{N} \\
    & = b_{k+1} \left[ q_k - 2h_{k,k} + 2 ((N-k) p_{k+1} - 1) (1-h_{k,k}) + \frac{(N-k)(N-k-1)\lambda_{N,k}}{N} \right] .
\end{align*}
Finally, from the identity
\begin{align*}
    & q_k - 2h_{k,k} + 2 ((N-k) p_{k+1} - 1)(1 - h_{k,k}) \\
    & = q_k - 2h_{k,k} + 2(N-k)p_{k+1} - 2(N-k)p_{k+1}h_{k,k} - 2 + 2h_{k,k} \\
    & = 1 - (N-k) p_{k+1} + (N-k+1) p_k + 2(N-k) p_{k+1} - 2 (N-k) p_k - 2 \\
    & = - 1 + (N-k) p_{k+1} - (N-k-1) p_k \\
    & = - \frac{(N-k)(N-k-1)}{N} \lambda_{N,k}
\end{align*}
we conclude that $r_{N-1,k} = 0$.
This completes the proof of~\cref{proposition:appendix-optimal-family-solvability-up-to-k=2}.

\end{proof}

\cref{proposition:appendix-optimal-family-solvability-up-to-k=2}, together with~\cref{proposition:appendix-skk-zero-condition}, implies that we can inductively solve the systems $s_{\ell,k} = 0$ ($\ell=k,\dots,N$) from $k=N-2$ to $k=1$ to determine the solution $h_{i,k}^*$'s with $N-1 \ge i \ge k$ as continuous functions of $h_{1,1},\dots,h_{N-1,N-1}$.
This completes Step~4 within the proof outline of~\cref{subsection:appendix-optimal-family-proof-outline}, and together with the remaining arguments from~\cref{subsection:appendix-optimal-family-proof-outline}, finally proves~\cref{theorem:optimal-method-family}.

\newpage
\section{Remaining proofs and details for \cref{subsection:family-characterization-via-H-matrices}}
\label{section:appendix-optimal-family-remaining-proof}

\subsection{Proof of \cref{proposition:optimal-family-continuous-extension}}
\label{subsection:appendix-optimal-family-continuous-extension}

For simplicity, write $\left( H_{\text{OHM}} \right)_{k,j} = h_{k,j}^{(0)}$ and $\left( H_{\text{Dual-OHM}} \right)_{k,j} = h_{k,j}^{(1)}$ throughout this section.
In the proof of \cref{theorem:optimal-method-family} in \cref{section:appendix-optimal-family-theorem}, we have seen that
\begin{align*}
    p_{k}^{(0)} = \prod_{\ell=k}^{N-1} h_{\ell,\ell}^{(0)} = \frac{k}{N} , \quad p_{k}^{(1)} = \prod_{\ell=k}^{N-1} h_{\ell,\ell}^{(1)} = \frac{1}{N-k+1} 
\end{align*}
so that
\begin{align*}
    u := \left( p_2^{(0)}, \dots , p_{N-1}^{(0)} \right) \in \partial C , \quad v := \left( p_2^{(1)}, \dots , p_{N-1}^{(1)} \right) \in \partial C .
\end{align*}
Recall that for $w = (p_2, \dots, p_{N-1}) \in C$,
\begin{align*}
    \Phi (w) = 
    \begin{bmatrix}
        \frac{1}{N p_2} & 0 & \cdots & 0 \\
        h_{2,1}^* \left( \frac{1}{N p_2} , \frac{p_2}{p_3} , \dots , p_{N-1} \right) & \frac{p_2}{p_3} & \cdots & 0 \\
        \vdots & \vdots & \ddots & \vdots \\
        h_{N-1,1}^* \left( \frac{1}{N p_2} , \frac{p_2}{p_3} , \dots , p_{N-1} \right) & h_{N-1,2}^* \left( \frac{1}{N p_2} , \frac{p_2}{p_3} , \dots , p_{N-1} \right) & \cdots & p_{N-1}
    \end{bmatrix} .
\end{align*}
Clearly, we can directly extend the definition of $\Phi$ to $u,v$ for diagonal entries while preserving continuity.
For non-diagonal entries, however, we have to check whether $h_{i,k}^*$'s with $i>k$, which are defined as solutions of linear systems, converge to $h_{i,k}^{(m)}$ as $w \to \left( p_2^{(m)}, \dots , p_{N-1}^{(m)} \right)$, for $m=0,1$.
In~\cref{section:appendix-optimal-family-theorem}, we first characterize $h_{i,k}^*$ as solutions to the set of equations $s_{\ell,k} = 0$ ($\ell=k,\dots,N-2$) which is equivalent to a linear system determined by the invertible lower-triangular matrix $\vA$ defined therein, and then show that those solutions also satisfy $s_{N-1,k} = 0 = s_{N,k}$.
However, within the process of rewriting the system $s_{\ell,k} = 0$ ($\ell=k,\dots,N-2$) in terms of $\vA$ we use division by $\lambda_{N,k}$ (so we are implicitly assuming $\lambda_{N,k} > 0$), and the resulting matrix $\vA$ is not invertible if one of $\lambda_{j+1,j}$ ($j=k+1,\dots,N-2$) is zero.
Because $\lambda_{N,k}=0$ ($k=1,\dots,N-2$) for \ref{alg:halpern} and $\lambda_{j+1,j} = 0$ ($j=1,\dots,N-2$) for \ref{alg:dual_halpern}, the same argument is not directly applicable to these cases.

Instead, we retreat to the original set of equations and consider the system $s_{\ell,k} = 0$ ($k=k+1,\dots,N-1$), which has the matrix form
\begin{align*}
    \underbrace{\begin{bmatrix}
        \lambda_{k+2,k+1} + \lambda_{N,k+1} & \lambda_{N,k+1} & \cdots & \lambda_{N,k+1} & \lambda_{N,k+1} \\
        -\lambda_{k+2,k+1} & \lambda_{k+3,k+2} + \lambda_{N,k+2} & \cdots & \lambda_{N,k+2} & \lambda_{N,k+2} \\
        \vdots & \vdots & \ddots & \vdots & \vdots \\ 
        0 & 0 & \cdots & \lambda_{N-1,N-2} + \lambda_{N,N-2} & \lambda_{N,N-2} \\
        0 & 0 & \cdots & -\lambda_{N-1,N-2} & \lambda_{N,N-1}
    \end{bmatrix}}_{:=\vB}
    \underbrace{\begin{bmatrix}
        h_{k+1,k} \\
        h_{k+2,k} \\
        \vdots \\
        h_{N-2,k} \\
        h_{N-1,k}
    \end{bmatrix}}_{\vh} \quad\quad\quad\quad & \\
    = \underbrace{\frac{1}{2}\begin{bmatrix}
        -2\lambda_{k+1,k}(1-h_{k,k}) - \lambda_{N,k}(q_k - 2h_{k,k}) \\
        -\lambda_{N,k} q_{k+2} \\
        \vdots \\
        -\lambda_{N,k} q_{N-2} \\
        -2\lambda_{N,k} h_{N-1,N-1}
    \end{bmatrix}}_{:=\vb} . &
\end{align*}
As we take $w \to u$ ($u$ is the $(p_2,\dots,p_{N-1})$-coordinate for \ref{alg:halpern}), all $\lambda_{N,j}$ with $j=k+1,\dots,N-2$ vanishes, and $\vB$ converges to the lower bidiagonal matrix
\begin{align*}
    \vB^{(0)} = 
    \begin{bmatrix}
        \lambda_{k+2,k+1}^{(0)} & & & \\
        -\lambda_{k+2,k+1}^{(0)} & \lambda_{k+3,k+2}^{(0)} & & \\
        & \ddots & \ddots & \\
        & & -\lambda_{N-1,N-2}^{(0)} & \lambda_{N,N-1}^{(0)}
    \end{bmatrix}
\end{align*}
where $\lambda_{j+1,j}^{(0)} = \frac{j(j+1)}{N}$ denotes the limit of $\lambda_{j+1,j}$ as $w \to u$ (up to a constant, these are the weights multiplied to the monotonicity inequalities in the convergence proof of \ref{alg:halpern}).
It is clear that $\vB^{(0)}$ is invertible, and because $\lambda$'s depend continuously on $p_2,\dots,p_{N-1}$, the matrix $\vB$ is invertible in a neighborhood of $u$.
Within this neighborhood, the solutions $h_{k+1,k}^*, \dots, h_{N-1,k}^*$ are determined via Cramer's rule applied to the system $\vB\vh = \vb$.
As all entries of $\vb$ depend continuously on $p_2,\dots,p_{N-1}$ (because $\lambda$'s and $q_j$'s are), this proves that $h_{i,k}^* \to h_{i,k}^{(0)}$ as $w\to u$ for each $i=k+1,\dots,N-1$.
(Note that $h_{i,k}^{(0)}$ must be the unique solutions satisfying $\vB^{(0)} \vh = \vb^{(0)} = \lim_{w\to u} \vb$, as \ref{alg:halpern} also satisfies the identity~\eqref{eqn:proof-template-identity-appendix}.)

Similarly, for the case of \ref{alg:dual_halpern}, we observe that all $\lambda_{j+1,j}$ with $j=k+1,\dots,N-2$ vanish in the limit $w \to v$, so $\vB$ converges to the upper triangular matrix
\begin{align*}
    \vB^{(1)} =
    \begin{bmatrix}
        \lambda_{N,k+1}^{(1)} & \lambda_{N,k+1}^{(1)} & \cdots & \lambda_{N,k+1}^{(1)} \\
        & \lambda_{N,k+2}^{(1)} & \cdots & \lambda_{N,k+2}^{(1)} \\
        & & \ddots & \vdots & \\
        & & & \lambda_{N,N-1}^{(1)}
    \end{bmatrix}
\end{align*}
where $\lambda_{N,j}^{(1)} = \frac{N}{(N-j)(N-j+1)} = \lim_{w\to v} \lambda_{N,j}$ (as before, these are the weights multiplied to the monotonicity inequalities in the convergence proof of \ref{alg:dual_halpern} up to a constant).
Again, $\vB^{(1)}$ is invertible and thus $h_{i,k}^*$ are determined via Cramer's rule applied to the system $\vB\vh = \vb$ in a neighborhood of $v$, which shows that $h_{i,k}^* \to h_{i,k}^{(1)}$ as $w\to v$.

The above arguments altogether show that
\[
\begin{array}{lc}
   \Phi(w) \to H_{\text{OHM}}  &  \text{as } \,\, w \to u \\
   \Phi(w) \to H_{\text{Dual-OHM}}  &  \text{as } \,\, w \to v \\
\end{array}
\]
and thus concludes the proof of~\cref{proposition:optimal-family-continuous-extension}.

\subsection{Explicit characterization of optimal algorithm family for the case $N=3$}

For a nonexpansive operator $\opT$, we let $\opA = 2(\opI + \opT)^{-1} - \opI$, $x_{k+1} = \JA(y_k)$ and $\tilA x_{k+1} = y_k - x_{k+1}$ as usual.
Consider the algorithm defined by
\begin{align*}
    y_1 & = y_0 - h_{1,1} (y_0 - \opT y_0) \\
    & = y_0 - 2h_{1,1} \tilA x_1 \\
    y_2 & = y_1 - h_{2,1} (y_0 - \opT y_0) - h_{2,2} (y_1 - \opT y_1) \\
    & = y_1 - 2h_{2,1} \tilA x_1 - 2h_{2,2} \tilA x_2
\end{align*}
where $h_{1,1}, h_{2,2} \in \left[\frac{1}{2}, \frac{2}{3}\right]$, $h_{1,1}h_{2,2} = \frac{1}{3}$ and $h_{1,1} + h_{2,1} + h_{2,2} = 1$.
Let
\begin{align*}
    \lambda_{3,2} = 3h_{2,2} , \quad \lambda_{3,1} = 2 - 3h_{2,2} , \quad \lambda_{2,1} = 3 h_{2,2} (2h_{2,2} - 1)
\end{align*}
as defined in~\eqref{eqn:appendix-optimal-family-dual-variables} (we substitute $N=3$ and $p_2 = h_{2,2}$).
Clearly, we have $\lambda_{3,2}, \lambda_{3,1}, \lambda_{2,1} \ge 0$ because $\frac{1}{2} \le h_{2,2} \le \frac{2}{3}$.

Now, let $g_k = \tilA x_k$ for $k=1,2,3$.
Then
\begin{align*}
    x_2 - x_1 & = (y_1 - g_2) - (y_0 - g_1) = (y_1 - y_0) - (g_2 - g_1) = (1 - 2h_{1,1}) g_1 - g_2 \\
    x_3 - x_2 & = (y_2 - g_3) - (y_1 - g_2) = (y_2 - y_1) - (g_3 - g_2) = -2h_{2,1} g_1 + (1 - 2h_{2,2}) g_2 - g_3 \\
    x_3 - x_1 & = (x_3 - x_2) + (x_2 - x_1) = (1 - 2h_{1,1} - 2h_{2,1}) g_1 - 2h_{2,2} g_2 - g_3 \\
    x_3 - y_0 & = (x_3 - x_1) - (y_0 - x_1) = x_3 - x_1 - g_1 = -(2h_{1,1} + 2h_{2,1}) g_1 - 2h_{2,2} g_2 - g_3 
\end{align*}
so we can expand
\begin{align*}
    & \inprod{\tilA x_3}{x_3 - y_0} + 3\sqnorm{\tilA x_3} + \lambda_{2,1} \inprod{x_2 - x_1}{\tilA x_2 - \tilA x_1} + \lambda_{3,1} \inprod{x_3 - x_1}{\tilA x_3 - \tilA x_1} + \lambda_{3,2} \inprod{x_3 - x_2}{\tilA x_3 - \tilA x_2} \\
    & = \inprod{g_3}{x_3 - y_0} + 3\sqnorm{g_3} + \lambda_{2,1} \inprod{x_2 - x_1}{g_2 - g_1} + \lambda_{3,1} \inprod{x_3 - x_1}{g_3 - g_1} + \lambda_{3,2} \inprod{x_3 - x_2}{g_3 - g_2} \\
    & = \inprod{g_3}{-(2h_{1,1} + 2h_{2,1}) g_1 - 2h_{2,2} g_2 - g_3 } + 3\sqnorm{g_3} + \lambda_{2,1} \inprod{(1 - 2h_{1,1}) g_1 - g_2}{g_2 - g_1} \\
    & \quad\quad + \lambda_{3,1} \inprod{(1 - 2h_{1,1} - 2h_{2,1}) g_1 - 2h_{2,2} g_2 - g_3}{g_3 - g_1} + \lambda_{3,2} \inprod{-2h_{2,1} g_1 + (1 - 2h_{2,2}) g_2 - g_3}{g_3 - g_2} \\
    & = \left( \lambda_{2,1} (2h_{1,1} - 1) - \lambda_{3,1} (1-2h_{1,1}-2h_{2,1}) \right) \sqnorm{g_1} + \left( \lambda_{2,1}(2 - 2h_{1,1}) + 2\lambda_{3,1} h_{2,2} + 2\lambda_{3,2}h_{2,1} \right) \inprod{g_2}{g_1} \\
    & \quad\quad + \left( -(2h_{1,1} + 2h_{2,1}) + \lambda_{3,1} (2 - 2h_{1,1} - 2h_{2,1}) - 2\lambda_{3,2} h_{2,1} \right) \inprod{g_3}{g_1} + \left( -\lambda_{2,1} + \lambda_{3,2} (2h_{2,2} - 1) \right) \sqnorm{g_2} \\
    & \quad\quad + \left( -2h_{2,2} - 2\lambda_{3,1} h_{2,2} + \lambda_{3,2} (2 - 2h_{2,2}) \right) \inprod{g_3}{g_2} + \left( -1 + 3 - \lambda_{3,1} - \lambda_{3,2} \right) \sqnorm{g_3} \\
    & = s_{1,1} \sqnorm{g_1} + s_{2,1} \inprod{g_2}{g_1} + s_{3,1} \inprod{g_3}{g_1} + s_{2,2} \sqnorm{g_2} + s_{3,2} \inprod{g_3}{g_2} + s_{3,3} \sqnorm{g_3} .
\end{align*}
We verify:
\begin{align*}
    s_{3,3} & = 2 - \lambda_{3,1} - \lambda_{3,2} = 0 \\
    s_{3,2} & = -2h_{2,2} - 2\lambda_{3,1} h_{2,2} + \lambda_{3,2} (2 - 2h_{2,2}) \\
    & = -2h_{2,2} - 2(2 - 3h_{2,2})h_{2,2} + 3h_{2,2} (2 - 2h_{2,2}) = 0 \\
    s_{2,2} & = -\lambda_{2,1} + \lambda_{3,2} (2h_{2,2} - 1) \\
    & = -3h_{2,2} (2h_{2,2} - 1) + 3h_{2,2} (2h_{2,2} - 1) = 0 .
\end{align*}
Next,
\begin{align*}
    s_{3,1} & = -(2h_{1,1} + 2h_{2,1}) + \lambda_{3,1} (2 - 2h_{1,1} - 2h_{2,1}) - 2\lambda_{3,2} h_{2,1} \\
    & = 2 (h_{2,2} - 1) + (2 - 3h_{2,2}) 2h_{2,2} - 6h_{2,2} \left( 1 - \frac{1}{3h_{2,2}} - h_{2,2} \right) = 0
\end{align*}
where we plug in the definitions of $\lambda_{3,2}$ and eliminate $h_{2,1}, h_{1,1}$ using the conditions $h_{1,1}h_{2,2} = \frac{1}{3}$ and $h_{1,1} + h_{2,1} + h_{2,2} = 1$ to obtain the second equality.
Similarly,
\begin{align*}
    s_{2,1} & = \lambda_{2,1}(2 - 2h_{1,1}) + 2\lambda_{3,1} h_{2,2} + 2\lambda_{3,2}h_{2,1} \\
    & = 3h_{2,2} (2h_{2,2} - 1) \left( 2 - \frac{2}{3h_{2,2}} \right) + 2 (2 - 3h_{2,2}) h_{2,2} + 6 h_{2,2} \left(1 - \frac{1}{3h_{2,2}} - h_{2,2} \right) \\
    & = (2h_{2,2} - 1)(6h_{2,2} - 2) + 2h_{2,2} (2 - 3h_{2,2}) + 6h_{2,2} - 2 - 6h_{2,2}^2 \\
    & = 0
\end{align*}
and
\begin{align*}
    s_{1,1} & = \lambda_{2,1} (2h_{1,1} - 1) - \lambda_{3,1} (1-2h_{1,1}-2h_{2,1}) \\
    & = 3h_{2,2} (2h_{2,2} - 1) \left( \frac{2}{3h_{2,2}} - 1 \right) - (2 - 3h_{2,2}) (2h_{2,2} - 1) \\
    & = (2h_{2,2} - 1) (2 - 3h_{2,2}) - (2 - 3h_{2,2}) (2h_{2,2} - 1) \\
    & = 0 .
\end{align*}
This shows that
\begin{align*}
    0 & = \inprod{\tilA x_3}{x_3 - y_0} + 3\sqnorm{\tilA x_3} + \lambda_{2,1} \inprod{x_2 - x_1}{\tilA x_2 - \tilA x_1} \\
    & \quad \quad + \lambda_{3,1} \inprod{x_3 - x_1}{\tilA x_3 - \tilA x_1} + \lambda_{3,2} \inprod{x_3 - x_2}{\tilA x_3 - \tilA x_2} \\
    & \ge \inprod{\tilA x_3}{x_3 - y_0} + 3\sqnorm{\tilA x_3} ,
\end{align*}
which, together with \cref{lemma:convergence-proof-last-step}, proves $\sqnorm{\tilA x_3} \le \frac{\sqnorm{y_0 - y_\star}}{9}$.

Note that our choice of $h_{2,1}$ is consistent with the formula~\eqref{eqn:appendix-optimal-family-skk-zero-condition} for $k=1$, i.e., $2(h_{1,1} + h_{2,1}) = 2 - 2h_{2,2} = 1 - 2p_2 + 3p_1$.
Indeed, the calculation above is just a concrete demonstration of the more complicated series of computations presented in~\cref{section:appendix-optimal-family-theorem}.

\subsection{Examples of optimal algorithms not covered by \cref{theorem:optimal-method-family}}
\label{section:appendix-optimal-family-remaining-proof_missing}

Let $N > 3$, and fix $2 \le N' \le N-1$.
Consider the following algorithm that first runs $N'$ steps of \ref{alg:dual_halpern} (with $y_{N'-1}$ as terminal iterate) and then $N-N'$ steps of \ref{alg:halpern} (as if $y_{N'-1}$ was generated by usual \ref{alg:halpern} and then continuing on using the update rule of \ref{alg:halpern}):
\begin{align}
\begin{alignedat}{2}
    y_{k+1} & = y_k + \frac{N'-k-1}{N'-k} (\opT y_k - \opT y_{k-1}) \quad && \text{if } \,\, k=0,\dots,N'-2 \\
    y_{k+1} & = \frac{k+1}{k+2} \opT y_k + \frac{1}{k+2} y_0 \quad        && \text{if } \,\, k=N'-1,\dots,N-2  .
\end{alignedat}
\label{eqn:appendix-optimal-algorithm-outside-family}
\end{align}
We show that this algorithm also has the exact optimal rate
\begin{align*}
    \sqnorm{y_{N-1} - \opT y_{N-1}} \le \frac{4\sqnorm{y_0 - y_\star}}{N^2} .
\end{align*}
First, because $y_{N'-1}$ is generated via Dual-OHM for $N'-1$ iterations, we have
\begin{align*}
    & \inprod{\tilA x_{N'}}{x_{N'} - y_0} + N' \sqnorm{\tilA x_{N'}} \\
    & \le \inprod{\tilA x_{N'}}{x_{N'} - y_0} + N' \sqnorm{\tilA x_{N'}} + \sum_{j=1}^{N'-1} \frac{N'}{(N'-j)(N'-j+1)} \inprod{x_{N'} - x_j}{\tilA x_{N'} - \tilA x_j} \\
    & = 0
\end{align*}
where as usual, $\opA = 2(\opI + \opT)^{-1} - \opI$, $x_{k+1} = \JA(y_k)$ and $\tilA x_{k+1} = y_k - x_{k+1}$.
Now from the analysis of OHM, the identity
\begin{align*}
    \underbrace{\left( (k+1)^2 \sqnorm{\tilA x_{k+1}} + (k+1) \inprod{\tilA x_{k+1}}{x_{k+1} - y_0} \right)}_{U_{k+1}} - \underbrace{\left( (k+2)^2 \sqnorm{\tilA x_{k+2}} + (k+2) \inprod{\tilA x_{k+2}}{x_{k+2} - y_0} \right)}_{U_{k+2}} \\
    = (k+1)(k+2) \inprod{x_{k+2} - x_{k+1}}{\tilA{x_{k+2} - x_{k+1}}} 
\end{align*}
holds if the relationship
\begin{align*}
    y_{k+1} = \frac{k+1}{k+2} \opT y_k + \frac{1}{k+2} y_0 = \frac{k+1}{k+2} (y_k - 2\tilA x_{k+1}) + \frac{1}{k+2} y_0
\end{align*}
holds (regardless of whether $y_k$ is an iterate generated by OHM or not); see, e.g., \citet{RyuYin2022_largescale}.
Therefore, we have
\begin{align*}
    & N^2 \sqnorm{\tilA x_N} + N \inprod{\tilA x_{N}}{x_{N} - y_0} = U_N \\
    & \quad \le U_{N-1} \le \cdots \le U_{N'} = (N')^2 \sqnorm{\tilA x_{N'}} + N' \inprod{\tilA x_{N'}}{x_{N'} - y_0} \le 0
\end{align*}
and dividing both sides by $N$ and applying \cref{lemma:convergence-proof-last-step} we obtain $\sqnorm{\tilA x_N} \le \frac{\sqnorm{y_0 - y_\star}}{N^2}$.

However, one can expect that the algorithm~\eqref{eqn:appendix-optimal-algorithm-outside-family} will not belong to the optimal algorithm family in~\cref{theorem:optimal-method-family}, because its convergence proof above uses the set of inequalities
\begin{align*}
    \lambda_{i,j} \inprod{x_i - x_j}{\tilA x_i - \tilA x_j}
\end{align*}
with $(i,j) \in \{(N',1), \dots, (N',N'-1)\} \cup \{(N'+1, N'), \dots, (N, N-1)\}$, which is different from the set $\{(N,1),\dots,(N,N-1)\} \cup \{(2,1),\dots,(N,N-1)\}$ used in~\cref{theorem:optimal-method-family}.
Indeed, consider the algorithm~\eqref{eqn:appendix-optimal-algorithm-outside-family} in the case $N'=N-1$.
Then the upper $(N-2) \times (N-2)$ submatrix of its H-matrix equals the H-matrix of \ref{alg:dual_halpern} for with terminal iterate $y_{N-2}$. 
Next, it can be checked that the sum of each column in the H-matrix of~\ref{alg:dual_halpern} is always $\frac{1}{2}$ (regardless of the total iteration count), so that $y_{N-2} = y_0 - \tilA x_1 - \cdots - \tilA x_{N-2}$.
Then we see that
\begin{align*}
    y_{N-1} & = \frac{N-1}{N} \opT y_{N-2} + \frac{1}{N} y_0 \\
    & = \frac{N-1}{N} (y_{N-2} - 2\tilA x_{N-1}) + \frac{1}{N} y_0 \\
    & = y_0 - \frac{N-1}{N} \left( \tilA x_1 + \cdots + \tilA x_{N-2} \right) - \frac{2(N-1)}{N} \tilA x_{N-1} \\
    & = y_{N-2} + \frac{1}{N} \left( \tilA x_1 + \cdots + \tilA x_{N-2} \right) - \frac{2(N-1)}{N} \tilA x_{N-1} ,
\end{align*}
and thus, the H-matrix of~\eqref{eqn:appendix-optimal-algorithm-outside-family} with $N'=N-1$ is
\begin{align*}
    H = \begin{bmatrix}
        \frac{N-2}{N-1} & & & \\
        \vdots & \ddots & & \\
        -\frac{1}{(N-1)(N-2)} & \cdots & \frac{1}{2} \\
        -\frac{1}{2N} & \cdots & -\frac{1}{2N} & \frac{N-1}{N}
    \end{bmatrix} .
\end{align*}
For this H-matrix, we have
\begin{align*}
    p_{N-1} = \frac{N-1}{N} , \quad p_{N-2} = \frac{N-1}{2N} , \quad \dots , \quad p_2 = \frac{N-1}{(N-2)N}
\end{align*}
and it can be checked that $w = (p_2,\dots,p_{N-1}) \in C$.
However, if $H = \Phi(w)$, then by construction of $\Phi$ (see \eqref{eqn:appendix-optimal-family-qk-definition}, \eqref{eqn:family-existence-proof-induction-assumption}) we must have
\begin{align*}
    2 (h_{N-1,N-2} + h_{N-2,N-2}) = 1 - 2p_{N-1} + 3p_{N-2} .
\end{align*}
However, the left hand side is
\begin{align*}
    2 \left( \frac{1}{2} - \frac{1}{2N} \right) = \frac{N-1}{N} ,
\end{align*}
while the right hand side is
\begin{align*}
    1 - \frac{2(N-1)}{N} + \frac{3(N-1)}{2N} = \frac{N+1}{2N} ,
\end{align*}
which is a contradiction.
Therefore, we conclude that $H \ne \Phi(w)$.

\newpage
\section{Omitted details from~\cref{section:minimax}}

\subsection{Lyapunov analysis of \ref{alg:dual-feg}}
\label{subsection:appendix-Dual-FEG-Lyapunov}

In this section, we denote $\opA x = \sop{x}$ for simplicity. 
Recall the update rules of~\ref{alg:dual-feg}:
\begin{align}
\begin{split}
    x_{k+\half} & = x_k - \alpha z_k - \alpha \opA{x_k} \\
    x_{k+1}  & = x_{k+1/2} - \frac{N-k-1}{N-k}\alpha\left(\opA x_{k+1/2} - \opA x_k \right) \\ 
    z_{k+1} & = \frac{N-k-1}{N-k} z_k - \frac{1}{N-k} \opA{x_{k+\hf}},
\end{split}
\tag{Dual-FEG}
\end{align}
and the Lyapunov function
\begin{align*}
    V_k = -\alpha \sqnorm{z_k + \opA x_N} + \frac{2}{N-k} \inprod{z_k + \opA x_N}{x_k - x_N}
\end{align*}
for $k=1,\dots,N-1$.
We prove that
\begin{align*}
    V_{k} - V_{k+1}
    &= \frac{2}{ (N-k)(N-k-1)} \underbrace{ \inprod{\opA x_N - \opA x_{k+1/2}}{x_N - x_{k+1/2}} }_{ \mathrm{MI}_k } \\ 
    &  \qquad + \frac{1}{\alpha (N-k)^2}  \underbrace{ \Big( \|x_{k+1/2} - x_k\|^2 - \alpha^2 \|\opA x_{k+1/2} - \opA x_k\|^2 \Big) }_{ \mathrm{LI}_k }
\end{align*}
for $k=1,\dots,N-2$. 
This implies $V_k \ge V_{k+1}$ because $\mathrm{MI}_k \ge 0$ by monotonicity of $\opA$ and $\mathrm{LI}_k \ge 0$ from $L$-Lipschitz continuity of $\opA$ ($L$-smoothness of $\vL$) and $0 < \alpha \le \frac{1}{L}$.

We first decompose $V_k$ as following:
\begin{align*}
    V_k 
    = -\alpha \norm{ z_k }^2 - \alpha \norm{ \opA x_N }^2 
     + \underbrace{ \left(\frac{2}{N-k} \inprod{\opA x_N}{x_{k} - x_N} - 2\alpha \inprod{\opA x_N}{z_{k}} \right) }_{  :=V_{k}^{(1)} }  
     + \underbrace{ \frac{2}{N-k} \inprod{ z_k }{x_k - x_N} }_{  :=V_{k}^{(2)} }.
\end{align*}
Observe that $x_{k+1/2}$ can be written in the following two ways:
\begin{align}
    \begin{aligned}
        x_{k+\half} & = x_k - \alpha z_k - \alpha \opA{x_k} \\
        x_{k+1/2} &= x_{k+1} - \frac{N-k-1}{N-k} \alpha (\opA x_k - \opA x_{k+1/2}) . \label{eqn:dual-extra-anchor-xkhalf-identity} 
    \end{aligned}
\end{align} 
Appropriately plugging \eqref{eqn:dual-extra-anchor-xkhalf-identity} into $\mathrm{MI}_k$ we can rewrite it as
\begin{align*}
    \mathrm{MI}_k &= (N-k) \inprod{\opA x_N}{x_N - x_{k+1/2}} - (N-k-1) \inprod{\opA x_N}{x_N - x_{k+1/2}} - \inprod{\opA x_{k+1/2}}{x_N - x_{k+1/2}} \\
    & = (N-k) \inprod{\opA x_N}{x_N - x_{k+1} + \frac{N-k-1}{N-k} \alpha (\opA x_k - \opA x_{k+1/2})} \\
    & \quad \quad  - (N-k-1) \inprod{\opA x_N}{x_N - x_k + \alpha z_k + \alpha \opA x_k} - \inprod{\opA x_{k+1/2}}{x_N - x_{k+1/2}}  \\
    &= (N-k) \inprod{\opA x_N}{x_N - x_{k+1}} - (N-k-1) \inprod{\opA x_N}{x_N - x_k} \nonumber \\
    & \quad \quad - \alpha (N-k-1) \inprod{\opA x_N}{ \opA x_{k+1/2} +  z_k } - \inprod{ \opA x_{k+1/2}}{x_N - x_{k+1/2}} .
\end{align*}
Now multiply $\frac{2}{(N-k)(N-k-1)}$ to $\mathrm{MI}_k$:
\begin{align}
    & \frac{2}{(N-k)(N-k-1)} \mathrm{MI}_k \nonumber \\ 
    &= \frac{2}{N-k-1} \inprod{\opA x_N}{x_N - x_{k+1}} - \frac{2}{N-k} \inprod{\opA x_N}{x_N - x_k} \nonumber \\ 
    & \quad \quad - 2\alpha \inprod{\opA x_N}{ \frac{1}{N-k} \opA x_{k+1/2} + \frac{1}{N-k} z_k } - \frac{2}{N-k-1} \inprod{ \frac{1}{N-k} \opA x_{k+1/2}}{x_N - x_{k+1/2}} \label{eqn:dual-extra-anchor-proof-Axkhalf-before-replacement}
\end{align}
and plug the following identity (which is the third line of \ref{alg:dual-feg})
\begin{align}
    \label{eqn:dual-extra-anchor-proof-Axkhalf-in-terms-of-z} 
     \frac{1}{N-k} \opA x_{k+1/2} = \frac{N-k-1}{N-k} z_k - z_{k+1} 
\end{align}
into~\eqref{eqn:dual-extra-anchor-proof-Axkhalf-before-replacement} to obtain
\begin{align}
    & \frac{2}{(N-k)(N-k-1)} \mathrm{MI}_k \nonumber \\
    & = \frac{2}{N-k-1} \inprod{\opA x_N}{x_N - x_{k+1}} - \frac{2}{N-k} \inprod{\opA x_N}{x_N - x_k} 
        \nonumber \\
    & \quad \quad - 2\alpha \inprod{\opA x_N}{ z_k - z_{k+1}} - \frac{2}{N-k-1} \inprod{ \frac{N-k-1}{N-k} z_k - z_{k+1} }{x_N - x_{k+1/2}} \nonumber \\
    & = \frac{2}{N-k} \inprod{\opA x_N}{x_k - x_N} - 2\alpha \inprod{\opA x_N}{z_k}
    -  \left(\frac{2}{N-k-1} \inprod{\opA x_N}{x_{k+1} - x_N} - 2\alpha \inprod{\opA x_N}{z_{k+1}} \right)  \nonumber \\
    & \quad \quad
    + \underbrace{ 2 \inprod{  \frac{1}{N-k} z_k - \frac{1}{N-k-1} z_{k+1} }{x_{k+1/2} - x_N}}_{:=R_k} \nonumber \\
    & = V_k^{(1)} - V_{k+1}^{(1)} + R_k
    \label{eqn:dual-extra-anchor-proof-monotonicity-grouped} 
\end{align}
We rewrite $R_k$ as following, using \eqref{eqn:dual-extra-anchor-xkhalf-identity}:
\begin{align}
    R_k & = 2\inprod{ \frac{1}{N-k} z_k }{x_{k+1/2} - x_N} - 2\inprod{ \frac{1}{N-k-1} z_{k+1} }{x_{k+1/2} - x_N} \nonumber \\
    & = 2\inprod{ \frac{1}{N-k} z_k }{x_k - x_N - \alpha z_k - \alpha \opA x_k} 
        - 2\inprod{ \frac{1}{N-k-1} z_{k+1} }{ x_{k+1} - x_N - \frac{N-k-1}{N-k} \alpha (\opA x_k - \opA x_{k+1/2})} \nonumber \\
    & = \frac{2}{N-k} \inprod{ z_k }{x_k - x_N}
        - \frac{2}{N-k-1} \inprod{z_{k+1}}{x_{k+1} - x_N} \nonumber    \\
    & \quad \quad +  \frac{2\alpha}{N-k} \inprod{\opA x_k}{ z_{k+1} - z_k } 
    - \frac{2\alpha}{N-k} \|z_k\|^2 - 2\alpha \inprod{ z_{k+1} }{ \frac{1}{N-k} \opA x_{k+1/2}}  \nonumber \\
    & = V_k^{(2)} - V_{k+1}^{(2)} + \alpha \pr{ \frac{2}{N-k} \inprod{\opA x_k}{ z_{k+1} - z_k }  - \frac{2}{N-k} \|z_k\|^2
     - \frac{2(N-k-1)}{N-k} \inprod{z_{k+1}}{  z_k }  + 2\norm{ z_{k+1} }^2 } 
    \label{eqn:dual-extra-anchor-proof-Rk-rewritten}
\end{align}
where the last equality uses~\eqref{eqn:dual-extra-anchor-proof-Axkhalf-in-terms-of-z}.
Now multiplying $\frac{1}{\alpha(N-k)^2}$ to $\mathrm{LI}_k$ and applying the identities~\eqref{eqn:dual-extra-anchor-xkhalf-identity}, \eqref{eqn:dual-extra-anchor-proof-Axkhalf-in-terms-of-z} we obtain
\begin{align}
    & \frac{1}{\alpha(N-k)^2} \pr{\mathrm{LI}_k} \nonumber \\
    & = \frac{1}{\alpha(N-k)^2} \left( \|x_{k+1/2} - x_k\|^2 - \alpha^2 \|\opA x_{k+1/2} - \opA x_k\|^2 \right) \nonumber \\
    & = \frac{1}{(N-k)^2} \left( \alpha \norm{ z_k + \opA x_k}^2 - \alpha \| (N-k-1) z_k -  (N-k) z_{k+1} - \opA x_k\|^2 \right) \nonumber \\
    & =  \alpha \pr{ \norm{ \frac{1}{N-k} z_k + \frac{1}{N-k} \opA x_k}^2  
        - \left \| \frac{1}{N-k} z_k + \frac{1}{N-k} \opA x_k + ( z_{k+1} - z_k ) \right \|^2  } \nonumber \\ 
    &= - \alpha \inner{ z_{k+1} - \pr{ 1 - \frac{2}{N-k} } z_k + \frac{2}{N-k} \opA x_k }{ z_{k+1} - z_k }   \nonumber \\
    &= - \alpha  \pr{ \norm{ z_{k+1} }^2 + \pr{ 1  - \frac{2}{N-k} } \norm{ z_{k} }^2 
        - \frac{2(N-k-1)}{N-k} \inner{z_k}{z_{k+1}}
        + \frac{2}{N-k} \inprod{\opA x_k}{ z_{k+1} - z_k } }
    \label{eqn:dual-extra-anchor-lipschitzness}
\end{align}
holds.
Now, we add~\eqref{eqn:dual-extra-anchor-proof-monotonicity-grouped} with \eqref{eqn:dual-extra-anchor-lipschitzness}, plug in \eqref{eqn:dual-extra-anchor-proof-Rk-rewritten} and simplify to obtain:
\begin{align*}
    \frac{2}{(N-k)(N-k-1)} \pr{\mathrm{MI}_k} + \frac{1}{\alpha(N-k)^2} \pr{\mathrm{LI}_k}
    &= (V_k^{(1)} + V_k^{(2)} - \alpha \|z_k\|^2) - (V_{k+1}^{(1)} + V_{k+1}^{(2)} - \alpha \|z_{k+1}\|^2)   \\
    &= V_k - V_{k+1}.
\end{align*} 
It remains to show $V_{N-1} \ge 0$. 
When $k = N-1$, we have
\begin{align*}
    x_{N} = x_{N-1/2} = x_{N-1} - \alpha z_{N-1} - \alpha \opA x_{N-1} 
    \quad \Longleftrightarrow \quad 
    z_{N-1} + \opA x_{N-1} = - \frac{1}{\alpha} \pr{ x_N - x_{N-1} }.
\end{align*}
Therefore,
\begin{align*}
    V_{N-1} & = -\alpha \sqnorm{z_{N-1} + \opA x_N} + 2 \inprod{z_{N-1} + \opA x_N}{x_{N-1} - x_N} \\
    &= - \alpha \norm{ { z_{N-1} + \opA x_{N-1} } + \pr{ \opA x_{N} - \opA x_{N-1}  } }^2 + 2 \inner{  z_{N-1} + \opA x_{N-1} }{ x_{N-1}-x_N }  \\
    &= - \alpha \norm{ -\frac{1}{\alpha} \pr{ x_N - x_{N-1} } + \pr{ \opA x_{N} - \opA x_{N-1}  } }^2 + \frac{2}{\alpha} \norm{ x_N - x_{N-1} }^2  \\
    &= \frac{1}{\alpha} \pr{ \norm{ x_N - x_{N-1} }^2 - \alpha^2 \norm{ \opA x_{N} - \opA x_{N-1}  }^2 }
        + 2 \inner{ x_N - x_{N-1} }{ \opA x_{N} - \opA x_{N-1} }        
    \ge 0,
\end{align*}
which concludes the proof. 

\subsection{Proof of~\cref{proposition:dual-feg-H-dual}}
\label{subsection:proof-of-dual-feg-H-dual}

In \cref{section:appendix-algorithm-equivalence}, we derive the H-matrix forms of \ref{alg:feg} and \ref{alg:dual-feg}, respectively~\eqref{eqn:appendix-FEG-H-matrix-form} and \eqref{eqn:appendix-dual-FEG-H-matrix-form}.
It remains to check that the matrices are indeed in the anti-transpose relationship.
Recall the definition
\begin{align*}
    (H_{\text{Dual-FEG}})_{\ell,i} =     \begin{cases}
         h_{\ell/2,(i-1)/2} & \text{if } i \leq \ell \\
        0     & \text{if } i > \ell.
    \end{cases}
\end{align*}
For comparison, write
\begin{align*}
    (H_{\text{FEG}})_{\ell,i} =     \begin{cases}
         \hat{h}_{\ell/2,(i-1)/2} & \text{if } i \leq \ell \\
        0     & \text{if } i > \ell .
    \end{cases}
\end{align*}
We check that $h_{\ell/2,(i-1)/2} = (H_{\text{Dual-FEG}})_{\ell,i} = (H^\at_{\text{FEG}})_{\ell,i} = (H_{\text{FEG}})_{2N-i+1,2N-\ell+1} = \hat{h}_{N-(i-1)/2,N-\ell/2}$ by carefully dividing cases, which completes the proof.

\paragraph{Case 1.} For $\ell=i=2k+1$ ($k=0,\dots,N-1$),
\begin{align*}
    h_{\ell/2,(i-1)/2} = h_{k+1/2,k} = \alpha L = \hat{h}_{N-k,N-k-1/2} = \hat{h}_{N-(i-1)/2,N-\ell/2}.
\end{align*}

\paragraph{Case 2.} For $\ell=2k+1$ ($k=1,\dots,N-1)$, $i=2j+2$ ($j=0,\dots,k-1$),
\begin{align*}
    h_{\ell/2,(i-1)/2} = h_{k+1/2,j+1/2} = -\frac{N-k}{(N-j-1)(N-j)} \alpha L = \hat{h}_{N-j-1/2,N-k-1/2} = \hat{h}_{N-(i-1)/2,N-\ell/2}
\end{align*}
(note that $N-j > N-k$).

\paragraph{Case 3.} For $\ell=2k+1$ ($k=1,\dots,N-1)$, $i=2j+1$ ($j=0,\dots,k-1$),
\begin{align*}
    h_{\ell/2,(i-1)/2} = h_{k+1/2,j} = 0 = \hat{h}_{N-j,N-k-1/2} = \hat{h}_{N-(i-1)/2,N-\ell/2}
\end{align*}
(note that $N-j > N-k$).

\paragraph{Case 4.} For $\ell=i=2k+2$ ($k=0,\dots,N-1$),
\begin{align*}
    h_{\ell/2,(i-1)/2} = h_{k+1,k+1/2} = \frac{N-k-1}{N-k} \alpha L = \hat{h}_{N-k-1/2,N-k-1} = \hat{h}_{N-(i-1)/2,N-\ell/2} .
\end{align*}

\paragraph{Case 5.} For $\ell=2k+2$, $i=2k+1$ ($k=0,\dots,N-1$),
\begin{align*}
    h_{\ell/2,(i-1)/2} = h_{k+1,k} = -\frac{N-k-1}{N-k} \alpha L = \hat{h}_{N-k,N-k-1} = \hat{h}_{N-(i-1)/2,N-\ell/2} .
\end{align*}

\paragraph{Case 6.} For $\ell=2k+2$, $i=1,\dots,2k$ ($k=0,\dots,N-1$), because $\hat{h}_{k',j'} = 0$ for any $j'=0,1,\dots$ and $k' \ge j' + 3/2$,
\begin{align*}
    h_{\ell/2,(i-1)/2} = h_{k+1,(i-1)/2} = 0 = \hat{h}_{N-(i-1)/2,N-k-1} = \hat{h}_{N-(i-1)/2,N-\ell/2} .
\end{align*}

\begin{flushright}
\qedsymbol    
\end{flushright}

\newpage
\section{Omitted details from \cref{section:continuous-time}}
\label{section:appendix-continuous}

\subsection{Derivation of ODE as continuous-time limit} \label{appendix:continuous_time_limit_algorithms}

In this section, we derive the Dual-Anchor ODE as continuous-time limit of \ref{alg:dual_halpern} and \ref{alg:dual-feg}.
The derivation is informal in the sense that we do not rigorously derive the sup-norm convergence
\[
    \lim_{\alpha \to 0^+} \sup_{t\in [0,T]}\norm{X(t) - x_{\lfloor t/\alpha \rfloor}} = 0 ,
\]
but we clarify all essential correspondence between discrete and continuous-time quantities.

\subsubsection{\ref{alg:dual_halpern} to Dual-Anchor ODE}  \label{appendix:continuous_time_limit_dual_OHM}
Assume $\opA$ is continuous monotone operator and let $\alpha>0$. 
Consider \ref{alg:dual_halpern} with $\opT = 2\opJ_{\frac{\alpha}{2} \opA} - \opI$ 
(note that this differs from the ``usual identification'' $\opT = 2\JA - \opI$ by scale, because here we need a step-size $\alpha$ with respect to which we take the limit).
Let $x_{k+1} = \opJ_{\frac{\alpha}{2} \opA} y_k$, so that $x_{k+1} + \frac{\alpha}{2} \opA x_{k+1} = y_k$. 
Then
\begin{align*}
    \opT y_k = 2x_{k+1} - y_k = y_k - \alpha \opA x_{k+1},
\end{align*}
and we can rewrite the $z$-form of \ref{alg:dual_halpern}~\eqref{alg:dual_halpern_with_z} as
\begin{align*}
    \frac{\frac{z_{k+1}}{\alpha} - \frac{z_{k}}{\alpha}}{\alpha} & = -\frac{1}{\alpha(N-k)} \frac{z_k}{\alpha} - \frac{1}{\alpha(N-k)} \opA x_{k+1} \\ 
    \frac{y_{k+1} - y_k}{\alpha} &= -\frac{z_{k+1}}{\alpha} - \opA x_{k+1}.
\end{align*}
Identifying $\alpha k =t$, $\alpha N =T$, $y_k=Y(t)$, $\frac{z_{k}}{\alpha} = Z(t)$ and $x_k = X(t)$, we obtain
\begin{align*}
    \dot{Z}(t) & = -\frac{1}{T-t} Z(t) - \frac{1}{T-t} \opA (X(t+\alpha)) \\
    \dot{Y}(t) &= -Z(t+\alpha) - \opA (X(t+\alpha)).
\end{align*}
Assuming differentiability of $X,Y,Z$ and taking the limit $\alpha \to 0$ gives
\begin{align}
\label{eqn:appendix-continuous-dual-OHM-derivation-Y-ODE}
\begin{split}
    \dot{Z}(t) & = -\frac{1}{T-t} Z(t) - \frac{1}{T-t} \opA (X(t)) \\
    \dot{Y}(t) &= -Z(t) - \opA (X(t)).
\end{split}
\end{align} 
Once we show $\dot{Y}(t) = \dot{X}(t)$ in the limit $\alpha \to 0$, \eqref{eqn:appendix-continuous-dual-OHM-derivation-Y-ODE} becomes the Dual-Anchor ODE~\eqref{ode:dual-anchor}.
From $x_{k+1} + \frac{\alpha}{2} \opA x_{k+1} = y_k$, we have
\begin{align*}
    \dot{X}(t) + \mathcal{O}(\alpha)
    = \frac{x_{k+1} - x_k}{\alpha}
    = \frac{y_{k} - y_{k-1}}{\alpha} - \frac{1}{2} \pr{ \opA x_{k+1} - \opA x_{k} }
    = \dot{Y}(t) + \mathcal{O}(\alpha) - \frac{1}{2} \pr{ \opA (X(t+\alpha)) - \opA (X(t)) }
\end{align*}
which shows that indeed, $\dot{X}(t) = \dot{Y}(t)$ in the limit $\alpha \to 0$.

\subsubsection{\ref{alg:dual-feg} to Dual-Anchor ODE} \label{appendix:continuous_time_limit_dual_FEG}

From the definition of~\ref{alg:dual-feg}, we have
\begin{align} 
    \begin{split}
        x_{k+1} & = x_{k+\hf} - \frac{N-k-1}{N-k} \alpha \left( \opA x_{k+\hf} - \opA x_k \right) \\
        & = x_k - \alpha z_k - \alpha \opA x_k  - \frac{N-k-1}{N-k} \alpha \left( \opA x_{k+\hf} - \opA x_k \right)  
    \end{split}
 \label{eqn:appendix-continuous-dual-FEG-xkp1-xk-difference}
\end{align}
where $\opA = \nabla_\pm \vL$.
Therefore, we can write
\begin{align*}
    \frac{x_{k+1} - x_k}{\alpha} &= - z_k - \opA x_k - \frac{\alpha(N-k-1)}{\alpha(N-k)} \pr{ \opA x_{k+\hf} - \opA x_{k} }  \\
    \frac{z_{k+1} - z_k }{\alpha} & = -\frac{1}{\alpha(N-k)} z_k - \frac{1}{\alpha(N-k)} \opA x_{k+\hf} .
\end{align*}
Now identify $\alpha k =t$, $\alpha N = T$, $x_k = X(t)$ and $z_k = Z(t)$ to obtain
\begin{align*}
    \dot{X}(t) & = -Z(t) - \opA (X(t)) - \frac{T-t-\alpha}{T-t} \left( \opA (X(t) + \cO(\alpha)) - \opA (X(t)) \right) \\
    \dot{Z}(t) & = -\frac{1}{T-t} Z(t) - \frac{1}{T-t} \opA (X(t) + \cO(\alpha))
\end{align*}
where we use $x_{k+\hf} = x_k + \cO(\alpha)$.
Thus, under the limit $\alpha\to 0$, we obtain
\begin{align*}
    \dot{X}(t) & = -Z(t) - \opA (X(t)) \\
    \dot{Z}(t) & = -\frac{1}{T-t} Z(t) - \frac{1}{T-t} \opA (X(t))
\end{align*}
which is the Dual-Anchor ODE~\eqref{ode:dual-anchor}.

\subsection{Proof of~\cref{proposition:continuous-time-H-dual}} 
\label{appendix:continuous_time_H-matrix}

The Anchor ODE~\eqref{ode:anchor} \cite{RyuYuanYin2019_ode, SuhParkRyu2023_continuoustime} is
\begin{align}
\label{eqn:appendix-continuous-anchor-ode-restated}
    \dot{X}(t) = -\opA(X(t)) - \frac{1}{t} ( X(t) - X_0 )  
\end{align}
where $X(0) = X_0$. 
We first rewrite~\eqref{eqn:appendix-continuous-anchor-ode-restated} in the H-kernel form
\begin{align*}
    \dot{X}(t) = -\int_{0}^{t} H(t,s) \opA(X(s)) ds .
\end{align*} 
Multiply $t$ to both sides of~\eqref{eqn:appendix-continuous-anchor-ode-restated} and reorganize to get
\begin{align*}
    \frac{d}{dt} \pr{ t (X(t) - X_0) }
    = t\dot{X}(t) + (X(t) - X_0)
    = - t \opA(X(t)).
\end{align*}
Integrating from $0$ to $t$ and dividing both sides by $t$ we obtain
\begin{align*}
    X(t) - X_0 = -\frac{1}{t} \int_{0}^t s \opA(X(s)) ds.
\end{align*}
Differentiating both sides gives
\begin{align*}
    \dot{X}(t) 
    = \frac{1}{t^2} \int_{0}^t s \opA(X(s)) ds - \opA(X(t))
    = \int_{0}^t \pr{ \frac{s}{t^2} - \delta(s-t) }\opA(X(s)) ds =  -\int_{0}^{t} H(t,s) \opA(X(s)) ds 
\end{align*}
with
\[
    H(t,s) = -\frac{s}{t^2}  + \delta(s-t)
\]
and $\delta(\cdot)$ is the Dirac delta function. 
The continuous-time anti-diagonal transpose of this H-kernel is:
\begin{align*}
    H^\at(t,s) = H(T-s, T-t) = -\frac{T-t}{(T-s)^2}  + \delta(s-t) ,
\end{align*}
Therefore, the H-dual of the Anchor ODE is
\begin{align*}
    \dot{X}(t) = -\int_0^t H^\at (t,s) \opA (X(s)) ds
    = \int_{0}^t \pr{ \frac{T-t}{(T-s)^2}  - \delta(s-t) }\opA(X(s)) ds
    = \int_{0}^t \frac{T-t}{(T-s)^2} \opA(X(s)) ds - \opA(X(t)).
\end{align*}
Now define
\begin{align*}
    Z(t) = - \dot{X}(t) - \opA(X(t)) = - (T-t) \int_{0}^t \frac{1}{(T-s)^2} \opA(X(s)) ds .
\end{align*}
Differentiating both sides of the above equation gives
\begin{align*}
    \dot{Z}(t)
    = \int_{0}^t \frac{1}{(T-s)^2} \opA(X(s)) ds - \frac{1}{T-t} \opA(X(t))
    = -\frac{1}{T-t} Z(t) - \frac{1}{T-t} \opA(X(t)). 
\end{align*}
The last equation, together $\dot{X}(t) = -Z(t) - \opA(X(t))$ which follows directly from definition of $Z$, becomes~\eqref{ode:dual-anchor}, which is the Dual-Anchor ODE.
This shows that Dual-Anchor ODE and Anchor ODE are H-duals of each other. 

\subsection{Existence, uniqueness and Lyapunov analysis of Dual Anchor ODE}

\subsubsection{Well-posedness of the dynamics}

\begin{theorem} \label{appendix_thm:well-posedness_for_Lipschitz_continuous_A}
Suppose $\opA \colon \reals^d \to \reals^d$ is Lipschitz continuous and let $T > 0$. 
Consider the differential equation  
\begin{align}
\begin{split}
    \pmat{ \dot{X}(t) \\ \dot{Z}(t) }
     & = - \pmat{ Z(t) + \opA(X(t)) \\ \frac{1}{T-t} Z(t) + \frac{1}{T-t} \opA(X(t))  }  
\end{split} 
\label{eqn:dual-anchor-ode-appendix-vector-form}
\end{align}
for $t \in (0, T)$, with initial conditions $X(0)=X_0$ and $Z(0)=0$. 
Then there is a unique solution $\pmat{ X \\ Z } \in \mathcal{C}^1([0,T), \reals^{2d})$ that satisfies \eqref{eqn:dual-anchor-ode-appendix-vector-form} for all $t \in (0, T)$. 
\end{theorem}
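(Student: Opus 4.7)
The plan is to reduce the problem to a standard Picard--Lindel\"of argument on compact subintervals $[0,T']$ with $T' < T$, and then glue the resulting local solutions using uniqueness. Write the system \eqref{eqn:dual-anchor-ode-appendix-vector-form} in vector form as $\dot{W}(t) = F(t, W(t))$, where $W = \binom{X}{Z}$ and
\[
    F(t, X, Z) = -\begin{pmatrix} Z + \opA(X) \\ \frac{1}{T-t}\bigl( Z + \opA(X) \bigr) \end{pmatrix}.
\]
The only obstruction to a purely classical argument is the factor $\frac{1}{T-t}$, which blows up at $t = T$ but is bounded (by $\frac{1}{T-T'}$) on any $[0,T']$ with $T' < T$. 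In particular, there is no singularity at the left endpoint $t=0$: $F$ is well-defined there with value $-\bigl( \opA(X_0),\, \frac{1}{T}\opA(X_0) \bigr)$, so the initial-value problem is genuinely regular at $t=0$.

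First I would fix an arbitrary $T' \in (0, T)$ and verify that $F$ restricted to $[0,T'] \times \reals^{2d}$ is continuous in $t$ and globally Lipschitz in $(X,Z)$, with Lipschitz constant $L' = \max\!\bigl(1,\tfrac{1}{T-T'}\bigr)(1 + L_{\opA})$, where $L_{\opA}$ is the Lipschitz constant of $\opA$. The Picard--Lindel\"of theorem (in its global-Lipschitz, compact-time-interval form) then gives a unique $W_{T'} \in \cC^1([0,T'], \reals^{2d})$ satisfying \eqref{eqn:dual-anchor-ode-appendix-vector-form} on $[0,T']$ with $W_{T'}(0) = (X_0, 0)$. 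Global Lipschitzness eliminates any finite-time blow-up concern on the closed interval $[0, T']$, so the solution genuinely exists all the way up to $t = T'$.

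Next I would choose any sequence $T_n \uparrow T$ and let $W_n := W_{T_n}$ denote the corresponding solutions. By uniqueness applied to any pair $T_n < T_m$, the restriction of $W_m$ to $[0,T_n]$ coincides with $W_n$. Therefore the pointwise limit $W(t) := W_n(t)$ for $t \in [0,T_n]$ is a well-defined function on $[0,T)$, independent of $n$ for $t$ in the overlap. Since each $W_n$ is $\cC^1$ on $[0,T_n]$ and satisfies the ODE there, $W \in \cC^1([0,T), \reals^{2d})$ and solves \eqref{eqn:dual-anchor-ode-appendix-vector-form} on $(0,T)$ (and indeed at $t=0$). Uniqueness on $[0,T)$ follows because any other solution would, upon restriction to each $[0,T_n]$, have to agree with $W_n$ by uniqueness on compact intervals.

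I do not expect any serious obstacle in this proof. The only point requiring a moment of care is explicitly recording that the Lipschitz constant of $F$ in $(X,Z)$ depends on $T'$ via $\tfrac{1}{T-T'}$ (and blows up as $T' \to T^-$), which is precisely why the conclusion is stated on the half-open interval $[0,T)$ rather than on the closed $[0,T]$. The control of the solution up to $t = T^-$ (needed for \cref{thm:continuous_convergnece_rate}) is a separate issue handled by the Lyapunov analysis and \cref{lemma:ode_terminal_infomation}, not by well-posedness itself.
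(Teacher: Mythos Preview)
Your proposal is correct and matches the paper's own argument essentially step for step: fix $T' < T$, observe that $F$ is globally Lipschitz in $(X,Z)$ on $[0,T']$ with a constant depending on $\frac{1}{T-T'}$, invoke Picard--Lindel\"of, and let $T' \uparrow T$. The paper is slightly terser about the gluing step (it simply says ``since $\overline{t}$ is arbitrary''), while you spell out the sequence-and-uniqueness argument, but the substance is identical.
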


\begin{proof} 
    The right hand side of~\eqref{eqn:dual-anchor-ode-appendix-vector-form}, as a function of $t$ and $\pmat{X\\Z}$, can be rewritten as
    \begin{align*}
        F\pr{t,\pmat{ X \\ Z }}
         & = - \pmat{ Z + \opA(X) \\ \frac{1}{T-t} Z + \frac{1}{T-t} \opA(X)  }  
         = - \pmat{ I & 0 \\ 0 & \frac{1}{T-t} I  }  \pmat{ Z + \opA(X) \\ Z + \opA(X) } .
    \end{align*}
    Let $L>0$ be the Lipschitz continuity parameter for $\opA$. 
    Fix $\overline{t} \in (0,T)$; then for $t\in[0,\overline{t}]$, $F$ is Lipschitz continuous with parameter
    \[
        \sqrt{ 2\left( 1 + \frac{1}{(T-\overline{t})^2} \right) } \max\set{1,L}
    \]
    in $\pmat{ X \\ Z }$. 
    By classical ODE theory, Lipschitz continuity of $F$ on $[0,\overline{t}]$ implies that the solution $\pmat{ X \\ Z } \in \mathcal{C}^1([0,\overline{t}], \reals^{2d})$ uniquely exists.
    Since $\overline{t} \in (0,T)$ is arbitrary, the solution uniquely exists on $[0,T)$ and the proof is complete.   
\end{proof}

\begin{corollary} \label{appendix:equivalence_of_two_ODE}
    Suppose $\opA \colon \reals^d \to \reals^d$ is Lipschitz continuous and let $T > 0$. 
    The solution $\pmat{ X \\ Z } \colon [0,T) \to \reals^{2d}$ to~\eqref{eqn:dual-anchor-ode-appendix-vector-form} satisfies the differential equation
    \begin{align} 
    \label{eqn:appendix-continuous-dual-anchor-ODE-second-order-restated}
        \ddot{X}(t) + \frac{1}{T-t} \dot{X}(t) + \frac{d}{dt} \opA (X(t)) = 0 
    \end{align}
    almost everywhere in $t\in[0,T)$. 
    Conversely, if $X \in \mathcal{C}^1([0,T), \reals^{d})$ satisfies~\eqref{eqn:appendix-continuous-dual-anchor-ODE-second-order-restated} almost everywhere in $t\in[0,T)$, then with $Z \colon [0,T) \to \reals^{d}$ defined as $Z(t) = -\dot{X}(t) -\opA(X(t))$, $\pmat{ X \\ Z }$ is the solution of~\eqref{eqn:dual-anchor-ode-appendix-vector-form}. 
\end{corollary}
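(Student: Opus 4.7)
The plan is to handle the two implications separately, each reducing to a short calculation after one substitution. The only subtlety is regularity: since $\opA$ is merely Lipschitz, the composition $t \mapsto \opA(X(t))$ is Lipschitz (for any $\mathcal{C}^1$ curve $X$), hence absolutely continuous and differentiable at almost every $t \in [0,T)$. This is precisely the sense in which~\eqref{eqn:appendix-continuous-dual-anchor-ODE-second-order-restated} is claimed to hold.

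For the forward direction, let $\pmat{X \\ Z} \in \mathcal{C}^1([0,T),\reals^{2d})$ be the unique solution guaranteed by \cref{appendix_thm:well-posedness_for_Lipschitz_continuous_A}. I would differentiate the first line $\dot{X}(t) = -Z(t) - \opA(X(t))$ at every $t$ where $\opA \circ X$ is differentiable, obtaining
\[
    \ddot{X}(t) = -\dot{Z}(t) - \frac{d}{dt} \opA(X(t)).
\]
Next, from the second line $\dot{Z}(t) = -\frac{1}{T-t}\left( Z(t) + \opA(X(t)) \right)$ and the first line I would deduce $\dot{Z}(t) = \frac{1}{T-t}\dot{X}(t)$. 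Substituting this back yields~\eqref{eqn:appendix-continuous-dual-anchor-ODE-second-order-restated} on the full-measure set where $\opA \circ X$ is differentiable.

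For the converse, given $X \in \mathcal{C}^1([0,T), \reals^d)$ satisfying~\eqref{eqn:appendix-continuous-dual-anchor-ODE-second-order-restated} a.e.\ with $X(0) = X_0$ and $\dot{X}(0) = -\opA(X_0)$ (the appropriate initial conditions for the second-order form, as derived in~\cref{subsection:appendix-equivalence-dual-anchor-ode}), I would define $Z(t) := -\dot{X}(t) - \opA(X(t))$. The first line of~\eqref{eqn:dual-anchor-ode-appendix-vector-form} then holds by construction, and $Z(0) = -\dot{X}(0) - \opA(X_0) = 0$ matches the required initial value. Differentiating the defining identity and invoking~\eqref{eqn:appendix-continuous-dual-anchor-ODE-second-order-restated} gives
\[
    \dot{Z}(t) \;=\; -\ddot{X}(t) - \frac{d}{dt}\opA(X(t)) \;=\; \frac{1}{T-t}\dot{X}(t),
\]
and re-substituting $\dot{X}(t) = -Z(t) - \opA(X(t))$ recovers the second line of~\eqref{eqn:dual-anchor-ode-appendix-vector-form}. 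Uniqueness from \cref{appendix_thm:well-posedness_for_Lipschitz_continuous_A} then identifies $\pmat{X\\Z}$ as \emph{the} solution.

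The main obstacle here is entirely bookkeeping about a.e.~differentiability. Each use of $\frac{d}{dt}\opA(X(t))$ has to be justified on the full-measure subset of $[0,T)$ where it exists, and one must track that both implications operate on the same such set. The $\mathcal{C}^1$ regularity of $X$ (and, in the forward direction, of $Z$) is what makes every other term in both ODE statements meaningful pointwise, so no further regularity of $\opA$ is required; Lipschitz continuity is enough to make the equivalence rigorous.
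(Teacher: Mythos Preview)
Your proposal is correct and follows essentially the same approach as the paper: both argue that $t\mapsto \opA(X(t))$ is (locally) Lipschitz and hence a.e.\ differentiable, then carry out the identical substitutions $\dot{Z}=\frac{1}{T-t}\dot{X}$ in the forward direction and $Z:=-\dot{X}-\opA(X)$ in the converse, exactly as in \cref{subsection:appendix-equivalence-dual-anchor-ode}. The only refinement in the paper is that it localizes to compact subintervals $[0,\overline{t}]$ to justify the Lipschitz bound on $\opA\circ X$ (your phrase ``Lipschitz for any $\mathcal{C}^1$ curve'' is only true locally) and explicitly records that $\dot{X}=-Z-\opA(X)$ is absolutely continuous, so that $\ddot{X}$ exists a.e.; you gesture at this in your final paragraph but do not spell it out.
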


\begin{proof}
    Let $L$ be the Lipschitz continuity parameter of $\opA$ and take $\overline{t} \in (0,T)$. 
    As $t \mapsto \dot{X}(t)$ is continuous on the compact interval $[0,\overline{t}]$, we have $M = \max_{t\in[0,\overline{t}]} \norm{ \dot{X}(t) } < \infty$. 
    Therefore $t \mapsto X(t)$ is $M$-Lipschitz, which implies that $t \mapsto \opA(X(t))$ is $LM$-Lipschitz; hence $\opA(X(t))$ is differentiable almost everywhere in $t$. 
    Then $\dot{X}(t) = -Z(t) - \opA(X(t))$ is absolutely continuous in $t$ (being the sum of $\cC^1$ function $Z$ and Lipschitz continuous $\opA(X(t))$) and hence differentiable almost everywhere.
    Now the equivalence between~\eqref{eqn:dual-anchor-ode-appendix-vector-form} and~\eqref{eqn:appendix-continuous-dual-anchor-ODE-second-order-restated} can be proved via same arguments as in~\cref{subsection:appendix-equivalence-dual-anchor-ode}, with all equalities holding almost everywhere in $t$. 
\end{proof}

\subsubsection{Regularity at the terminal time $t=T$} \label{appendix:proof_of_ode_terminal_infomation}

Next, we show the solution on $[0,T)$ can be continuously extended to the terminal time $t=T$ due to its favorable properties. 
First, we need:

\begin{lemma} \label{appendix_lemma:bound_of_dotX}
    Suppose $\opA \colon \reals^d \to \reals^d$ is Lipschitz continuous and monotone.
    Let $\pmat{X\\Z}$ be the solution to~\eqref{eqn:dual-anchor-ode-appendix-vector-form} on $[0,T)$.
    Then 
    \begin{align*}
        \Psi(t) =  \frac{1}{(T-t)^2} \norm{ \dot{X}(t) }^2
    \end{align*}
    is nonincreasing in $t \in [0,T)$. 
    In particular, this implies  
    \begin{align*}
        \norm{\dot{Z}(t)}^2 = \frac{1}{(T-t)^2} \norm{ \dot{X}(t) }^2 \le \frac{1}{T^2} \norm{ \opA(X_0) }^2.
    \end{align*}
\end{lemma}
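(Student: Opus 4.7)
The plan is to reduce the claim to a derivative computation on $\Psi$ that exploits the second-order form of the Dual-Anchor ODE and monotonicity of $\opA$. First I would observe that the two lines of \eqref{eqn:dual-anchor-ode-appendix-vector-form} can be combined into
\[
\dot{Z}(t)=\frac{1}{T-t}\bigl(-Z(t)-\opA(X(t))\bigr)=\frac{1}{T-t}\dot{X}(t),
\]
so that $\|\dot{Z}(t)\|^{2}=\Psi(t)$; this immediately yields the ``in particular'' claim once monotonicity of $\Psi$ and the value at $t=0$ are established. Moreover $\dot X(0)=-Z(0)-\opA(X_0)=-\opA(X_0)$, giving $\Psi(0)=\tfrac{1}{T^{2}}\|\opA(X_0)\|^{2}$, which will serve as the upper bound at the end.

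Next I would differentiate $\Psi$. Using the second-order form from \cref{appendix:equivalence_of_two_ODE},
\[
\ddot{X}(t)+\frac{1}{T-t}\dot{X}(t)+\frac{d}{dt}\opA(X(t))=0,
\]
a direct computation gives
\begin{align*}
\dot\Psi(t)
&=\frac{2}{(T-t)^{3}}\|\dot{X}(t)\|^{2}+\frac{2}{(T-t)^{2}}\bigl\langle \dot{X}(t),\ddot{X}(t)\bigr\rangle \\
&=-\frac{2}{(T-t)^{2}}\left\langle \dot{X}(t),\tfrac{d}{dt}\opA(X(t))\right\rangle,
\end{align*}
the two $(T-t)^{-3}\|\dot X\|^{2}$ contributions cancelling. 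To sign the surviving term, I would invoke monotonicity of $\opA$: for $s<t$,
\[
\bigl\langle X(t)-X(s),\opA(X(t))-\opA(X(s))\bigr\rangle\geq 0,
\]
and dividing by $(t-s)^{2}$ and letting $s\to t^{-}$ yields $\bigl\langle \dot X(t),\tfrac{d}{dt}\opA(X(t))\bigr\rangle\geq 0$ at every $t\in(0,T)$ where both derivatives exist. By \cref{appendix:equivalence_of_two_ODE}, this is true almost everywhere, so $\dot\Psi(t)\leq 0$ a.e.

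To pass from ``$\dot\Psi\leq 0$ a.e.'' to ``$\Psi$ is nonincreasing'', I would verify local absolute continuity of $\Psi$ on $[0,T)$. From \cref{appendix_thm:well-posedness_for_Lipschitz_continuous_A}, $\dot X,\dot Z\in\cC^{0}([0,T),\reals^{d})$; since $\opA$ is Lipschitz and $X$ is $\cC^{1}$ on any $[0,\bar t]\subset[0,T)$, $t\mapsto \opA(X(t))$ is Lipschitz there, hence $\dot X=-Z-\opA(X)$ is absolutely continuous on $[0,\bar t]$. Then $\|\dot X\|^{2}$ is absolutely continuous and $(T-t)^{-2}$ is smooth on $[0,\bar t]$, so $\Psi$ is absolutely continuous on $[0,\bar t]$; thus $\Psi(t_2)-\Psi(t_1)=\int_{t_1}^{t_2}\dot\Psi\,dt\leq 0$ for $0\leq t_1\leq t_2<T$.

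The main obstacle I anticipate is the regularity bookkeeping in the last paragraph: the clean algebraic cancellation and the monotonicity argument are routine, but one must be careful that the a.e.\ differentiability established in \cref{appendix:equivalence_of_two_ODE} is strong enough to justify the fundamental-theorem-of-calculus step. The cleanest route is to handle this on an arbitrary compact subinterval $[0,\bar t]\subset[0,T)$ so that all Lipschitz constants (including that of $(T-t)^{-2}$) are finite, and then let $\bar t\to T^{-}$.
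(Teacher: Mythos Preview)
Your proposal is correct and follows essentially the same approach as the paper: both differentiate $\Psi$ via the second-order form \eqref{eqn:appendix-continuous-dual-anchor-ODE-second-order-restated}, obtain $\dot\Psi(t)=-\tfrac{2}{(T-t)^{2}}\langle \dot X(t),\tfrac{d}{dt}\opA(X(t))\rangle$, sign it using monotonicity through the same difference-quotient limit, and finish with $\dot Z=\tfrac{1}{T-t}\dot X$ and $\dot X(0)=-\opA(X_0)$. Your regularity bookkeeping (local absolute continuity on $[0,\bar t]$) is slightly more explicit than the paper's, which simply asserts that $\Psi$ is absolutely continuous and presents the computation via an integrated ``conservation law'', but the content is identical.
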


\begin{proof}
Take the inner product with $\frac{1}{(T-t)^2}\dot{X}(t)$ to both sides of~\eqref{eqn:appendix-continuous-dual-anchor-ODE-second-order-restated}, we have for almost every $t\in [0,T)$,
\begin{align*}
    0 
    &= \frac{1}{(T-t)^2} \inner{\ddot{X}(t)}{\dot{X}(t)} + \frac{1}{(T-t)^3} \norm{ \dot{X}(t) }^2  + \frac{1}{(T-t)^2} \inner{ \frac{d}{dt} \opA(X(t)) }{ \dot{X}(t) } \\
    &= \frac{d}{dt} \pr{ \frac{1}{2(T-t)^2} \norm{ \dot{X}(t) }^2 } + \frac{1}{(T-t)^2} \inner{ \frac{d}{dt} \opA(X(t)) }{ \dot{X}(t) }.
\end{align*}
Note that because $\dot{X}(t)$ is absolutely continuous, $\Psi(t)$ is also absolutely continuous.
Integrating from $0$ to $t$ and reorganizing, we obtain the following ``conservation law'':
\begin{align*}
    E 
    \equiv \frac{1}{2T^2} \norm{ \dot{X}(0) }^2
    = \underbrace{\frac{1}{2(T-t)^2} \norm{ \dot{X}(t) }^2}_{\frac{\Psi(t)}{2}} + \int_{0}^{t} \frac{1}{(T-s)^2} \inner{ \frac{d}{ds} \opA(X(s)) }{ \dot{X}(s) } ds
\end{align*}
($E$ is a constant).
Note that by monotonicity of $\opA$, 
\begin{align*}
    \inner{ \frac{d}{dt} \opA(X(t)) }{ \dot{X}(t) }
    = \lim_{h\to 0} \frac{ \inner{ \opA(X(t+h)) - \opA(X(t)) }{ X(t+h) - X(t) } }{h^2} \ge 0.
\end{align*}
Therefore for almost every $t\in(0,T)$,
\begin{align*}
    \dot{\Psi}(t) = -\frac{2}{(T-t)^2} \inner{ \frac{d}{dt} A(X(t)) }{ \dot{X}(t) } \le 0 ,
\end{align*}
from which we conclude that $\Psi(t)$ is nonincreasing.
Finally, \eqref{eqn:dual-anchor-ode-appendix-vector-form} gives $\dot{Z}(t) = \frac{1}{T-t}\dot{X}(t)$ and $\dot{X}(0) = -Z(0) - \opA(X_0) = - \opA(X_0)$, so we conclude that 
\begin{align*}
    \norm{ \dot{Z}(t) }^2
    = \frac{1}{(T-t)^2} \norm{ \dot{X}(t) }^2
    = \Psi(t)
    \le \Psi(0) 
    = \frac{1}{T^2} \norm{ \opA(X_0) }^2.
\end{align*}

\end{proof}

\begin{corollary}
    \label{lemma:ode_terminal_infomation}
    Suppose $\opA \colon \reals^d \to \reals^d$ is Lipschitz continuous and monotone.
    Then, given the solution $\pmat{X\\Z}$ to~\eqref{eqn:dual-anchor-ode-appendix-vector-form} on $[0,T)$, one can continuously extend $X(t)$ to $t = T$, and the extension $X \colon [0,T] \to \reals^d$ satisfies
    \begin{align*}
        \lim_{t \to T^-} \dot{X}(t) = \lim_{t \to T^-} \frac{X(t) - X(T)}{t-T} = 0.
    \end{align*}
\end{corollary}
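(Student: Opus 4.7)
The plan is to leverage the pointwise bound established in \cref{appendix_lemma:bound_of_dotX} to get Lipschitz control of $\dot X$ near $T$, from which both the continuous extension and the two limits follow by routine estimates.

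First I would invoke \cref{appendix_lemma:bound_of_dotX}, which states that $\Psi(t) = \|\dot X(t)\|^2/(T-t)^2$ is nonincreasing on $[0,T)$ with $\Psi(0) = \|\opA(X_0)\|^2/T^2$. Rearranging gives the crucial pointwise bound
\begin{align*}
    \|\dot X(t)\| \,\le\, \frac{T-t}{T}\,\|\opA(X_0)\|,\qquad t\in[0,T).
\end{align*}
In particular $\dot X$ is uniformly bounded by $\|\opA(X_0)\|$ on $[0,T)$, so $X$ is globally Lipschitz and hence uniformly continuous on $[0,T)$.

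Next, uniform continuity lets $X$ be continuously extended to $[0,T]$ by defining $X(T) := \lim_{t \to T^-} X(t)$; this limit exists because for any sequence $t_n \to T^-$, the values $\{X(t_n)\}$ form a Cauchy sequence by Lipschitz continuity. Substituting the above bound on $\|\dot X(t)\|$ immediately gives $\lim_{t\to T^-} \dot X(t) = 0$.

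For the second limit, I would write $X(t) - X(T) = -\int_t^T \dot X(\tau)\,d\tau$ (valid since $\dot X$ is continuous and bounded), whence
\begin{align*}
    \left\|\frac{X(t)-X(T)}{t-T}\right\|
    \,\le\, \frac{1}{T-t}\int_t^T \frac{T-\tau}{T}\,\|\opA(X_0)\|\,d\tau
    \,=\, \frac{T-t}{2T}\,\|\opA(X_0)\| \,\xrightarrow[t\to T^-]{}\, 0.
\end{align*}
There is no genuine obstacle here: the hard analytic work (monotonicity of $\Psi$) has already been done in \cref{appendix_lemma:bound_of_dotX}. The only point to be slightly careful about is to justify the fundamental theorem of calculus step on $[t,T]$ using the continuous extension of $X$ together with the bound $\|\dot X\| \le \|\opA(X_0)\|$, which can be done by taking a limit of the identity $X(t) - X(s) = -\int_t^s \dot X(\tau)\,d\tau$ as $s \to T^-$.
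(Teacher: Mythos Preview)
Your proposal is correct and follows essentially the same approach as the paper: both invoke \cref{appendix_lemma:bound_of_dotX} for the bound $\|\dot X(t)\|\le\frac{T-t}{T}\|\opA(X_0)\|$, use it to extend $X$ continuously to $T$ via a Cauchy-sequence argument, and then control the difference quotient by integrating this bound. The only cosmetic difference is that the paper bounds $\frac{1}{T-t}\int_t^T\|\dot X(s)\|\,ds$ via L'H\^opital's rule whereas you compute the integral explicitly to get $\frac{T-t}{2T}\|\opA(X_0)\|$; your direct computation is arguably cleaner.
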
 

\begin{proof}  
    Let $t \in [0,T)$ and let $h>0$ satisfy $t+h < T$. Then by~\cref{appendix_lemma:bound_of_dotX},
    \begin{align*}
        \norm{ X(t+h) - X(t) }
        = \norm{ \int_{t}^{t+h} \dot{X}(s)  ds }
        \le \int_{t}^{t+h} \norm{ \dot{X}(s) }  ds 
        \le \int_{t}^{t+h} \frac{T-s}{T} \norm{ \opA(X_0) }  ds 
        \le h \norm{ \opA(X_0) } .
    \end{align*}
    This shows that $X\colon [0,T) \to \reals^d$ is Lipschitz continuous, which implies any sequence $\{X(t_j)\}$ with $0 < t_j \nearrow T$ is a Cauchy sequence, and such sequential limit is unique.
    Setting $X(T)$ to this unique limit, we see that $X\colon [0,T] \to \reals^d$ is a (unique) continuous function extending $X$. 
    Additionally, \cref{appendix_lemma:bound_of_dotX} gives
    \begin{align*}
        \lim_{t\to T^-} \norm{ \dot{X}(t) } 
        \le \lim_{t\to T^-} \frac{T-t}{T} \norm{ \opA(X_0) }
        = 0 .
    \end{align*}
    Therefore,
    \begin{align*}
        \lim_{t\to T^-} \norm{ \frac{X(t) - X(T)}{t-T} }
        = \lim_{t\to T^-} \frac{1}{T-t}  \norm{ \int_{t}^{T} \dot{X}(s) ds  }
        \le \lim_{t\to T^-} \frac{1}{T-t} \int_{t}^{T} \norm{ \dot{X}(s) } ds
        = \lim_{t\to T^-} \norm{ \dot{X}(t) }
        = 0
    \end{align*}
    where the second-to-last equality uses L'Hôpital's rule.
\end{proof}

\subsubsection{Proof of \cref{thm:continuous_convergnece_rate}} \label{appendix:proof_of_continuous_convergnece_rate}
Recall that we define $V\colon [0,T) \to \reals$ by
\begin{align*}  
    V(t) &= -\norm{ Z(t) + \opA(X(T)) }^2 
     + 2\inner{ Z(t) + \opA(X(T)) }{ \frac{1}{T-t} \pr{ X(t) - X(T) } } .
\end{align*}
Differentiate $V$ and apply the substitutions $Z = -( \dot{X} + \opA(X) )$ and $\dot{Z} = \frac{1}{T-t} \dot{X}$ to obtain
\begin{align*}
    \dot{V}(t)
    &= -2 \inner{\dot{Z}(t)}{ Z(t) + \opA(X(T))} 
        + 2\inner{ \dot{Z}(t) }{ \frac{1}{T-t} \pr{ X(t) - X(T) } }  \\
    & \qquad + 2\inner{ Z(t) + \opA(X(T)) }{ \frac{d}{dt} \pr{ \frac{1}{T-t} \pr{ X(t) - X(T) } } } \\  
    &= 2\inner{\frac{1}{T-t}\dot{X}(t)}{ \dot{X}(t) + \opA(X(t)) - \opA(X(T)) + \frac{1}{T-t} \pr{X(t) - X(T)} } \\ 
    & \qquad - 2\inner{ \dot{X}(t) + \opA(X(t)) - \opA(X(T)) }{ \frac{1}{(T-t)^2} \pr{ X(t) - X(T) } + \frac{1}{T-t} \dot{X}(t) } \\
    &= -\frac{2}{(T-t)^2} \inner{ X(t) - X(T) }{ \opA(X(t)) - \opA(X(T)) } \\ 
    &\le 0.
\end{align*}
On the other hand, by~\cref{lemma:ode_terminal_infomation}, we have $\lim_{t\to T^-}\dot{X}(t) = 0$, which implies
\begin{align*}
    \lim_{t\to T^-} Z(t) = \lim_{t\to T^-} \pr{ -\dot{X}(t) - \opA(X(t)) } = -\opA(X(T)) 
\end{align*}
and thus,
\begin{align*} 
    \lim_{t\to T^-} V(t)
    = \lim_{t\to T^-} \left( -\norm{ Z(t) + \opA(X(T)) }^2 \right)
     + \lim_{t\to T^-} 2 \inner{ Z(t) + \opA(X(T)) }{ \frac{ X(t) - X(T)}{T-t}  }
    = 0
\end{align*}
where the last equality holds because both $Z(t) + \opA(X(T))$ and $\frac{X(t) - X(T)}{t-T}$ converges to $0$ as $t\to T^-$.
Therefore,
\begin{align*}
    0 = \lim_{t\to T^-} V(t) \le V(0) = - \norm{\opA(X(T))}^2 - \frac{2}{T}\inner{\opA(X(T))}{X(T)-X_0}
\end{align*}
where the last equality uses $Z(0) = 0$.
Now multiplying $-\frac{T}{2}$ to both sides and applying \cref{lemma:convergence-proof-last-step}, we conclude 
\begin{align*}
    \norm{ \opA(X(T)) }^2 \le \frac{4 \norm{ X_0 - X_\star }^2 }{T^2} .
\end{align*}

\subsubsection{Correspondence of continuous-time analysis with discrete-time analysis} 

We overview how the continuous-time analysis presented above corresponds to respective analyses of \ref{alg:dual_halpern} and \ref{alg:dual-feg}.  
Precisely, we verify that under the identifcation of terms from \cref{appendix:continuous_time_limit_algorithms}, the following holds true: 
\begin{itemize}
    \item [(i)] The discrete Lyapunov function $V_k$ corresponds to $V(t)$ 
    \item [(ii)] The consecutive difference $V_{k+1} - V_{k}$ corresponds to $\dot{V}(t)$ 
\end{itemize}

\begin{itemize}
\item \ref{alg:dual_halpern} \vspace{.2cm} \\ 
In~\cref{appendix:continuous_time_limit_dual_OHM} we consider~\ref{alg:dual_halpern} with $\opT = 2\opJ_{\frac{\alpha}{2} \opA} - \opI$ for continuous monotone  $\opA$ to derive the continuous-time limit, and we have the identities $x_{k+1} = \opJ_{\frac{\alpha}{2} \opA} y_k$ and $x_{k+1} + \frac{\alpha}{2} \opA x_{k+1} = y_k$.  
Therefore, the corresponding Lyapunov analysis of~\ref{alg:dual_halpern} should use $g_k = \frac{\alpha}{2}\opA x_k$ (instead of $g_k = \tilA x_k$).
Recall that we identify $\alpha k = t$, $\alpha N = T$, $x_k = X(t)$, $y_k = Y(t)$ and $\frac{z_{k}}{\alpha} = Z(t)$.
\begin{itemize}
    \item [(i)] $\frac{V_k}{\alpha^2} \, \longleftrightarrow \, V(t)$. \vspace{.2cm} \\  
        Replacing $g_N = \tilA x_N$ with $\frac{\alpha}{2} \opA x_N$ in the Lyapunov function of \ref{alg:dual_halpern} gives
        \begin{align*}
            V_k = -\frac{N-k-1}{N-k} \sqnorm{z_k + \alpha \opA x_{N}} + \frac{2}{N-k} \inprod{z_k + \alpha \opA x_{N}}{y_k - y_{N-1}}.
        \end{align*}
        Therefore
        \begin{align*}
            \frac{V_k}{\alpha^2} 
            &= - \pr{ 1 - \frac{\alpha}{\alpha(N-k)} } \sqnorm{ \frac{z_k}{\alpha} + \opA x_{N}} 
                + \frac{2}{\alpha(N-k)} \inprod{ \frac{z_k}{\alpha} + \opA x_{N}}{y_k - y_{N-1}} \\
            &= -\pr{ 1 + \mathcal{O}(\alpha) } \sqnorm{ Z(t) + \opA(X(T)) } 
                + \frac{2}{T-t} \inprod{ Z(t) + \opA(X(T)) }{ Y(t) - Y(T-\alpha)}.
        \end{align*}
        From the identity $x_{k+1} + \frac{\alpha}{2} \opA x_{k+1} = y_k$ we have $Y(t) = X(t+\alpha) + \mathcal{O}(\alpha)$, so in the limit $\alpha\to 0$, $Y(t)=X(t)$.
        Then the above equation establishes the desired correspondence, as 
        \[
            (\textrm{RHS}) = -\sqnorm{Z(t) + \opA(X(T))} + \frac{2}{T-t} \inprod{Z(t)+\opA(X(T))}{X(t)-X(T)} .
        \]
        
    \item [(ii)] $\frac{ \frac{1}{\alpha^2} V_{k+1} - \frac{1}{\alpha^2} V_k}{\alpha} \, \longleftrightarrow \, \dot{V}(t)$. \vspace{.2cm} \\
        The Lyapunov analysis of~\ref{alg:dual_halpern} establishes
        \begin{align*}
            V_{k+1} - V_k = -\frac{4}{(N-k)(N-k-1)} \inprod{x_{k+1} - x_N}{g_{k+1} - g_N} .
        \end{align*}
        Replacing $g_k$ with $\frac{\alpha}{2} \opA x_{k}$, dividing both sides by $\alpha^3$ and applying the identifications, we obtain
        \begin{align*}
            \frac{ \frac{1}{\alpha^2} V_{k+1} - \frac{1}{\alpha^2} V_k}{\alpha}
            &= -\frac{4}{\alpha^3 (N-k)(N-k-1)} \inprod{x_{k+1} - x_N}{ \frac{\alpha}{2} \opA x_{k+1} - \frac{\alpha}{2} \opA x_{N} } \\
            &= -\frac{2}{(\alpha N-\alpha k)(\alpha N-\alpha k-\alpha)} \inprod{x_{k+1} - x_N}{ \opA(X(t+\alpha) - \opA (X(T)) } \\
            &= -\frac{2}{(T-t)^2 + \mathcal{O}(\alpha)} \inprod{X(t+\alpha) - X(T)}{ \opA(X(t+\alpha) - \opA (X(T)) }.
        \end{align*}
        Taking $\alpha\to 0$, we have 
        \begin{align*}
            (\textrm{RHS}) = - \frac{2}{(T-t)^2} \inprod{X(t) - X(T)}{\opA(X(t)) - \opA(X(T))} = \dot{V}(t) 
        \end{align*}
        which gives the desired correspondence.
\end{itemize}

\item \ref{alg:dual-feg} \vspace{.2cm} \\ 
    As above, we use the identification $\alpha k =t$, $\alpha N = T$, $x_k = X(t)$ and $z_k = Z(t)$.
    \begin{itemize} 
    \item [(i)] $\frac{V_k}{\alpha} \, \longleftrightarrow \, V(t)$. \vspace{.2cm} \\ 
        The Lyapunov function of \ref{alg:dual-feg} is
        \begin{align*}
            V_k = -\alpha \sqnorm{z_k + \opA x_N} + \frac{2}{N-k} \inprod{z_k + \opA x_N}{x_k - x_N}.
        \end{align*}
        Dividing both sides by $\alpha$ and applying the identifcations, we have
        \begin{align*}
            \frac{V_k}{\alpha} 
            &= - \sqnorm{z_k + \opA x_N} + \frac{2}{\alpha(N-k)} \inprod{z_k + \opA x_N}{x_k - x_N} \\
            &= - \sqnorm{ Z(t) + \opA (X(T))} + \frac{2}{T-t} \inprod{ Z(t) + \opA(X(T))}{X(t) - X(T)},
        \end{align*}
        we get the desired correspondence. 
        
    \item [(ii)] $\frac{ \frac{1}{\alpha} V_{k+1} - \frac{1}{\alpha} V_k}{\alpha} \, \longleftrightarrow \, \dot{V}(t)$. \vspace{.2cm} \\
        The Lyapunov analysis of~\ref{alg:dual-feg} establishes
        \begin{align*}
            V_{k+1} - V_{k}
            &= -\frac{2}{ (N-k)(N-k-1)} \inprod{\opA x_N - \opA x_{k+1/2}}{x_N - x_{k+1/2}}  \\ &\quad
            - \frac{1}{\alpha (N-k)^2}  \Big( \|x_{k+1/2} - x_k\|^2 - \alpha^2 \|\opA x_{k+1/2} - \opA x_k\|^2 \Big) 
        \end{align*}
        Because $x_{k+1/2} = x_k + \cO(\alpha)$, dividing the both sides by $\alpha^2$, we obtain
        \begin{align*}
            \frac{ \frac{1}{\alpha} V_{k+1} - \frac{1}{\alpha} V_k}{\alpha}
            &= -\frac{2}{ \alpha^2(N-k)(N-k-1)} \inprod{\opA x_N - \opA x_{k+1/2}}{x_N - x_{k+1/2}}  \\ &\quad
            - \frac{1}{\alpha^2 (N-k)^2}  \Big( \frac{1}{\alpha}\|x_{k+1/2} - x_k\|^2 - \alpha \|\opA x_{k+1/2} - \opA x_k\|^2 \Big)  \\
            &= -\frac{2}{(T-t)^2 + \mathcal{O}(\alpha)} \inprod{ \opA(X(T)) - \opA(X(t) + \mathcal{O}(\alpha))}{ X(T) - X(t) + \mathcal{O}(\alpha)}  \\ 
            & \quad \underbrace{- \frac{1}{(T-t)^2}  \Big( \frac{1}{\alpha} \mathcal{O}(\alpha^2) - \alpha \| \opA(X(t) + \mathcal{O}(\alpha)) - \opA(X(t)) \|^2 \Big)}_{\cO(\alpha)} .
        \end{align*} 
        Taking $\alpha \to 0$, we have
        \begin{align*}
            (\textrm{RHS}) = - \frac{2}{(T-t)^2} \inprod{X(t) - X(T)}{\opA(X(t)) - \opA(X(T))} = \dot{V}(t)
        \end{align*}
        which gives the desired correspondence.
\end{itemize}
\end{itemize}

\subsection{Generalization to differential inclusion with maximally monotone $\opA$}
\label{appendix:continuous-time-differential-inclusion}

So far, we have analyzed the Dual-Anchor ODE with respect to Lipschitz continuous $\opA$.
In this section, we deal with its extension to differential inclusion, which is a generalized form of continuous-time model, covering the case of general (possibly set-valued) maximally monotone operators:
\begin{align} \label{ode:dual-anchor_inclusion}
\begin{split}
    \pmat{ \dot{X}(t) \\ \dot{Z}(t) }
     & \in - \pmat{ Z(t) + \opA(X(t)) \\ \frac{1}{T-t} Z(t) + \frac{1}{T-t} \opA(X(t)) } .  
\end{split} 
\end{align}
We say that $(X(t), Z(t))$ is a solution to this differential inclusion if it satisfies~\eqref{ode:dual-anchor_inclusion} almost everywhere in $t$. (So unlike in ODEs, we do not require $X, Z$ to be differentiable everywhere, but only require absolute continuity, which implies differentiability almost everywhere.)
Although~\eqref{ode:dual-anchor_inclusion} is technically not an ODE, with a slight abuse of notation, we will often refer to it as (generalized) Dual-Anchor ODE throughout the section.

\subsubsection{Existence of a solution to the generalized Dual-Anchor ODE} 
\begin{theorem} \label{appendix_thm:well-posedness_for_general_maximal_monotone}
    Let $\opA \colon \reals^d \rightrightarrows \reals^d$ be maximally monotone and let $T > 0$. 
    Given the initial conditions $X(0)=X_0 \in \dom \opA$ and $Z(0)=0$, there exists a solution to the generalized Dual-Anchor ODE, i.e., an absolutely continuous curve $\pmat{ X \\ Z } \colon [0,T) \to \reals^{2d}$ that satisfies \eqref{ode:dual-anchor_inclusion} for $t \in (0, T)$ almost everywhere. 
\end{theorem}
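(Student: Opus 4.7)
The plan is to construct a solution via Yosida regularization followed by a compactness argument. For each $\lambda > 0$, let $\opA_\lambda = \frac{1}{\lambda}(\opI - \opJ_{\lambda \opA})$ denote the Yosida approximation, which is single-valued, monotone, and $\frac{1}{\lambda}$-Lipschitz, and satisfies the key inclusion $\opA_\lambda(x) \in \opA(\opJ_{\lambda \opA}(x))$ for every $x \in \reals^d$. By \cref{appendix_thm:well-posedness_for_Lipschitz_continuous_A}, the regularized equation obtained by replacing $\opA$ with $\opA_\lambda$ admits a unique $\mathcal{C}^1$ solution $(X_\lambda, Z_\lambda)$ on $[0,T)$ with $X_\lambda(0) = X_0$ and $Z_\lambda(0) = 0$.

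The next step is to extract uniform-in-$\lambda$ bounds. Fix any $y_0 \in \opA(X_0)$ (possible since $X_0 \in \dom\opA$). Applying \cref{appendix_lemma:bound_of_dotX} to each regularized problem, which is valid because $\opA_\lambda$ is Lipschitz and monotone, yields
\begin{align*}
    \|\dot{Z}_\lambda(t)\| \;=\; \frac{\|\dot{X}_\lambda(t)\|}{T-t} \;\le\; \frac{\|\opA_\lambda(X_0)\|}{T} \;\le\; \frac{\|y_0\|}{T},
\end{align*}
using the standard bound $\|\opA_\lambda(X_0)\| \le \|y_0\|$. Consequently $\{Z_\lambda\}$ is uniformly Lipschitz on $[0,T)$ and $\{X_\lambda\}$ is uniformly $\|y_0\|$-Lipschitz on $[0,T)$, so both families are equicontinuous and uniformly bounded on $[0,T)$.

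By Arzel\`a--Ascoli, I would extract a subsequence $\lambda_n \downarrow 0$ along which $(X_{\lambda_n}, Z_{\lambda_n})$ converges uniformly on $[0,T)$ to an absolutely continuous limit $(X,Z)$ with $X(0) = X_0$ and $Z(0) = 0$. The uniform $L^\infty$ bounds on the derivatives give, along a further subsequence, weak-$\ast$ convergence $\dot{X}_{\lambda_n} \to \dot{X}$ and $\dot{Z}_{\lambda_n} \to \dot{Z}$ on compact subintervals. Setting $v := -\dot{X} - Z$, the relation $\opA_{\lambda_n}(X_{\lambda_n}) = -\dot{X}_{\lambda_n} - Z_{\lambda_n}$ then identifies $\opA_{\lambda_n}(X_{\lambda_n})$ as converging weakly to $v$, while the second equation similarly forces $(T-t)\dot{Z}(t) = -Z(t) - v(t)$.

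The main obstacle is verifying the pointwise membership $v(t) \in \opA(X(t))$ almost everywhere, since this is exactly what promotes the limit from a formal identity to an actual solution of the differential inclusion. The standard trick is to observe that by construction of the Yosida approximation, $\opA_{\lambda_n}(X_{\lambda_n}(t)) \in \opA\bigl(\opJ_{\lambda_n \opA}(X_{\lambda_n}(t))\bigr)$, and
\begin{align*}
    \|\opJ_{\lambda_n \opA}(X_{\lambda_n}(t)) - X_{\lambda_n}(t)\| \;=\; \lambda_n\,\|\opA_{\lambda_n}(X_{\lambda_n}(t))\| \;\le\; \lambda_n \|y_0\| \;\to\; 0,
\end{align*}
so $\opJ_{\lambda_n \opA}(X_{\lambda_n}(t)) \to X(t)$ strongly. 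Combining this strong convergence of the base points with the weak convergence of $\opA_{\lambda_n}(X_{\lambda_n}(t))$ to $v(t)$ and invoking the strong--weak closedness of the graph of a maximally monotone operator then gives $v(t) \in \opA(X(t))$, completing the proof that $(X,Z)$ solves \eqref{ode:dual-anchor_inclusion} on $(0,T)$ almost everywhere.
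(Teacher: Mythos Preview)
Your proposal is correct and follows essentially the same approach as the paper: Yosida regularization, uniform derivative bounds via \cref{appendix_lemma:bound_of_dotX}, Arzel\`a--Ascoli compactness, and strong--weak closedness of the maximal monotone graph. The paper is slightly more careful in two places: it bounds $\|\opA_{\lambda_n}(X_{\lambda_n}(t))\|$ via the ODE relation $\opA_{\lambda_n}(X_{\lambda_n}) = -\dot X_{\lambda_n} - Z_{\lambda_n}$ (your stated bound $\le \|y_0\|$ is not directly justified at points other than $X_0$), and it lifts $\opA$ to a maximally monotone operator $\cA$ on $L^2([0,T],\reals^d)$ so that the graph-closedness argument applies to the \emph{functional} weak convergence of $\opA_{\lambda_n}(X_{\lambda_n}(\cdot))$ rather than a pointwise-in-$t$ statement.
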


We proceed with similar ideas as in \citep[Chpater 3]{Jean-PierreArrigo1984_differential} and \citep[Appendix B.2]{SuhParkRyu2023_continuoustime}, i.e., we construct a sequence $\{(X_\yap (t), Z_\yap (t))\}_{\delta > 0}$ of solutions to ODEs approximating~\eqref{ode:dual-anchor_inclusion}, with $\opA$ replaced by Yosida approximations $\opA_\yap$ (which are much better-behaved, being single-valued and Lipschitz continuous).
Then we show that the limit of $(X_\yap (t), Z_\yap (t))$ in $\yap \to 0$ is a solution to the original inclusion~\eqref{ode:dual-anchor_inclusion}.

Below we present some well-known facts needed to rigorously establish the approximation argument.

\begin{lemma} \label{appendix_lemma:properties_of_Yosida_approximation}
Let $\opA\colon \reals^d \rightrightarrows \reals^d$ be maximally monotone.
Then for $\delta > 0$, the Yosida approximation operators
    $$ \opA_{\yap} = \frac{1}{\yap} \pr{ \opI - \opJ_{\yap \opA} } = \frac{1}{\yap} \pr{ \opI - \pr{ \opI + \yap \opA }^{-1} } $$
satisfy the followings:
\begin{itemize}
    \item [(i)] $\forall x \in \OurSpace$, $\opA_\yap(x) \in \opA(\opJ_{\yap \opA} x)$.
    \item [(ii)] $\opA_\yap \colon \reals^d \to \reals^d$ is singled-valued, $\frac{1}{\yap}$-Lipschitz continuous and maximally monotone.
    \item [(iii)] $\norm{\opA_\yap(x)} \le \norm{m(\opA(x))}$.
    \item [(iv)] $\lim_{\yap\to0+}\opA_\yap (x) = m(\opA(x)) := \argmin_{u \in \opA(x)} \|u\|$. (The minimum norm element of the set $\opA(x)$ is well-defined because $\opA(x)$ is a closed and convex set due to maximal monotonicity of $\opA$.)
\end{itemize}
\end{lemma}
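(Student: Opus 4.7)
The plan is to prove each of the four properties in order, leveraging the standard correspondence between the resolvent $\opJ_{\yap \opA}$ and the Yosida approximation $\opA_\yap$. All four claims are classical and the proofs are short, so I would state the argument compactly and rely on the maximal monotonicity of $\opA$ and the firm nonexpansiveness of $\opJ_{\yap \opA}$ (both of which follow from \cref{proposition:monotone-nonexpansive-correspondence} applied to $\opT = 2\opJ_{\yap\opA} - \opI$).

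For (i), I would simply unwind the definition: $y = \opJ_{\yap\opA} x$ means $x \in y + \yap \opA y$, i.e., $\frac{x-y}{\yap} \in \opA(y)$. Since $\opA_\yap(x) = \frac{x - \opJ_{\yap\opA}x}{\yap}$ by definition, this gives $\opA_\yap(x) \in \opA(\opJ_{\yap\opA}x)$ immediately. For (ii), single-valuedness of $\opA_\yap$ follows from single-valuedness of $\opJ_{\yap\opA}$ (guaranteed by maximal monotonicity of $\opA$). For the Lipschitz bound, I would invoke firm nonexpansiveness of $\opJ_{\yap\opA}$, i.e., $\norm{\opJ_{\yap\opA}x - \opJ_{\yap\opA}y}^2 + \norm{(\opI-\opJ_{\yap\opA})x - (\opI-\opJ_{\yap\opA})y}^2 \le \norm{x-y}^2$, which immediately yields $\norm{\opA_\yap x - \opA_\yap y} \le \frac{1}{\yap} \norm{x-y}$. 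Monotonicity of $\opA_\yap$ follows by combining (i) with monotonicity of $\opA$ and the identity $\opJ_{\yap\opA}x - x = -\yap \opA_\yap(x)$, and maximal monotonicity then follows because a continuous (hence hemicontinuous) single-valued monotone operator defined everywhere on $\reals^d$ is automatically maximally monotone.

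For (iii), I would fix an arbitrary $g \in \opA(x)$ and use monotonicity of $\opA$ between the pairs $(x,g)$ and $(\opJ_{\yap\opA}x, \opA_\yap x)$ (the latter being a valid pair by (i)):
\begin{align*}
    0 \le \inprod{\opA_\yap(x) - g}{\opJ_{\yap\opA}x - x} = -\yap\norm{\opA_\yap(x)}^2 + \yap\inprod{g}{\opA_\yap(x)},
\end{align*}
so Cauchy--Schwarz gives $\norm{\opA_\yap(x)} \le \norm{g}$, and taking the infimum over $g \in \opA(x)$ yields $\norm{\opA_\yap(x)} \le \norm{m(\opA(x))}$.

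The main subtlety, and the only part that requires genuine care, is (iv). The plan is: by (iii), the family $\{\opA_\yap(x)\}_{\yap>0}$ is bounded in $\reals^d$, so it has accumulation points as $\yap \to 0^+$. For any such accumulation point $u^*$ along some sequence $\yap_n \to 0^+$, the identity $\opJ_{\yap_n\opA}x = x - \yap_n \opA_{\yap_n}(x)$ combined with boundedness gives $\opJ_{\yap_n\opA} x \to x$. Then the pair $(\opJ_{\yap_n \opA}x, \opA_{\yap_n}(x))$ lies in $\gra \opA$ and converges to $(x, u^*)$, and since the graph of a maximally monotone operator is closed, $u^* \in \opA(x)$. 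Combined with $\norm{u^*} \le \norm{m(\opA(x))}$ from (iii), the uniqueness of the minimum-norm element in the closed convex set $\opA(x)$ forces $u^* = m(\opA(x))$. Since every accumulation point equals $m(\opA(x))$, the full limit exists and equals $m(\opA(x))$. The only obstacle worth flagging is the implicit use of closedness of $\gra \opA$ under weak-type limits, which in $\reals^d$ reduces cleanly to the sequential closedness I have used.
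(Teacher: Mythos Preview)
Your proof is correct and self-contained. The paper, by contrast, does not prove this lemma at all: its proof reads in full ``See \citep[Chapter 3.1, Theorem 2]{Jean-PierreArrigo1984_differential}.'' So there is no approach in the paper to compare against; you have supplied the standard arguments (resolvent identity for (i), firm nonexpansiveness for (ii), monotonicity inequality for (iii), compactness plus graph-closedness for (iv)) that the cited reference would contain. One minor remark: parts (iii) and (iv) implicitly require $x \in \dom\opA$ so that $m(\opA(x))$ is defined, and your proof of (iii) correctly uses this when picking $g \in \opA(x)$; you might state this assumption explicitly since the lemma as written does not.
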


\begin{proof}
    See \citep[Chpater 3.1, Theorem 2]{Jean-PierreArrigo1984_differential}. 
\end{proof}
 
\begin{lemma} \label{appendix_lemma:existence of subsequence}
Let $\{x_n(\cdot)\}_n$ be a sequence of absolutely continuous functions $x_n\colon I \to \reals^d$, where $I \subset \reals$ is an interval of finite length.
Suppose that
\begin{itemize}
    \item [(i)] $\forall t \in I$, the set $\{x_n(t)\}_{n}$ is bounded.
    \item [(ii)] There exists $M > 0$ such that $\norm{\dot{x}_n(t)} \le M$ almost everywhere in $t \in I$.
\end{itemize}
Then there exists a subsequence $x_{n_k}(\cdot)$ such that  
\begin{itemize}
    \item [(1)] $x_{n_k}(\cdot)$ uniformly converges to $x(\cdot)$ over compact subsets of $I$.
    \item [(2)] $x(\cdot)$ is absolutely continuous, so $\dot{x}$ exists almost everywhere.
    \item [(3)] $\dot{x}_{n_k}(\cdot)$ converges weakly to $\dot{x}(\cdot)$ in $L^2(I,\reals^d)$.
\end{itemize}
\end{lemma}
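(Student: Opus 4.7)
The plan is to combine a classical Arzelà--Ascoli argument for the functions themselves with weak sequential compactness in $L^2$ for their derivatives, and then identify the weak limit of the derivatives with the derivative of the uniform limit.

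First, I would use hypothesis (ii) to observe that each $x_n$ is $M$-Lipschitz on $I$, since for $s<t$ in $I$ absolute continuity gives $x_n(t)-x_n(s)=\int_s^t \dot{x}_n(\tau)\,d\tau$ and hence $\|x_n(t)-x_n(s)\|\le M|t-s|$. This single Lipschitz bound provides equicontinuity of the family $\{x_n\}$, uniformly in $n$. Together with the pointwise boundedness hypothesis (i), the Arzelà--Ascoli theorem applies on any compact subinterval $K\subset I$, yielding a subsequence converging uniformly on $K$. A standard diagonal argument over an exhausting sequence of compact subintervals of $I$ then produces a single subsequence $\{x_{n_k}\}$ converging uniformly on every compact subset of $I$ to some limit $x(\cdot)$. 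Because the uniform convergence preserves the $M$-Lipschitz constant, $x(\cdot)$ is itself $M$-Lipschitz on $I$, hence absolutely continuous and differentiable almost everywhere, giving (1) and (2).

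Next, for (3), I would note that $\|\dot{x}_{n_k}\|_{L^2(I,\reals^d)}\le M\sqrt{|I|}$, so the sequence of derivatives is bounded in the Hilbert space $L^2(I,\reals^d)$. By the Banach--Alaoglu theorem (using reflexivity of $L^2$), after passing to a further subsequence (still denoted $n_k$) we may assume $\dot{x}_{n_k}\rightharpoonup y$ weakly in $L^2(I,\reals^d)$ for some $y\in L^2$. It remains to identify $y=\dot{x}$ almost everywhere. To do so, fix any $s,t\in I$ and test weak convergence against the indicator $\mathbf{1}_{[s,t]}\otimes e_i$ (coordinate-wise) to obtain
\[
\int_s^t y(\tau)\,d\tau = \lim_{k\to\infty} \int_s^t \dot{x}_{n_k}(\tau)\,d\tau = \lim_{k\to\infty}\bigl(x_{n_k}(t)-x_{n_k}(s)\bigr) = x(t)-x(s),
\]
where the last equality uses uniform convergence on $[s,t]$. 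Since this holds for all $s,t\in I$ and $x$ is absolutely continuous with derivative $\dot{x}$, we conclude $y=\dot{x}$ almost everywhere, establishing (3).

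I do not anticipate serious obstacles: the potentially fiddly point is the diagonal extraction over an exhaustion of $I$ by compact subsets (needed because $I$ may be half-open, as in the application $I=[0,T)$), but this is routine. The identification step in (3) reduces cleanly to testing against characteristic functions of subintervals, which is where the uniform convergence from (1) is used, so steps (1)--(3) fit together without circularity.
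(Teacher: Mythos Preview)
Your proposal is correct and follows essentially the same route as the paper: Arzel\`a--Ascoli for uniform convergence, then weak sequential compactness for the derivatives, then identification of the weak limit by testing against indicator functions of subintervals. The only minor variation is that the paper invokes Banach--Alaoglu in $L^\infty = (L^1)^*$ (weak-$*$ compactness) rather than in the Hilbert space $L^2$, and then deduces $L^2$ weak convergence from the inclusion $L^2(I)\subset L^1(I)$; your direct $L^2$ argument is equally valid and arguably slightly cleaner for the stated conclusion.
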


\begin{proof}
This is a simplified version of \citep[Chapter 0.3, Theorem~4]{Jean-PierreArrigo1984_differential}. For completeness, we outline the proof below.

The family $\{x_n(\cdot)\}_n$ is pointwisely bounded (condition~(i)) and equicontinuous as
\[
    \norm{x_n(t) - x_n(s)} = \norm{\int_s^t \dot{x}_n (\tau) \, d\tau} \le M |t-s|, \quad \forall s, t \in I .
\]
By Arzel\`a-Ascoli theorem, there exists a subsequence $x_{n_k}(\cdot)$ such that $x_{n_k} \to x$ uniformly over compact subsets of $I$, where $x \colon I \to \reals^d$ is continuous (so far we have not shown absolute continuity).
Now observe that $\{\dot{x}_{n_k}(\cdot)\}_k$ is a bounded set within $L^\infty (I, \reals^d) = L^1 (I, \reals^d)^*$.
Therefore, by Banach-Alaoglu theorem, $\dot{x}_{n_k}(\cdot)$ again has a subsequence that converges in the weak-$*$ sense.
By replacing $n_k$ with this subsequence if necessary, assume without loss of generality that $\dot{x}_{n_k} \xrightarrow{\text{w}*} y \in L^\infty (I, \reals^d)$.
This means that for any $c \in L^1 (I, \reals^d)$,
\begin{align}
\label{eqn:xnk-dot-weak-star-convergence}
    \lim_{k \to \infty} \int_I \inprod{\dot{x}_{n_k}(\tau)}{c(\tau)} \, d\tau = \int_I \inprod{y(\tau)}{c(\tau)} \, d\tau .
\end{align}
For any $s, t \in I$ and $\va \in \reals^d$, taking $c(\tau) = 1_{[s,t]}(\tau) \va$ in particular, we have
\[
    \lim_{k \to \infty} \int_s^t \inprod{\dot{x}_{n_k}(\tau)}{\va} \, d\tau = \int_s^t \inprod{y(\tau)}{\va} \, d\tau = \inprod{\int_s^t y(\tau)\,d\tau}{\va} .
\]
On the other hand, the left hand side equals $\lim_{k\to\infty} \inprod{x_{n_k}(t) - x_{n_k}(s)}{\va} = \inprod{x(t) - x(s)}{\va}$.
Since $\va \in \reals^d$ is arbitrary, this shows
\[
    x(t) - x(s) = \int_s^t y(\tau) \, d\tau , \quad y \in L^\infty (I, \reals^d)
\]
so $x$ is absolutely continuous (in fact, Lipschitz continuous) and $\dot{x} = y$ almost everywhere.
Finally, note that $L^2 (I, \reals^d) \subset L^1 (I, \reals^d)$ (as $I$ is of finite length), so we can take $c \in L^2 (I, \reals^d)$ in~\eqref{eqn:xnk-dot-weak-star-convergence}, showing that $\dot{x}_{n_k} \to y$ weakly in $L^2 (I, \reals^d)$.
\end{proof}

Using the above results, we can now prove~\cref{appendix_thm:well-posedness_for_general_maximal_monotone}.
For $\delta > 0$, the ODE
\begin{align} \label{ode:Yosida_approximated_dynamics}
    \begin{split}
        \pmat{ \dot{X}_\delta(t) \\ \dot{Z}_\delta(t) }
         & = - \pmat{ Z_\delta(t) + \opA_\delta(X_\delta(t)) \\ \frac{1}{T-t} Z_\delta(t) + \frac{1}{T-t} \opA_\delta(X_\delta(t))  }  
    \end{split} 
\end{align}
with initial conditions $X_\delta(0) = X_0 \in \dom \opA$ and $Z_\delta(0) = 0$ has a unique $\cC^1$ solution $(X_\delta (t), Z_\delta (t))$ for $t \in [0,T]$ by~\cref{appendix_thm:well-posedness_for_Lipschitz_continuous_A} and \cref{lemma:ode_terminal_infomation}.
Additionally by~\cref{appendix_lemma:bound_of_dotX}, $X_\delta$ and $Z_\delta$ satisfy
\begin{align}
\label{eqn:appendix-inclusion-approximate-solution-dot-bounds}
    \norm{ \dot{X}_\delta(t)} \le \frac{T-t}{T} \norm{ m(\opA(X_0)) } \le \norm{ m(\opA(X_0)) }, 
    \qquad 
    \norm{ \dot{Z}_\delta(t)} \le \frac{1}{T} \norm{ m(\opA(X_0)) } .
\end{align}
Therefore, we can apply~\cref{appendix_lemma:existence of subsequence} to obtain a positive sequence $\delta_n \to 0$ such that
\begin{itemize}
    \item $\pmat{X_{\delta_n}\\Z_{\delta_n}} \to \pmat{X\\Z}$ uniformly on $[0, T]$, where $\pmat{X\\Z}$ is absolutely continuous,
    \item $\pmat{\dot{X}_{\delta_n} \\ \dot{Z}_{\delta_n}} \to \pmat{\dot{X} \\ \dot{Z}}$ weakly in $L^2([0,T], \reals^{2d})$.
\end{itemize}
Now, we wish to show that $(X(t),Z(t))$ is a solution to~\eqref{ode:dual-anchor_inclusion}.
Define $G_{\yap}, G \colon [0,T] \to \OurSpace$ as
\begin{align*}
    G_{\yap}(t) = Z_{\yap}(t) + \dot{X}_{\yap}(t) , \quad G(t) = Z(t) + \dot{X}(t) .
\end{align*}  
By construction, 
\begin{align*} 
    G_{\yap_{n}}(\cdot) \to G(\cdot) \quad \textit{weakly } \, \text{in } \, L^2([0,T], \reals^d) .
\end{align*}
Note that because $(X_\yap, Z_\yap)$ solves~\eqref{ode:Yosida_approximated_dynamics} and \cref{appendix_lemma:properties_of_Yosida_approximation}~(i) holds, we have
\begin{align*}
    G_{\yap_{n}}(t) = -\opA_\delta (X_\delta (t)) \in -\opA(\opJ_{\yap_{n}\opA}(X_{\yap_{n}}(t))).
\end{align*}
On the other hand, because $Z_{\delta_n}(0) = 0$, by~\eqref{eqn:appendix-inclusion-approximate-solution-dot-bounds} we have
\begin{align*}
    \norm{ Z_{\yap_n}(t) } 
    = \norm{ \int_{0}^{t} \dot{Z}_{\yap_n}(s)  ds }
    \le \int_{0}^{t} \norm{ \dot{Z}_{\yap_n}(s) }  ds
    \le \int_{0}^{t} \frac{\norm{ \opA_{\yap_n}(X_0)}}{T}  ds 
    \le \norm{ \opA_{\yap_n}(X_0)}
    \le \norm{ m(\opA(X_0))} .
\end{align*}
Together with the norm bound on $\dot{X}$ in~\eqref{eqn:appendix-inclusion-approximate-solution-dot-bounds}, this implies that for all $t \in [0, T]$,
\begin{align*}
    \norm{X_{\yap_n}(t) - \opJ_{\yap_n\opA}(X_{\yap_n}(t))} 
        = \yap_n \norm{\opA_{\yap_n}(X_{\yap_n}(t))} 
        = \yap_n \norm{ Z_{\yap_n}(t) + \dot{X}_{\yap_n}(t) } 
        \le \yap_n \left( \norm{ Z_{\yap_n}(t) } + \| \dot{X}_{\yap_n}(t) \| \right) \le 2 \yap_n \norm{ m(\opA(X_0))}.
\end{align*}
As $X_{\yap_n}(\cdot)$ converges uniformly to $X(\cdot)$, the above result shows that $\opJ_{\yap_n \opA} (X_{\yap_n}(\cdot))$ uniformly converges to $X(\cdot)$ as well. In particular, 
\begin{align*} 
    \opJ_{\yap_n \opA} (X_{\yap_n}(\cdot)) \to X(\cdot) \quad \textit{strongly } \, \text{in } \, L^2([0,T], \reals^d) .
\end{align*}

Now, define $\mathcal{A} \colon L^2([0,T],\OurSpace) \to L^2([0,T],\OurSpace)$ by 
\[
    \cA(y) = \left\{ u \in L^2([0,T],\OurSpace) \,|\, u(t) \in \opA(y(t)) \,\, \text{for a.e.} \,\, t\in [0,T] \right\} .
\]
$\cA$ is monotone because $\opA$ is monotone: if $u \in \cA(y), v \in \cA(z)$ then
\[
    \inprod{u - v}{y - z}_{L^2 ([0,T], \reals^d)} = \int_0^T \inprod{u(t) - v(t)}{y(t) - z(t)} \, dt \ge 0 \quad \text{since} \,\, u(t) \in \opA (y(t)), v(t) \in \opA (z(t)) \,\, \text{a.e. in} \,\, t \in [0,T] . 
\]
If $\cI \colon L^2([0,T],\OurSpace) \to L^2([0,T],\OurSpace)$ is the identity operator, then $\cI + \cA$ is surjective: for any $u \in L^2([0,T],\OurSpace)$, we have $y(t) = \JA(u(t)) \in L^2([0,T], \OurSpace)$ because $\JA$ is nonexpansive, and then $u \in \cA(y)$ by construction.
By Minty's surjectivity theorem \cite{Minty1962_monotone}, this implies that $\cA$ is maximally monotone.
Now because $\opJ_{\delta_n \opA} (X_{\delta_n})$ converges to $X$ strongly and $-G_{\yap_n} \in \cA (\opJ_{\delta_n \opA} (X_{\delta_n}))$ converges to $-G$ weakly, and the graph of a maximally monotone operator is closed under the strong-weak topology \citep[Proposition~20.38]{bauschke2011convex}, we conclude that $-G \in \cA (X)$, i.e.,
\[
    G(t) = Z(t) + \dot{X}(t) \in -\opA (X(t)) \iff \dot{X}(t) \in -\left( Z(t) + \opA (X(t)) \right) \quad \text{a.e. in} \,\, t \in [0,T] .
\]
Finally, because $(T-t) \dot{Z}_\delta (t) = \dot{X}_\delta (t)$ for all $\delta > 0$ we have
\[
    (T-t) \dot{Z}_{\delta_n} = \dot{X}_{\delta_n} \xrightarrow{\text{w}} \dot{X} \quad \text{in} \,\, L^2([0,T], \reals^d) .
\]
On the other hand, we had $\dot{Z}_{\delta_n} \xrightarrow{\text{w}} \dot{Z}$ in $L^2 ([0,T], \reals^d)$, which implies
\[
    (T-t) \dot{Z}_{\delta_n} \xrightarrow{\text{w}} (T-t) \dot{Z} 
\]
because for any $c(t) \in L^2([0,T], \reals^d)$, we have $(T-t) c(t) \in L^2 ([0,T], \reals^d)$ as well.
By uniqueness of the (weak) limit, we have $\dot{X} = (T-t) \dot{Z}$ a.e., so
\[
    \dot{Z}(t) = \frac{1}{T-t} \dot{X}(t) \in -\frac{1}{T-t} \left( Z(t) + \opA (X(t)) \right) \quad \text{a.e. in} \,\, t \in [0,T].
\]
This shows that $\pmat{X\\Z}$ is indeed a solution of~\eqref{ode:dual-anchor_inclusion}.

\subsubsection{Behavior at the terminal time $t=T$} \label{appendix:proof_of_ode_terminal_infomation_inclusion}

So far, we have successfully constructed an absolutely continuous solution $\pmat{X\\Z} \colon [0,T] \to \reals^{2d}$.
In this section we show that $X(t), Z(t)$ have two very favorable properties at the terminal time: $X$ has left derivative 0, and $Z(T) \in -\opA (X(T))$.
To show this, we need: 

\begin{lemma} \label{appendix_lemma:bound_of_dotX_inclusion}
    Let $\opA \colon \reals^d \rightrightarrows \reals^d$ be maximally monotone and let $\pmat{ X \\ Z }$ be the solution of~\eqref{ode:dual-anchor_inclusion} constructed in~\cref{appendix_thm:well-posedness_for_general_maximal_monotone}.  
    Then for almost every $t \in [0,T)$, 
    \begin{align*}
        \frac{1}{(T-t)^2} \norm{ \dot{X}(t) }^2 \le \frac{1}{T^2} \norm{ m(\opA(X_0)) }^2.  
    \end{align*}
\end{lemma}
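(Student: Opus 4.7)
The plan is to obtain the inequality by passing to the limit from the Yosida-approximated solutions $(X_\delta, Z_\delta)$ constructed in the proof of \cref{appendix_thm:well-posedness_for_general_maximal_monotone}. Since $\opA_\delta$ is Lipschitz and monotone, \cref{appendix_lemma:bound_of_dotX} applies and gives the pointwise bound
\[
    \frac{1}{(T-t)^2}\norm{\dot{X}_\delta(t)}^2 \le \frac{1}{T^2}\norm{\opA_\delta(X_0)}^2 \le \frac{1}{T^2}\norm{m(\opA(X_0))}^2
\]
for every $t\in[0,T)$, where the second step uses \cref{appendix_lemma:properties_of_Yosida_approximation}(iii). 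It remains to carry this uniform-in-$\delta$ bound through the weak limit $\dot{X}_{\delta_n} \rightharpoonup \dot{X}$ in $L^2([0,T],\reals^d)$ obtained along the subsequence $\delta_n\to 0^+$.

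The key idea is to exploit weak lower semicontinuity of the $L^2$-norm on subintervals bounded away from the singularity at $t=T$. Fix $\epsilon\in(0,T)$ and an arbitrary measurable set $E\subseteq[0,T-\epsilon]$. The multiplication operator $u\mapsto \mathbf{1}_E(t)\,u(t)/(T-t)$ is bounded linear on $L^2([0,T],\reals^d)$ because $1/(T-t)$ is bounded on $[0,T-\epsilon]$, so weak convergence is preserved, and weak lower semicontinuity of the $L^2$-norm combined with the pointwise bound above yields
\[
    \int_E \frac{\norm{\dot{X}(t)}^2}{(T-t)^2}\,dt \;\le\; \liminf_{n\to\infty}\int_E \frac{\norm{\dot{X}_{\delta_n}(t)}^2}{(T-t)^2}\,dt \;\le\; \frac{|E|}{T^2}\norm{m(\opA(X_0))}^2.
\]
Since $E\subseteq[0,T-\epsilon]$ is arbitrary, the Lebesgue differentiation theorem upgrades this integral inequality to the almost-everywhere pointwise bound $\norm{\dot{X}(t)}^2/(T-t)^2 \le \norm{m(\opA(X_0))}^2/T^2$ on $[0,T-\epsilon]$. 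Letting $\epsilon\to 0^+$ along a countable sequence (so the exceptional null sets union to a null set) gives the conclusion on all of $[0,T)$.

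The main obstacle is precisely the passage to the limit: weak $L^2$ convergence does not give pointwise convergence of $\dot{X}_{\delta_n}$, so the bound cannot simply be evaluated at a fixed $t$. The argument above sidesteps this by working at the level of integrals against arbitrary sets bounded away from $T$ and then appealing to Lebesgue differentiation; both ingredients are standard and require nothing beyond the maximal monotonicity of $\opA$ already assumed.
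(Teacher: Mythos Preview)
Your proof is correct and follows a genuinely different route from the paper's. Both arguments start identically: the Yosida-approximated solutions satisfy the pointwise bound $\|\dot X_{\delta_n}(t)\|\le \frac{T-t}{T}\|m(\opA(X_0))\|$, and one must push this through the weak $L^2$ convergence $\dot X_{\delta_n}\rightharpoonup\dot X$. The paper does this by testing the weak convergence against $\chi_D\,\dot X$, applying Cauchy--Schwarz and reverse Fatou to obtain the pointwise comparison $\|\dot X(t)\|\le\limsup_n\|\dot X_{\delta_n}(t)\|$ almost everywhere, and then invoking the uniform bound. You instead multiply by the bounded weight $\mathbf{1}_E/(T-t)$ on sets $E$ bounded away from $T$, invoke weak lower semicontinuity of the $L^2$-norm, and upgrade the resulting integral inequality to a pointwise one. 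Your approach is arguably more elementary, since weak lower semicontinuity of the norm is a single classical fact; the paper's route yields the slightly stronger intermediate statement $\|\dot X(t)\|\le\limsup_n\|\dot X_{\delta_n}(t)\|$ a.e., which could be handy elsewhere but is not needed here. One minor remark: your localization away from $t=T$ is not strictly necessary---you could apply weak lower semicontinuity directly to $\mathbf{1}_E\,\dot X_{\delta_n}$ (no division) to get $\int_E\|\dot X\|^2\le\int_E\frac{(T-t)^2}{T^2}\|m(\opA(X_0))\|^2$ for all measurable $E\subseteq[0,T]$, and the same ``arbitrary $E$ implies pointwise'' step finishes without ever isolating the singularity.
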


\begin{proof}
Let $\delta_n > 0$ be the sequence taken in the proof of~\cref{appendix_thm:well-posedness_for_general_maximal_monotone}, for which the solutions $(X_{\delta_n}, Z_{\delta_n})$ to~\eqref{ode:Yosida_approximated_dynamics} with $\delta=\delta_n$ converge uniformly to $(X, Z)$ and $(\dot{X}_{\delta_n}, \dot{Z}_{\delta_n})$ converge weakly to $(\dot{X}, \dot{Z})$. 
We have shown $\norm{ \dot{X}_{\delta_{n}}(t) } \le \frac{T-t}{T} \norm{ m(\opA(X_0)) }$ in~\cref{eqn:appendix-inclusion-approximate-solution-dot-bounds}. 
Thus, the proof is done once we show 
\[
    \norm{\dot{X}(t)} \le \limsup_{n\to\infty} \norm{ \dot{X}_{\delta_{n}}(t) } 
\]
for almost every $t \in [0,T]$. 
(This would be straightforward if $\dot{X}$ were the pointwise limit of $\dot{X}_{\delta_n}$, but it is not the case; it is the weak limit. We need a careful argument, as presented below.)

Let $D$ be any measurable subset of $[0,T]$. 
Since $\dot{X}_{\yap_{n_k}} \to \dot{X}$ weakly in $L^2([0,T],\OurSpace)$ and $\chi_{D}\dot{X} \in L^2([0,T],\OurSpace)$, we have
\begin{align}
\begin{aligned} \label{appendix_ineq:limsup_dotX_1}
    \int_{D} \norm{\dot{X}(t)}^2 dt 
    & = \int_{0}^{T} \inner{\dot{X}(t)}{ \chi_{D}(t)\dot{X}(t) } dt 
    = \lim_{n\to\infty} \int_{0}^{T} \inner{\dot{X}_{\yap_{n}}(t)}{ \chi_{D}(t)\dot{X}(t) } dt \\
    & = \lim_{n\to\infty} \int_{D} \inner{\dot{X}_{\yap_{n}}(t)}{ \dot{X}(t)} dt     
    \le \limsup_{n\to\infty} \int_{D} \norm{\dot{X}_{\yap_{n}}(t)} \norm{\dot{X}(t)} dt .
\end{aligned}
\end{align}
Now from $\norm{\dot{X}_{\yap_{n}}(\cdot)} \le \norm{ m(\opA(X_0)) }$ and $ \norm{\dot{X}(\cdot)} \in L^2 ([0,T],\OurSpace) \subset  L^1([0,T],\OurSpace)$, we obtain
\begin{align*}
    \norm{\dot{X}_{\yap_{n}}(\cdot)} \norm{\dot{X}(\cdot)} \le \norm{ m(\opA(X_0)) } \norm{\dot{X}(\cdot)} \in L^1([0,T], \OurSpace).
\end{align*}
Thus by reverse Fatou Lemma,
\begin{align*}
    \limsup_{n\to\infty} \int_{D} \norm{\dot{X}_{\yap_{n}}(t)} \norm{\dot{X}(t)} dt \le \int_{D} \limsup_{n\to\infty} \norm{\dot{X}_{\yap_{n}}(t)} \norm{\dot{X}(t)} dt.
\end{align*}
Combining the above inequality with \eqref{appendix_ineq:limsup_dotX_1} we obtain
\begin{align*}
    \int_D \pr{ \limsup_{n\to\infty} \norm{\dot{X}_{\yap_{n}}(t)} - \norm{\dot{X}(t)} } \norm{\dot{X}(t)} dt \ge 0.
\end{align*}
As $D$ was an arbitrary measurable subset of $[0,T]$, we conclude that for almost every $t \in [0,T]$,
\begin{align*}
    \pr{ \limsup_{n\to\infty} \norm{\dot{X}_{\yap_{n}}(t)} - \norm{\dot{X}(t)} } \norm{\dot{X}(t)} \ge 0
    \quad \Longrightarrow \quad 
    \norm{\dot{X}(t)} \le \limsup_{n\to\infty} \norm{\dot{X}_{\yap_{n}}(t)} .
\end{align*} 
\end{proof}

\begin{corollary} \label{appendix_lemma:terminal_point_information_inclusion}
    Let $\opA \colon \reals^d \rightrightarrows \reals^d$ be maximally monotone and let $\pmat{ X \\ Z }$ be the solution of~\eqref{ode:dual-anchor_inclusion} constructed in~\cref{appendix_thm:well-posedness_for_general_maximal_monotone}. 
    Then the following holds true:
    \begin{align*} 
        \lim_{t \to T^-} \frac{X(t) - X(T)}{t-T} = 0, \qquad -Z(T) \in \opA(X(T)).
    \end{align*} 
\end{corollary}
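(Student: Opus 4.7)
The plan is to prove both claims by leveraging the a.e.\ bound $\norm{\dot{X}(t)} \le \frac{T-t}{T}\norm{m(\opA(X_0))}$ from \cref{appendix_lemma:bound_of_dotX_inclusion}. Since both $\dot{X}$ and $\dot{Z}$ are essentially bounded on $[0,T)$ (the bound on $\dot{Z}$ comes from $\dot{Z}_{\delta_n} = \frac{1}{T-t}\dot{X}_{\delta_n}$ passed to the weak limit, combined with \cref{appendix_lemma:bound_of_dotX_inclusion}), $X$ and $Z$ are Lipschitz there, so they extend continuously to $[0,T]$ and $Z(T) := \lim_{t\to T^-} Z(t)$ is well-defined.

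For the first claim the argument mirrors the Lipschitz case of \cref{lemma:ode_terminal_infomation}: integrating the bound on $\norm{\dot{X}(\tau)}$ from $t$ to $T$ yields $\norm{X(T)-X(t)} \le \frac{(T-t)^2}{2T}\norm{m(\opA(X_0))}$, and dividing by $T-t$ and sending $t \to T^-$ concludes.

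For the second claim, the plan is to deduce $-Z(T) \in \opA(X(T))$ from maximality of $\opA$ via an averaging argument. Define $u(t) := -\dot{X}(t) - Z(t)$ for a.e.\ $t \in [0,T)$; by the first line of the inclusion \eqref{ode:dual-anchor_inclusion}, $u(t) \in \opA(X(t))$ a.e., and $u$ is essentially bounded on $[0,T)$. For any $(x,y) \in \gra\opA$, monotonicity yields $\inprod{u(t)-y}{X(t)-x} \ge 0$ a.e. Averaging over $[t_0, T]$ and splitting $X(t) = X(T) + (X(t)-X(T))$ gives
\begin{align*}
    \inprod{\bar{u}_{t_0} - y}{X(T) - x} + \frac{1}{T-t_0}\int_{t_0}^T \inprod{u(t)-y}{X(t)-X(T)}\,dt \ge 0,
\end{align*}
where $\bar{u}_{t_0} := \frac{1}{T-t_0}\int_{t_0}^T u(t)\,dt = -\frac{X(T)-X(t_0)}{T-t_0} - \frac{1}{T-t_0}\int_{t_0}^T Z(t)\,dt$. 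As $t_0 \to T^-$ the first term vanishes by the first claim and the second tends to $-Z(T)$ by continuity of $Z$, so $\bar{u}_{t_0} \to -Z(T)$. The remainder integral is bounded in absolute value by $\sup_{t\in[t_0,T]}\norm{X(t)-X(T)}\cdot(\|u\|_\infty + \norm{y})$, which tends to $0$ by continuity of $X$. Passing to the limit, $\inprod{-Z(T)-y}{X(T)-x} \ge 0$ for every $(x,y) \in \gra\opA$, and maximality of $\opA$ forces $-Z(T) \in \opA(X(T))$.

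The main subtlety is that $\dot{X}$ is only defined a.e.\ (as a weak $L^2$-limit of Yosida approximants), so we cannot expect $u(t)$ to converge pointwise to $-Z(T)$ along a sequence and invoke strong-strong closedness of $\gra\opA$ directly. The averaging argument is designed precisely to bypass this: only the time-average $\bar{u}_{t_0}$ needs to converge, which is guaranteed by the first claim together with continuity of $Z$, after which maximality of $\opA$ (rather than graph closure) finishes the job.
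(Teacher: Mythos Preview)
Your proof of the first claim is correct and matches the paper's argument.

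For the second claim, your averaging argument is correct, but it is more elaborate than necessary, and your stated ``main subtlety'' is not actually an obstacle. The paper's route is more direct: since both the inclusion $-(\dot{X}(t)+Z(t))\in\opA(X(t))$ and the pointwise bound $\norm{\dot{X}(t)}\le\frac{T-t}{T}\norm{m(\opA(X_0))}$ from \cref{appendix_lemma:bound_of_dotX_inclusion} hold for almost every $t$, one can simply pick a sequence $t_k\nearrow T$ along which \emph{both} statements hold simultaneously (the union of two null sets is still null). Along that sequence $\dot{X}(t_k)\to 0$ strongly, hence $-(\dot{X}(t_k)+Z(t_k))\to -Z(T)$ strongly, while $X(t_k)\to X(T)$ by continuity; strong--strong closedness of $\gra\opA$ then gives $-Z(T)\in\opA(X(T))$ immediately. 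So the very issue you designed the averaging to circumvent --- pointwise convergence of $u(t)$ along a sequence --- is in fact available thanks to the a.e.\ bound on $\norm{\dot{X}}$. Your argument trades this simple sequence selection for an integral-average computation plus the maximality characterization; it works, but the paper's approach is shorter and uses only graph closedness rather than full maximality.
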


\begin{proof} 
    Using~\cref{appendix_lemma:bound_of_dotX_inclusion}, we obtain
    \begin{align*}
        \lim_{t\to T^-} \norm{ \frac{X(t) - X(T)}{t-T} }
        \le \lim_{t\to T^-} \int_{t}^{T} \frac{\norm{\dot{X}(s)}}{T-t} ds 
        \le \lim_{t\to T^-} \int_{t}^{T} \frac{\norm{ \dot{X}(s) }}{T-s}  ds
        \le \lim_{t\to T^-} \int_{t}^{T} \frac{\norm{  m(\opA(X_0))  }}{T} ds
        = 0 ,
    \end{align*}
    which proves the first property.
    
    Next, because $(X(t), Z(t))$ satisfies~\eqref{ode:dual-anchor_inclusion} a.e.\ and $\|\dot{X}(t)\| \le \frac{T-t}{T} \norm{m(\opA(X_0))}$ a.e.\ (by~\cref{appendix_lemma:bound_of_dotX_inclusion}), we can take a sequence $t_k \in (0,T)$ such that $\lim_{k\to\infty} t_k = T$ and  
    \begin{align*}
        -\pr{ \dot{X}(t_k) + Z(t_k) } \in \opA(X(t_k)), \qquad
        \norm{ \dot{X}(t_k) } \le \frac{T-t_k}{T} \norm{ \opA(X_0) }.
    \end{align*}
    Then we have $\lim_{k\to\infty} \norm{ \dot{X}(t_k) } \le \lim_{k\to\infty} \frac{T-t_k}{T} \norm{ \opA(X_0) } = 0$.
    On the other hand, $X(t_k) \to X(T)$ and $Z(t_k) \to Z(T)$ because $X, Z$ are continuous.
    Finally, because the graph of $\opA$ is closed in $\reals^d \times \reals^d$ \citep[Proposition~20.38]{bauschke2011convex}, we conclude 
    \begin{align*}
        -Z(T) = -\lim_{k\to\infty} \pr{ \dot{X}(t_k) + Z(t_k) } \in \opA\pr{ \lim_{k\to\infty}X(t_k) } = \opA(X(T)).
    \end{align*}

\end{proof}

\subsubsection{Convergence analysis} \label{appendix:proof_of_continuous_convergnece_rate_inclusion}

Based on the previous analyses, we can prove that the constructed solution has a Lyapunov function similar to that of~\cref{thm:continuous_convergnece_rate} (the case of Lipschitz continuous operators).

\begin{theorem} \label{thm:continuous_convergnece_rate_inclusion}
Let $\opA\colon \reals^d \rightrightarrows \reals^d$ be maximally monotone and $\zer \opA \ne \emptyset$. 
Let $\pmat{ X \\ Z }$ be the solution of~\eqref{ode:dual-anchor_inclusion} constructed in~\cref{appendix_thm:well-posedness_for_general_maximal_monotone}.
Then the function $V\colon [0,T) \to \reals$ defined by
\begin{align*}
    V(t) &= -\norm{ Z(t) - Z(T) }^2 + \frac{2}{T-t} \inner{ Z(t) - Z(T) }{  X(t) - X(T) }  
\end{align*}
is nonincreasing, and $\lim_{t\to T^-} V(t) = 0$.
Furthermore, for $X_\star \in \zer \opA$,
\begin{align*}
    \norm{ m(\opA(X(T))) }^2
    \le \norm{ Z(T) }^2 
    \le \frac{4 \norm{ X_0 - X_\star }^2 }{T^2} 
\end{align*}
where $m(\opA(X(T)))$ is the minimum norm element of $\opA(X(T))$.  
    
\end{theorem}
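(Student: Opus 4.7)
The plan is to follow the same three-stage template as in the Lipschitz case (\cref{thm:continuous_convergnece_rate}), but carefully working a.e.\ in $t$ since the solution $\pmat{X\\Z}$ is only absolutely continuous. Stage~1 shows $V$ is nonincreasing by computing $\dot V$ and invoking maximal monotonicity; Stage~2 establishes $\lim_{t\to T^-} V(t) = 0$ using the terminal regularity from \cref{appendix_lemma:terminal_point_information_inclusion}; Stage~3 reads off the rate from $V(0) \ge 0$ via \cref{lemma:convergence-proof-last-step}, together with the elementary bound $\|m(\opA(X(T)))\| \le \|Z(T)\|$ which follows because $-Z(T) \in \opA(X(T))$ by \cref{appendix_lemma:terminal_point_information_inclusion}.

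For Stage~1, since $V$ is absolutely continuous I would differentiate at any Lebesgue point $t$ where all the relevant derivatives exist, and use the two facts established during the construction of the solution: $\dot Z(t) = \frac{1}{T-t}\dot X(t)$ and the existence of a (measurable) selection $\eta(t) \in \opA(X(t))$ with $\dot X(t) = -Z(t) - \eta(t)$ a.e. A direct computation gives
\begin{align*}
    \dot V(t)
    &= -2\inner{\dot Z(t)}{Z(t)-Z(T)}
       + \frac{2}{(T-t)^2}\inner{Z(t)-Z(T)}{X(t)-X(T)} \\
    &\quad + \frac{2}{T-t}\inner{\dot Z(t)}{X(t)-X(T)}
       + \frac{2}{T-t}\inner{Z(t)-Z(T)}{\dot X(t)}.
\end{align*}
Substituting $\dot Z(t) = \frac{1}{T-t}\dot X(t)$ cancels the first and last terms, leaving
\begin{align*}
    \dot V(t)
    &= \frac{2}{(T-t)^2}\inner{Z(t) + \dot X(t) - Z(T)}{X(t) - X(T)}
     = \frac{2}{(T-t)^2}\inner{-\eta(t) - Z(T)}{X(t) - X(T)} \le 0,
\end{align*}
where the last inequality follows from monotonicity of $\opA$ applied to the pair $(\eta(t), -Z(T)) \in \opA(X(t)) \times \opA(X(T))$.

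For Stage~2, the bound $\|\dot Z(t)\| \le \|m(\opA(X_0))\|/T$ (a direct consequence of \cref{appendix_lemma:bound_of_dotX_inclusion} together with $\dot Z = \dot X/(T-t)$) gives $\|Z(t) - Z(T)\| \le \|m(\opA(X_0))\|(T-t)/T$, so the quotient $(Z(t)-Z(T))/(T-t)$ stays bounded as $t\to T^-$; paired with $X(t) \to X(T)$ from \cref{appendix_lemma:terminal_point_information_inclusion}, the cross term $\tfrac{2}{T-t}\inner{Z(t)-Z(T)}{X(t)-X(T)}$ vanishes in the limit, while $\|Z(t)-Z(T)\|^2 \to 0$ by continuity of $Z$. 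Stage~3 is then immediate: evaluating at $t=0$ using $Z(0)=0$ gives
\begin{align*}
    0 \le V(0) = -\|Z(T)\|^2 + \frac{2}{T}\inner{Z(T)}{X(T) - X_0},
\end{align*}
which is exactly the hypothesis of \cref{lemma:convergence-proof-last-step} with $\tilA x = -Z(T) \in \opA(X(T))$, $x = X(T)$, $y = X_0$, and $\rho = T/2$, yielding $\|Z(T)\|^2 \le 4\|X_0 - X_\star\|^2/T^2$; the bound on $\|m(\opA(X(T)))\|$ follows from the definition of the minimum-norm selection.

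The main technical obstacle is Stage~2, specifically verifying that $(Z(t)-Z(T))/(T-t)$ does not blow up as $t \to T^-$ in the purely monotone (non-Lipschitz) setting. Everything hinges on the sharp identity $\dot Z = \dot X/(T-t)$ together with the envelope $\|\dot X(t)\| \le (T-t)\|m(\opA(X_0))\|/T$ from \cref{appendix_lemma:bound_of_dotX_inclusion}; the rest of the argument is a faithful transcription of the smooth-case proof with $\opA(X(T))$ replaced throughout by its representative $-Z(T)$.
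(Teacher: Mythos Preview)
Your proof is correct but Stage~1 takes a genuinely different route from the paper. The paper does \emph{not} differentiate $V$ directly in the maximally monotone case; instead it defines the approximating Lyapunov functions
\[
    V_{\delta_n}(t) = -\norm{Z_{\delta_n}(t) - Z_{\delta_n}(T)}^2 + \frac{2}{T-t}\inner{Z_{\delta_n}(t)-Z_{\delta_n}(T)}{X_{\delta_n}(t)-X_{\delta_n}(T)}
\]
for the Yosida-approximated solutions $(X_{\delta_n},Z_{\delta_n})$, invokes the already-proved Lipschitz result (\cref{thm:continuous_convergnece_rate}) to conclude each $V_{\delta_n}$ is nonincreasing, and then passes to the limit using the uniform convergence $(X_{\delta_n},Z_{\delta_n})\to(X,Z)$ on $[0,T]$ established in \cref{appendix_thm:well-posedness_for_general_maximal_monotone}. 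Your direct a.e.\ differentiation of $V$ is arguably more elementary and self-contained---it uses only the two facts $\dot Z = \dot X/(T-t)$ and $\dot X + Z \in -\opA(X)$ a.e., both of which were explicitly established during the construction of the solution---whereas the paper's argument leverages the Lipschitz theorem as a black box and the approximation machinery. The trade-off is that your route requires the (easy but unstated) verification that $V$ is absolutely continuous on compact subintervals of $[0,T)$, so that $\dot V\le 0$ a.e.\ genuinely implies monotonicity; the paper's limit argument sidesteps this entirely. Stages~2 and~3 are essentially the same in both (your Stage~2 bounds $(Z(t)-Z(T))/(T-t)$ and uses $X(t)\to X(T)$, while the paper bounds $(X(t)-X(T))/(T-t)$ via \cref{appendix_lemma:terminal_point_information_inclusion} and uses $Z(t)\to Z(T)$; both variants are valid).
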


\begin{proof} 
Let $\delta_n > 0$ be the sequence taken in the proof of~\cref{appendix_thm:well-posedness_for_general_maximal_monotone}. Let
\begin{align*}
    V_{\delta_{n}}(t) = -\norm{ Z_{\delta_{n}}(t) - Z_{\delta_{n}}(T) }^2 + \frac{2}{T-t} \inner{ Z_{\delta_{n}}(t) - Z_{\delta_{n}}(T)  }{  X_{\delta_{n}}(t) - X_{\delta_{n}}(T) }
\end{align*}
for $t\in [0,T)$. 
As $\pmat{ X_{\delta_{n}} \\  Z_{\delta_{n}} }$ converges uniformly on $[0,T]$, for all $t \in [0,T)$ we have
\begin{align*}
    V(t) = \lim_{n\to\infty} V_{\delta_{n}}(t). 
\end{align*}
Observe that $V_{\delta_n}(\cdot)$ is nonincreasing on $[0,T)$ for each $n$ (by the result of~\cref{thm:continuous_convergnece_rate} and $Z_{\delta_n}(T) = -\opA(X_{\delta_n}(T))$, which follows from~\cref{appendix_lemma:terminal_point_information_inclusion} and uniqueness of the solution in the Lipschitz case).
Therefore, for any $s,t \in [0,T)$ such that $s<t$, we have
\begin{align*}
    V(s) = \lim_{n\to\infty} V_{\delta_{n}}(s) \ge \lim_{n\to\infty} V_{\delta_{n}}(t) = V(t) ,
\end{align*}
which shows that $V$ is nonincreasing. 
Furthermore, from $\lim_{t\to T^-}Z(t) = Z(T)$ (continuity of $Z$) and~\cref{appendix_lemma:terminal_point_information_inclusion}, we have
\begin{align*}
    \lim_{t\to T^-} V(t) 
    = -\norm{ \lim_{t\to T^-} Z(t) - Z(T) }^2 - 2 \inner{ \lim_{t\to T^-} Z(t) - Z(T) }{ \lim_{t\to T^-}\frac{ X(t) - X(T) }{ t-T } }
    = 0 .
\end{align*}
Therefore
\begin{align*}
    0 = \lim_{t\to T^-} V(t) \le V(0) = -\norm{ Z(T) }^2 - \frac{2}{T} \inner{ Z(T) }{ X_0 - X(T) } ,
\end{align*}
and~\cref{lemma:convergence-proof-last-step} gives
\begin{align*}
    \norm{ Z(T) }^2 \le \frac{4\norm{ X_0 - X_\star }^2}{T^2} . 
\end{align*}
Finally, because $-Z(T) \in \opA(X(T))$ by~\cref{appendix_lemma:terminal_point_information_inclusion}, the left hand side is lower bounded by $\norm{ m(\opA(X(T))) }^2$, which gives the desired convergence rate. 

\end{proof}
\newpage
\section{Omitted details from~\cref{section:experiments}}
\label{section:appendix-experiment-omitted-details}

\subsection{Extragradient algorithm specification}

In Figures~\ref{subfig:2D_bilinear_trajectories}, \ref{subfig:ouyang_construction_performance}, we display comparison with Extragradient (EG) \cite{Korpelevich1976_extragradient}.
For completeness, we specify its definition here:
\begin{align*}
    x_{k+1/2} & = x_k - \alpha \sop{x_k} \\
    x_{k+1}   & = x_k - \alpha \sop{x_{k+1/2}} .
\end{align*}

\subsection{Details of worst-case construction from~\citet{OuyangXu2021_lower}}
\label{subsection:appendix-experiment-Ouyang-construction}

For~\cref{subfig:ouyang_construction_performance}, we use the following construction due to~\citet{OuyangXu2021_lower}, which has been used to establish complexity lower bounds:
\[
A = \frac{1}{4} \begin{bmatrix}
  &  &  & -1 & 1 \\
  &  & \iddots & \iddots & \\
  & -1 & 1 \\
  -1 & 1\\
  1
\end{bmatrix} \in \reals^{n\times n}, \quad
b = \frac{1}{4} \begin{bmatrix}
1 \\ 1 \\ \vdots \\ 1 \\ 1 
\end{bmatrix} \in \reals^n, \quad
g = \frac{1}{4} \begin{bmatrix}
0 \\ 0 \\ \vdots \\ 0 \\ 1
\end{bmatrix} \in \reals^n,
\]
and $G = 2A^\intercal A$.
In \citet{OuyangXu2021_lower}, it is shown that $\|A\| \le \frac{1}{2}$ (so $\|G\| \le \frac{1}{2}$), so that the bilinear function
\[
    \vL(u,v) = \frac{1}{2} u^\intercal G u - g^\intercal u - \inprod{Au - b}{v}
\]
is $1$-smooth.

\subsection{H-dual algorithms produce identical terminal iterates for linear operators}
\label{subsection:appendix-experiment-identical-terminal-iterates}

In this section, we only consider explicit algorithms written in the form
\begin{align}
\label{eqn:appendix-explicit-algorithm-h-matrix-form}
    x_{k+1} = x_k - \sum_{j=0}^{k} h_{k+1,j} \opA x_j 
\end{align}
where $\opA: \reals^d \to \reals^d$ is a single-valued operator.
If $\opA = \nabla_\pm \vL$, \ref{alg:feg} and \ref{alg:dual-feg} are instances of this class if we identify $1/2$-indexed iterates with integer iterates in increasing order.
If $\opA = \nabla f$, constant step-size gradient descent or Nesterov's accelerated gradient method \cite{Nesterov1983_method} are instances of this class.
However, we do not need to restrict the discussion of this section to a specific operator class, because the following result does not rely on any particular property of the operator except for linearity.

\begin{proposition}
Suppose $\opA: \reals^d \to \reals^d$ is linear, i.e., there exists a matrix $A \in \reals^{d\times d}$ such that $\opA x = Ax$ for all $x \in \reals^d$.
Let $N \ge 1$ and consider an algorithm defined by~\eqref{eqn:appendix-explicit-algorithm-h-matrix-form} for $k=0,1,\dots,N-1$.
Then its H-dual algorithm, defined by 
\begin{align*}
    \hat{x}_{k+1} = \hat{x}_k - \sum_{j=0}^{k} h_{N-j,N-k-1} \opA x_j ,
\end{align*}
satisfies
\begin{align*}
    x_N = \hat{x}_N 
\end{align*}
provided that $x_0 = \hat{x}_0$.
\end{proposition}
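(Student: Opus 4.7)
The plan is to exploit linearity of $\opA = A$ to reduce the claim to a one-variable polynomial identity, then verify that identity via a combinatorial bijection induced by the anti-diagonal transpose. By induction on $k$, every iterate has the form $x_k = P_k(A) x_0$ for some polynomial $P_k$ of degree at most $k$ whose coefficients depend only on the entries of $H$, and analogously $\hat{x}_k = \hat{P}_k(A) x_0$ for polynomials determined by $H^{\at}$. The identity $x_N = \hat{x}_N$ for every linear $A$ and every $x_0$ is thus equivalent to $P_N \equiv \hat{P}_N$, which can be verified in the scalar case $d = 1$, $A = t$, $x_0 = 1$: writing $p_N(t)$ and $\hat{p}_N(t)$ for the scalar terminal iterates under the primal and H-dual updates, the goal reduces to $p_N = \hat{p}_N$ as polynomials in $t$.

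The key technical step is a combinatorial expansion obtained by induction on $N$ from the recurrence $p_k = p_{k-1} - t \sum_{j=0}^{k-1} h_{k, j}\, p_j$, namely
\[
    p_N(t) = \sum_{\ell \ge 0} (-t)^\ell \sum_{N \ge i_1 > j_1 \ge i_2 > j_2 \ge \cdots \ge i_\ell > j_\ell \ge 0} \prod_{r=1}^{\ell} h_{i_r, j_r}.
\]
That is, the coefficient of $(-t)^\ell$ is a sum over all ``descending chains'' of $H$-index pairs of length $\ell$. The non-strict inequality $j_r \ge i_{r+1}$ takes a moment to pin down: it reflects the fact that in the recurrence $p_{k-1}$ appears both as the drift term and as the $j = k-1$ summand inside the gradient sum. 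Applying the same expansion to $\hat{p}_N$ yields the identical formula with each $h_{i_r, j_r}$ replaced by $\hat{h}_{i_r, j_r} = h_{N - j_r, N - i_r}$.

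The two sums are matched term-by-term via the chain-reversal map $(I_s, J_s) := (N - j_{\ell+1-s},\, N - i_{\ell+1-s})$: under this involution the constraints $I_s > J_s$, $J_s \ge I_{s+1}$, $I_1 \le N$, $J_\ell \ge 0$ translate respectively to $i_{\ell+1-s} > j_{\ell+1-s}$, $j_{\ell-s} \ge i_{\ell+1-s}$, $j_\ell \ge 0$, $i_1 \le N$, which are exactly the constraints defining an admissible original chain. Under this bijection each factor $h_{N-j_r, N-i_r}$ corresponds to $h_{I_{\ell+1-r}, J_{\ell+1-r}}$, so the transformed product is merely a reordering of the original, giving $\hat{p}_N = p_N$ and hence $x_N = \hat{x}_N$. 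The main obstacle is correctly setting up the combinatorial formula with the right (non-strict vs.\ strict) inequalities on the chains; once that is established, both the scalar reduction and the anti-transpose bijection are essentially mechanical.
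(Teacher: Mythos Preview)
Your proposal is correct and follows essentially the same route as the paper's proof: both express the terminal iterate as a polynomial in $A$ whose coefficient of $(-A)^m$ is a sum of products $\prod h_{i_r,j_r}$ over ``chains'' of index pairs with the mixed strict/non-strict ordering $i_r > j_r \ge i_{r+1}$, and then match the primal and dual sums via the involution $(i,j)\mapsto (N-j,\,N-i)$ combined with chain reversal. Your preliminary reduction to the scalar case $d=1$ is a clean way to phrase the ``it suffices to match polynomial coefficients'' step, and your chain is written in decreasing index order while the paper's is increasing, but these are cosmetic differences; the combinatorial content and the bijection are identical.
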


\begin{proof}

Because $\opA$ is linear, we can explicitly describe $x_k$ from the algorithm~\eqref{eqn:appendix-explicit-algorithm-h-matrix-form} via matrix polynomial in $A$ and $x_0$.
For $k=1,2,3$, we have
\begin{align*}
    x_1 & = x_0 - h_{1,0} Ax_0 \\
    x_2 & = x_1 - h_{2,0} Ax_0 - h_{2,1} Ax_1 \\
    & = (x_0 - h_{1,0} Ax_0) - h_{2,0} Ax_0 - h_{2,1} A (x_0 - h_{1,0} Ax_0) \\
    & = x_0 - (h_{1,0} + h_{2,0} + h_{2,1}) Ax_0 + h_{1,0}h_{2,1} A^2 x_0
\end{align*}
and
\begin{align*}
    x_3 & = x_2 - h_{3,0} Ax_0 - h_{3,1} Ax_1 - h_{3,2} Ax_2 \\
    & = (x_0 - (h_{1,0} + h_{2,0} + h_{2,1}) Ax_0 + h_{1,0}h_{2,1} A^2 x_0) - h_{3,0} Ax_0 - h_{3,1} A (x_0 - h_{1,0} Ax_0) \\
    & \quad\quad - h_{3,2} A (x_0 - (h_{1,0} + h_{2,0} + h_{2,1}) Ax_0 + h_{1,0}h_{2,1} A^2 x_0) \\
    & = x_0 - (h_{1,0} + h_{2,0} + h_{3,0} + h_{2,1} + h_{3,1} + h_{3,2}) Ax_0 \\
    & \quad\quad + (h_{1,0}h_{2,1} + h_{1,0}h_{3,1}  + h_{1,0}h_{3,2} + h_{2,0}h_{3,2} + h_{2,1}h_{3,2}) A^2 x_0 - h_{1,0}h_{2,1}h_{3,2} A^3 x_0 .
\end{align*}
The following lemma formally generalizes the pattern for these expressions:
\begin{lemma}
    \label{lemma:polynomial_expression_of_algorithm}
    For linear operator $\opA x = Ax$, the iterates of~\eqref{eqn:appendix-explicit-algorithm-h-matrix-form} satisfy
    \begin{align}
    \label{eqn:appendix-terminal-iterate-xk-matrix-polynomial-form}
        x_k = \sum_{m=0}^{k} (-1)^m P(k,m) A^m x_0.
    \end{align}
    for $k=1,\dots,N$, where $P(k,0) = 1$ and for $m=1,\dots,k$,
    \begin{align}
    \label{eqn:appendix-terminal-iterate-P-definition}
        P(k,m) = \sum_{\substack{ i(1)\le j(2), \dots, i(m-1)\le j(m) \\ i(m) \le k}} \prod_{\ell=1}^m h_{i(\ell),j(\ell)} .
    \end{align}
\end{lemma} 

\begin{proof}[Proof of \cref{lemma:polynomial_expression_of_algorithm}]
We first clarify the definition of $P(k,m)$ by providing some examples.
First, we have
\begin{align*}
    P(1,1) = h_{1,0}, \qquad P(2,1) = h_{1,0} + h_{2,0} +  h_{2,1}, \qquad P(3,1) = h_{1,0} + h_{2,0} + h_{3,0} + h_{2,1} + h_{3,1} + h_{3,2} .
\end{align*}
This is because in~\eqref{eqn:appendix-terminal-iterate-P-definition}, when $m=1$, the constraints $i(1)\le j(2), \dots, i(m-1)\le j(m)$ become vacuous so we add all $h_{i,j}$'s with $i \le k$ (note that always $j \le i-1$).
Next, we have
\begin{align*}
    P(2,2) = h_{1,0} h_{2,1} , \qquad P(3,2) = h_{1,0}h_{2,1} + h_{1,0}h_{3,1}  + h_{1,0}h_{3,2} + h_{2,0}h_{3,2} + h_{2,1}h_{3,2} .
\end{align*}
Observe that by definition of $P$, we have to choose the $(i,j)$ pairs within the product to satisfy the constraint $i(1) \le j(2)$, so if we choose $(i(1),j(1)) = (1,0)$ then we must have $j(2)\ge 1$, so $P(k,1)$ contains the products of $h_{1,0}$ with $h_{i,j}$'s satisfying $k\ge i > j \ge 1$. 
Similarly, if we choose $i(1) = 2$ then one must have $j(2) \ge 2$, and the only $(i,j)$ pair satisfying $3 \ge i > j \ge 2$ is $(i,j) = (3,2)$, so $h_{2,0}h_{3,2}$ and $h_{2,1}h_{3,2}$ are the only terms within $P(3,2)$ that involve $h_{2,0}$ and $h_{2,1}$, respectively.

Because the constraint $j(\ell) < i(\ell)$ is implicit, the definition~\eqref{eqn:appendix-terminal-iterate-P-definition} is requiring the $(i(\ell),j(\ell))$ pairs consisting the product to satisfy
\begin{align*}
    1 \le i(1) \le j(2) < \cdots < j(m-1) < i(m-1) \le j(m) < i(m) \le k .
\end{align*}
Thus, when $m=k$, the only possible choice of indices is $i(\ell) = \ell$ and $j(\ell) = \ell-1$, so
\begin{align*}
    P(k,k) = h_{1,0}h_{2,1} \cdots h_{k,k-1}
\end{align*}
and in particular, $P(3,3) = h_{1,0}h_{2,1}h_{3,2}$.

The above observations already show that~\eqref{eqn:appendix-terminal-iterate-xk-matrix-polynomial-form} holds for $k=1,2,3$.
To prove the general statement, we use induction.
Suppose~\eqref{eqn:appendix-terminal-iterate-xk-matrix-polynomial-form} holds up to $k$, and consider
\begin{align*}
    x_{k+1} = x_k - \sum_{n=0}^k h_{k+1,n} Ax_n .
\end{align*}
By induction hypothesis, we have
\begin{align*}
    x_{k+1} & = \sum_{m=0}^k (-1)^m P(k,m) A^m x_0 - \sum_{n=0}^k h_{k+1,n} A \left( \sum_{m=0}^n (-1)^m P(n,m) A^m x_0 \right) \\
    & = \sum_{m=0}^k (-1)^m P(k,m) A^m x_0 + \sum_{n=0}^k \sum_{m=0}^n (-1)^{m+1} h_{k+1,n} P(n,m) A^{m+1} x_0 \\
    & = \sum_{m=0}^k (-1)^m P(k,m) A^m x_0 + \sum_{n=0}^k \sum_{m=1}^{n+1} (-1)^m h_{k+1,n} P(n,m-1) A^m x_0 \\
    & = \sum_{m=0}^k (-1)^m P(k,m) A^m x_0 + \sum_{m=1}^{k+1} \sum_{n=m-1}^{k} (-1)^m h_{k+1,n} P(n,m-1) A^m x_0 \\
    & = x_0 + h_{k+1,k} P(k,k) A^{k+1} x_0 + \sum_{m=1}^{k} (-1)^m \left( P(k,m) + \sum_{n=m-1}^k h_{k+1,n} P(n,m-1) \right) A^m x_0 .
\end{align*}
Observe that $h_{k+1,k} P(k,k) = h_{k+1,k} \left( h_{1,0}h_{2,1} \cdots h_{k,k-1} \right) = h_{1,0}h_{2,1} \cdots h_{k+1,k} = P(k+1,k+1)$.
It remains to show that
\begin{align*}
    P(k,m) + \sum_{n=m-1}^k h_{k+1,n} P(n,m-1) = P(k+1,m)
\end{align*}
for $m=1,\dots,k$.
Rewrite the right hand side:
\begin{align}
    P(k+1,m) & = \sum_{\substack{ i(1)\le j(2), \dots, i(m-1)\le j(m) \\ i(m) \le k+1}} \prod_{\ell=1}^m h_{i(\ell),j(\ell)} \nonumber \\
    & = \sum_{\substack{ i(1)\le j(2), \dots, i(m-1)\le j(m) \\ i(m) \le k}} \prod_{\ell=1}^m h_{i(\ell),j(\ell)} + \sum_{\substack{ i(1)\le j(2), \dots, i(m-1)\le j(m) \\ i(m) = k+1}} \prod_{\ell=1}^m h_{i(\ell),j(\ell)} \nonumber \\
    & = P(k,m) + \sum_{n=1}^k \sum_{\substack{ i(1)\le j(2), \dots, i(m-1)\le j(m) = n}} h_{k+1,n} \prod_{\ell=1}^{m-1} h_{i(\ell),j(\ell)} \nonumber \\
    & = P(k,m) + \sum_{n=1}^k h_{k+1,n} \sum_{\substack{ i(1)\le j(2), \dots, i(m-1)\le j(m) = n}} \prod_{\ell=1}^{m-1} h_{i(\ell),j(\ell)} \nonumber \\
    & = P(k,m) + \sum_{n=m-1}^k h_{k+1,n} \sum_{\substack{ i(1)\le j(2), \dots, i(m-2)\le j(m-1) \\ i(m) \le n}} \prod_{\ell=1}^{m-1} h_{i(\ell),j(\ell)} \label{eqn:appendix-terminal-iterate-P-induction-final-expansion}
\end{align}
where the last equality holds because for $n < m-1$, the inner summation with respect to $i(1)\le j(2), \dots, i(m-2) \le j(m-1)$ is vacuous as it is impossible to choose $i(\ell)$'s satisfying $1\le i(1) < i(2) < \cdots i(m-1) \le n$.
Now replace the inner summation in~\eqref{eqn:appendix-terminal-iterate-P-induction-final-expansion} using the definition of $P(n,m-1)$:
\begin{align*}
    P(k+1,m) = \sum_{n=m-1}^k h_{k+1,n} P(n,m-1) .
\end{align*}
This completes the induction.
\end{proof}

\cref{lemma:polynomial_expression_of_algorithm} explicitly characterizes $x_k$ in terms of matrix polynomial.
Now the proof is done once we show that the explicit expression for $x_N$ is invariant under anti-diagonal transposing, i.e., the replacement $h_{k,j} \mapsto h_{N-j,N-k}$.
More precisely, we have to show that $P(N,m) = \hat{P}(N,m)$, where
\begin{align}
\label{eqn:appendix-terminal-iterate-P-N-hat}
    \hat{P}(N,m) = \sum_{\substack{ i(1)\le j(2), \dots, i(m-1)\le j(m) \\ i(m) \le N}} \prod_{\ell=1}^m h_{N-j(\ell),N-i(\ell)}
\end{align}
so that
\begin{align*}
    \hat{x}_N = \sum_{m=0}^N (-1)^m \hat{P}(N,m) A^m \hat{x}_0 .
\end{align*}
Once this is done, provided that $\hat{x}_0 = x_0$, we can conclude
\begin{align*}
    \hat{x}_N = \sum_{m=0}^N (-1)^m \hat{P}(N,m) A^m \hat{x}_0 = \sum_{m=0}^N (-1)^m P(N,m) A^m x_0 = x_N .
\end{align*}

In the product within~\eqref{eqn:appendix-terminal-iterate-P-N-hat}, we make the substitution $i'(\ell) = N-j(\ell)$ and $j'(\ell) = N-i(\ell)$.
Then for $\ell=1,\dots,m-1$,
\begin{align*}
    i(\ell) \le j(\ell+1) \iff N-j(\ell+1) \le N-i(\ell) \iff i'(\ell+1) \le j'(\ell)
\end{align*}
and the condition $i(m) \le N$ is vacuous (can be dropped) because we are considering $h_{i,j}$ only for $i=1,\dots,N$.
Thus,
\begin{align*}
    \hat{P}(N,m) = \sum_{\substack{ i'(m)\le j'(m-1), \dots, i'(2)\le j'(1)}} \prod_{\ell=1}^m h_{i'(\ell),j'(\ell)} .
\end{align*}
Finally, reversing the order of $(i'(1),j'(1)), \dots, (i'(m),j'(m))$ via substitution $i(\ell) = i'(m+1-\ell), j(\ell) = j'(m+1-\ell)$ we obtain
\begin{align*}
    \hat{P}(N,m) = \sum_{\substack{ i(1)\le j(2), \dots, i(m-1)\le j(m)}} \prod_{\ell=1}^m h_{i(\ell),j(\ell)} = P(N,m)
\end{align*}
which concludes the proof.

\end{proof}

\subsection{Faster convergence of the Dual-Anchor ODE with strongly monotone operators}
\label{subsection:appendix-experiment-early-stopping}

In this section, we show that the Dual-Anchor ODE converges much more rapidly than the primal Anchor ODE for the subclass of strongly monotone operators, which suggests a new potential value of the dual algorithms. 
While the Dual-Anchor ODE can only guarantee $\|\opA (X(T))\| = \cO(\epsilon)$ at the terminal time $T = \Omega(\frac{1}{\epsilon})$ when $\opA$ is merely monotone, when $\opA$ is strongly monotone and Lipschitz, the Dual-Anchor ODE achieves $\|X(t) - X(T)\| = \cO(\epsilon)$ and $\|\opA (X(t))\| = \cO(\epsilon)$ at $t = \cO(\log \frac{1}{\epsilon}) \ll T$ and makes negligible progress thereafter (similar to the pattern shown in \cref{subfig:ouyang_strongly_monotone_performance}). 
This allows one to apply \textit{early stopping}; instead of waiting until the terminal time, we stop and return $X(t)$.
On the other hand, this is not the case for primal Anchor ODE which, even when $\opA$ is strongly monotone, behaves conservatively and converges no faster than the $\|\opA(X(t))\|^2 = \cO(1/t^2)$ rate.
Below we provide the details.

\paragraph{Fast decay of $\norm{\opA(X(t))}$ and $\norm{X(t) - X(T)}$ under strong monotonicity.}
We established in \cref{appendix_lemma:bound_of_dotX} that $\Psi(t) = \frac{1}{(T-t)^2} \norm{\dot{X}(t)}^2$ is nonincreasing by deriving
\begin{align*}
    \dot{\Psi}(t) = -\frac{2}{(T-t)^2} \inner{\frac{d}{dt} \opA(X(t))}{\dot{X}(t)}.
\end{align*}
If $\opA$ is $\mu$-strongly monotone for some $\mu > 0$, then the above identity implies
\begin{align*}
    \dot{\Psi}(t) \le -\frac{2\mu}{(T-t)^2} \norm{\dot{X}(t)}^2 = -2\mu \Psi(t)
\end{align*}
so by Gr\"{o}nwall's inequality,
\begin{align*}
    \Psi(t)
    \le \Psi(0) e^{-2\mu t} = \frac{\norm{\opA(X_0)}^2}{T^2} e^{-2\mu t} 
\end{align*}
for any $t \in (0, T)$.
Reorganizing, we obtain
\begin{align}
\label{eqn:dot_X_decay_in_strongly_monotone_case}
    \norm{ \dot{X}(t) } = \sqrt{(T-t)^2 \Psi(t)} \le \frac{T-t}{T} e^{-\mu t} \norm{\opA(X_0)} \le e^{-\mu t} \norm{\opA(X_0)}.
\end{align}

\paragraph{Complexity analysis.}
Suppose the desired accuracy level is $\|\opA(\cdot)\| \le \epsilon$, so that we choose $T = \Theta(\frac{1}{\epsilon})$ according to the worst-case guarantee $\norm{\opA (X(T))} = \cO\left(\frac{1}{T}\right)$.
Given that $\opA$ is $\mu$-strongly monotone and $L$-Lipschitz, one achieves
\begin{align*}
    \norm{\dot{X}(t)} = \cO \left(\frac{1}{LT^2}\right)  \quad \text{at} \,\, t = \Theta \pr{ \frac{1}{\mu} \log LT } = \Theta \pr{ \frac{1}{\mu} \log \frac{L}{\epsilon} }
\end{align*}
by~\eqref{eqn:dot_X_decay_in_strongly_monotone_case} and then
\begin{align*}
    \norm{X(t) - X(T)} & \le \int_t^T \norm{\dot{X}(s)} \, ds \le \int_t^T \frac{T-s}{T-t} \norm{\dot{X}(t)} \, ds \le T \cO \left( \frac{1}{LT^2} \right) = \cO \left( \frac{1}{LT} \right) = \cO \left( \frac{\epsilon}{L} \right) ,
\end{align*}
where the second inequality uses $\Psi(s)\le \Psi(t)$ for $s \ge t$.
Finally, this implies
\begin{align*}
    \norm{\opA (X(t))} \le \norm{\opA (X(T))} + \cO \left(L \norm{X(t) - X(T)}\right) = \cO \left( \epsilon \right) .
\end{align*}

\paragraph{Anchor ODE is no faster than $\cO\pr{ \frac{1}{t} }$ under strong monotonicity.} 
Suppose $\opA$ is linear ($\opA x = Ax$ for some matrix $A$) and assume $A$ is invertible. In this case the solution to the Anchor ODE can be explicitly characterized \citep{SuhParkRyu2023_continuoustime}:
\begin{align*}
    X_{\text{anchor}}(t) = \frac{1}{t} A^{-1} \pr{ I - e^{-tA} }X_0.
\end{align*}
Now if $A$ is $\mu$-strongly monotone, then $A - \mu I$ is monotone and $0 \in \zer (A - \mu I)$, so we have
\begin{align*}
    & \frac{d}{dt} \norm{ e^{-t(A-\mu I)} X_0 }^2 = -2 \inner{ (A-\mu I) e^{-t(A-\mu I)} X_0  }{ e^{-t(A-\mu I)} X_0 } \le 0 \\
    & \implies \norm{X_0} \ge \norm{ e^{-t(A-\mu I)} X_0 } = \norm{ e^{\mu t I} e^{-tA} X_0 } = e^{\mu t} \norm{ e^{-tA} X_0 } ,
\end{align*}
which implies
\begin{align*}
    \norm{ A X_{\text{anchor}}(t) } = \frac{1}{t} \norm{ \pr{ I - e^{-tA} } X_0 } \ge \frac{1}{t} (1 - e^{-\mu t}) \norm{X_0} .
\end{align*}
Therefore, the convergence of the Anchor ODE is generally not faster than $\cO\left(\frac{1}{t}\right)$ even with strongly monotone (and linear) operators.

\end{document}